\documentclass[reqno, a4paper, 12pt]{amsart}

\usepackage{comment}

\usepackage[british]{babel}

\usepackage{bbm}
\usepackage[top=2.50cm,left=2.65cm,right=2.65cm,bottom=2.50cm]{geometry}
\usepackage{amsfonts}
\usepackage{amsmath,amsthm,amscd,mathrsfs,amssymb,graphicx,enumerate,
	dsfont}
\usepackage{xypic}
\usepackage{hyperref}
\usepackage[capitalise]{cleveref}
\usepackage[usenames,dvipsnames]{xcolor}
\hypersetup{colorlinks=true,citecolor=NavyBlue,linkcolor=Maroon,urlcolor=Orange}

\allowdisplaybreaks

\newtheorem{theorem}{Theorem}[section]
\newtheorem{lemma}[theorem]{Lemma}
\newtheorem{corollary}[theorem]{Corollary}
\newtheorem{conjecture}[theorem]{Conjecture}
\newtheorem{proposition}[theorem]{Proposition}

\theoremstyle{definition}
\newtheorem{remark}[theorem]{Remark}
\newtheorem{example}[theorem]{Example}
\newtheorem{definition}[theorem]{Definition}

\crefname{theorem}{Theorem}{Theorems}
\Crefname{theorem}{Theorem}{Theorems}
\crefname{conjecture}{Conjecture}{Conjectures}
\Crefname{conjecture}{Conjecture}{Conjectures}
\crefname{lemma}{Lemma}{Lemmas}
\Crefname{lemma}{Lemma}{Lemmas}
\crefname{corollary}{Corollary}{Corollaries}
\Crefname{corollary}{Corollary}{Corollaries}
\crefname{proposition}{Proposition}{Propositions}
\Crefname{proposition}{Proposition}{Propositions}
\crefname{definition}{Definition}{Definitions}
\Crefname{definition}{Definition}{Definitions}
\crefname{remark}{Remark}{Remarks}
\Crefname{remark}{Remark}{Remarks}
\crefname{example}{Example}{Examples}
\Crefname{example}{Example}{Examples}
\Crefname{equation}{}{}
\crefname{equation}{}{}

\let\oldconjecture\conjecture
\renewcommand{\conjecture}{%
  \crefalias{theorem}{conjecture}%
  \oldconjecture
}

\let\olddefinition\definition
\renewcommand{\definition}{%
  \crefalias{theorem}{definition}%
  \olddefinition
}

\let\oldlemma\lemma
\renewcommand{\lemma}{%
  \crefalias{theorem}{lemma}%
  \oldlemma
}

\let\oldcorollary\corollary
\renewcommand{\corollary}{%
  \crefalias{theorem}{corollary}%
  \oldcorollary
}

\let\oldproposition\proposition
\renewcommand{\proposition}{%
  \crefalias{theorem}{proposition}%
  \oldproposition
}

\let\oldremark\remark
\renewcommand{\remark}{%
  \crefalias{theorem}{remark}%
  \oldremark
}

\let\oldexample\example
\renewcommand{\example}{%
  \crefalias{theorem}{example}%
  \oldexample
}

\usepackage{amsmath,tikz-cd}
\usepackage{setspace}
\usepackage{amsfonts, mathrsfs}
\usepackage{amssymb}
\usepackage{mathtools}
\usepackage{enumitem}

\usepackage[utf8]{inputenc}

\usepackage{csquotes}
\usepackage[style=alphabetic, backend=biber, maxalphanames=5, maxbibnames=99, minbibnames=99, url=false]{biblatex}
\AtEveryBibitem{%
  \clearfield{issn}
}

\newcommand{\w}{\wedge}

\newcommand{\IN}{\ensuremath{\mathbb{N}}}
\newcommand{\IZ}{\ensuremath{\mathbb{Z}}}
\newcommand{\IQ}{\ensuremath{\mathbb{Q}}}
\newcommand{\IR}{\ensuremath{\mathbb{R}}}
\newcommand{\IC}{\ensuremath{\mathbb{C}}}
\newcommand{\IP}{\ensuremath{\mathbb{P}}}

\newcommand{\id}{\mathrm{id}}

\DeclareMathOperator{\supp}{supp}

\newcommand{\Ox}{\ensuremath{\mathcal{O}}}

\DeclareMathOperator{\im}{im}

\renewcommand{\comment}[1]{}

\DeclareMathOperator{\proj}{Proj}

\DeclareMathOperator{\Gr}{Gr}
\DeclareMathOperator{\chow}{CH}
\DeclareMathOperator{\gdch}{GDCH}

\DeclareMathOperator{\sym}{Sym}
\DeclareMathOperator{\pr}{pr}

\begin{document}

\title{On the Chow ring of double EPW quartics}
\author{Carl Mazzanti}

\begin{abstract}
    Double EPW quartics are hyperkähler varieties of dimension 4, first introduced by Iliev, Kapustka, Kapustka, and Ranestad. The general double EPW quartic is isomorphic to a moduli space of twisted sheaves on a $K3$ surface. 
    They have a rich geometry: they are equipped with an anti-symplectic involution and are related to conics in Verra fourfolds in the same way Fano varieties of lines on cubic fourfolds are related to cubic fourfolds themselves.
    In this work, we exploit this geometry to establish general conjectures about algebraic cycles on hyperkähler varieties in the case of double EPW quartics.
\end{abstract}

\maketitle

\tableofcontents

\section{Introduction}

\subsection{Algebraic cycles on hyperkähler varieties}

Algebraic cycles on hyperkähler varieties have been the subject of much research.
Following the work of Beauville and Voisin on the Chow ring of a $K3$ surface in \cite{BV}, Beauville conjectured that the conjectural Bloch--Beilinson filtration admits a canonical splitting in the case of hyperkähler varieties \cite{beauville07}. 
As a concrete consequence, Beauville conjectured that the subring generated by divisors maps injectively to the cohomology via the cycle class map for any hyperkähler variety \cite{beauville07}. 
Voisin considered a generalisation of this conjecture in \cite{voisin08}.

\begin{conjecture}[Beauville--Voisin, see \Cref{conj:BV}]
    Let $T$ be a hyperkähler variety. Then the cycle class map restricted to the subring generated by divisors and Chern classes,
    $$\mathrm{cl}\colon\left\langle\chow^1(T), c_i(T)\right\rangle\to H^\ast(T,\IQ),$$
    is injective.
\end{conjecture}

Another question studied following \cite{BV} is the Franchetta conjecture for hyperkähler varieties. 
O'Grady noticed in \cite{og2} that the Beauville--Voisin class constructed in \cite{BV} is \emph{generically defined}, see \Cref{def:gen-defined}, and asked whether any generically defined zero-cycle on a $K3$ surface is a multiple of the Beauville--Voisin class. 
This was generalised to higher-dimensional hyperkähler varieties in \cite{flv19}, where the Franchetta property was introduced and conjectured to hold for locally complete families of hyperkähler varieties.

\begin{conjecture}[Franchetta conjecture, see \Cref{conj:franchetta}]
    Let $\mathcal{F}$ be the moduli stack of a locally complete family of hyperkähler varieties and $\mathcal{T}\to\mathcal{F}$ its universal family. 
    Then it satisfies the Franchetta property, meaning that for any $b\in\mathcal{F}$ the cycle class map restricted to the subring of generically defined cycles,
    $$\mathrm{cl}\colon\gdch^\ast(\mathcal{T}_b)\to H^\ast(\mathcal{T}_b,\IQ),$$
    is injective.
\end{conjecture}

The interaction of the Franchetta conjecture with the Beauville--Voisin conjecture has been studied in \cite{bvf}.

These conjectures are open in general, but there are two cases where significant progress has been made: the Fano variety of lines on a cubic fourfold and Hilbert schemes of points on $K3$ surfaces. 
The Beauville--Voisin conjecture has been proven in the case of the Fano variety of lines and for $S^{[n]}$ for small $n$ in \cite{voisin08}.

The Franchetta property is known to hold for $\mathcal{F}_1(\mathcal{X})\to\mathcal{M}$, the universal family of Fano varieties of lines on cubic fourfolds, and for $\mathcal{S}^{n/\mathcal{F}_{2d}}\to\mathcal{F}_{2d}$, the universal family of powers of $K3$ surfaces for small $n$ and low degree $d$, by \cite{flv19}, even though the latter is not a locally complete family.
See \Cref{rmk:non-complete} for details on the Franchetta property in the case of non-locally complete families.
In addition to this, a multiplicative splitting of the Chow ring of these varieties was constructed in \cite{shenvial}, verifying some results predicted by the Bloch--Beilinson conjecture. 

The Chow ring of moduli spaces of stable objects on possibly twisted $K3$ surfaces, of which double EPW quartics are examples, is less well understood. 
The more geometric techniques used to study the Beauville--Voisin conjecture for Hilbert schemes of points on $K3$ surfaces and Fano varieties of lines are not available in this setting, complicating the study of their Chow rings.

\subsection{The Beauville--Voisin conjecture for double EPW quartics}

The aim of this paper is to complete the picture in the case of double EPW quartics.
These are hyperkähler fourfolds of $K3^{[2]}$-type admitting an anti-symplectic involution and were first constructed by Iliev, Kapustka, Kapustka, and Ranestad in \cite{ikkr17}.
They form a divisor in the moduli space of $K3^{[2]}$-type varieties.
While they are constructed via Lagrangian degeneracy loci, the general double EPW quartic admits a description in terms of $(1,1)$-conics on Verra fourfolds and as a moduli space of stable objects on a twisted $K3$ surface.
This rich geometry makes it possible to study their cycle-theoretic properties in depth.

We prove the following generalisation of the Beauville--Voisin conjecture proposed in \cite{bvf}. For $K3^{[2]}$-type hyperkähler varieties, this is equivalent to both the Beauville--Voisin and the Franchetta conjecture, see \Cref{subsec:BVF} for more details.

The Beauville--Voisin conjecture is known in a handful of cases \cite{BV,voisin08, fu-BV, laterveervial, laterveer23, bvf}, see \Cref{subsec:BVF} for more details.
The Franchetta conjecture, on the other hand, is known for fewer hyperkähler varieties and some Fano varieties \cite{pavic-shen-yin,fu-laterveer,lu2025ogradysgeneralizedfranchettaconjecture,flv19,flv21}, see \Cref{sec:franchetta} for more details.

\begin{theorem}[\Cref{thm:BV}]\label{intro:bv}
    Let $Y$ be a double EPW quartic. Then the Beauville--Voisin--Franchetta conjecture holds for $Y$, meaning that the cycle class map restricted to the subring generated by divisors and generically defined cycles,
    $$\mathrm{cl}\colon\left\langle \chow^1(Y),\gdch^\ast(Y)\right\rangle \to H^\ast(Y,\IQ),$$
    is injective.
\end{theorem}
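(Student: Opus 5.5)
The plan is to reduce the statement to a Franchetta-type statement for a suitable universal family, and then verify the Franchetta property using the Verra-fourfold/conic geometry. This follows the strategy of \cite{bvf} for Fano varieties of lines.

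\textbf{Reduction to generically defined cycles.} For very general $Y$, the Néron--Severi group of $Y$ has rank one. It is generated by the polarisation $h$, the pullback of the hyperplane class under the double cover $Y\to$ EPW quartic, and $h$ is generically defined. The subring $\langle\chow^1(Y),\gdch^\ast(Y)\rangle$ is then just $\gdch^\ast(Y)$. The claim therefore amounts to the Franchetta property for the universal family $\mathcal{Y}\to\mathcal{F}$ of double EPW quartics.

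For arbitrary $Y$, extra divisors appear. I would argue via a spreading-out and specialisation principle, as in \cite{bvf} (\Cref{subsec:BVF}), that it suffices to prove the Franchetta property for the families over Noether--Lefschetz loci. Concretely, the target is: for any $Y$ and any $D_1,\dots,D_k\in\chow^1(Y)$, the products of the $D_i$ with generically defined classes inject into cohomology. Since Beauville--Voisin for $K3^{[2]}$-type is known to follow from Franchetta plus control of $\chow_0$ and $\chow^2$ relations (\Cref{subsec:BVF}), I would reduce further to the following. One needs to show that $h^2$, $c_2(Y)$ and $D\cdot h$, $D^2$, $D\cdot c_2$ satisfy in $\chow$ only the relations they satisfy in cohomology. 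Among these, the zero-cycles $h^4$, $h^2c_2$, $c_2^2$ and $D^2h^2$ must be proportional to a single canonical class $\mathfrak{o}_Y$.

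\textbf{Franchetta for $\mathcal{Y}\to\mathcal{F}$.} Here I would use the description of $Y$ via $(1,1)$-conics on a Verra fourfold $V\subset\IP^2\times\IP^2$ double cover. There is an incidence correspondence $P\subset Y\times V$ of conics, analogous to the universal line over $F_1(X)$. I would show that, over the relevant parameter space of Verra fourfolds, generically defined cycles on $Y$ (and on $Y\times Y$ if needed) come via $P$ from generically defined cycles on $V$ and products of projective bundles. These are controlled by a stratified projective-bundle argument over the parameter space: the total space of the universal family of Verra fourfolds (or of pairs of points/conics) is a projective bundle over $\IP^2\times\IP^2$ (or a stratification of one). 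It therefore has trivial Chow groups, i.e. Chow groups spanned by tautological classes. Hence $\gdch^\ast(Y)$ is generated by $h$ and $c_i(Y)$, plus the classes coming from $\IP^2\times\IP^2$. The cohomological independence of these classes can then be checked numerically. The anti-symplectic involution $\iota$ helps here: generically defined cycles are $\iota$-invariant, and $\iota$ acts as $-1$ on $H^{2,0}$, which pins down $\chow_0$ contributions.

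\textbf{Main obstacle.} I expect the hardest step to be the zero-cycle statement for non-generic divisors: showing $D^2 h^2$ and $D^4$ are multiples of $\mathfrak{o}_Y$ when $\rho(Y)>1$. This cannot come from the universal family alone.

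My first attempt would use the twisted-$K3$ moduli description, $Y\cong M_{S,\alpha}(v)$. I would try to prove that $\mathfrak{o}_Y$ is represented by points on a constant-cycle Lagrangian subvariety, e.g. a fixed locus or a surface coming from $\iota$ or from rational curves on $S$. I would then show that $D^2$ is a combination of $h^2$, $c_2$ and $\iota$-invariant classes supported on such subvarieties.

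If this fails, the fallback is Voisin's argument. It writes $D^2$ via the Beauville--Bogomolov relation $D^2-\tfrac{q(D)}{q(h)}h^2\in$ a span controlled by a rational Lagrangian fibration or uniruled divisor. The key input is that for $K3^{[2]}$-type the relation $D^4$, $D^2c_2$ holds once one knows Franchetta for the two-dimensional Noether--Lefschetz families. That in turn would be treated again by degeneration to the conic-on-Verra picture.
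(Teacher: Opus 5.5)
\textbf{There is a genuine gap.} The part of the theorem that goes beyond the Franchetta property is the set of relations involving divisors that are not generically defined. For this part your proposal supplies no mechanism.

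Your starting point is also off. The very general double EPW quartic has Picard rank $2$, with $\mathrm{NS}(Y)=\langle h_1,h_2\rangle=H^2(Y,\IZ)^\iota\cong U(2)$, since the family is $19$-dimensional. This is harmless for very general $Y$, because $h_1,h_2$ are generically defined. But it is exactly the lever of the paper's proof: every primitive divisor $D\in\langle h_1,h_2\rangle^\perp$ is $\iota$-anti-invariant.

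You also look for relations in the wrong degrees. In codimension $2$ nothing needs proving: $\sym^2H^2(Y,\IQ)\to H^4(Y,\IQ)$ is injective, and $c_2(Y)$ is independent of products of algebraic divisors. The content sits in $\chow^3(Y)$:
\begin{itemize}
\item $D^2h=\frac{q(D)}{12}h^3$ and $D^2g=-\frac{q(D)}{12}g^3$;
\item $DD'h=DD'g=0$ and $Dgh=0$;
\item $Dg^2=-Dh^2$ and $D^3=\frac{3q(D)}{4}Dh^2$.
\end{itemize}
The zero-cycle relations you single out follow from these by multiplying by $h$ or $g$, e.g.\ $D^2h^2=\frac{q(D)}{12}h^4=4q(D)o_Y$.

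Neither of your two strategies reaches these $1$-cycle relations.
\begin{itemize}
\item Representing $o_Y$ by points of a constant cycle surface (the paper does this for the fixed locus $Z$) says nothing about $D^2h$ or $D^3$.
\item Writing $D^2$ through $h^2$, $c_2(Y)$ and classes supported on constant cycle subvarieties is unjustified for arbitrary $D$. Even if it held, it would only yield zero-cycle relations.
\item The Franchetta property for Noether--Lefschetz families would suffice, but it is not known, and you give no way to prove it.
\end{itemize}

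\textbf{What is missing} are two inputs, both coming from the involution.

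First, take $\alpha\in\chow^2(Y)^+$, e.g.\ $\alpha=D^2$ or $DD'$. Push forward along the quotient $f\colon Y\to D_1^{\overline{A}}$, a quartic section of the cone $C(\IP^2\times\IP^2)$ whose Chow ring is that of $\IP^2\times\IP^2$. Then $2\alpha h=f^\ast f_\ast(\alpha h)$ and $f_\ast(\alpha h)=\frac14 j^\ast j_\ast f_\ast\alpha$, so $\alpha h\in\langle h^3,g^3\rangle$. The same holds for $h_2$, using the fibration of $D_1^{\overline{A}}$ in Kummer quartics inside $\IP(\Ox_{\IP^2}(1)^{\oplus 3}\oplus\Ox_{\IP^2})$ (\Cref{proposition:D^2}). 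Combined with the Thom--Porteous class of $Z$ (\Cref{lemma:class-of-Z}) and $D\cdot Z=0$ (\Cref{lemma:DZ}), this also gives $Dh^3=Dg^3=0$.

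Second, handle anti-invariant homologically trivial $1$-cycles such as $D^3-\frac{3q(D)}{4}Dh^2$. One applies the relation $\frac12(\Delta_Y-\Gamma_\iota)=I\cdot\Gamma+(\iota,\id)^\ast I\cdot\Gamma'+\Gamma''$ of \Cref{cor:proj-i}. It is obtained by pushing forward the self-intersection formula $I_F^2=aW_2+I_F\cdot\Gamma+\Gamma'$ for the incidence variety of $(1,1)$-conics, with $a\neq 0$. By the projection formula, every term reduces either to $I_\ast$ of a homologically trivial divisor or to a product already killed by the first step. Hence the cycle vanishes (\Cref{lemma:D^3}). Together with \Cref{rmk:prim-div} for the subring generated by primitive divisors, this closes the argument.

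\textbf{Your route to Franchetta itself does not work as stated.} The universal conic annihilates $\chow_0(Y)^+_{\mathrm{hom}}$ (\Cref{thm:chow-iso}). That is exactly where the invariant part of a homologically trivial generically defined zero-cycle lives. So Franchetta for Verra fourfolds cannot be transported to $Y$ through $P$ alone. You would need a generically defined motivic relation between $Y$ and $X\times X$, which you do not supply. The paper instead uses the twisted-$K3$ description, which makes $\mathfrak{h}(Y)$ a summand of $\bigoplus_i\mathfrak{h}(S^2)(l_i)$. It then invokes the known Franchetta property for $S\times S$ over genus-$2$ $K3$ surfaces (\Cref{thm:franchetta}).
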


In particular, this establishes the existence of a special zero-cycle $o_Y\in\chow_0(Y)$, called the Beauville--Voisin class, satisfying $\langle\chow^1(Y),\gdch^\ast(Y)\rangle_0 =\IQ o_Y$.
We prove that this is not only a zero-cycle, but is represented by a point on a constant cycle subvariety.

\begin{theorem}[\Cref{thm:constant-cycle}]\label{intro:constant-cycle}
    Let $Y$ be a general double EPW quartic and $Z$ the fixed locus of its involution $\iota$. Then $Z$ is a constant cycle surface in $Y$ and the Beauville--Voisin class $o_Y$ is represented by any point lying on $Z$.
\end{theorem}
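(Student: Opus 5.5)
We have to show two things: every point of the fixed surface $Z$ has the same class in $\chow_0(Y)$, and this class is the Beauville--Voisin class $o_Y$ of \Cref{intro:bv}.

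\textbf{Step 1: $\iota$ acts as $-1$ on the symplectic part of $\chow_0(Y)$.} Conjecturally, the Bloch--Beilinson filtration on $\chow_0(Y)$ of a $K3^{[2]}$-type fourfold has graded pieces $\IQ\, o_Y$, $\chow_0(Y)_{(2)}$ and $\chow_0(Y)_{(4)}$. Since $\iota$ is anti-symplectic, it should act by $-1$ on the piece $\chow_0(Y)_{(2)}$ and by $+1$ on $\chow_0(Y)_{(4)}$, which is $\sym^2$ of the transcendental part. Concretely, I would realise the general double EPW quartic as a moduli space of stable objects on a twisted $K3$ surface $(S,\alpha)$, or use the $(1,1)$-conics description on Verra fourfolds. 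With the Shen--Vial style Fourier decomposition available for such moduli spaces, one then checks that $\iota^\ast$ acts by $(-1)^{s/2}$ on $\chow_0(Y)_{(s)}$. This is the analogue of Voisin's theorem on anti-symplectic involutions of the Fano variety of lines.

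\textbf{Step 2: reduce to points of $Z$.} For $z\in Z$ we have $\iota^\ast[z]=[z]$. Write $[z]=o_Y+a_2+a_4$ with $a_s\in\chow_0(Y)_{(s)}$. Invariance under $\iota^\ast$ then kills $a_2$, leaving only $a_4$. To kill $a_4$, I would use a geometric description of $Z$. One option is to identify $Z$ with the surface of points fixed under the covering involution of the double cover $Y\to X$ onto the EPW quartic. Another is to use the fact that $Y$ is swept out by rational or constant-cycle subvarieties through $Z$. The cleanest route is probably a Voisin-type argument:
\begin{itemize}
\item Find a uniruled divisor, or an explicit rational curve, joining points of $Z$.
\item Alternatively, show that $Z$ is Lagrangian and that the image of $\chow_0(Z)\to\chow_0(Y)$ lies in $\IQ o_Y\oplus\chow_0(Y)_{(2)}$. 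This would follow if $\chow_0(Z)\to\chow_0(Y)$ factors through the Albanese or $H^{2,0}$ part of the surface $Z$, for instance via a correspondence with $S$.
\end{itemize}
Combined with Step 1, this shows that $[z]-o_Y=0$ for all $z\in Z$. Hence $Z$ is a constant cycle surface and its points represent $o_Y$.

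\textbf{Step 3: identify the constant class with $o_Y$.} The class $[z]$ for $z\in Z$ is generically defined, since $Z$ is the fixed locus of $\iota$ and so spreads out over the family. Its degree is $1$. By \Cref{intro:bv}, the generically defined zero-cycles of degree $1$ are exactly $o_Y$, so $[z]=o_Y$ provided the class is constant. One caveat: a general point of $Z$ is not itself generically defined. So the argument must go as follows. First use Step 2 to get constancy. Then integrate over $Z$: the cycle $\deg(Z\cdot h^2)^{-1}\, (Z\cdot h^2)$ is generically defined, and by constancy it equals the class of any point of $Z$. Applying \Cref{intro:bv} to this generically defined cycle gives $[z]=o_Y$.

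\textbf{Main obstacle.} The hard part is Step 2, killing the $\chow_0(Y)_{(4)}$-component. It requires a concrete geometric handle on $Z$ inside $Y$. I would first try the Verra fourfold description, since it gives families of rational curves (conics) on $Y$ meeting $Z$, and then a Voisin-style argument that $\chow_0$ of $Z$ maps to $\chow_0(Y)$ through a constant class.
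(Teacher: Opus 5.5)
\textbf{There is a genuine gap.} Steps 1 and 2 are not proved, and together they carry the whole content of the theorem.

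\emph{Step 1 cannot be ``checked''.} A splitting $\chow_0(Y)=\IQ o_Y\oplus\chow_0(Y)_{(2)}\oplus\chow_0(Y)_{(4)}$ on which $\iota^\ast$ acts by $(-1)^{s/2}$ is a Bloch--Beilinson-type statement about how the graph $\Gamma_\iota$ acts on Chow groups. To deduce it from the known action on cohomology you would need injectivity of the cycle class map on generically defined cycles of $Y\times Y$. \Cref{thm:franchetta} only covers $Y$, $X^n$ for $n\le 3$, and $F(X)$. In the paper, the $\iota$-eigenspace picture of $\chow_0(Y)$ (e.g.\ \Cref{prop:filtration}) is a \emph{consequence} of the constant cycle property, not an input.

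\emph{Step 2 gives no mechanism to kill $a_4$, even granting Step 1.}
\begin{itemize}
\item A rational curve joining two points of $Z$ says nothing about a two-dimensional family.
\item ``Factoring through the $H^{2,0}$ part of $Z$'' has no meaning at the level of $\chow_0$.
\item The implication ``$Z$ Lagrangian $\Rightarrow$ the image of $\chow_0(Z)$ lies in $\IQ o_Y\oplus\chow_0(Y)_{(2)}$'' is itself an instance of the generalised Bloch conjecture.
\item The conics are curves in $X$, and the fibres of $\alpha$ are contracted to points of $Y$, so they give no rational curves on $Y$ through $Z$.
\end{itemize}

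\textbf{The missing idea.} No decomposition of $\chow_0(Y)$ is needed. One works in $\chow_1(X)$ via the universal conic, in two stages.

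\emph{Constancy in $\chow_1(X)$.} Take any $y\in Y$ and conics $c\in\alpha^{-1}(y)$, $c'\in\alpha^{-1}(\iota(y))$. Then $c+c'$ is a hyperplane section of the quartic del Pezzo surface $D_{(L,M)}$. These sections are parametrised by a $\IP^3$-bundle over $(\IP^2)^\vee\times(\IP^2)^\vee$, so $[c]+[c']\in\chow_1(X)$ is independent of $y$. For $z\in Z$ both conics lie in the same fibre $\alpha^{-1}(z)\cong\IP^1$, so $[c]=[c']$ and $2[c]$ is constant for $c\in\alpha^{-1}(Z)$.

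\emph{Transfer back to $Y$.} The real work is showing that $[c]=[c']$ in $\chow_1(X)$ implies $[\alpha(c)]=[\alpha(c')]$ in $\chow_0(Y)$ (\Cref{lemma:non-zero}). This comes from the relation $I_F^2=aW_2+I_F\cdot\Gamma+\Gamma'$ of \Cref{prop:I_F^2}, applied to $[c]$:
\begin{itemize}
\item the left-hand side gives $D_c^2$, which depends only on $[c]\in\chow_1(X)$;
\item the decomposable terms likewise depend only on $[c]$;
\item the remaining term satisfies $\alpha_\ast(h_F\cdot(aW_2)_\ast[c])=am[\iota(\alpha(c))]$.
\end{itemize}
Finally, $a\neq0$ must be proved separately. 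Using the computation of $D_c^2$ in \Cref{lemma:conic-intersection}, $a=0$ would make $[y]+[\iota(y)]$ constant in $\chow_0(Y)$. Mumford's theorem would then give $\sigma^2+\iota^\ast\sigma^2=2\sigma^2=0$, a contradiction.

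\textbf{Step 3 is fine.} Your normalisation via $Z\cdot h^2$ correctly handles the fact that an individual point of $Z$ is not a priori generically defined. It only needs $\gdch^4(Y)=\IQ o_Y$ from \Cref{thm:franchetta}, not the full \Cref{intro:bv}.
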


\subsection{Parallels to Fano varieties of lines on cubic fourfolds}

As in the case of double EPW sextics, or more generally hyperkähler fourfolds with an anti-symplectic involution, we expect the Bloch--Beilinson filtration on zero-cycles to split in the following way, where the superscript indicates (anti-)invariance with respect to the involution~$\iota$ on $Y$,
$$\chow_0(Y)=\IQ o_Y\oplus\chow_0(Y)^-\oplus \chow_0(Y)^+_{\mathrm{hom}}.$$
This bears resemblance to the case of Fano varieties of lines on cubic fourfolds, where it is known that the group of zero-cycles admits an eigenspace decomposition with respect to Voisin's self-rational map \cite{shenvial}.

Furthermore, the general double EPW quartic $Y$ is associated to a unique Verra fourfold $X$, first studied in \cite{verra}, which is a Fano variety of Picard rank $2$ and $K3$ type.
This is similar to both the case of the Fano variety of lines on a cubic fourfold and to the case of double EPW sextics, which are also associated to Fano varieties of $K3$ type: they are associated to cubic fourfolds and Gushel--Mukai fourfolds, respectively.
More precisely, the Hilbert scheme $F(X)$ of conics in $X$ admits the structure of a $\IP^1$-bundle over $Y$. 
The universal conic induces a correspondence between $Y$ and $X$, similar to the universal line inducing a correspondence between the Fano variety of lines on a cubic fourfold and the cubic fourfold itself. 
Both in the case of Fano varieties of lines and of double EPW sextics, it is known that the middle graded part of the group of zero-cycles is isomorphic to the group of homologically trivial $1$-cycles on the associated fourfold \cite{shenvial, zhang24}.
We prove the analogous statement and compare the Chow ring of $Y$ with that of $X$.

\begin{theorem}[\Cref{thm:chow-iso,prop:h^2,cor:chow^3_strong}]\label{intro:chow-iso}
    Let $Y$ be a general double EPW quartic, $X$ the Verra fourfold associated to $Y$, and $P$ the universal conic.
    We have isomorphisms 
    $$\chow_0(Y)^-\xrightarrow[\cong]{P_\ast} \chow_1(X)_{\mathrm{hom}}\xrightarrow[\cong]{P^t_\ast} \chow^2(Y)^-_{\mathrm{hom}}$$
    induced by the universal conic.
    The inverse of the composition is given (up to a non-zero scalar) by intersecting with $h^2$.
    Furthermore, 
    $$\chow^3(Y)^-_{\mathrm{hom}}=h\cdot\chow^2(Y)^-_{\mathrm{hom}}\oplus g\cdot \chow^2(Y)^-_{\mathrm{hom}},$$
    where $g$ is the other invariant divisor, see \Cref{subsec:motives} for the definition.
\end{theorem}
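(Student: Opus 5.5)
We follow the strategy Shen--Vial use for Fano varieties of lines and Zhang for double EPW sextics, adapted to the $\IP^1$-bundle $F(X)\to Y$ and the universal conic $P\subset Y\times X$ (the pushforward of the universal family over $F(X)$).

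\textbf{Step 1: a quadratic relation for $P^t_\ast P_\ast$.} The key input is a relation among correspondences of the form
\begin{equation*}
P^t\circ P = a\,(\Delta_Y - \Gamma_\iota) + \text{(terms supported on } D\times Y \text{ or } Y\times D,\ \text{or involving } h,g)
\end{equation*}
in $\chow^4(Y\times Y)$, with $a\neq 0$ and $D\subset Y$ a divisor. We obtain it by generalising Voisin's incidence argument for lines. For a general point $y\in Y$ with conic $C_y$, we describe the cycle $P^t_\ast P_\ast[y]$, i.e.\ the conics meeting $C_y$. Every conic through a point $x\in X$ lies in a $K3$-type surface cut out by the two projections of the Verra fourfold to $\IP^2\times\IP^2$. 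Thus the locus of conics meeting $C_y$ is a divisor whose class is a polynomial in $h,g$ plus a contribution from $y$ and $\iota(y)$: the two conics of a pair $(y,\iota y)$ lie on a common quadric surface section.

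Rather than proving this relation pointwise, we prove it generically. We spread the correspondences over the universal family and use the Franchetta property for $Y\times Y$, established earlier in the paper together with \Cref{intro:bv}, to upgrade a cohomological identity to a rational one. This requires computing the action of $P^t\circ P$ on $H^\ast(Y)$. By the $K3$-type Hodge structure, $P_\ast$ identifies $H^4$-transcendental data of $X$ with the anti-invariant part $H^2(Y)^-_{\mathrm{tr}}$. We expect this generic identity to be the main obstacle: it requires controlling the full cohomology class of $P^t\circ P$ on $Y\times Y$, not just its action on $H^2$, and the spread-out Franchetta input for $Y\times Y$.

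\textbf{Step 2: deducing the isomorphisms.} Apply the relation to $\chow_0(Y)^-$. The correction terms act through $h,g$ or are supported on $D$. Zero-cycles of degree $0$ pushed through such terms land in $\IQ o_Y$ or die, and their anti-invariant part vanishes by \Cref{intro:bv} and \Cref{intro:constant-cycle}. So $P^t_\ast P_\ast = 2a\cdot\id$ on $\chow_0(Y)^-$, since $\iota_\ast=-1$ there. This shows that $P_\ast$ is injective on $\chow_0(Y)^-$ and that $P^t_\ast$ is surjective onto the image. For surjectivity of $P_\ast$ onto $\chow_1(X)_{\mathrm{hom}}$, we use that $X$ is covered by conics, so $\chow_1(X)$ is generated by conics. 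Then $P\circ P^t$ acts on $\chow_1(X)_{\mathrm{hom}}$ as a nonzero multiple of the identity; this is a similar relation on $X\times X$, obtained from the incidence of conics meeting a given conic.

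The same argument handles $\chow^2(Y)^-_{\mathrm{hom}}$, via the relation $P^t_\ast$ followed by intersection with $h^2$. Compute $h^2\cdot P^t_\ast P_\ast$ on $\chow_0(Y)^-$: using $h^2\cdot P^t_\ast(\gamma)=P^t_\ast(\gamma\cdot \pi^\ast\text{(hyperplane)})$-type projection formulas on $F(X)$, this reduces to a multiple of the identity. The scalar is found from the degree of the conic and checked to be nonzero. This yields the inverse up to scalar.

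\textbf{Step 3: the decomposition of $\chow^3(Y)^-_{\mathrm{hom}}$.} By Step 2 (and its analogue in the middle degree), $\chow^3(Y)^-_{\mathrm{hom}}$ is generated by $P^t_\ast$ of homologically trivial 2-cycles of $X$. Via the two $\IP^2$-projections and the motivic decomposition of \Cref{subsec:motives}, these reduce to $h\cdot(-)$ and $g\cdot(-)$ applied to $\chow^2(Y)^-_{\mathrm{hom}}$, which gives the span. For directness of the sum, intersect further with $h$ and $g$ and land in $\chow_0(Y)^-\cong\chow^2(Y)^-_{\mathrm{hom}}$. The resulting $2\times2$ matrix of scalars is the intersection form of $h,g$ against $h^2$ and $hg$ up to factors. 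Its nondegeneracy follows from the known Beauville--Bogomolov form of $h,g$, already encoded in \Cref{intro:bv}.
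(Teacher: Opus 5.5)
Your Step 1 relation is not well-typed, and Steps 2 and 3 inherit the problem. The correspondence giving $\chow_0(Y)\to\chow_1(X)$ is $\Psi=\frac1m P_\ast\circ(\pr_2^\ast h_F)_\ast\circ\alpha^\ast$, which sends $\chow^k(Y)$ to $\chow^{k-1}(X)$. The bare pushforward $(\alpha\times\id)_\ast P$ that you describe is a different correspondence: it sends $y$ to the whole del Pezzo surface $D_{(L,M)}$, whose class is constant, so it kills $\chow_0(Y)_{\mathrm{hom}}$.

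With the correct $\Psi$, the composition $\Psi^t\circ\Psi=I$ lies in $\chow^2(Y\times Y)$ and maps $\chow_0(Y)$ to $\chow^2(Y)$. Consequently:
\begin{itemize}
\item it cannot equal $a(\Delta_Y-\Gamma_\iota)+\cdots$ in $\chow^4(Y\times Y)$;
\item the statement ``$P^t_\ast P_\ast=2a\cdot\id$ on $\chow_0(Y)^-$'' is meaningless;
\item likewise $P\circ P^t$ maps $\chow_1(X)$ to $\chow^1(X)$, not to itself.
\end{itemize}
In the same vein, the locus $D_c$ of conics meeting $c$ is a threefold in the fivefold $F(X)$, not a divisor, and conics lie on the degree-$4$ del Pezzo surfaces $D_{(L,M)}$, not on $K3$-type surfaces.

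The quadratic relation that does hold is for the intersection square $I\cdot I$ (\Cref{prop:I^2}). Combined with the decomposability of $I+(\iota,\id)^\ast I$ (\Cref{prop:linear-relation}) and $a\neq0$ (\Cref{lemma:non-zero}), it gives the projector relation \Cref{cor:proj-i}. The paper proves this geometrically, by Voisin's self-intersection argument for $I_F$ (regular embedding off $W_1\cup W_2$, decomposable excess Chern classes, \Cref{prop:I_F^2}). It does not lift a cohomological identity. The Franchetta property is proved only for $Y$, for $X^n$ with $n\le 3$, and for $F(X)$, not for $Y\times Y$; this is why the paper routes \Cref{prop:h^2} through $X\times X$.

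The same projector relation produces the spanning in Step 3: for $\gamma\in\chow^3(Y)^-_{\mathrm{hom}}$ it gives $\gamma=\sum_i D_i'\cdot I_\ast(D_i\cdot\gamma)$ with $D_i,D_i'\in\chow^1(Y)^+$ (\Cref{cor:chow^3_weak}). By contrast, ``$P^t_\ast$ of homologically trivial $2$-cycles of $X$'' lands in $\chow^1(Y)$, not $\chow^3(Y)$.

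You are also missing the source of the nonzero scalar. In the paper it comes from two facts:
\begin{itemize}
\item $S_y+S_{\iota(y)}=2I_\ast(o_Y)$ is independent of $y$ (\Cref{thm:constant-cycle}), because $c+c'$ is a hyperplane section of $D_{(L,M)}$;
\item $D_c^2=4\ell_{i+4}+\sum_{j\neq i,i+4}\ell_j$ (\Cref{lemma:conic-intersection}).
\end{itemize}
Together these give that $2I_\ast(o_Y)\cdot I_\ast(y-\iota(y))=S_y^2-S_{\iota(y)}^2$ is a nonzero multiple of $y-\iota(y)$. This yields injectivity of $\Psi$ on $\chow_0(Y)^-$ and the inverse up to scalar. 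Surjectivity onto $\chow_1(X)_{\mathrm{hom}}$ is a separate input, quoted from \cite[Cor.~4.4]{laterveer-verra}.

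Your projection formula moving $h^2$ across $P^t$ cannot work. It would turn $\gamma\in\chow_1(X)_{\mathrm{hom}}$ into a homologically trivial zero-cycle on the Fano fourfold $X$, where $\chow_0(X)=\IQ$, i.e.\ into zero. That would prove the opposite of what you want.

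Replacing $I_\ast(o_Y)$ (a combination of $h_1^2,h_1h_2,h_2^2,c_2(Y)$ by Franchetta) with $h^2$ needs two Chow-level facts you do not address:
\begin{itemize}
\item $h_1^2$ and $h_2^2$ act by zero on $\chow^2(Y)^-_{\mathrm{hom}}$, from Franchetta for $X\times X$ plus $q(h_i,h_i)=0$;
\item $c_2(Y)$ acts through the $h_ih_j$, because $Z\cdot\chow^2(Y)^-_{\mathrm{hom}}=0$ ($Z$ being a constant cycle surface) and $kZ=\frac34(9h^2-g^2)-c_2(Y)$ (\Cref{lemma:class-of-Z}).
\end{itemize}
Your $2\times2$ argument for directness has the right shape: the paper multiplies $\mu h=\nu g$ (with $\mu,\nu\in\chow^2(Y)^-_{\mathrm{hom}}$) by $h$ and uses $hg=h_1^2-h_2^2$. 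But it rests on exactly these facts from \Cref{prop:h^2}. The Beauville--Bogomolov form only gives their cohomological shadow, and \Cref{intro:bv} says nothing about how $h^2$ or $hg$ act on $\chow^2(Y)^-_{\mathrm{hom}}$.
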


We also show that Voisin's filtration, introduced in \cite{voisin16}, agrees with the expected splitting of the conjectural Bloch--Beilinson filtration.

\begin{proposition}[\Cref{prop:filtration}]
    Let $Y$ be a double EPW quartic. 
    Then Voisin's filtration agrees with the filtration obtained from the involution $\iota$,
    $$
    \IQ o_Y\subset \IQ o_Y\oplus\chow_0(Y)^-\subset \chow_0(Y).
    $$
\end{proposition}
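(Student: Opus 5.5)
To prove the proposition, I would show that Voisin's filtration $S_\bullet\chow_0(Y)$ from \cite{voisin16} has the following two pieces:
$$S_2\chow_0(Y)=\IQ o_Y \quad\text{and}\quad S_1\chow_0(Y)=\IQ o_Y\oplus\chow_0(Y)^-.$$
Here $S_i$ is spanned by classes of points whose rational equivalence orbit has dimension $\ge i$. The last step $S_0=\chow_0(Y)$ is automatic.

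\textbf{The piece $S_2$.} First, \Cref{intro:constant-cycle} shows that the fixed surface $Z$ is a constant cycle surface whose points represent $o_Y$. Hence $o_Y\in S_2$. Conversely, Voisin proves that for a $K3^{[2]}$-type fourfold any two points with $2$-dimensional orbits are rationally equivalent. Any point $y$ with $\dim O_y=2$ therefore has $[y]=o_Y$, because a constant cycle surface already exists. This gives $S_2=\IQ o_Y$.

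\textbf{The inclusion $\IQ o_Y\oplus\chow_0(Y)^-\subseteq S_1$.} I would use the geometry of conics, assuming Voisin's result that $S_1$ is a subgroup, or more precisely its description as the span of points lying on constant cycle curves. For a point $y\in Y$, the cycle $y+\iota y$ is $\iota$-invariant. By \Cref{intro:chow-iso} together with the expected splitting $\chow_0(Y)^+=\IQ o_Y\oplus\chow_0(Y)^+_{\hom}$, I need to know how $\chow_0(Y)^+$ decomposes. The cleanest route is through the quotient: $\chow_0(Y)^+\cong\chow_0(Y/\iota)$, and $Y/\iota$ is the EPW quartic, with a double cover onto $\IP^1\times\IP^1\times\IP^1$ or a similar structure. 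Instead I would argue directly. A point lies in $S_1$ if it lies on a rational curve or a constant cycle curve. Every $y\in Y$ should have $[y]-[\iota y]$ anti-invariant. I would show that the points $y$ with $[y]+[\iota y]=2o_Y$ sweep out a divisor, namely the points corresponding to conics meeting a fixed locus, such as conics through a point of a fixed surface in $X$. These points lie in $S_1$, and their classes are $o_Y+\tfrac12([y]-[\iota y])$. Combined with the surjectivity $\chow_0(Y)^-=\{[y]-[\iota y]\}$, and the fact that every class in $\chow_0(Y)^-$ is generated by such differences with $y$ in that divisor (via $P_*$ onto $\chow_1(X)_{\hom}$, which is spanned by differences of conics meeting a fixed surface), this yields the inclusion.

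\textbf{The reverse inclusion $S_1\subseteq\IQ o_Y\oplus\chow_0(Y)^-$.} I expect this to be the main obstacle. Following Voisin's argument for Lagrangian fibrations and for double EPW sextics, I would take a point $y$ on a constant cycle curve $C$ and show that $[y]+[\iota y]=2o_Y$. The involution $\iota$ acts by $-1$ on $H^{2,0}$. So the correspondence $\Delta+\Gamma_\iota$ kills the transcendental part, and by \Cref{intro:bv} its action on a class of $S_1$ lands in the BV-type ring. Concretely, I would use the identity $h^2\cdot(\text{curve class})$ together with the inverse map $h^2\cdot$ from \Cref{intro:chow-iso}. I would also use Voisin's result that $S_1\chow_0$ is generated by $o_Y$ and by the image of $\chow^2(Y)$-type classes intersected with a divisor. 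Anti-invariance then follows from \Cref{intro:chow-iso}, which identifies $h^2\cdot\chow^2(Y)^-_{\hom}$ with $\chow_0(Y)^-$, while $h^2\cdot\chow^2(Y)^+_{\hom}=0$ by \Cref{intro:bv}.
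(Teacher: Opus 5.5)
Your overall shape is right: identify the piece of orbit dimension $\geq 2$, then prove both inclusions for the piece of orbit dimension $\geq 1$ (your $S_1$). The first step is fine once you replace the appeal to Voisin, which is not available for arbitrary $K3^{[2]}$-type fourfolds, by \Cref{prop:nef}; that result uses the Lagrangian fibration.

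\textbf{The inclusion $\chow_0(Y)^-\subseteq S_1$.} Both inclusions for $S_1$ have genuine gaps, starting here. Nothing you write shows that the points $y$ with $[y]+[\iota y]=2o_Y$ have positive-dimensional orbits: that relation says nothing about $O_y$. Neither the existence of such a divisor nor the spanning claim for $\chow_1(X)_{\mathrm{hom}}$ is justified either. What you need is a source of constant cycle curves through enough points, i.e.\ uniruled divisors. The paper uses \cite[Thm.~1.8]{charles-mongardi-piacenza}, which gives $S_1=h\cdot\chow^3(Y)$. The inclusion is then one line from \Cref{prop:h^2}: $\chow_0(Y)^-=h^2\cdot\chow^2(Y)^-_{\mathrm{hom}}\subseteq h\cdot\chow^3(Y)$.

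\textbf{The reverse inclusion.} Your target ``show $[y]+[\iota y]=2o_Y$ for $y$ on a constant cycle curve $C$'' is the right one. Since $h\cdot[C]$ is supported on $C$, one has $[y]+[\iota y]=\frac{1}{h\cdot C}\,h\cdot(C+\iota_\ast C)$, so it suffices to prove $h\cdot\chow^3(Y)^+\subseteq\IQ o_Y$. Your justification does not give this, for three reasons.
\begin{itemize}
\item The claim that $(\Delta_Y+\Gamma_\iota)_\ast$ of an $S_1$-class lies in the Beauville--Voisin ring is exactly what must be proved, and \Cref{thm:BV} says nothing about it. In general $(\Delta_Y+\Gamma_\iota)_\ast$ maps onto $\chow_0(Y)^+$, which is strictly larger than $\IQ o_Y$ because $\iota^\ast\sigma^2=\sigma^2$.
\item Your use of $h^2\cdot\chow^2(Y)^+_{\mathrm{hom}}=0$ (last part of \Cref{thm:BV}) only handles curve classes in $h\cdot\chow^2(Y)$. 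But $C+\iota_\ast C$ is an arbitrary element of $\chow^3(Y)^+$, and $\chow^3(Y)^+_{\mathrm{hom}}=0$ is not known.
\item The ``Voisin result'' describing $S_1$ via $\chow^2$-classes times divisors is not available a priori; it is a consequence of the proposition.
\end{itemize}

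The missing idea is the quotient route you set aside, with the correct geometry from \Cref{thm:construction}: $Y/\iota\cong D_1^{\overline{A}}$ is a quartic section of the cone $C(\IP^2\times\IP^2)$, not a cover of $(\IP^1)^3$. Let $f\colon Y\to D_1^{\overline{A}}$ be the quotient and $j\colon D_1^{\overline{A}}\hookrightarrow C(\IP^2\times\IP^2)$ the inclusion. For $\alpha\in\chow^3(Y)^+$,
\[
h\cdot\alpha=\tfrac12 f^\ast f_\ast(h\cdot\alpha)=\tfrac12 f^\ast\bigl(h\cdot f_\ast\alpha\bigr)=\tfrac18 f^\ast j^\ast j_\ast f_\ast\alpha\in\IQ o_Y .
\]
The last step holds because $\chow_1(C(\IP^2\times\IP^2))=\IQ$, and the resulting fixed zero-cycle is generically defined. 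Passing to $Y/\iota$ alone would not help, since $\chow_0(Y/\iota)\cong\chow_0(Y)^+$. The point is that multiplication by $h$ factors through the ambient cone.
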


Due to the surjectivity of $H^0(Y,\Omega_Y^2)\otimes H^0(Y,\Omega_Y^2)\to H^0(Y,\Omega_Y^4)$, the Bloch--Beilinson conjecture predicts that the multiplication map $F^2\chow^2(Y)\otimes F^2\chow^2(Y)\to F^4\chow_0(Y)$ is surjective on the deepest part of the conjectural filtration. 
While the existence of the Bloch--Beilinson filtration is not known in general, this was shown for what it is expected to be in the case of Fano varieties of lines on cubic fourfolds  \cite[Thm.~20.2]{shenvial}.
We would expect that $F^2\chow^2(Y)=\chow^2(Y)^-_{\mathrm{hom}}$ in the case of double EPW quartics and double EPW sextics.
We prove it in this setting.

\begin{theorem}[\Cref{them:multiplicativity+0}, \Cref{thm:A-multiplicativity}]\label{intro:multiplicativity}
    Let $Y$ be a general double EPW quartic or a general double EPW sextic. Then the intersection pairing
    $$\chow^2(Y)^-_{\mathrm{hom}}\otimes \chow^2(Y)^-_{\mathrm{hom}}\to \chow_0(Y)^+_{\mathrm{hom}}$$
    is surjective.
\end{theorem}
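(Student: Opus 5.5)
The plan is to produce enough products of two classes in $\chow^2(Y)^-_{\mathrm{hom}}$ to span $\chow_0(Y)^+_{\mathrm{hom}}$, using the correspondences already introduced and following the strategy of \cite[Thm.~20.2]{shenvial} for Fano varieties of lines. For a double EPW quartic I would realise $\chow^2(Y)^-_{\mathrm{hom}}$ through the universal conic. By \Cref{intro:chow-iso}, every element of $\chow^2(Y)^-_{\mathrm{hom}}$ has the form $P^t_\ast\gamma$ with $\gamma\in\chow_1(X)_{\mathrm{hom}}$, and equivalently the form $P^t_\ast P_\ast z$ with $z\in\chow_0(Y)^-$.

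The first step is to reduce the target to products of points. Since $\iota$ acts on $\chow_0(Y)$ and $\chow_0(Y)^+_{\mathrm{hom}}$ is spanned by classes of the form $x+\iota x-2o_Y$, it suffices to show that each such class is a product of anti-invariant homologically trivial codimension-two cycles.

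Next, for points $x,y\in Y$, set $z_x=x-\iota x\in\chow_0(Y)^-$ and write $\Sigma_x:=P^t_\ast P_\ast(x-\iota x)\in\chow^2(Y)^-_{\mathrm{hom}}$. Geometrically, $\Sigma_x$ is the surface of conics meeting the conic $C_x$, minus the surface of conics meeting $C_{\iota x}$; these are analogues of the surfaces $S_\ell$ of lines meeting a line. I would compute $\Sigma_x\cdot\Sigma_y$ through a quadratic relation expressing $\Sigma_x^2$, or $\Sigma_x\cdot\Sigma_y$, in terms of $x$, $\iota x$, $o_Y$ and the classes $h,g$. To find this relation, I would degenerate to conics through a point, or use the $\IP^1$-bundle $F(X)\to Y$ together with the description of $Y$ as a moduli space of twisted sheaves, which gives a Voisin-type relation of the form $\Delta_\ast$-decomposition as in \cite{shenvial}.

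Concretely, I would aim for an identity
$$\Sigma_x^2 = c\,(x+\iota x) + (\text{terms in } h^2\Sigma_x,\ \text{a multiple of } o_Y),$$
with $c\neq 0$. The term $h^2\Sigma_x$ is anti-invariant, and by \Cref{intro:chow-iso} it is a scalar multiple of $x-\iota x$. Projecting the identity to the $+$-part then gives $c(x+\iota x)+c' o_Y$. Taking degrees forces $c'=-2c$, since all the other terms vanish in degree. Hence $x+\iota x-2o_Y$ lies in the image of the pairing, and surjectivity follows.

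The double EPW sextic case would follow the same template, with Gushel--Mukai fourfolds and the results of \cite{zhang24} in place of Verra fourfolds.

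The main obstacle is the quadratic relation, namely showing that the coefficient $c$ is nonzero and that the error terms are decomposable. Here I would first try to prove a relation $\Sigma_x^2\equiv c\,x + (\text{stuff with } h,g,\iota)$ universally over $Y\times Y$ via a Franchetta-type argument. The difference of the two sides is a generically defined cycle on $Y\times Y$ relative to the family, so by a Franchetta property for $Y\times Y$ it would vanish once it is cohomologically trivial. The constant $c$ would then be fixed by a cohomological computation in $K3^{[2]}$-type lattice terms.
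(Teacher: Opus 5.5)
Your skeleton agrees with the paper's proof of \Cref{them:multiplicativity+0}. You reduce to the generators $y+\iota(y)-2o_Y$, show that $(S_y-S_{\iota(y)})^2$ (your $\Sigma_y$, up to scalar) is a non-zero multiple of $y+\iota(y)$ plus a constant, and fix the constant by degree. However, the one step that carries all the content is not supplied: the quadratic identity with $c\neq 0$. The mechanism you propose for it is not available. There is no Franchetta property for $\mathcal{Y}\times_{\mathcal{U}}\mathcal{Y}$. \Cref{thm:franchetta} covers only $Y$, $X^n$ for $n\leq 3$, and $F(X)$. Since $\mathfrak{h}(Y)$ is controlled by $\mathfrak{h}(S^2)$, the argument there would need the Franchetta property for fourth powers of genus-$2$ $K3$ surfaces, which is not known (only cubes are). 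For double EPW sextics the Franchetta property is not known even for $Y$ itself, so ``the same template'' fails outright there. Your other suggestions, degenerating to conics through a point and a relation coming from the twisted-sheaf description, are not arguments as stated.

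The paper closes this step by explicit geometry on $F(X)$. \Cref{lemma:conic-intersection} gives $D_c^2=4\ell_{i+4}+\sum_{j\neq i,i+4}\ell_j$. \Cref{prop:Dc.Diota} gives $D_c\cdot D_{c'}=\Gamma+\sum_{j\neq i,i+4}\ell_j$, with $\Gamma$ generically defined, by excess intersection along the surfaces $E_{x_1}\cup E_{x_2}$ of conics through the two points of $c\cap c'$. Pushing $(D_c-D_{c'})^2\cdot h_F$ forward then expresses $(S_y-S_{\iota(y)})^2$ via \Cref{lemma:D-S} in terms of $y+\iota(y)$, the points $\alpha(\ell_j)$, and a generically defined zero-cycle on $Y$ alone, where the Franchetta property for $Y$ suffices. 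For sextics the paper uses \cite[Lem.~3.9]{zhang24}, observes that the extra $\sigma$-conic curve is contracted by $\alpha$, and replaces Franchetta by the Beauville--Voisin theorem of \cite{laterveer23}.

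Alternatively, the ``Voisin-type relation'' you want already exists as \Cref{prop:I^2}. It is proved geometrically from the self-intersection of the incidence correspondence, not via Franchetta. Evaluating it at a point gives $S_y^2=a\,\iota(y)+b\cdot S_y+\mathrm{const}$ with $b\in\gdch^2(Y)$ independent of $y$. Since $S_y+S_{\iota(y)}$ is constant by \Cref{prop:linear-relation}, the identity $(S_y-S_{\iota(y)})^2=2S_y^2+2S_{\iota(y)}^2-(S_y+S_{\iota(y)})^2$ yields $(S_y-S_{\iota(y)})^2=2a(y+\iota(y))+\mathrm{const}$. Here $a\neq 0$ by \Cref{lemma:non-zero}. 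Either way, the missing ingredient is a geometric computation, not a cohomological one on $Y\times Y$.
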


\subsection{Structure}

The paper is structured as follows. In \Cref{sec:construction} we review the three constructions of double EPW quartics, see \Cref{thm:construction}:
\begin{enumerate}[label=\alph*.]
    \item\label{a} as the base of a $\IP^1$-bundle given by the Hilbert scheme of $(1,1)$-conics on a Verra fourfold,
    \item\label{b} double covers of Lagrangian degeneracy loci, and
    \item\label{c} as moduli spaces of twisted sheaves on a $K3$ surface.
\end{enumerate}
It is meant to be referred back to whenever a different construction is used. 
In \Cref{sec:franchetta} we discuss generically defined cycles and the Franchetta property and prove that it holds for double EPW quartics. 
\Cref{sec:incidence} contains the technical details on the incidence variety relying on construction \ref{a}, which we prove satisfies relations in the Chow ring that we will use in the subsequent sections. 
In \Cref{sec:constant-cycle-surface} we show that the fixed locus is a constant cycle subvariety by computing certain intersection products.
In \Cref{sec:multiplicativity} we use similar methods to prove that the multiplication is surjective on the deepest part of the Bloch--Beilinson filtration.
Finally, we use results from \Cref{sec:incidence,sec:constant-cycle-surface} and construction \ref{b} to prove the Beauville--Voisin--Franchetta conjecture for double EPW quartics in \Cref{sec:BV}.
The Appendix~\ref{appendix} contains adaptations of the results from \Cref{sec:constant-cycle-surface,sec:multiplicativity} to double EPW sextics, for which these were not known yet.

\subsection{Conventions}
We work over the complex numbers.
A variety refers to a separated, integral scheme of finite type over $\IC$ and a subvariety to a closed, reduced, possibly reducible, subscheme of a variety, unless stated otherwise. 
A hyperkähler variety refers to a smooth, projective variety that is simply connected in the analytic topology and carries a holomorphic symplectic form that is unique up to scalars, i.e. $H^0(T,\Omega_T^2)=\IC\sigma$.
Points in varieties refer to closed points, unless otherwise specified.
Chow rings will be assumed to have rational coefficients and $\chow^\ast(T)_{\mathrm{hom}}$ always refers to homologically trivial cycles with respect to singular cohomology.
A general point in a variety refers to a point contained in a non-empty open subset, while a very general point in a variety refers to a point contained in the non-empty complement of a countable union of subvarieties. 
We refer to \cite{motives} for details on Chow motives.

A glossary of notation can be found at the end of this article.

\subsection{Acknowledgements}

The author thanks Robert Laterveer for suggesting this project and Charles Vial for invaluable conversations over the course of completing this work.
The author is funded by the Deutsche Forschungsgemeinschaft (DFG, German Research Foundation) -- Project-ID 491392403 -- TRR 358. 

\section{The construction of double EPW quartics}\label{sec:construction}

Introduced by Iliev, Kapustka, Kapustka, and Ranestad in \cite{ikkr17}, double EPW quartics $Y$ are hyperkähler fourfolds of $K3^{[2]}$-type that form a $19$-dimensional family. 
They carry a polarisation $h$ of degree $4$ with respect to the Beauville--Bogomolov form~$q$, i.e. $q(h,h)=4$, and an anti-symplectic involution $\iota$ with invariant lattice isometric to~$U(2)$.
Furthermore, they admit two Lagrangian fibrations.

They were first constructed in \cite{ikkr17}, where three different constructions were given: one via Lagrangian degeneracy loci, one via conics in Verra fourfolds and one via moduli spaces of twisted sheaves on K3 surfaces. 
We review these constructions in this section. 
While the details of the third construction will not be used in this article, we include it for future reference.
The second and third constructions can be skipped on first read and referred back to when needed.
We first summarise the results.
\begin{theorem}[\cite{ikkr17}]\label{thm:construction}
    There is a $19$-dimensional family $\mathcal{U}$ of hyperkähler varieties~$Y$ of $K3^{[2]}$-type, called double EPW quartics, that admit a polarisation of Beauville degree~$4$ and an anti-symplectic involution $\iota\colon Y\to Y$ with invariant lattice $H^2(Y,\IZ)^\iota$ isomorphic to $U(2)$.
    Each double EPW quartic admits two Lagrangian fibrations $Y\to\IP^2$.
    The general double EPW quartic $Y$ admits three distinct realisations:
    \begin{enumerate}[label=\alph*.]
        \item as the base of a $\IP^1$-bundle $\alpha\colon F(X)\to Y$, where $F(X)$ is the Hilbert scheme of $(1,1)$-conics in a Verra fourfold $X=Q\cap C(\IP^2\times\IP^2)$, where $Q$ is a general quadric hypersurface. There exists a unique such Verra fourfold associated to each double EPW quartic.
        \item as the moduli space of stable sheaves twisted by a $2$-torsion Brauer class on a polarised $K3$ surface of genus $2$. The general double EPW quartic admits exactly two such associated $K3$ surfaces.
        \item\label{c_2} as a double cover $Y\to D_1^{\overline{A}}$ of a Lagrangian degeneracy locus associated to a unique Lagrangian space $A$. The base $D_1^{\overline{A}}\subset C(\IP^2\times\IP^2)$ is a quartic section in a cone. The projections induce the two Lagrangian fibrations $Y\to\IP^2$.
    \end{enumerate}
    Only construction \ref{c_2} applies to all double EPW quartics.
\end{theorem}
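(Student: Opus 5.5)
This theorem is a summary of results from \cite{ikkr17}, so the plan is to assemble it from that source, reproving only the links between the three constructions. I would start from the Lagrangian degeneracy construction \ref{c_2}, since it covers the whole family. Fix the $\mathbb{Z}/2$-graded data: a $12$-dimensional symplectic space $V$ together with a Lagrangian subbundle $F$ over the cone $C(\IP^2\times\IP^2)$, built from $\wedge^2$ of the tautological bundles. For a general Lagrangian $A\subset V$ in a suitable open subset of the Lagrangian Grassmannian, the degeneracy loci $D_k^{\overline{A}}=\{x: \dim(A\cap F_x)\ge k\}$ have their expected codimensions. In particular, $D_1^{\overline{A}}$ is a quartic section of the cone, singular along $D_2^{\overline{A}}$, and $D_3^{\overline{A}}$ is empty away from the vertex.

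Following the O'Grady and EPW method, I would construct the double cover $Y\to D_1^{\overline{A}}$ branched along $D_2^{\overline{A}}$ via a sheaf of algebras $\mathcal{O}\oplus\xi$ with $\xi^{\otimes2}\to\mathcal{O}$ coming from the symmetric map $A\cap F\to (A\cap F)^\vee$. Smoothness of $Y$ then follows from a local computation near the surface $D_2^{\overline{A}}$, using transversality. The holomorphic symplectic form is obtained as a limit of Mukai forms via a family argument, or by comparing $Y$ with construction (c). The covering involution is $\iota$, and it is anti-symplectic because the quotient is not symplectic. The two projections of the cone to $\IP^2$ compose with $Y\to D_1^{\overline{A}}$ to give fibrations whose general fibres are double covers of quadric surface sections, hence abelian surfaces. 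These fibres are Lagrangian since they are of half dimension with vanishing restriction of $\sigma$, which holds because $h^{2,0}$ is pulled back trivially.

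Next I would identify the invariant lattice. Pullbacks of the two hyperplane classes $h_1,h_2$ from $\IP^2\times\IP^2$ span $H^2(Y)^\iota\otimes\IQ$, since the rank must be $2$ by the anti-symplectic hypothesis and a count of moduli: $\dim$ of the family equals $21-2=19$. One then computes $q(h_i,h_i)=0$ (fibration classes) and $q(h_1,h_2)=2$ via the Fujiki relation $h_1^2h_2^2=3q(h_1,h_2)^2$ together with $\deg$ on the cover. This gives $U(2)$ and $h=h_1+h_2$ with $q(h)=4$. The $K3^{[2]}$-type assertion I would obtain from construction (b): for general $A$, $Y$ is deformation equivalent to a moduli space of twisted sheaves.

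For construction (a), take the Verra fourfold $X=Q\cap C(\IP^2\times\IP^2)$. A $(1,1)$-conic spans a plane meeting $\IP^2\times\IP^2$ in a conic of that type, and $Q$ cuts out a conic in that plane. The variety of such planes is parametrised over $C(\IP^2\times\IP^2)$, and the conics contained in $X$ occur exactly where the restricted quadric degenerates along a pencil. This identifies the Stein factorisation of $F(X)\to D_1^{\overline{A}}$ with $Y$ and exhibits $F(X)\to Y$ as a conic bundle that is in fact a $\IP^1$-bundle. For (b), the structure of the Lagrangian fibration, relative to a $K3$ of genus $2$ as a discriminant double cover of $\IP^2$, gives a $2$-torsion Brauer class, and the Mukai--Căldăraru theory identifies $Y$ with a twisted moduli space, with one $K3$ for each of the two fibrations. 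Uniqueness of $X$ and $A$ follows from reconstructing them from $(Y,h,\iota)$ via the linear system $|h|$. The closing statement that only (c) applies to all quartics reflects the failure of genericity of $X$ or of the $K3$ along special divisors.

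The main obstacle is the smoothness and symplectic structure of the double cover, together with the precise link between conics and the degeneracy locus. Here I would rely fully on \cite{ikkr17} rather than reprove them.
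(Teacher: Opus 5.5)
The paper gives no independent proof of this statement. It is a summary of \cite{ikkr17} (with \cite{CamereKapustkaMongardi2018} for the twisted-sheaf model), and \Cref{sec:construction} only reviews the constructions. Deferring to the source is therefore also the paper's approach. However, several of the links you propose to reprove are wrong as written.

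The most serious error is in the Lagrangian fibrations: their fibres are not double covers of quadric surfaces. The fibre of the projection $C(\IP^2\times\IP^2)\dashrightarrow\IP^2$ over a point closes up to a $\IP^3$ through the vertex, and the quartic section $D_1^{\overline{A}}$ cuts it in a quartic surface. By \cite[Props.~3.8 and 3.10]{ikkr17} this is a Kummer quartic whose $16$ nodes are the points of $D_2^{\overline{A}}$ lying in that $\IP^3$. The fibre of $Y$ is its double cover branched at the nodes, i.e.\ an abelian surface. A double cover of a smooth quadric surface is never abelian: trivial canonical class forces the branch curve to be a $(4,4)$-curve, and then $\chi(\Ox)=2$. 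You have conflated this with the quadric-surface fibrations $\pi_i\colon X\to\IP^2$ of the Verra fourfold. The Kummer structure is exactly what the paper uses later, in \Cref{proposition:D^2}.

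Your Fujiki relation also has the wrong constant. Polarising $\int\alpha^4=3q(\alpha)^2$ gives $\int\alpha^2\beta^2=q(\alpha)q(\beta)+2q(\alpha,\beta)^2$, hence $\int_Y h_1^2h_2^2=2q(h_1,h_2)^2$. On the cone, $h_1^2h_2^2$ is represented by a line through the vertex, which meets $D_1^{\overline{A}}$ in $4$ points, so $\int_Y h_1^2h_2^2=8$. This gives $q(h_1,h_2)=2$, $q(h)=4$ and $h^4=48$, whereas your constant would give $q(h_1,h_2)^2=8/3$.

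The set-up is also misremembered in a way that hides what makes these varieties special. The symplectic space is $\wedge^3V_6$, of dimension $20$, and $A$ is not a general Lagrangian. It must satisfy $\IP(A)\cap\Gr(3,V_6)=\{[U_1]\}$ (and $\dim(A\cap T_U)\le 1$ for all $U$), which is a divisorial condition; for general $A$ the intersection is empty and one gets double EPW sextics instead. The point $[U_1]$ is what produces the cone $C_{U_1}$ and the reduction to $9$-dimensional Lagrangians $\overline{A}$, $\overline{T}_U$ in the $18$-dimensional space $(\wedge^3U_1)^\perp/\wedge^3U_1$.

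For the invariant lattice, the rank bound needs the independent fact that the family has $19$ moduli; as written, ``$21-2=19$'' is circular. Even then, $H^2(Y,\IQ)^\iota=\langle h_1,h_2\rangle\otimes\IQ$ plus the Gram matrix does not give $H^2(Y,\IZ)^\iota\cong U(2)$. You still have to show that $\langle h_1,h_2\rangle$ is saturated, ruling out the even overlattices $\langle h_1/2,h_2\rangle\cong U$ and $\langle h_1,h_2/2\rangle\cong U$.

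Finally, ``comparing $Y$ with construction (c)'' is circular, since (c) is where you start. If you take $K3^{[2]}$-type from (b), you need the derived-category identification of \cite{CamereKapustkaMongardi2018}. The lattice-theoretic identification in \cite[Sec.~4]{ikkr17} presupposes $K3^{[2]}$-type, which the source obtains from an $18$-dimensional locus of $\Sigma$ where $Y_A$ is birational to $S^{[2]}$.
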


\subsection{Double EPW quartics via conics in Verra fourfolds}\label{sec:construction-verra}

We begin by reviewing the construction of double EPW quartics via conics in Verra fourfolds, as this is the construction we will be using most, in particular in \Cref{sec:incidence,sec:constant-cycle-surface,sec:multiplicativity}.
Proofs for all statements in this section can be found in \cite[Sec. 3]{ikkr17}.

Let $U_1$ and $U_2$ be three-dimensional complex vector spaces and fix volume forms on~$U_1$ and $U_2^\vee$, such that $\w^2U_1=U_1^\vee$ and $\w^2U_2^\vee=U_2$.
Denote the projective cone over the Segre embedding of $\IP(U_1)\times\IP(\w^2U_2)$ by $C(\IP(U_1)\times \IP(\w^2U_2))$.
Now, define a \emph{Verra fourfold} $X$ as the intersection
$$X\coloneq Q\cap C(\IP(U_1)\times\IP(\w^2 U_2))\subset \IP(\IC\oplus U_1\otimes \w^2U_2)$$
of the cone with a quadric hypersurface $Q$. 
The general Verra fourfold is a smooth Fano variety, see \cite[Lem. 2.3]{laterveer-verra}, and a double cover $\pi\colon X\to\IP(U_1)\times\IP(\w^2U_2)$ branched along a $(2,2)$-divisor.
Denote the composition of $\pi$ with the projections $\pr_i$ by $\pi_i$.

Consider the Hilbert scheme of $(1,1)$-conics on $X$,
$$F(X)\coloneq\left\{c\subset X~|~c\text{ is a conic with }\pi_i(c)=L_i\text{ for some lines }L_i\text{ and }i=1,2\right\}.$$
For the rest of this article, unless otherwise stated, any conic in $X$ will be a $(1,1)$-conic. 
We describe how to construct a double EPW quartic from $F(X)$.

We have that 
$$I_{X,2}\coloneq H^0(X,\mathcal{I}_X(2))\cong \IC\oplus U_1\otimes \w^2U_2,$$
which we think of as the space of quadrics in $\IP(\IC\oplus U_1\otimes\w^2U_2)$ containing $X$. 
Define the following morphism
$$\psi\colon F(X)\to\IP(\IC\oplus \w^2U_1\otimes U_2),~c\mapsto H_c\coloneq\left\{\mathfrak{Q}\in I_{X,2}~|~X\cup\langle c\rangle \subset \mathfrak{Q}\right\},$$
where $\langle c\rangle $ is the plane spanned by $c$. 
Notice that this is well-defined, as $H_c$ is a hyperplane in $I_{X,2}$, meaning it defines a point in the dual projective space $\IP(I_{X,2}^\vee)=\IP(\IC\oplus \w^2U_1\otimes U_2)$.

Any $c\in F(X)$ is contained in 
$$c\subset D_{(L,M)}\coloneq \pi^{-1}(L\times M)$$
for the two lines $L$ and $M$ with $\pi_1(c)=L$ and $\pi_2(c)=M$. 
As a double cover of $\IP^1\times\IP^1$ branched along a $(2,2)$-divisor, this is a degree 4 del Pezzo surface, and we have 
$$D_{(L,M)}\subset \IP(\IC\oplus L\otimes M)\subset\IP(\IC\oplus U_1\otimes \w^2U_2),$$
by abuse of notation, where we also denote the two-dimensional subspaces of $U_1$ and~$\w^2U_2$ defining $L$ and $M$ by the same letter. 
As $c\subset D_{(L,M)}$, there is a unique quadric threefold~$\tilde{Q}$ containing $D_{(L,M)}$ and $\langle c\rangle$, i.e.
$$D_{(L,M)}\cup \langle c\rangle \subset \tilde{Q}\subset \IP(\IC\oplus L\otimes M).$$
Any $\tilde{Q}$ obtained in this manner can only be of rank 3 or 4, with the one defined by the general conic being of rank $4$.
There are two pencils of planes on quadric threefolds of rank $4$, which coincide for those of rank $3$. 
Intersecting any plane contained in $\tilde{Q}$ with~$D_{(L,M)}$ defines a $(1,1)$-conic in $X$. 
The morphism $\psi$ contracts exactly the conics obtained from the same $\tilde{Q}$. 
The involution on $X$ restricts to an involution on~$D_{(L,M)}$, where it interchanges the two pencils of conics obtained from the two pencils of planes in $\tilde{Q}$. 
Thus, the general fibre of $\psi$ consists of two copies of $\IP^1$, which are interchanged by the involution. 
Denote the image of $\psi$ by $D$, which is a singular Calabi--Yau fourfold and a quartic hypersurface in $C(\IP(\w^2 U_1\otimes \IP(U_2)))\subset\IP(\IC\oplus \w^2 U_1\otimes U_2)$.
Then, $\psi\colon F(X)\to D$ factors through a $\IP^1$-bundle
$$\alpha\colon F(X)\to Y,$$
where $Y$ is a double EPW quartic and a double cover of $D$. 
The double EPW quartic is a hyperkähler fourfold and is of $K3^{[2]}$-type.
The family of Verra fourfolds is $19$-dimensional, and so is the family of double EPW quartics.
Interchanging the pencils of conics contained in a $\tilde{Q}$ defines an anti-symplectic involution $\iota$ on $Y$. 
For more details on $D$, see \Cref{sec:construction-lagrangian}, where it is denoted $D_1^{\overline{A}}$.

\subsection{Double EPW quartics via Lagrangian degeneracy loci and their two Lagrangian fibrations}\label{sec:construction-lagrangian}

In this section we review the construction of double EPW quartics via Lagrangian degeneracy loci, following \cite[Sec. 2]{ikkr17}.
The existence of the Lagrangian fibrations and the fact that $Y$ is a double cover of a quartic section in a cone will be crucial to the proof of the Beauville--Voisin--Franchetta conjecture in \Cref{sec:BV}.
While we will not use the details of the construction, we present a summary for the sake of completeness.

Let $V$ be a $6$-dimensional complex vector space and fix a volume form on it, which induces a symplectic form on $\w^3 V$ via the wedge product. 
Denote the space of Lagrangian subspaces of $\w^3 V$ by $\mathrm{LG}(10, \w^3 V)$ and define the Lagrangian subspace $T_U=\w^2 U\w V$ for any $U\in\Gr(3,V)$. Let
\begin{gather*}
    \Sigma'\coloneq \left\{A\in\mathrm{LG}(10,\w^3 V)~|~\IP(A)\cap \Gr(3, V)=\{U\}\text{ for some }U\in\Gr(3,V)\right\},\\
    \Sigma''\coloneq \left\{A\in\mathrm{LG}(10,\w^3 V)~|~\dim(A\cap T_U)\leq 1\text{ for all }U\in\Gr(3,V)\right\},\\
    \Sigma\coloneq \Sigma'\cap\Sigma''.
\end{gather*}
This is a $19$-dimensional family. Consider $A\in\Sigma$ and let $U_1$ be the unique element in~$\IP(A)\cap \Gr(3,V)$. 
Define
$$C_{U_1}\coloneq\IP(T_U)\cap \Gr(3,V)$$
and denote the following quotients by $\overline{A}\coloneq A/\wedge^3 U_1$ and $\overline{T}_U\coloneq T_U/\wedge^3 U_1$. Finally, define
$$D_k^{\overline{A}}\coloneq \left\{[U]\in C_{U_1}~|~\dim(\overline{T}_U\cap \overline{A})\geq k\right\}.$$
It is proven in \cite[Sec. 2]{ikkr17} that $C_{U_1}$ is a cone with vertex $[U_1]$ over the Segre embedding of $\IP(\w^2 U_1)\times\IP(V/U_1)$ and that $D_1^{\overline{A}}$ is a quartic hypersurface in $C_{U_1}$, singular along the surface $D_2^{\overline{A}}$.
Furthermore, it is proven that $D_1^{\overline{A}}$ is integral. 

A natural desingularisation of $D_1^{\overline{A}}$ can be constructed as 
$$\tilde{D}_1^{\overline{A}}\coloneq \left\{([U],[\omega])\in C_{U_1}\times \Gr(1,\overline{A})~|~\omega\subset \overline{T}_U\right\}.$$
Consider also 
$$\tilde{\mathbb{D}}_1^{\overline{A}}\coloneq\left\{(L,\omega)\in\mathrm{LG}(9,(\w^3 U_1)^\perp /(\w^3 U_1))\times \Gr(1,\overline{A})~|~\omega\subset L\right\}$$
and the embedding
$$g\colon C_{U_1}\to\mathrm{LG}(9,(\w^3 U_1)^\perp/(\w^3 U_1)),~U\mapsto \overline{T}_U.$$
Then, we obtain the following commutative diagram. 
\begin{equation*}
    \xymatrix{
    \tilde{D}_1^{\overline{A}} \ar[r]^\phi\ar[d]
    &D_1^{\overline{A}}\ar[r]\ar[d]
    &C_{U_1}\ar[d]^g\\
    \tilde{\mathbb{D}}_1^{\overline{A}}\ar[r]
    &\mathbb{D}_1^{\overline{A}} \ar[r]
    &\mathrm{LG}\bigl(9,(\wedge^3 U_1)^\perp/(\wedge^3 U_1)\bigr)\\
    }
\end{equation*}
It is then shown in \cite{ikkr17} that the exceptional divisor $E$ in $\tilde{D}_1^{\overline{A}}$ is even, meaning that there is a double cover $\tilde{f}\colon \tilde{Y}_A\to \tilde{D}_1^{\overline{A}}$ branched along $E$. 
Furthermore, it is proven that~$\tilde{f}^{-1}(E)$ is contracted by a birational morphism induced by some multiple of $\tilde{f}^\ast\phi^\ast H$, where $H$ is the hyperplane class on $\mathrm{LG}(9,(\wedge^3 U_1)^\perp/(\wedge^3 U_1))$ pulled back to $D_1^{\overline{A}}$.
The image of this morphism is smooth and induces a double cover 
$$Y_A\to D_1^{\overline{A}},$$
branched along $D_2^{\overline{A}}$.
This $Y_A$ is a double EPW quartic.
It is shown in \cite{ikkr17} that it has trivial first Chern class and that there is an $18$-dimensional subset of $\Sigma$ for which~$Y_A$ is birational to the Hilbert scheme of two points on a $K3$ surface, meaning that it is a hyperkähler fourfold of $K3^{[2]}$-type. 

We see again that the family of double EPW quartics is $19$-dimensional and that they are double covers of singular quartic hypersurfaces $D_1^{\overline{A}}$ in a cone $C(\IP^2\times\IP^2)$. 
In particular, the variety $D$ in \Cref{sec:construction-verra} is exactly $D_1^{\overline{A}}$ for a unique $A\in\Sigma$ associated to the Verra fourfold.

Finally, the compositions of the double cover $Y\to D_1^{\overline{A}}$ with the two projections of~$C_{U_1}$ onto $\IP^2$ define two Lagrangian fibrations on $Y$.

\subsection{Double EPW quartics as moduli spaces of twisted sheaves on K3 surfaces}\label{sec:construction-moduli}

In this section we review the construction of double EPW quartics as moduli spaces of twisted sheaves on K3 surfaces.
While we will not use it in this article, we describe the construction for the sake of completeness.

Double EPW quartics were first realised as moduli spaces of twisted sheaves on $K3$ surfaces in \cite[Sec. 4]{ikkr17} via lattice-theoretic methods by constructing an embedding of $H^2(Y,\IZ)$ into the Hodge structure associated to the twisted $K3$ surface and finding a vector with square $2$ in the orthogonal complement.
Here, we follow the more geometric approach of \cite[Sec. 5]{CamereKapustkaMongardi2018}.

Let $X=Q\cap C(\IP^2\times\IP^2)$ be a general Verra fourfold.
The two projections $\pi_i\colon X\to~\IP^2$ are quadric fibrations, whose discriminant loci are smooth sextic curves $C_i\subset \IP^2$. 
Let~$S_i$ be the double covers of $\IP^2$ branched along $C_i$. 
The quadric bundles $\pi_i\colon X\to\IP^2$ induce~$\IP^1$-fibrations on $S_i$, defining two Brauer classes $\beta_i\in\mathrm{Br}(S_i)$ of order $2$.
We work with $\pi_1$ and $S_1$, as the other case is analogous. For the remainder of this section, we denote $\Ox_X(a,b)=\pi_1^\ast\Ox_{\IP^2}(a,b)$.

We know from \cite[Thm. 4.2]{kuznetsov} that there is a semi-orthogonal decomposition
$$D^b(X)=\left\langle\Phi(D^b(\IP^2,\mathcal{B}_0)),\Ox_X(-1,0),\Ox_X,\Ox_X(1,0),\Ox_X(0,1),\Ox_X(1,1),\Ox_X(2,1)\right\rangle,$$
where $\Phi\colon D^b(\IP^2,\mathcal{B}_0)\to D^b(X)$ is a fully faithful functor and $\mathcal{B}_0$ is the sheaf of even parts of the Clifford algebra associated to $\pi_1$, as in \cite{kuznetsov}.
Furthermore, it is shown in \cite[Lem. 4.2]{kuznetsov2} that the double cover $f\colon S_1\to\IP^2$ induces an equivalence
$$f_\ast\colon D^b(S_1,\beta_1)\to D^b(\IP^2,\mathcal{B}_0).$$
Let $\Gamma$ be the inverse functor and let $\pr\colon D^b(X)\to \Phi(D^b(\IP^2,\mathcal{B}_0))$ be the projection functor. 

Consider the map 
$$F(X)\to\mathcal{M}_{(0,H,0)}(S_1,\beta_1),$$
where $H$ is the polarisation on $S_1$, given by the composition of $\Gamma$ with 
$$c\mapsto \pr(\mathcal{I}_{c/X}(1)).$$
It is shown in \cite[Thm. 5.1]{CamereKapustkaMongardi2018} that this map is well-defined and it factors through~$\alpha\colon F(X)\to Y$ and induces an isomorphism
$$Y\cong \mathcal{M}_{(0,H,0)}(S_1,\beta_1).$$
Furthermore, this can be done with both $K3$ surfaces $S_1$ and $S_2$ associated to $X$, and hence we also have $Y\cong \mathcal{M}_{(0,H',0)}(S_2,\beta_2)$.

\section{The Franchetta property}\label{sec:franchetta}
In this section we discuss the Franchetta property and prove that it holds for the family of double EPW quartics and cubes of Verra fourfolds.
\begin{definition}\label{def:gen-defined}
    Let $\mathcal{X}\to B$ be a family of smooth varieties over a smooth base $B$. Let~$b\in B$ be a point. 
    Following \cite{flv21}, we say that a cycle $\gamma\in\chow^\ast(\mathcal{X}_b)$ is \emph{generically defined} if it lies in the image of the pullback map from $\mathcal{X}$, i.e.
    $$\gamma\in\im\left(\chow^\ast(\mathcal{X})\to\chow^\ast(\mathcal{X}_b)\right).$$
    We denote the subring of generically defined cycles over $B$ by $\gdch_B^\ast(\mathcal{X}_b)$.
\end{definition}
Notice that the definition of generically defined cycles depends on the family $\mathcal{X}\to B$. 
It will usually be clear over which base we are working, so we will often omit the subscript~$B$.
\begin{definition}\label{def:franchetta-property}
    Let $\mathcal{X}\to B$ be a family of smooth varieties over a smooth base $B$.
    We say that the family $\mathcal{X}\to B$ satisfies the Franchetta property if for any closed point, or equivalently for the very general point, $b\in B$, we have that the cycle class map restricted to the subring of generically defined cycles,
    $$\mathrm{cl}\colon\gdch_B^\ast(\mathcal{X}_b)\to H^\ast(\mathcal{X}_b,\IQ),$$
    is injective.
\end{definition}

Beauville and Voisin proved in \cite{BV} the existence of a special zero-cycle on a $K3$ surface $S$ satisfying $D\cdot D'=\deg(D\cdot D') o_S$ for any two divisors $D,D'$ and that $c_2(S)=~24 o_S$.
The class of any point lying on a rational curve in $S$ is $o_S$.
O'Grady noticed that this implies that the Beauville--Voisin class $o_S$ is generically defined and asked in \cite{og2} whether the Franchetta property holds for the family of $K3$ surfaces. 
This was extended to hyperkähler varieties in \cite{flv19}, and it was conjectured in \cite{bvf} that the Generalised Franchetta conjecture holds.

\begin{conjecture}[\cite{bvf}]\label{conj:franchetta}
Let $\mathcal{F}$ be the moduli stack of a locally complete family of polarised hyperkähler varieties and let $\mathcal{X}\to\mathcal{F}$ be the universal family. 
\begin{itemize}[label={}]
\item (Franchetta Conjecture): The family $\mathcal{X}\to\mathcal{F}$ satisfies the Franchetta property.
\item (Generalised Franchetta Conjecture): The family $\mathcal{X}^{n/\mathcal{F}}\to\mathcal{F}$ satisfies the Franchetta property for any $n \geq 1$.
\end{itemize}
\end{conjecture}
Since we work exclusively with varieties, we refer to \cite{flv19} for more details on the fact that the conjecture is stated in terms of stacks. 
\begin{remark}
    Unfortunately, what we call the Franchetta conjecture above is called the generalised Franchetta conjecture in \cite{og2}, \cite{pavic-shen-yin}, \cite{flv19}, and \cite{flv21}. 
    We have decided to follow the convention used in \cite{bvf} and distinguish between the two, as there is less evidence for what we call the generalised Franchetta conjecture and because it interacts differently with conjectures such as the Beauville--Voisin conjecture, see \Cref{sec:BV}.
\end{remark}
The Franchetta property was first proved for the family of $K3$ surfaces of genera~$g\leq~10$ and $g=12,13,16,18,20$ in \cite{pavic-shen-yin}, and more recently for $g=14$ \cite{fu-laterveer} and $g=11$ \cite{lu2025ogradysgeneralizedfranchettaconjecture}.
This has been extended to squares of $K3$ surfaces of genus $g\leq 10$ or~$g=12$, cubes of $K3$ surfaces of genus $2,3,4,5$, and fifth powers of quartic $K3$ surfaces in \cite{flv19}.
Furthermore, the Franchetta property is known for the Fano variety of lines on cubic fourfolds and its self-product \cite{flv19}, fourth powers of cubic fourfolds, and LLSS eightfolds \cite{flv21}. 
We can now add double EPW quartics and Verra fourfolds and their Hilbert schemes of $(1,1)$-conics to this list.

\begin{theorem}\label{thm:franchetta}
    Let $\mathcal{Y}\to \mathcal{U}$ be the family of double EPW quartics. It satisfies the Franchetta property, meaning that for any point $b\in\mathcal{U}$, the cycle class map restricted to generically defined cycles,
    $$\mathrm{cl}\colon\gdch^\ast(\mathcal{Y}_b)\to H^\ast(\mathcal{Y}_b,\IQ),$$
    is injective.
    The universal family of self-products of Verra fourfolds $\mathcal{X}^{n/\mathcal{U}}\to~\mathcal{U}$ also satisfies the Franchetta property for $n\leq 3$.
    So does the family of Hilbert schemes of~$(1,1)$-conics $F(\mathcal{X})\to\mathcal{U}$.
\end{theorem}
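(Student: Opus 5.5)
The plan is the standard Franchetta argument of \cite{flv19,flv21}, carried out in three steps.

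First, I would work with the family of Verra fourfolds themselves. Let $\bar B\coloneq\IP H^0(C,\Ox_C(2))$ be the projective space of quadric sections of the fixed cone $C\coloneq C(\IP(U_1)\times\IP(\w^2U_2))$. Let $B\subset\bar B$ be the open subset parametrising smooth $X$ for which the constructions of \Cref{thm:construction} work. Let $\bar{\mathcal X}\to\bar B$ be the universal quadric section, so that $\mathcal{X}\to B$ is its restriction. Since $\Ox_C(2)$ is globally generated, the fibres of
$$\bar{\mathcal X}^{n/\bar B}\to C^n$$
over a point $(x_1,\dots,x_n)$ are linear subspaces of $\bar B$. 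Their codimension is the number of conditions imposed on quadrics by the points $x_i$.

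For $n\le 3$ I would stratify $C^n$ by coincidences among the $x_i$. Since $C$ is projectively normal, one could also stratify by collinearity and by whether points are the vertex. Over each stratum the map is a projective bundle. The stratification property of \cite[Prop.~2.6]{flv21} then shows that $\chow^\ast(\bar{\mathcal X}^{n/\bar B})$ is generated by pullbacks from $C^n$ together with the relative hyperplane class and classes of diagonals.

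Restricting to a fibre $X^n$, every generically defined cycle is therefore a polynomial in the following classes:
\begin{itemize}[label={}]
\item restrictions of cycles of $C^n$, which are generated by the pullbacks of the hyperplane classes $h_1,h_2,H$ (with $C$ singular only at the vertex, one passes to a resolution or treats the vertex via the stratification);
\item the big diagonals.
\end{itemize}
To check that this subring injects into cohomology for $n\le3$, I would use that $X$ has a Chow--Künneth decomposition with $\chow^\ast(X)$ algebraic and trivial except for a $K3$-type part in degree $4$. Then, exactly as for cubic fourfolds in \cite{flv21}, I would verify by explicit diagonal relations that the relevant tautological ring injects. This is the place where the restriction $n\le 3$ enters.

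Second, for $F(\mathcal X)$, I would write the relative Hilbert scheme of conics via the universal conic $\mathcal P\subset F(\mathcal X)\times_B\mathcal X$. Alternatively, I would describe $F(X)$ as a tower over the Grassmannians of lines,
$$F(X)\to \Gr(2,U_1)\times\Gr(2,\w^2U_2),$$
whose fibres are conics in the degree $4$ del Pezzo surfaces $D_{(L,M)}$. Globally over $B$ this is again a projective-bundle-type incidence over a fixed variety, namely conics in $C$ with given pair of lines. I would then apply the same stratification argument to the total space. The upshot is that $\gdch^\ast(F(X))$ is generated by tautological classes: the pullbacks of $\sigma$-classes from the two Grassmannians and Chern classes of the tautological bundles. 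Their injectivity into cohomology reduces to the corresponding statement on $X^{3}$ through the correspondence $P$. Indeed, a conic is determined by three points, so one can pull back via the rational map $X^3\dashrightarrow F(X)$ and resolve it. Alternatively, one uses that $P^t_\ast P_\ast$ is controlled, and that the $K3$-type motive of $F(X)$ is cut out from $\mathfrak h(X)^{\otimes 2}$ as in the Fano-of-lines case.

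Third, for $\mathcal Y$, I would use the $\IP^1$-bundle $\alpha\colon F(\mathcal X)\to\mathcal Y$, which is defined over $B$. A generically defined cycle $\gamma$ on $Y$ satisfies
$$\gamma=\alpha_\ast(\alpha^\ast\gamma\cdot\xi)$$
for a generically defined relative hyperplane class $\xi$ of degree $1$ on fibres. Here $\xi$ can be taken as the restriction of $h_1$ pulled back along the universal conic, which has degree $2$, possibly twisted by a pulled-back class. Hence injectivity for $Y$ follows from injectivity for $F(X)$. Finally, $\mathcal U$ and $B$ differ by the quotient by $\mathrm{GL}(U_1)\times\mathrm{GL}(U_2)$ and by an open restriction, and neither change affects the Franchetta property.

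The main obstacle I expect is the case $n=3$ for $X^3$, together with the transfer to $F(X)$. Here the tautological ring must be shown to inject using explicit relations among the diagonals and $h_1,h_2$, and the singular vertex of the cone complicates the stratification. I would first try to rerun the cubic fourfold computations of \cite{flv21} with the $K3$-type motive of $X$.
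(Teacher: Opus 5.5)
\textbf{There is a genuine gap.} It sits in your Step 2, and it also undermines Step 3, which only transports the result from $F(X)$ to $Y$.

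You reduce $\gdch^\ast(F(X))$ to $X^3$ because ``a conic is determined by three points''. That has no geometric basis here. $F(X)$ is $5$-dimensional, so triples of points lying on a common $(1,1)$-conic form a locus of dimension at most $5+3=8$ in the $12$-dimensional $X^3$. Likewise, pairs of points on a common conic form a locus of dimension at most $7<8=\dim X^2$. Hence there is no rational map $X^3\dashrightarrow F(X)$. Nor is there a Galkin--Shinder/Voisin-type geometry, in which two general points determine a curve, that would cut $\mathfrak h(F(X))$ or $\mathfrak h(Y)$ out of $\mathfrak h(X)^{\otimes 2}$ as for lines on cubic fourfolds.

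Independently, Step 1 is only an outline. The stratification itself is fine: three distinct points, even collinear ones, impose independent conditions on $H^0(C,\Ox_C(2))$, so only diagonal strata occur for $n\le 3$. This identifies $\gdch^\ast(X^n)$ with a tautological ring. But the injectivity of that ring into cohomology is the entire content of the statement, and you name no relation (for instance a small-diagonal relation for $X$) that would give it.

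A minor point: your $\xi$ in Step 3 is not a divisor, since $p_\ast q^\ast$ of a divisor is a multiple of $[F(X)]$. A Pl\"ucker hyperplane class works instead.

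\textbf{The missing idea} is to route everything through the genus-$2$ K3 surface, where the hard injectivity is already known. This is what the paper does.
\begin{itemize}
\item For $Y$: the paper realises $Y$ in families as the moduli space of twisted sheaves $\mathcal M_{(0,H,0)}(S,\beta)$. This gives a dominant rational map $\mathcal F_4\dashrightarrow\mathcal U$. By \cite[Thm.~1.5]{flv21}, $\mathfrak h(Y)$ is a direct summand of $\bigoplus_i\mathfrak h(S^2)(l_i)$. The Franchetta property for $\mathcal S\times_{\mathcal F_4}\mathcal S$ is \cite[Thm.~1.5]{flv19}, and \cite[Rmk.~2.6]{flv19} descends it to $\mathcal U$.
\item For $X^n$ with $n\le 3$: the paper uses the family version of $\mathfrak h(X)\cong\mathfrak h(S)(1)\oplus\bigoplus_{i=0}^4\mathds{1}(i)$, together with the Franchetta property for $S^3$.
\item For $F(X)$: the implication between $Y$ and $F(X)$ runs opposite to yours. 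Franchetta for $F(\mathcal X)$ is deduced from Franchetta for $\mathcal Y$ by the projective bundle formula.
\end{itemize}
Your Step 3 is a correct implication, but it needs a valid input for $F(X)$, which your argument does not supply.
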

\begin{proof}
    The construction of double EPW quartics via moduli of stable objects, see \Cref{sec:construction-moduli}, can be done in families, yielding a dominant rational morphism $\mathcal{F}_4\dashrightarrow \mathcal{U}$ from the moduli space of genus-$2$ $K3$ surfaces $\mathcal{F}_4$. 
    We know from the construction, see \cite[Rmk. 4.3]{ikkr17} for more details, that this is generically finite of degree $2$. 
    Then, \cite[Thm. 1.5]{flv21} describes the Chow motive of moduli spaces of sheaves on a $K3$ surface as a direct summand of $\bigoplus_{i=1}^r\mathfrak{h}(S^2)(l_i)$, where $r\in\IN$ and $l_i\in\IZ$.
    We refer to \cite{flv21} for notation.
    Then, it is shown in \cite[Thm. 1.5]{flv19} that the family $\mathcal{S}\times_{\mathcal{F}_4}\mathcal{S}\to\mathcal{F}_4$ of self-products of $K3$ surfaces of genus $2$ satisfies the Franchetta property. 
    If the pullback of a family along a dominant morphism satisfies the Franchetta property, so does the original family, see \cite[Rmk. 2.6]{flv19}, completing the proof of the first statement.

    The argument for the second statement is similar. It is known from \cite{laterveer-verra}, see also \Cref{rmk:verra-fix}, that there is an isomorphism of Chow motives $\mathfrak{h}(X)\cong\mathfrak{h}(S)(1)\oplus\bigoplus_{i=0}^4\mathds{1}(i)$, where $S$ is a $K3$ surface associated to the double EPW quartic. 
    This can be done in families and hence it follows from \cite[Thm. 1.5]{flv19} that the family of triple products of genus-$2$ $K3$ surfaces satisfies the Franchetta property. 

    We know from \Cref{thm:construction} that $F(X)$ is a $\IP^1$-bundle over the double EPW quartic~$Y$.
    The projective bundle formula for motives and the Franchetta property for $\mathcal{Y}\to\mathcal{U}$ then imply the Franchetta property for $F(\mathcal{X})\to\mathcal{U}$.
\end{proof}

\begin{remark}\label{rmk:non-complete}
    In general, one only expects locally complete families of hyperkähler varieties to satisfy the Franchetta property, which moduli spaces of stable objects on $K3$ surfaces aren't.
    Still, their motives are summands of the motives of powers of $K3$ surfaces, which are expected to satisfy the generalised Franchetta conjecture.
    Hence, we would expect these families to also satisfy the Franchetta property, even though they do not form a locally complete family. 
\end{remark}

This implies that all generically defined cycles on $Y$ multiply to a multiple of $c_4(Y)$ in~$\chow_0(Y)$. 
Hence, we define the \emph{Beauville--Voisin class} $o_Y$ as $\frac{c_4(Y)}{\mathrm{deg} c_4(Y)}$. 
This is well-defined since it is known that $\mathrm{deg} c_4(S^{[2]})=324$ for any $K3$ surface $S$ and that this number is deformation invariant, meaning that $\mathrm{deg} c_4(Y)=324$.
We will prove in \Cref{sec:constant-cycle-surface} that $o_Y$ is not only a $0$-cycle of degree $1$, but that it is represented by a point in $Y$.
Now, let $h_1$ and $h_2$ be the two divisors pulled back from the Lagrangian fibrations $Y\to\IP^2$. 
These are the only divisors on the very general double EPW quartic.
Similarly, the only Hodge classes in degree $4$ on the very general double EPW quartic are $h_1^2, h_2^2, h_1h_2,$ and $c_2(Y)$. 
From the Hard Lefschetz theorem we know that the only Hodge classes in degree $6$ on the very general double EPW quartic are $h_1^2h_2$ and $h_1h_2^2$. 
Thus, we have that 
\begin{align*}
    \gdch^1(Y)&=\langle h_1, h_2\rangle\\
    \gdch^2(Y)&=\langle h_1^2, h_2^2, h_1h_2, c_2(Y)\rangle\\
    \gdch^3(Y)&=\langle h_1^2h_2, h_1h_2^2\rangle\\
    \gdch^4(Y)&=\langle o_Y\rangle 
\end{align*}
In particular, generically defined cycles on $Y$ are $\iota$-invariant.
\begin{remark}\label{rmk:prim-div}
    Denote the space of primitive divisors by $H^2(Y,\IQ)_{\mathrm{pr}}=\langle h_1,h_2\rangle^\perp$, relative to the Beauville--Bogomolov form. The subring of $\chow^\ast(Y)$ generated by primitive divisors also maps injectively into cohomology via the cycle class map.
    Indeed, using the twisted incidence correspondence from \cite{twistedSYZ}, the proof of \cite[Thm. C.8]{vial22} applies verbatim in the case of moduli of stable objects on twisted $K3$ surfaces.
    Hence, \cite[Cor. C.11]{vial22} also holds for moduli of stable objects on twisted $K3$ surfaces.
    This shows that $Y$ admits a surface decomposition in the sense of \cite[Def. 1.2]{voisin22} by the self-product of a $K3$ surface. 
    Then, applying \cite[Thm. 1.7]{voisin22} grants the result.
\end{remark}

\section{The incidence variety}\label{sec:incidence}

Voisin studied the incidence variety of the Fano variety of lines on a cubic fourfold and proved a relation involving it to prove the Beauville--Voisin conjecture for the Fano variety \cite{voisin08}.
A similar idea was used to prove an analogue to \Cref{intro:chow-iso} in \cite{zhang24} for the incidence variety of two conics on a general Gushel--Mukai fourfold and the associated double EPW sextic.
The relation Zhang obtained was then pushed forward to the double EPW sextic in \cite{laterveer23}, see \Cref{rmk:error}, and used to prove the Beauville--Voisin conjecture for double EPW sextics.

The aim of this rather technical section is to study the incidence variety of $(1,1)$-conics on a Verra fourfold $X$ and to prove a relation involving it in the Chow ring of~$F(X)\times F(X)$, before pushing it forward to $Y\times Y$. 

\subsection{\texorpdfstring{The incidence variety in $F(X)$}{The incidence variety in F(X)}}\label{sec:incidence1}

Recall from \Cref{sec:construction-verra} that $F(X)$ is the Hilbert scheme of $(1,1)$-conics on $X$ and that there is a $\IP^1$-bundle $\alpha\colon F(X)\to Y$, where $Y$ is the double EPW quartic associated to $X$. 
Unless otherwise stated, all conics in $X$ will be $(1,1)$-conics.
Recall also the double cover $\pi\colon X\to\IP(U_1)\times\IP(\w^2U_2)$.

\begin{lemma}\label{lemma:injective}
Let $X$ be a general Verra fourfold. 
The morphism 
$$F(X)\to\Gr(3,\IC\oplus U_1\otimes\w^2U_2),~c\mapsto \langle c\rangle,$$ 
is injective, where $\langle c\rangle$ is the plane spanned by $c$.
Furthermore, if two conics $c$ and $c'$ have no common component and the planes they generate intersect in a line, i.e. $\langle c\rangle\cap \langle c'\rangle =\ell$, we have $\iota(\alpha(c))=\alpha(c')$. 
\end{lemma}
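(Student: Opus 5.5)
For injectivity, I will show that a conic $c\in F(X)$ is determined by its plane $\langle c\rangle$. The planned statement is $c=\langle c\rangle\cap X$ scheme-theoretically for general $X$, and I would derive it from the geometry of $D_{(L,M)}$. First, $\langle c\rangle\subset\IP(\IC\oplus L\otimes M)$, where $L=\pi_1(c)$ and $M=\pi_2(c)$. Indeed, the linear span of $c$ lies in the span of $D_{(L,M)}$, and the latter is $\IP(\IC\oplus L\otimes M)\cong\IP^4$. Second, $X\cap\IP(\IC\oplus L\otimes M)=D_{(L,M)}$, because the cone $C(\IP(U_1)\times\IP(\w^2U_2))$ meets this $\IP^4$ exactly in the cone over $L\times M\cong\IP^1\times\IP^1$. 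Hence $\langle c\rangle\cap X=\langle c\rangle\cap D_{(L,M)}$. Since $D_{(L,M)}$ is a degree $4$ del Pezzo surface, it is an intersection of two quadrics in $\IP^4$, namely the cone over the Segre quadric and $Q$. Restricted to the plane $\langle c\rangle$, these give two conics. Both contain $c$, so each is either $c$ itself or the whole plane. If both vanished on $\langle c\rangle$, the plane would lie in $D_{(L,M)}$, which is impossible for a surface. So $\langle c\rangle\cap X=c$, and $c\mapsto\langle c\rangle$ is injective. Degenerate conics, i.e.\ pairs of lines or double lines, need the same argument carried out scheme-theoretically; I expect this to go through because the argument only uses that $c$ is a plane conic.

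For the second statement, suppose $\langle c\rangle\cap\langle c'\rangle=\ell$ is a line. I first want $c$ and $c'$ to lie in the same $D_{(L,M)}$. Projecting from the vertex, the images of the planes are the linear spans of the two conics $\pi(c)\subset L\times M$ and $\pi(c')\subset L'\times M'$ inside the Segre $\IP^8$, and these spans must meet. This should force $L=L'$ and $M=M'$; a case check on linear spans of $(1,1)$-curves in the Segre embedding should handle it. Granting this, both planes lie in the same $\IP^4=\IP(\IC\oplus L\otimes M)$ and contain the line $\ell$. Let $\tilde Q$ be the quadric threefold attached to $c$, which contains $D_{(L,M)}\cup\langle c\rangle$. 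The line $\ell$ meets $D_{(L,M)}$ in $\ell\cap c$ and in $\ell\cap c'$, which are two points each. If these four points are distinct, the restriction $\tilde Q|_{\ell}$ vanishes at four points, so $\ell\subset\tilde Q$. In fact $\ell\subset\langle c\rangle\subset\tilde Q$ already holds, so the real task is to show $\langle c'\rangle\subset\tilde Q$. For this, $\tilde Q|_{\langle c'\rangle}$ vanishes on $c'$ and on $\ell$. Because $c$ and $c'$ share no component, $\ell\not\subset c'$, so the union $c'\cup\ell$ is a cubic curve in the plane. A quadric vanishing on it must vanish on all of $\langle c'\rangle$. Hence both planes lie on $\tilde Q$, and they meet in a line.

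It then remains to identify the pencils. On a rank $4$ quadric threefold, two planes from the same pencil meet only in the vertex, while planes from opposite pencils meet in a line. Therefore $\langle c'\rangle$ is in the pencil opposite to $\langle c\rangle$. By construction (\Cref{sec:construction-verra}), $\alpha$ contracts each pencil and $\iota$ swaps the two pencils, so $\alpha(c')=\iota(\alpha(c))$. In the rank $3$ case the two pencils coincide and $\alpha(c)$ is fixed by $\iota$; here any two distinct planes meet along a line through the vertex, and the conclusion still holds. I expect the main obstacle to be the first step, showing that meeting planes force the same pair $(L,M)$. This is where generality of $X$, or an argument about linear sections of $X$, may have to enter.
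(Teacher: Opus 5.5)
Your injectivity argument is correct, and so is the part of the second statement that happens inside a single $D_{(L,M)}$: the plane-cubic argument putting $\langle c'\rangle$ on $\tilde Q$, and the pencil count. The gap is the step you flag and then grant, namely that $\langle c\rangle\cap\langle c'\rangle=\ell$ forces $(L,M)=(L',M')$.

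This does not follow from linear algebra of the Segre embedding, so the proposed case check cannot close it. Since $\IP(L\otimes M)\cap\IP(L'\otimes M')=\IP\bigl((L\cap L')\otimes(M\cap M')\bigr)$, the only way the projected planes can share a line with $(L,M)\neq(L',M')$ is, say, $L\neq L'$ and $M=M'$, with the common line equal to the ruling $\{p\}\times M$, where $p=L\cap L'$. This configuration does occur in $\IP^8$. If $\pi(c)=(\{p\}\times M)\cup(L\times\{q\})$ and $\pi(c')=(\{p\}\times M)\cup(L'\times\{q'\})$, then the spans $\langle\pi(c)\rangle$ and $\langle\pi(c')\rangle$ meet exactly in that ruling.

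What rules this out is not generality of $X$ but the hypothesis that $c$ and $c'$ share no component. In that configuration $c$ contains a line $c_1$ lying over $\{p\}\times M$. Since $\ell$ projects onto that ruling, and projection from the vertex is injective on $\langle c\rangle$, you get $\ell\subset\langle c\rangle\cap\IP(\IC\oplus p\otimes M)=c_1\subset X$. Then $\ell\subset X\cap\langle c'\rangle=c'$ by your first part, so $\ell$ is a common component. You need to add this argument. Note that it uses the no-common-component hypothesis, not only to get $\ell\not\subset c'$. With it added, your route works.

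The paper avoids this case analysis altogether. Instead of the single quadric $\tilde Q$ attached to $c$, it applies your plane-cubic argument to every quadric $\mathfrak Q\in I_{X,2}$ containing $X\cup\ell$. Because $X\cap\langle c\rangle=c$ and there is no common component, $\ell\not\subset X$. So $\mathfrak Q\cap\langle c\rangle\supset c\cup\ell$ forces $\langle c\rangle\subset\mathfrak Q$, and likewise for $c'$. Hence $H_c=H_{c'}$, i.e.\ $\psi(c)=\psi(c')$. This already puts both conics on the same $\tilde Q$, and so in the same $D_{(L,M)}$, without any discussion of $(L,M)$. The pencil argument then finishes exactly as in your proposal.
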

\begin{proof}
Recall from \Cref{sec:construction-verra} the morphism 
$$\psi\colon F(X)\to\IP(\IC\oplus\w^2 U_1\otimes U_2),~c\mapsto H_c=\left\{\mathfrak{Q}\in I_{X,2}~|~X\cup \langle c\rangle \subset \mathfrak{Q}\right\}$$
used in the construction of double EPW quartics associated to Verra fourfolds.

Let $c,c'\in F(X)$ and assume that $\langle c\rangle = \langle c'\rangle$.
Then, we have that $\psi(c)=\psi(c')$ and the conics lie in the same quadric threefold $\tilde{Q}$ and are uniquely determined by the plane in $\tilde{Q}$ that they generate, hence $c=c'$. 

Now assume instead that $\langle c\rangle\cap\langle c'\rangle = \ell$. 
As $X\cap\langle c\rangle= c$, $\ell$ is not contained in $X$. 
Otherwise, it would be contained in $c$ and $c'$ and hence they would have a common component. 
Then, any quadric $\mathfrak{Q}$ that contains $X\cup\ell$ also contains $\langle c\rangle$ and $\langle c'\rangle$. 
Indeed, we have $$\ell\cup c\subset \langle c\rangle\cap\mathfrak{Q},$$
and counting degrees, this cannot be a proper intersection, meaning that $\langle c\rangle\subset \mathfrak{Q}$. 
The same argument works for $c'$, meaning that $\psi(c)=\psi(c')$, which shows that they are contained in the same quadric threefold $\tilde{Q}$ and obtained from planes contained in it. 
In the case where the quadric threefold $\tilde{Q}$ is of rank $3$, there is a single pencil of planes in~$\tilde{Q}$, meaning that $\alpha(c)=\alpha(c')$ and we also know that in this case $\alpha(c)$ is fixed by $\iota$. 
In the case where $\tilde{Q}$ is of rank $4$, there are two pencils of planes in $\tilde{Q}$, which are exchanged by $\iota$. 
As planes lying on the same pencil only intersect in the vertex of $\tilde{Q}$, two conics on the same pencil can't intersect, meaning that the two conics are associated to planes lying on different pencils, i.e. $\iota(\alpha(c))=\alpha(c')$.
\end{proof}
Given the above, we can consider the closed embedding
$$F(X)\to \Gr(3,\IC\oplus U_1\otimes\w^2 U_2),~c\mapsto \langle c\rangle.$$
Pulling back the tautological bundle $\mathcal{S}$ to $F(X)$, we have the following diagram, where~$P$ is the universal $(1,1)$-conic and $i$ is a closed immersion. 
\begin{equation*}
\xymatrix{
& P\ar[dl]^p\ar[r]^q\ar[d]^i & X\ar[d]\\
F(X) & \IP(\mathcal{S}|_{F(X)})\ar[l]^{p'}\ar[r]^{q'} & \IP(\IC\oplus U_1\otimes \w^2 U_2)
}
\end{equation*}
We also define the incidence correspondence
$$I_F\coloneq\left\{(c,c')\in F(X)\times F(X)~|~c\cap c'\neq\varnothing\right\}.$$

\begin{lemma}\label{lemma:normal-bundle}
Let $c$ be a smooth $(1,1)$-conic on a general Verra fourfold $X$. 
Then its normal bundle splits as
$$N_{c/X}=\Ox_c(1)\oplus \Ox_c(1)\oplus\Ox_c.$$
Furthermore, the morphism $q\colon P\to X$ is flat. 
In particular, we have that 
$$I_F=(p\times p)_\ast (q\times q)^\ast\Delta_X\in \chow^2(F(X)\times F(X)).$$
\end{lemma}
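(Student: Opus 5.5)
Three things need proving: the splitting of $N_{c/X}$, the flatness of $q$, and the formula for $I_F$. We compute the normal bundle in stages, then get flatness from a dimension count on the fibres of $q$, and finally identify $I_F$ by a standard pullback/pushforward argument.

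\textbf{Normal bundle.} Let $c$ be a smooth $(1,1)$-conic with $\pi_1(c)=L$ and $\pi_2(c)=M$. Then $c\subset D_{(L,M)}$, and we use the filtration
$$0\to N_{c/D_{(L,M)}}\to N_{c/X}\to N_{D_{(L,M)}/X}|_c\to 0.$$
\begin{itemize}
\item $D_{(L,M)}$ is a degree $4$ del Pezzo surface and $c$ is a conic on it. So $c^2=0$, and $N_{c/D_{(L,M)}}=\Ox_c$.
\item $D_{(L,M)}=\pi^{-1}(L\times M)$, so $N_{D_{(L,M)}/X}=\pi^\ast N_{L\times M/\IP^2\times\IP^2}=\pi^\ast(\Ox(1,0)\oplus\Ox(0,1))$.
\item Since $c$ maps isomorphically onto a $(1,1)$-curve in $L\times M$, each of $\Ox(1,0)$ and $\Ox(0,1)$ restricts to $\Ox_{\IP^1}(1)$ on $c\cong\IP^1$.
\end{itemize}
The sequence therefore reads $0\to\Ox_c\to N_{c/X}\to\Ox_c(1)^{\oplus2}\to0$. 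It splits because $\mathrm{Ext}^1(\Ox_c(1),\Ox_c)=H^1(\Ox_c(-1))=0$. As a sanity check, $\deg N_{c/X}=-K_X\cdot c-2=2$ and $h^0(N_{c/X})=5=\dim F(X)$, consistent with $F(X)$ being a $\IP^1$-bundle over the fourfold $Y$.

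\textbf{Flatness.} $P$ and $X$ are smooth of dimensions $5$ and $4$. By miracle flatness it suffices to show every fibre of $q$ has pure dimension $1$.
\begin{itemize}
\item The fibre over $x$ is the family of conics through $x$. Project to $Y$. The conics through $x$ in a fixed $\IP^1$-family $\alpha^{-1}(y)$ come from planes of a pencil in $\tilde{Q}$; through a non-vertex point of $\tilde Q$ there is exactly one plane of each pencil, so each $y$ contributes at most finitely many conics unless $x$ is a vertex.
\item Fibres are nonempty of dimension $\ge1$, since $q$ is surjective (every point lies on a conic) and $\dim P-\dim X=1$.
\item For the upper bound: for $x\in X$ with $\pi(x)=(a,b)$, a conic through $x$ lies in some $D_{(L,M)}$ with $a\in L$ and $b\in M$. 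This gives a $1+1=2$-dimensional family of pairs $(L,M)$. On each $D_{(L,M)}$ the conics through $x$ are finitely many, at most one per pencil of the relevant quadrics. The conics through $x$ therefore form a family of dimension at most $2$.
\item This bound is too weak. I expect to refine it by noting that a fixed $\tilde Q$, i.e.\ a fixed point of $D$, is determined by $(L,M)$ together with a choice in a pencil of quadrics containing $D_{(L,M)}$, which carries one extra parameter. The condition that the chosen plane passes through $x$ imposes one condition, giving fibre dimension exactly $1$.
\end{itemize}
This dimension count, with the necessary care at special points (singular conics, rank-$3$ quadrics, points on the branch divisor), is the main obstacle. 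I would carry it out by bounding the locus of $y\in Y$ whose conic family meets $x$: this locus is a divisor-type condition in each fibre of $(L,M)$, and genericity of $Q$ excludes jumps. An alternative route, if the count gets messy, is to show the fibres of $q$ are curves by computing $\chi$ or a class on $P$, but I would try the direct count first.

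\textbf{The formula for $I_F$.} Given flatness of $q$, the map $q\times q$ is flat, so $(q\times q)^\ast\Delta_X$ is the class of the fibre product
$$P\times_X P=\{(c,c',x): x\in c\cap c'\},$$
of dimension $5+5-4=6$, with multiplicity one along components where the intersection is generically transverse. The map $p\times p$ sends this onto $I_F$. It is generically injective because two general intersecting conics meet in one point. Hence $(p\times p)_\ast(q\times q)^\ast\Delta_X=[I_F]$, a class of codimension $10-6=4$ in the ten-dimensional $F(X)\times F(X)$. (I would note that the dimension count suggests the stated codimension index should match $\dim I_F$.) Generic reducedness follows from the normal bundle computation: at a general intersection point the tangent spaces of the two conics together with the deformations give a transverse intersection.
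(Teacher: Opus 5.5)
The normal bundle computation is correct and is the paper's argument. The paper gets $N_{c/D_{(L,M)}}=\Ox_c$ by adjunction from $\omega_{D_{(L,M)}}=\pi^\ast\Ox(-1,-1)$ rather than from $c^2=0$, but this is the same fact.

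\textbf{Flatness: wrong dimension of $P$.} This part has a genuine error. $P$ is a conic bundle over $F(X)$, which you correctly note is $5$-dimensional, so $\dim P=6$, not $5$. Since $X$ is smooth, every component of every fibre of $q$ therefore has dimension at least $\dim P-\dim X=2$. The fibre $q^{-1}(x)$ is the surface $E_x$ of conics through $x$. So the ``refinement'' to fibre dimension $1$ that you plan would be proving a false statement. Your proposed mechanism also double counts. Only finitely many quadrics in the pencil containing $D_{(L,M)}$ contain planes, namely the singular ones. The genuine extra parameter is the position of the conic in one of the eight pencils $\ell_i$, and the condition $x\in c$ already fixes it. Your upper bound of $2$ ($2$ parameters for $(L,M)$, finitely many conics through $x$ on each $D_{(L,M)}$) was the right count. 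Together with the lower bound it gives equidimensionality, provided you justify finiteness on every relevant $D_{(L,M)}$, including degenerate ones.

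\textbf{How the paper handles it.} The paper uses the splitting you just computed, which is why the splitting is part of the lemma. At a smooth conic $c\ni x$, the tangent space of $q^{-1}(x)$ is $\{\sigma\in H^0(N_{c/X})\mid \sigma(x)=0\}$. For $\Ox_c(1)^{\oplus 2}\oplus\Ox_c$ this has dimension $1+1+0=2$. You also do not need, and have not shown, that $P$ is smooth. Miracle flatness only needs $P$ Cohen--Macaulay, which holds because $P$ is a divisor in the smooth variety $\IP(\mathcal{S}|_{F(X)})$.

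\textbf{The formula for $I_F$.} The same miscount affects this part. $P\times_X P$ has dimension $6+6-4=8$. Equivalently, choose $c$, a point on it, and a conic through that point: $5+1+2=8$. So $I_F$ has codimension $2$ in $F(X)\times F(X)$, and the class lies in $\chow^2$ exactly as stated; drop your parenthetical suggesting a mismatch. With the dimensions corrected, the rest of your argument is fine. Flat pullback, generic injectivity of $p\times p$ (two general intersecting conics meet in a single point) and generic reducedness are in fact spelled out in more detail than in the paper.
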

\begin{proof}
We have the following exact sequence of normal bundles
$$0\to N_{c/D_{(L,M)}}\to N_{c/X}\to N_{D_{(L,M)}/X}|_c\to 0. $$
Since $D_{(L,M)}$ is a complete intersection, we also have $$N_{D_{(L,M)}/X}|_c=\pi^\ast(\Ox(1,0)\oplus\Ox(0,1))|_c=\Ox_c(1)\oplus\Ox_c(1).$$
Furthermore, we have $$N_{c/D_{(L,M)}}=\omega_{D_{(L,M)}}|_c^\vee\otimes\omega_c$$ and as $D_{(L,M)}\to L\times M$ is a degree 2 ramified cover, we have $$\omega_{D_{(L,M)}}=\pi^\ast(\Ox(-2,-2))\otimes\Ox(R)=\pi^\ast\Ox(-1,-1),$$ where $2R$ is the ramification locus. 
Since $\pi^\ast\Ox(-1,-1)$ restricts to $\Ox_c(-2)$ on $c$, it follows that~$N_{c/D_{(L, M)}}=\Ox_c$. 
Finally, $c\cong \IP^1$, since $c$ is smooth, and it follows that the first sequence splits, and we obtain the desired decomposition of the normal bundle.

Being a divisor in the smooth variety $\IP(\mathcal{S}|_{F(X)})$, $P$ is Cohen--Macaulay, meaning that, by miracle flatness, it suffices to check that the fibres of $q$ are equi-dimensional. 
There is a smooth conic $c$ in every fibre $q^{-1}(x)$.
We may identify the fibre $p^{-1}(c)$ with $c$ via $q$ and see that $N_{c/P}=T_c F(X)\otimes\Ox_c$. 
The following diagram commutes.
\begin{equation*}
\xymatrix{
0\ar[r] & T_c\ar[r]\ar[d]^\cong & T_P|_c\ar[d]^{Dq|_c}\ar[r] & (T_{F(X)})_c\otimes\Ox_c\ar[d]^{\mathrm{ev}}\ar[r]& 0\\
0\ar[r]& T_c\ar[r]& T_X|_c\ar[r]& N_{c/X}\ar[r]&0
}
\end{equation*}
Then, $T_{c,x}q^{-1}(x)=\{\sigma\in H^0(N_{c/X})~|~\sigma_x=0\}$. 
Using the description of $N_{c/X}$ above, we see that this is 2-dimensional. 
\end{proof}
Now, we require some more notation. Let $I_0=I_F\setminus (W_1\cup W_2)$, where 
$$W_1 \coloneq \left\{(c,c')~|~c\text{ and }c'\text{ have a common component}\right\}$$
 and 
 $$W_2\coloneq\left\{(c,c')\in F(X)\times F(X)~|~\iota(\alpha(c))=\alpha(c')\right\}.$$
Furthermore, let $$\tilde{I}\coloneq (q\times q)^{-1}(\Delta_X) \text{ and } I'\coloneq (q'\times q')^{-1}(\Delta_{\IP(\IC\oplus U_1\otimes\w^2 U_2)})$$ be the incidence correspondences. 
Also, let $$\tilde{I}_0\coloneq(p\times p)^{-1}(I_0) \text{ and } I'_0\coloneq(p'\times p')^{-1}(I_0).$$
The second part of \Cref{lemma:injective} shows that sending $(c,c')\in I_0$ to the point of intersection of the planes they generate is a well-defined section from $I_0$ to $I_0'$.
\begin{lemma}\label{lemma:regular-embedding}
The inclusion $I_0\subset F(X)\times F(X)\setminus(W_1\cup W_2)$ is a regular embedding.
\end{lemma}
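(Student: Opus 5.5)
The plan is to show that $I_0$ is smooth of pure dimension $8$. Since $F(X)\times F(X)\setminus(W_1\cup W_2)$ is smooth of dimension $10$ (as $F(X)$ is a $\IP^1$-bundle over the smooth fourfold $Y$), a smooth closed subscheme of codimension $2$ in it is automatically a regular embedding: it is locally cut out by part of a regular system of parameters. So it suffices to compare $I_0$ with $\tilde{I}_0=(p\times p)^{-1}(I_0)\subset \tilde{I}=P\times_X P$, which is easier to control.

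\textbf{Step 1: $\tilde{I}_0$ is smooth of dimension $8$.} I will show that $q\colon P\to X$ is smooth at the relevant points. By \Cref{lemma:normal-bundle}, $q$ is flat. On a smooth conic $c$, the bundle $N_{c/X}=\Ox_c(1)^{\oplus2}\oplus\Ox_c$ is globally generated, so $\mathrm{ev}\colon H^0(N_{c/X})\otimes\Ox_c\to N_{c/X}$ is surjective. The diagram in the proof of \Cref{lemma:normal-bundle} then shows that $Dq$ is surjective along $p^{-1}(c)$. Hence $q$ is smooth there, with fibres of dimension $2$. Consequently $P\times_X P$ is smooth of dimension $6+6-4=8$ at every point $(c,x,c')$ at which $q$ is smooth at both $(c,x)$ and $(c',x)$.

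For singular conics (pairs of lines meeting at a point) I would argue separately. At a point $x$ that is a smooth point of $c$, the same tangent-space computation works on the relevant component. I expect the singular point of a reducible conic to be the delicate case. There I would use that $P$ is a Cartier divisor in the smooth $\IP(\mathcal{S}|_{F(X)})$, and then check by a local computation that the fibre dimension and tangent space of $q$ still have the expected dimension at the vertex. Alternatively, I would show that for a general Verra fourfold such configurations are excluded after removing $W_1$, or that they contribute only in codimension where smoothness of $I_0$ can be checked directly.

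\textbf{Step 2: $p\times p\colon\tilde{I}_0\to I_0$ is an isomorphism.} For $(c,c')\in I_0$ the conics have no common component, and $(c,c')\notin W_2$. By the second part of \Cref{lemma:injective}, the planes $\langle c\rangle,\langle c'\rangle$ therefore do not meet in a line. They meet, since $c\cap c'\neq\varnothing$, so they meet in exactly one point. Hence $c\cap c'$ is a single point, and $\tilde{I}_0\to I_0$ is bijective.

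The section $I_0\to I'_0$ given by taking the intersection point of the planes factors through $\tilde{I}_0$ and inverts $p\times p$ set-theoretically. To see that it is a morphism of schemes, I would use that the intersection of two $3$-dimensional subspaces of $\IC\oplus U_1\otimes\w^2U_2$ is, on the locus where it is $1$-dimensional, given by a line subbundle. This is the kernel of the constant-rank map $\mathcal{S}_1\oplus\mathcal{S}_2\to\IC^9$, constant rank being exactly the condition excluding intersection in a line. This yields a morphism $I_0\to\IP(\mathcal{S}\boxplus\mathcal{S})$ landing in $\tilde{I}_0$, inverse to $p\times p$. Therefore $I_0\cong\tilde{I}_0$ is smooth of dimension $8$, and the claim follows.

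I expect the main obstacle to be the singular-conic case in Step 1 together with the scheme-theoretic, not merely set-theoretic, identification in Step 2. The latter requires checking that $I_0$ with its reduced structure, which is the structure used in defining the cycle, coincides with the image scheme.
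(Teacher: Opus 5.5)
\textbf{There is a genuine gap in Step 1, at reducible conics.} Your reduction is fine: a smooth codimension-$2$ closed subscheme of the smooth variety $F(X)\times F(X)\setminus(W_1\cup W_2)$ is regularly embedded, and it then suffices to have $I_0\cong\tilde I_0$. Step 1 is also correct at points $((c,x),(c',x))$ where both $c$ and $c'$ are smooth. The remaining points, however, cannot be discarded.

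Each pencil $\ell_i$ is a conic bundle on a degree-$4$ del Pezzo surface, so it has four reducible members $\ell\cup\ell'$ (a $(1,0)$-line and a $(0,1)$-line). Hence reducible $(1,1)$-conics form a divisor in $F(X)$. Take $c$ reducible with node $x$ and $c'\in E_x$ general. Then $c'$ is smooth, so the pair has no common component. Moreover $W_2\cap(\{c\}\times F(X))=\{c\}\times\alpha^{-1}(\iota(\alpha(c)))$ is only a curve, so $(c,c')\notin W_2$. Thus $(c,c')\in I_0$, and $I_0$ contains a $6$-dimensional locus of such pairs. Your fallback, that these configurations disappear once $W_1$ is removed, is therefore false. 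Regularity is a condition at every point, so a codimension-$2$ locus cannot be ignored.

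At these points your argument gives nothing. It rests on the diagram in the proof of \Cref{lemma:normal-bundle}: exactness of $0\to T_c\to T_X|_c\to N_{c/X}\to 0$ and global generation of $N_{c/X}\cong\Ox_c(1)^{\oplus 2}\oplus\Ox_c$. Both are established only for smooth $c$. At the node $x$ of a reducible conic, $T_X|_c\to N_{c/X}$ is not surjective (its cokernel is the local smoothing space of the node). The fibre $p^{-1}(c)$ is singular there, and neither smoothness of $P$ at $(c,x)$ nor surjectivity of $Dq$ follows. Completing the smoothness route would need a genuine local analysis of $P$ and $q$ at such points. It would also need a separate computation of $N_{c/X}$ at smooth points of reducible conics. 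None of this is supplied.

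\textbf{How the paper avoids this, and how to repair your argument.} The paper never analyses tangent spaces. It views the incidence $I'$ as a base change of the diagonal of $\IP(\IC\oplus U_1\otimes\w^2U_2)$. It then transfers regularity to $I_0$ along the section $I_0\to I'_0$ given by the intersection point of the planes, via \cite[B.7.5]{fulton}. Singular conics play no special role. The same idea closes your gap if you aim for local complete intersection instead of smoothness:
\begin{enumerate}
\item By \Cref{lemma:normal-bundle}, $q$ is flat at \emph{every} conic (miracle flatness, reducible ones included). So $\tilde I=(q\times q)^{-1}(\Delta_X)$ is regularly embedded of codimension $4$ in $P\times P$.
\item $P$ is a Cartier divisor in the smooth $\IP(\mathcal{S}|_{F(X)})$. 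Hence $\tilde I_0$ is lci of pure dimension $8$, in particular Cohen--Macaulay.
\item Your Step 1 shows $\tilde I_0$ is smooth away from the pairs in which one conic is singular. That locus has dimension at most $4+1+2=7$ (a singular conic, a point on it, a conic through that point). So $\tilde I_0$ is generically reduced on every component, and therefore reduced.
\item Your Step 2 then gives $I_0\cong\tilde I_0$ as schemes, which also settles your scheme-theoretic worry. Note the map should go to $\Ox^{\oplus 10}$, not $\IC^9$, since $\dim(\IC\oplus U_1\otimes\w^2U_2)=10$.
\item So $I_0$ is an lci closed subscheme of a smooth variety, hence regularly embedded of codimension $2$.
\end{enumerate}
Smoothness of $I_0$ is never needed; the self-intersection formula in \Cref{prop:I_F^2} only uses regularity.
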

\begin{proof}
This is analogous to \cite[Lem. 3.7]{zhang24}. 
The diagonal $\Delta_{\IP(\IC\oplus U_1\otimes\w^2 U_2)}$ is a regular embedding and $I'$ is obtained from it by base-change. 
Since there exists a section from~$I_0$ to $I'_0$, we apply \cite[B.7.5]{fulton}, which completes the proof.
\end{proof}
We can now compute the self-intersection of $I_F$, obtaining an analogous relation to the case of cubic fourfolds in \cite{voisin08} and of double EPW sextics in \cite{zhang24}.
\begin{definition}\label{def:decomposable}
    Let $T$ and $T'$ be varieties. 
    We say that a cycle $\Gamma\in\chow^\ast(T\times T')$ is \emph{decomposable} if it lies in 
    $$\pr_1^\ast\chow^\ast(T)\cdot\pr_2^\ast\chow^\ast(T').$$
\end{definition}
\begin{proposition}\label{prop:I_F^2}
Let $X$ be a general Verra fourfold. 
Then we have the following relation
$$I_F^2=aW_2+I_F\cdot \Gamma+\Gamma'\text{ in }\chow^4(F(X)\times F(X))$$
for some $a\in\IQ$ and some decomposable and generically defined cycles $\Gamma,\Gamma'$.
\end{proposition}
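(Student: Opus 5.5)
We follow the strategy of Voisin for the Fano variety of lines \cite{voisin08} and of Zhang for Gushel--Mukai fourfolds \cite[Prop.~3.8]{zhang24}: compute $I_F^2$ through the excess intersection formula on the open set where $I_F$ is nicely embedded, and then control the error terms supported on $W_1\cup W_2$.

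\emph{Step 1: the open part.} By \Cref{lemma:normal-bundle}, $I_F=(p\times p)_\ast(q\times q)^\ast\Delta_X$. Hence
$$I_F\cdot I_F=(p\times p)_\ast\bigl((q\times q)^\ast\Delta_X\cdot(p\times p)^\ast I_F\bigr)=(p\times p)_\ast\bigl(\tilde I\cdot(p\times p)^\ast I_F\bigr).$$
Over $F(X)\times F(X)\setminus(W_1\cup W_2)$, \Cref{lemma:regular-embedding} shows that $I_0$ is a regular embedding of codimension $2$. Moreover, $\tilde I_0\to I_0$ is birational: a generic pair of intersecting conics meets in one point, namely the intersection point of the planes given by the section of \Cref{lemma:injective}. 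So the excess intersection formula \cite[Thm.~6.3]{fulton} computes $I_F|_{U}^2$ as $(p\times p)_\ast$ of the top Chern class of an excess bundle on $\tilde I_0$. This excess bundle is the cokernel of the map from the normal bundle of $\tilde I_0$ to the pulled-back normal bundle $(q\times q)^\ast N_{\Delta_X}=q^\ast T_X$, modulo the normal data of $I_0$. Concretely, it is built from $q^\ast T_X$, from the relative tangent bundles $T_{p}$ of the two conic families, and from the tautological bundles $\mathcal S$ pulled back from the Grassmannian via \Cref{lemma:injective}. By analogy with \cite{voisin08}, I expect to rewrite it as a combination of the form $\tilde I\cdot(\text{polynomial in } \pr_i^\ast$ of Chern classes of $\mathcal S$, $T_X$, and $h$-classes$)$. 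After pushforward, this gives $I_F\cdot\Gamma$ plus terms of the shape $(p\times p)_\ast(\tilde I\cdot\text{class})$. The latter are themselves of the form $I_F\cdot\Gamma$ or decomposable. For the decomposability I will use that the Künneth decomposition of $\Delta_X$ is explicit modulo the $K3$-part: $X$ has motive $\mathfrak h(S)(1)\oplus\bigoplus\mathds 1(i)$ \cite{laterveer-verra}, and $\Delta_X$ is generically defined. All classes arising are generically defined, since the whole construction works over $\mathcal U$.

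\emph{Step 2: the boundary.} By localisation, $I_F^2-(\text{Step 1 expression})$ is supported on $W_1\cup W_2$. $W_2$ is the graph of $\iota$ pulled back along $\alpha\times\alpha$, so it is $4$-dimensional, i.e.\ of codimension $4$. Cycles of dimension $4$ supported on it are multiples of $W_2$, assuming irreducibility, which holds for general $X$ since $W_2$ is a $\IP^1\times\IP^1$-bundle over $Y$. For $W_1$: pairs with a common component are either the diagonal $\Delta_{F(X)}$ (dimension $5$) or pairs of reducible conics sharing a line. The lines in $X$ form a family of small dimension, so these pairs also have small dimension. I must show that the dimension-$4$ cycles supported on $W_1$ can be absorbed. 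Any $4$-cycle inside $\Delta_{F(X)}$ is $\Delta_\ast(\text{a }1\text{-cycle on }F(X))$, and I would argue that the relevant one is generically defined, hence, by \Cref{thm:franchetta}, given by an explicit combination of $\alpha^\ast$ of $h_1,h_2$ and the relative hyperplane class. Then $\Delta_\ast$ of such a cycle is expressible as $I_F\cdot\Gamma+\Gamma'$, using the relation $\Delta_{F(X)}\cdot(\dots)$ coming from the $\IP^1$-bundle structure. This is the delicate point. For the locus of reducible conics sharing a line, a dimension count should show it has dimension $<4$, so it contributes nothing.

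\emph{Main obstacle.} The hard part is Step 2: controlling the contributions supported on $\Delta_{F(X)}$ and ensuring they are genuinely of the form $I_F\cdot\Gamma+\Gamma'$ with decomposable generically defined $\Gamma,\Gamma'$. I would first attempt to redo the excess computation on $F(X)\times F(X)\setminus W_2$ including the diagonal, as in \cite[Lem.~3.7]{zhang24}. There, the diagonal contribution appears as a Chern class of an explicit bundle pushed forward along $\Delta$, and such a class is decomposable by the generically-defined Künneth decomposition of $\Delta_{F(X)}$ (via the motive of $Y$ and the projective bundle formula).
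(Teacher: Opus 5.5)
Your overall architecture matches the paper's. You restrict to $U=F\times F\setminus(W_1\cup W_2)$, write $I_0^2=c_2(N_{I_0/F\times F})$, and compute it from
\[
0\to\pr_1^\ast T_{P/F}\oplus\pr_2^\ast T_{P/F}\to q_0^\ast T_X\to p_0^\ast N_{I_0/F\times F}\to 0 .
\]
This needs $p_0\colon\tilde I_0\to I_0$ to be an isomorphism over $U$, which follows from \Cref{lemma:injective}; birationality is not enough. You then turn the part pulled back from $F\times F$ into $I_F\cdot\Gamma$ and obtain $aW_2$ from localisation.

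\textbf{Gap in Step 1.} The step you pass over quickly is where the real content lies. Every remaining term contains a factor $\pr_i^\ast q^\ast\beta$ with $\beta=H$ or $G$. These come from $c(T_X)$, and from $\Ox_{\IP(\mathcal S)}(1)|_P=q^\ast H$ inside $c(T_{P/F})$. So you need $\pr_1^\ast\beta\cdot\Delta_X$ to be decomposable in $\chow^\ast(X\times X)$. Saying that the Künneth decomposition of $\Delta_X$ is explicit modulo the $K3$-part does not give this. The $K3$-part is exactly the non-decomposable piece, and you must explain why multiplying by $\pr_1^\ast\beta$ removes it.

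The paper does this geometrically. Localising at the vertex of $C=C(\IP^2\times\IP^2)$, whose complement is an $\mathbb A^1$-bundle over $\IP^2\times\IP^2$, shows that $\chow^\ast(T)\otimes\chow^\ast(C)\to\chow^\ast(T\times C)$ is surjective for every $T$. Hence for $j\colon X\times X\to C\times X$, the class $2\pr_1^\ast H\cdot\Delta_X=j^\ast j_\ast\Delta_X$ is decomposable. $G$ is treated using the cone over $\IP^1\times\IP^2$.

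Your own ingredients can also be made to work, but only once the missing argument is supplied:
\begin{enumerate}
\item The cycle $\pr_1^\ast\beta\cdot\Delta_X$ is generically defined.
\item Its cohomology class lies in $H^\ast_{\mathrm{alg}}(X)\otimes H^\ast_{\mathrm{alg}}(X)$, because $\beta\cdot H^4_{\mathrm{tr}}(X)=0$. Indeed, $\int_X\beta\cdot e\cdot D=0$ for $e\in H^4_{\mathrm{tr}}(X)$ and every $D$ in the algebraic group $H^2(X)$.
\item So it is cohomologous to a decomposable generically defined cycle.
\item The Franchetta property for $\mathcal X^{2/\mathcal U}$ (\Cref{thm:franchetta}) upgrades this to an equality in $\chow^\ast(X\times X)$.
\end{enumerate}

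\textbf{Dimension miscount in Step 2.} Since $F(X)$ is a $\IP^1$-bundle over $Y$, $\dim F\times F=10$ and $\chow^4(F\times F)=\chow_6(F\times F)$.
\begin{itemize}
\item By your own description $W_2$ is a $\IP^1\times\IP^1$-bundle over $Y$, hence $6$-dimensional, not $4$-dimensional. This gives $\chow_6(W_2)=\IQ W_2$.
\item $W_1$, the diagonal together with pairs of reducible conics sharing a line, has dimension $5$. So $\chow_6(W_1)=0$ and $W_1$ contributes nothing at all.
\end{itemize}
The ``main obstacle'' you identify therefore does not exist. The treatment you sketch for it also does not make sense as written. $\Delta_\ast$ of a $1$-cycle on $F(X)$ is not a codimension-$4$ cycle on $F\times F$. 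Nothing in your argument makes a boundary-supported cycle produced by the localisation sequence generically defined. With the correct count, Step 2 reduces to the one-line localisation argument the paper uses.
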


\begin{remark}\label{rmk:non-zero}
    Notice that we haven't stated whether the coefficient $a$ is non-zero.
    This is indeed the case, but we will postpone the proof of this to \Cref{lemma:non-zero} for the sake of readability.
    The reader will verify that only \Cref{cor:proj-i} requires $a\neq 0$ and that the proof of \Cref{lemma:non-zero} does not rely on it. 
\end{remark}

\begin{proof}[Proof of \Cref{prop:I_F^2}]
We shorten $F(X)$ to $F$ and denote the restrictions of $p\times p$ and~$q\times q$ to $\tilde{I}_0$ by $p_0$ and $q_0$, respectively. 
Since $W_1$ is of dimension $5$, using the localisation sequence, it is sufficient to prove that $I_0^2=I_0\cdot \Gamma+\Gamma'$ for $\Gamma,\Gamma'$ as in the statement. 
By \Cref{lemma:regular-embedding}, $I_0\subset F(X)\times F(X)\setminus(W_1\cup W_2)$ is a regular embedding, meaning that
$$I_0^2=c_2(N_{I_0/F\times F}).$$
Furthermore, $p_0$ is an isomorphism from $\tilde{I}_0\subset P\times P$ to $I_0\subset F\times F$, meaning that we have the following exact sequence.
$$0\to \pr_1^\ast T_{P/F}\oplus\pr_2^\ast T_{P/F}\to N_{\tilde{I}_0/P\times P}\to p_0^\ast(N_{I_0/F\times F})\to 0 $$
Hence, the Chern classes of $p_0^\ast N_{I_0/F\times F}$ are polynomials in those of $\pr_i^\ast T_{P/F}$ and $N_{\tilde{I}_0/P\times P}$. 
Since $\tilde{I}_0=q_0^{-1}(\Delta_X)$, we have $c(N_{\tilde{I}_0/P\times P})=q_0^\ast c(T_X)$. 
We prove now that these are decomposable.

Denote by $H$ and $G$ the divisors associated to $\Ox(1,1)$ and $\Ox(1,0)$ on $\IP(U_1)\times \IP(\w^2 U_2)$, which we shorten to $\IP^2\times \IP^2$. 
Since removing the vertex of $C(\IP^2\times\IP^2)$ results in the total space of a line bundle over $\IP^2\times\IP^2$, the localisation sequence shows that $$\chow^\ast(C(\IP^2\times\IP^2))\cong\chow^\ast(\IP^2\times\IP^2)$$ and that it is generated by $H$ and $G$. 
Using the conormal sequence, it follows that the first two Chern classes of $T_X$ are polynomials in $G$ and $H$.

We can also compute
$$c(T_{P/F})=\frac{c(T_{\IP(\mathcal{S})/F})|_P}{c(N_{P/\IP(\mathcal{S})})}=\frac{c(p^\ast\mathcal{S}\otimes\Ox_{\IP(\mathcal{S})}(1))|_P}{c(\Ox_P(P))},$$
where we also denote the divisor class of $P$ in $\IP(\mathcal{S})$ by the same letter. 
More precisely, we have $P=2c_1(\Ox_{\IP(\mathcal{S})}(1))+p^\ast D$ for some $D\in\chow^1(F)$ and $c_1(\Ox_{\IP(\mathcal{S})})|_P=q^\ast H$. 
Hence,~$c(T_{P/F})$ is a polynomial in $q^\ast H$ and $p^\ast\chow^\ast(F)$.

Thus, we obtain 
$$I_0^2=p_{0\ast}p^\ast c_2(N_{I_0/F\times F})=p_{0\ast}(P(D, H_i, G_i)),$$
for some polynomial $P(D,H_i, G_i)$ in $D\in p^\ast(\pr_1^\ast\chow^\ast(F)\cdot \pr_2^\ast\chow^\ast(F))$, $H_i=\pr_i^\ast q^\ast H$, and $G_i=\pr_i^\ast q^\ast G$.
We can further rewrite this as
$$I_0^2=(p\times p)_\ast(P(D,H_i,G_i)\cdot \tilde{I})|_{F\times F\setminus(W_1\cup W_2)}=(p\times p)_\ast(P(D,H_i,G_i)\cdot(q\times q)^\ast\Delta_X)|_{F\times F\setminus(W_1\cup W_2)}.$$
Decompose the polynomial as $$P(D,H_i,G_i)=L_1\cdot A+L_2\cdot B+Q,$$ where $L_1$ and $L_2$ are linear in $H_1,G_1,$ and $H_2,G_2$, respectively, $A$ and $B$ are linear, and~$Q\in p^\ast(\pr_1^\ast\chow^\ast(F)\cdot\pr_2^\ast\chow^\ast(F))$. 
It follows from the projection formula that
$$(p\times p)_\ast(Q\cdot \tilde{I})=\Gamma\cdot I,$$
where $\Gamma$ is decomposable. 
As $A$ and $B$ are decomposable, it suffices to prove that both~$H_i\cdot (q\times q)^\ast\Delta_X$ and $G_i\cdot(q\times q)^\ast\Delta_X$ are decomposable.

Before continuing, notice that for any other variety $T$, the exterior product 
$$\bigoplus_{i+j=k}\chow^i(T)\otimes\chow^j(C(\IP^2\times\IP^2))\to\chow^k(T\times C(\IP^2\times\IP^2))$$
is surjective. 
Indeed, consider the following diagram, where $U$ is the complement of the vertex $v$ of $C(\IP^2\times\IP^2)$.
\begin{equation*}
\xymatrix{
\chow^\ast(T)\ar[d]\ar[r]&\chow^\ast(T)\otimes\chow^\ast(C(\IP^2\times\IP^2))\ar[d]\ar[r]&\chow^\ast(T)\otimes\chow^\ast(U)\ar[d]\ar[r]& 0\\
\chow(T\times\{v\})\ar[r]&\chow^\ast(T\times C(\IP^2\times\IP^2))\ar[r]& \chow^\ast(T\times U)\ar[r] & 0
}
\end{equation*}
The rows are exact, and the first and third vertical maps are isomorphisms, since $U$ is an $\mathbb{A}^1$-bundle over $\IP^2\times\IP^2$. 
Hence, by the Four lemma, the second vertical map is surjective. 
Now, consider $$j\colon X\times X\to C(\IP^2\times\IP^2)\times X$$
Then, the argument above shows that $j_\ast\Delta_X\in\pr_1^\ast\chow^\ast(C(\IP^2\times\IP^2))\cdot \pr_2^\ast\chow^\ast(X)$, implying that 
$$2\pr_1^\ast H\cdot\Delta_X=j^\ast j_\ast\Delta_X\in\pr_1^\ast\chow^\ast(X)\cdot\pr_2^\ast\chow^\ast(X).$$
Finally, $G$ is represented by a subvariety $V$ that becomes an $\mathbb{A}^1$-bundle over $\IP^1\times\IP^2$ after removing the vertex. 
By an analogous argument as above, every cycle on $V\times V$ is decomposable. 
Thus, as $\pr_i^\ast G\cdot\Delta_X$ is the push-forward of $\Delta_V$, it is also decomposable. 
We obtain that
$$(p\times p)_\ast((L_1\cdot A+L_2\cdot B)\cdot(q\times q)^\ast\Delta_X)$$
is of the form $\Gamma'$ as in the statement of the proposition. 
The fact that the cycles $\Gamma$ and $\Gamma'$ are generically defined follows directly, as they are all given by Chern class computations involving only the polarisations, tangent bundles, and normal bundles of varieties that can be constructed over the family $\mathcal{U}$.
\end{proof}

\subsection{\texorpdfstring{Pushing the incidence variety forward to $Y$}{Pushing the incidence variety forward to Y}}\label{sec:incidence2}
We continue with pushing the relation forward to $Y\times Y$ to obtain a relation on the double EPW quartic. We prove versions of \cite[2.27, 2.28, and 3.2]{laterveer23}. Obtaining them is slightly simpler in our case, as the family of double EPW quartics satisfies the Franchetta property and we do not have to pass via algebraic equivalence, as is done in \emph{loc. cit.}

\begin{remark}\label{remark:fibre-of-I}
The general fibre of the restriction of $\alpha\times\alpha$ to $I_F$ is $1$-dimensional.
Indeed, the fibre of $\alpha\times\alpha$ restricted to $I$ is positive-dimensional, as otherwise $I_F\to Y\times Y$ would be surjective, which is a contradiction. 
Assume now that for a general $(c,c')\in I_F$, we have $\{c\}\times\alpha^{-1}(\alpha(c'))\subset I_F$. 
For a general $c'$, any other conic $\tilde{c}$ with $\alpha(\tilde{c})=\alpha(c')$ doesn't intersect $c'$. 
This can be seen from the description of the fibres of $\alpha$. 
Hence, the points of intersection of $c$ with the conics in the fibre of $\alpha(c')$ are all distinct, meaning that all points of $c$ are points of intersection with $c'$ and hence are contained in the same $D_{(L,M)}$ as $c'$.
Were this true for the general $(c,c')$, the dimension of $I$ would be~$5$ and not $8$, which is a contradiction. 
Hence, the fibre of $\alpha\times\alpha$ restricted to $I$ is one-dimensional and is a divisor in the fibre $\IP^1\times\IP^1$ of positive degree in both factors. 
Thus,~$dI_F=h_{F\times F}\cdot(\alpha\times\alpha)^\ast \tilde{I}$ and $d'\tilde{I}=(\alpha\times\alpha)_\ast(I_F\cdot h_{F\times F})$ for some $d,d'\in\IQ$, where~$\tilde{I}$ is the class of the image of $I$ under $\alpha\times\alpha$.
\end{remark}

Due to this remark, we can't work with the image of the class $I_F$ under $\alpha\times\alpha$, but instead choose an ample $h_F$ on $F$ and work with $$I\coloneq(\alpha\times\alpha)_\ast(I_F\cdot h_F\times h_F).$$

\begin{remark}\label{rmk:error}
    It is claimed in \cite[Rmk. 2.26]{laterveer23} that, with the notation of \emph{loc. cit.}, the entire fibre of $\pi\times\pi$ is contained in $I_F$. 
    However, by the same argument as in \Cref{remark:fibre-of-I}, the general fibre of $\pi\times\pi$ restricted to $I_F$ is $1$-dimensional. 
    As such, the statement of \cite[Rmk. 2.26]{laterveer23} is wrong.
    This has been fixed in an erratum \cite[Sec. 4]{laterveer-erratum}.
    \Cref{pullback-I} below is an adaptation of \cite[Lem. 4.2]{laterveer-erratum}.
\end{remark}

\begin{lemma}\label{pullback-I}
    Let $X$ be a general Verra fourfold.
    The class 
    $$A \coloneq I_F-(\alpha\times\alpha)^\ast I\text{ in }\chow^2(F(X)\times F(X))$$
    is decomposable and generically defined. 
\end{lemma}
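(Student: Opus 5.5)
Since $\alpha\colon F\to Y$ is a $\IP^1$-bundle, I will write $F=F(X)$ and decompose $\chow^\ast(F\times F)$ via the projective bundle formula for $\alpha\times\alpha$. Choose a relatively ample class $\xi\in\chow^1(F)$; one can take $\xi=h_F$, or a generically defined class such as the pullback of $H$ along the universal conic, pushed appropriately. Then every class in $\chow^2(F\times F)$ can be written uniquely as
$$
(\alpha\times\alpha)^\ast \beta_{00}+\xi_1\cdot(\alpha\times\alpha)^\ast\beta_{10}+\xi_2\cdot(\alpha\times\alpha)^\ast\beta_{01}+\xi_1\xi_2\cdot(\alpha\times\alpha)^\ast\beta_{11},
$$
with $\beta_{00}\in\chow^2(Y\times Y)$, $\beta_{10},\beta_{01}\in\chow^1(Y\times Y)$ and $\beta_{11}\in\chow^0(Y\times Y)$. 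Here $\xi_i=\pr_i^\ast\xi$. I apply this to $I_F$.

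The coefficient $\beta_{11}$ is a constant, so that term is decomposable. The coefficients $\beta_{10}$ and $\beta_{01}$ are divisors on $Y\times Y$. Since $H^1(Y)=0$, we have $\chow^1(Y\times Y)=\pr_1^\ast\chow^1(Y)\oplus\pr_2^\ast\chow^1(Y)$, so these terms are decomposable too. The coefficients are obtained by explicit push-forwards of the form $(\alpha\times\alpha)_\ast(I_F\cdot \xi_1^a\xi_2^b)$, corrected by Segre classes of the bundle. Because $I_F=(p\times p)_\ast(q\times q)^\ast\Delta_X$ (\Cref{lemma:normal-bundle}) is defined over the family $\mathcal{U}$, all coefficients are generically defined. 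For the divisor coefficients this also follows from $\gdch^1(Y)=\langle h_1,h_2\rangle$.

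It remains to compare $\beta_{00}$ with $I=(\alpha\times\alpha)_\ast(I_F\cdot h_F\times h_F)$. Taking $\xi=h_F$ and writing $\xi^2=\alpha^\ast s_1\cdot\xi+\alpha^\ast s_2$, I push forward the decomposition multiplied by $\xi_1\xi_2$. This gives $I=\beta_{00}+(\text{terms involving }\beta_{10},\beta_{01},\beta_{11}\text{ multiplied by }s_1,s_2)$, where I normalise $\alpha_\ast\xi=1$. The correction terms are products of pulled-back classes $\pr_i^\ast s_j$ with decomposable classes, so they are decomposable; they are generically defined if $s_1,s_2$ are, which holds when $h_F$ is chosen generically defined. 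Therefore $(\alpha\times\alpha)^\ast I=(\alpha\times\alpha)^\ast\beta_{00}+(\text{decomposable, generically defined})$, and $A=I_F-(\alpha\times\alpha)^\ast I$ is a sum of decomposable, generically defined terms.

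The main obstacle is making sure $\beta_{00}$ enters with coefficient exactly $1$, so that no multiple of $(\alpha\times\alpha)^\ast\beta_{00}$ survives in $A$. This depends on the normalisation of $h_F$: its degree on the fibres must be $1$, and otherwise $I$ has to be rescaled. I would therefore first fix $h_F$ as a generically defined class of fibre degree $1$ (after rescaling by $\IQ$), and then check the bookkeeping of the projective bundle formula carefully. The rest is formal, because decomposability of divisors on $Y\times Y$ uses only $H^1(Y)=0$.
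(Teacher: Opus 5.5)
Your proposal is correct and is essentially the paper's proof. Both use the projective bundle formula for $\alpha\times\alpha$, the decomposability of divisors on $Y\times Y$ coming from $H^1(Y,\Ox_Y)=0$, and a push-forward against $h_F\times h_F$ to identify the $(\alpha\times\alpha)^\ast$-component with $I$; the paper just does this bookkeeping on $A$ rather than on $I_F$. Your normalisation caveat is also right: the paper's step ``the first term vanishes by definition'' tacitly assumes $h_F$ has fibre degree $m=1$, since otherwise $A$ retains the non-decomposable term $(1-m^2)(\alpha\times\alpha)^\ast\beta_{00}$.
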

\begin{proof}
    We shorten $F(X)$ to $F$.
    It follows directly that $A$ is generically defined, as $I_F$ and $\alpha$ are defined in the family $\mathcal{U}$. 
    Thus, we only need to prove that $A$ is decomposable. 
    To this end, consider the projective bundle formula for $\alpha\times\alpha$, which tells us that
    $$A=(\alpha\times\alpha)^\ast (\gamma)+(h_F\times F)\cdot(\alpha\times\alpha)^\ast(d_1)+(F\times h_F)\cdot(\alpha\times\alpha)^\ast(d_2)+\lambda h_F\times h_F, $$
    where $\lambda\in \IQ$, $d_1, d_2\in\chow^1(Y\times Y)$ and $\gamma\in\chow^2(Y\times Y)$. 
    Since $H^1(Y,\Ox_Y)=0$, \cite[Ex.~III.12.6]{hartshorne} shows that all divisors on $Y\times Y$ are decomposable. 
    This means we only need to prove that $(\alpha\times\alpha)^\ast(\gamma)$ is decomposable. 
    We have
    \begin{align*}
        \gamma=&(\alpha\times\alpha)_\ast(A\cdot h_F\times h_F)+\pr_1^\ast c_1\cdot (\alpha\times\alpha)_\ast(A\cdot \pr_1^\ast h_F)\\
        &+\pr_2^\ast c_1\cdot(\alpha\times\alpha)_\ast(A\cdot\pr_2^\ast h_F)+(c_1\times c_1)\cdot(\alpha\times\alpha)_\ast(A),
    \end{align*}
    where $c_1$ is the first Chern class of the vector bundle underlying the projective bundle~$\alpha$. 
    The first term vanishes by definition, while the other terms are decomposable, as they are all products of divisors.
\end{proof}

We may now push the relation involving $I_F^2$ forward to $Y\times Y$.

\begin{proposition}\label{prop:I^2}
    Let $Y$ be a general double EPW quartic with associated Verra fourfold $X$ and let $I$ be the cycle $(\alpha\times\alpha)_\ast(I_F\cdot h_F\times h_F)$, where $h_F$ is an ample divisor on $F(X)$. 
    Then there is a relation
    $$I^2=a\Gamma_\iota+I\cdot \Gamma + \Gamma' \text{ in }\chow^4(Y\times Y)$$
    where $a\in \IQ$ and $\Gamma$ and $\Gamma'$ are decomposable and generically defined, and $\Gamma_\iota$ is the graph of the involution $\iota$ on $Y$.
\end{proposition}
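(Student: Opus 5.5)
The plan is to push the relation of \Cref{prop:I_F^2} forward along $\alpha\times\alpha$ after cutting with $h_F\times h_F$, and to use \Cref{pullback-I} to turn products of $I_F$ into products of pullbacks of $I$. By \Cref{pullback-I}, write $I_F=(\alpha\times\alpha)^\ast I+A$ with $A$ decomposable and generically defined. Then
$$I_F^2=(\alpha\times\alpha)^\ast(I^2)+2(\alpha\times\alpha)^\ast I\cdot A+A^2 .$$
Multiply both sides by $\pr_1^\ast h_F\cdot\pr_2^\ast h_F$ and apply $(\alpha\times\alpha)_\ast$. Since $\alpha\times\alpha$ is a $\IP^1\times\IP^1$-bundle and $h_F$ has positive degree on the fibres, the projection formula gives
$$(\alpha\times\alpha)_\ast\bigl(I_F^2\cdot h_F\times h_F\bigr)=e\,I^2+2I\cdot(\alpha\times\alpha)_\ast(A\cdot h_F\times h_F)+(\alpha\times\alpha)_\ast(A^2\cdot h_F\times h_F)$$
with $e=(\deg h_F|_{\IP^1})^2\neq0$. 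Both pushforwards of decomposable, generically defined cycles are again decomposable and generically defined, because $(\alpha\times\alpha)_\ast$ of an exterior product is the exterior product of the $\alpha_\ast$'s.

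Next I apply the same operation to the right-hand side of \Cref{prop:I_F^2}.

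\begin{itemize}
\item \emph{The term $\Gamma'$.} It pushes forward to a decomposable, generically defined cycle.
\item \emph{The term $I_F\cdot\Gamma$.} Substitute $I_F=(\alpha\times\alpha)^\ast I+A$ again. The part $(\alpha\times\alpha)^\ast I\cdot\Gamma\cdot h_F\times h_F$ pushes forward to $I\cdot(\alpha\times\alpha)_\ast(\Gamma\cdot h_F\times h_F)$, which is $I$ times a decomposable, generically defined cycle. The part $A\cdot\Gamma$ is decomposable and generically defined, and so is its pushforward.
\item \emph{The term $aW_2$.} By definition $W_2=(\alpha\times\alpha)^{-1}(\Gamma_\iota)=(\alpha\times\alpha)^\ast\Gamma_\iota$, since the preimage is a $\IP^1\times\IP^1$-bundle over $\Gamma_\iota$ and the pullback is flat. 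Hence $(\alpha\times\alpha)_\ast(W_2\cdot h_F\times h_F)=e\,\Gamma_\iota$.
\end{itemize}

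Dividing by $e$ and collecting terms yields $I^2=a\Gamma_\iota+I\cdot\Gamma_Y+\Gamma'_Y$ with $\Gamma_Y,\Gamma'_Y$ decomposable and generically defined.

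The main point to check carefully is the degree bookkeeping. The relation of \Cref{prop:I_F^2} lives in $\chow^4(F\times F)$, and after multiplying by $h_F\times h_F$ it lands in codimension $6$. Pushing forward along a map of relative dimension $2$ then gives codimension $4$ on $Y\times Y$, which is consistent with $I^2$ and $\Gamma_\iota$. One must also make sure that the projection-formula manipulation $(\alpha\times\alpha)_\ast((\alpha\times\alpha)^\ast I\cdot\Gamma\cdot h_F\times h_F)=I\cdot(\alpha\times\alpha)_\ast(\Gamma\cdot h_F\times h_F)$ is used consistently. Generic definedness is automatic throughout, since all constructions ($h_F$ can be chosen as a relative polarisation, $\alpha$, $A$, $\Gamma$) exist over the family $\mathcal{U}$.
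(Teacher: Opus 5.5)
Your argument is correct and is essentially the paper's proof: substitute $I_F=(\alpha\times\alpha)^\ast I+A$ from \Cref{pullback-I} into the relation of \Cref{prop:I_F^2}, cut with $h_F\times h_F$, push forward using the projection formula, and observe that $W_2=(\alpha\times\alpha)^\ast\Gamma_\iota$ pushes forward to a multiple of $\Gamma_\iota$. The only difference is bookkeeping. The paper substitutes into just one factor of $I_F^2$ and uses the definition $I=(\alpha\times\alpha)_\ast(I_F\cdot h_F\times h_F)$ to obtain $I^2$ with coefficient $1$, silently absorbing the fibre-degree factor into $a$; you expand both factors and divide by $e=m^2$, and the two computations give the same relation.
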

\begin{proof}
    The statement is analogous to \cite[Prop. 2.27]{laterveer23} and the proof is similar. 
    We use \cref{prop:I_F^2,pullback-I} to obtain
    $$((\alpha\times\alpha)^\ast I+A)\cdot I_F=aW_2+(\alpha\times\alpha)^\ast I\cdot\Gamma+\Gamma\cdot A+\Gamma'.$$
    Intersecting with $h_F\times h_F$ and pushing forward, we use the projection formula to simplify the equation to 
    $$ I^2+(\alpha\times\alpha)_\ast(A\cdot I_F\cdot h_F\times h_F)=a\Gamma_\iota+I\cdot(\alpha\times\alpha)_\ast(\Gamma\cdot h_F\times h_F)+(\alpha\times\alpha)_\ast((\Gamma\cdot A+\Gamma')\cdot h_F\times h_F)$$
    By substituting $I_F=A+(\alpha\times\alpha)^\ast I$ in the second term on the left-hand side, we obtain 
    $$(\alpha\times\alpha)_\ast(A\cdot I_F\cdot h_F\times h_F)=I\cdot (\alpha\times\alpha)_\ast(A\cdot h_F\times h_F)+(\alpha\times\alpha)_\ast(A^2\cdot h_F\times h_F).$$
    Finally, since decomposable and generically defined cycles are closed under push-forward and pullback of products of maps defined in families, we obtain the result after rearranging and renaming terms. 
\end{proof}

\begin{proposition}\label{prop:linear-relation}
    Let $Y$ be a general double EPW quartic and $I$ as above. Then the class
    $$B\coloneq I+(\iota,\id)^\ast I \text{ in } \chow^2(Y\times Y), $$
    is decomposable and generically defined.
\end{proposition}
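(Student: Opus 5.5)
We want to show that $B=I+(\iota,\id)^\ast I$ is decomposable and generically defined. Being generically defined is immediate, since $I$, $\iota$ and the ample class $h_F$ can all be constructed over the family $\mathcal{U}$. The work is in proving decomposability. The analogous statement for double EPW sextics is \cite[Prop.~3.2]{laterveer23}, and I would follow its strategy. It rests on two ingredients: the geometry of conics lying in a common del Pezzo surface $D_{(L,M)}$, and the Franchetta property of \Cref{thm:franchetta} for the universal family of $Y\times Y$ cycles.

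The first step is geometric. Fix a general conic $c$ with $\alpha(c)=y$. I would compute the cycle $I_\ast(y)=\alpha_\ast\bigl(p_\ast q^\ast q_\ast p^\ast(\alpha^\ast y\cdot h_F)\cdot h_F\bigr)$, which is the surface in $Y$ swept out by conics meeting conics of the pencil $\alpha^{-1}(y)$. I would then compare it with $I_\ast(\iota y)$.

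The conics in $\alpha^{-1}(y)$ and $\alpha^{-1}(\iota y)$ together sweep out exactly the del Pezzo surface $D_{(L,M)}=\pi^{-1}(L\times M)$. Indeed, the two pencils of planes in the rank-$4$ quadric $\tilde Q$ cut out the two rulings of conics on $D_{(L,M)}$. Hence
\[
q_\ast p^\ast\alpha^\ast(y+\iota y)\cdot(\text{suitable }h_F\text{-weights})\;=\;\lambda\,[D_{(L,M)}]=\lambda\,\pi^\ast(G\cdot(H-G))
\]
in $\chow^2(X)$, up to correcting for the $h_F$-weighting on the two fibres. This class is a polynomial in $H$ and $G$, so it is independent of $y$. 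Applying $p_\ast q^\ast$ and then $\alpha_\ast(\,\cdot\,h_F)$, I obtain that $(I+(\iota,\id)^\ast I)_\ast(y)$ is a fixed class in $\chow^2(Y)$, independent of $y$, for general $y$.

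In the second step I spread this out. A correspondence $\Gamma\in\chow^2(Y\times Y)$ whose action on points is constant, and whose transpose acts compatibly, is decomposable up to a cycle supported over a proper closed subset of the first factor. This is the Bloch--Srinivas-type decomposition. Concretely, the equality of the previous paragraph holds over the generic point of the first factor, so
\[
B=Y\times\beta+\Gamma_Z,
\]
where $\Gamma_Z$ is supported on $Z\times Y$ for a divisor $Z\subset Y$. This alone does not give decomposability. Instead, I would run the whole argument on the universal family: the relation $B-\pr_2^\ast\beta$ is a generically defined class on $\mathcal{Y}\times_{\mathcal{U}}\mathcal{Y}$, and $\beta$ itself is generically defined.

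The cohomology class of $B$ should be decomposable. I would verify this by checking that $B_\ast$ kills $H^{2,0}$ and the transcendental part: by the previous step $B_\ast$ factors through the class of $D_{(L,M)}$, which involves only algebraic classes pulled back from $\IP^2\times\IP^2$. By Künneth and the description of Hodge classes on the very general $Y\times Y$, $[B]$ then equals the class of some decomposable generically defined cycle $\Gamma''$. Finally I would apply a Franchetta property for the family $\mathcal{Y}\times_\mathcal{U}\mathcal{Y}\to\mathcal{U}$ to get $B=\Gamma''$ in $\chow^2(Y\times Y)$. This property follows, as in the proof of \Cref{thm:franchetta}, from the motive of $Y$ being a summand of motives of $S^2$ together with the Franchetta property for $S^4$ over $\mathcal{F}_4$. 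I expect this last step to be the main obstacle, because \cite{flv19} only gives Franchetta for low powers of genus-$2$ $K3$ surfaces. The fallback would be to use directly the Franchetta property for $\mathcal{X}^{3/\mathcal{U}}$ from \Cref{thm:franchetta}: write $B$ as $\alpha\times\alpha$-pushforward of $P^t\circ(\text{cycle on }X\times X)\circ P$, where the middle cycle is built from $\Delta_X$ and $q_\ast p^\ast$, and reduce the question to the Franchetta property for $X\times X$.
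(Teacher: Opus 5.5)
Your proposal has a genuine gap in each of its two steps.

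In the geometric step you compute the wrong cycle. Since $\alpha^\ast y\cdot h_F$ is a zero-cycle of $m$ points on the fibre $\alpha^{-1}(y)$, the class $B_\ast(y)=I_\ast(y)+I_\ast(\iota y)$ depends only on the $1$-cycle $q_\ast p^\ast\bigl(\alpha^\ast(y+\iota y)\cdot h_F\bigr)=m([c]+[c'])\in\chow_1(X)$, where $\alpha(c)=y$ and $\alpha(c')=\iota(y)$. The surface $q_\ast p^\ast\alpha^\ast y=[D_{(L,M)}]$ swept out by a pencil is a $2$-cycle. Pushing it through $p_\ast q^\ast$, $\cdot h_F$ and $\alpha_\ast$ gives a class in $\chow^1(Y)$, not in $\chow^2(Y)$, and no ``$h_F$-weighting'' turns one into the other.

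Moreover, a single pencil $\alpha^{-1}(y)$ already sweeps out $D_{(L,M)}$. So your argument never uses $\iota(y)$, and it would equally show that $I_\ast(y)$ is independent of $y$. That is false, since $I_\ast$ is injective on the non-zero group $\chow_0(Y)^-$ (\Cref{cor:chow-iso}).

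The fact you actually need is that $c\cup c'$ is a hyperplane section of $D_{(L,M)}$. Planes from opposite rulings of the rank-$4$ quadric $\tilde{Q}$ span a hyperplane $\Lambda$ with $\tilde{Q}\cap\Lambda=\langle c\rangle\cup\langle c'\rangle$. Hence $D_{(L,M)}\cap\Lambda=c\cup c'$, and $[c]+[c']=\pi^\ast H\cdot[D_{(L,M)}]$ is independent of $y$ in $\chow_1(X)$. The paper phrases this as the curves $c+c'$ being parametrised by a rational variety.

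In the second step you treat the Bloch--Srinivas decomposition $B=Y\times\beta+\Gamma_1$, with $\Gamma_1$ supported on $D\times Y$ for a divisor $D\subset Y$, as insufficient. You then fall back on a Franchetta property for $\mathcal{Y}\times_{\mathcal{U}}\mathcal{Y}$ that is not available: it would need the Franchetta property for fourth powers of genus-$2$ $K3$ surfaces. Your alternative via $X\times X$ is only sketched.

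In fact this decomposition already finishes the proof, and it is exactly what the paper does. Let $j\colon\tilde{D}\to Y$ be a desingularisation of $D$. Then $\Gamma_1$ is the push-forward of a divisor class on $\tilde{D}\times Y$. Since $H^1(Y,\Ox_Y)=0$, \cite[Ex.~III.12.6]{hartshorne} gives $\mathrm{Pic}(\tilde{D}\times Y)=\mathrm{Pic}(\tilde{D})\oplus\mathrm{Pic}(Y)$. Hence $\Gamma_1$ is a sum of classes $j_\ast a\times Y$ and $[D]\times b$ with $a\in\chow^1(\tilde{D})$ and $b\in\chow^1(Y)$, so it is decomposable. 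No cohomological computation and no Franchetta property for $Y\times Y$ is required.
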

\begin{proof}
    First, notice that the right-hand side is generically defined, meaning that $B$ is too. 
    Now, the classes $c+c'$ in $\chow_1(X)$ with $\alpha(c)=y$ and $\alpha(c')=\iota(y)$ are hyperplane sections restricted to $D_{(L,M)}$ from $\tilde{Q}$.
    The $D_{(L,M)}$ are parametrised by $(\IP^2)^\vee\times(\IP^2)^\vee$ and as each~$D_{(L,M)}$ is a del Pezzo of degree $4$, we know that its hyperplane sections are parametrised by $\IP^3$, meaning that the classes $c+c'$ are parametrised by a $\IP^3$-bundle over $(\IP^2)^\vee\times(\IP^2)^\vee$.
    Thus, they are rationally equivalent, i.e. constant. Denote this class by $C=c+c'$.
    Then we have $p_\ast q^\ast c+p_\ast q^\ast c'=p_\ast q^\ast C$ for all $c,c'$ as above.
    Since~$I_F=(p\times p)_\ast(q\times q)^\ast\Delta_X$, we can rewrite this as 
    $$I|_{y\times Y}+I|_{\iota(y)\times Y}=\alpha_\ast (p_\ast q^\ast C\cdot h_F),$$
    where $y=\alpha(c)$.
    This means that the correspondence
    $$(I+(\iota,\id)^\ast I)_\ast\colon \chow_0(Y)\to \chow^2(Y)$$
    factors through the degree map. 
    Thus, by a decomposition of the diagonal argument, more precisely by \cite[Cor. 10.20]{voisinI}, we have 
    $$I+(\iota,\id)^\ast I=C'\times Y+\Gamma,$$
    where $\Gamma$ is supported on $Y\times D$ for some divisor $D\subset Y$. 
    Since $H^1(Y,\Ox_Y)=0$, it follows from \cite[Ex. III.12.6]{hartshorne} that $\chow^1(Y\times D)=\chow^1(Y)\oplus\chow^1(D)$ and $\Gamma$ is decomposable, completing the proof.
\end{proof}

We can now prove the main result of this section, which is a relation involving the projector onto the anti-invariant part of $\chow^\ast(Y)$, the incidence variety $I$, and decomposable generically defined cycles. 
This will allow us to prove some relations necessary for the Beauville--Voisin conjecture. 
\begin{corollary}\label{cor:proj-i}
    Let $Y$ be a double EPW quartic with associated Verra fourfold $X$ and let $I=(\alpha\times\alpha)_\ast(I_F\cdot h_F\times h_F)$, where $h_F$ is an ample divisor on $F(X)$. 
    Then there is a relation
    $$\frac{1}{2}\left(\Delta_Y-\Gamma_\iota\right)=I\cdot\Gamma+((\iota,\id)^\ast I)\cdot \Gamma'+\Gamma'' \text{ in }\chow^4(Y\times Y),$$
    where $\Gamma_\iota$ is the graph of the involution and $\Gamma,\Gamma',\Gamma''$ are decomposable and generically defined cycles.
\end{corollary}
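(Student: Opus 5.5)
We derive the relation by combining \Cref{prop:I^2} with \Cref{prop:linear-relation}, in the spirit of the analogous arguments in \cite{voisin08} and \cite{laterveer23}. By \Cref{prop:linear-relation}, $(\iota,\id)^\ast I=B-I$ with $B$ decomposable and generically defined. Since $(\iota,\id)$ is an involution of $Y\times Y$ preserving the class of decomposable generically defined cycles, we may also apply $(\iota,\id)^\ast$ to the relation of \Cref{prop:I^2}. Note that $(\iota,\id)^\ast\Gamma_\iota=\Delta_Y$: indeed $(\iota,\id)^{-1}(\Gamma_\iota)=\{(y,y')\,|\,\iota(y')=\iota(y)\}=\Delta_Y$, and $(\iota,\id)$ is an automorphism. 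This gives
$$((\iota,\id)^\ast I)^2=a\Delta_Y+((\iota,\id)^\ast I)\cdot\tilde\Gamma+\tilde\Gamma',$$
where $\tilde\Gamma,\tilde\Gamma'$ are the pullbacks of $\Gamma,\Gamma'$. These are again decomposable and generically defined, since $\iota$ acts on $\gdch^\ast(Y)$ (which consists of $\iota$-invariant classes anyway) and preserves exterior products.

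Next we expand the left-hand side using $(\iota,\id)^\ast I=B-I$:
$$(B-I)^2=I^2-2B\cdot I+B^2.$$
Substituting $I^2=a\Gamma_\iota+I\cdot\Gamma+\Gamma'$ from \Cref{prop:I^2} and subtracting, we obtain
$$a(\Delta_Y-\Gamma_\iota)=I\cdot(\Gamma-2B)+B^2+\Gamma'-((\iota,\id)^\ast I)\cdot\tilde\Gamma-\tilde\Gamma'.$$
Every term on the right is either $I$ times a decomposable generically defined cycle, $(\iota,\id)^\ast I$ times one, or a product of such cycles. The latter is again decomposable and generically defined, because products of decomposable cycles are decomposable and generically defined cycles form a ring.

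Finally we divide by $2a$, which requires $a\neq0$. This is exactly the point flagged in \Cref{rmk:non-zero}, and it is the main obstacle; it is supplied by \Cref{lemma:non-zero}, which we invoke. Should one need to argue it directly, the plan would be to act with both sides on cohomology. If $a=0$, the correspondence $I^2$ would have the form $I\cdot\Gamma+\Gamma'$. Acting on $H^{2,0}(Y)$, decomposable correspondences act by zero on transcendental classes, while $I_\ast$ restricted to $H^2$ is a nonzero multiple of the anti-invariant projector (via the Abel--Jacobi/incidence picture of \Cref{intro:chow-iso}). Computing the action of $I^2$ on $\sigma\in H^{2,0}$ via the intersection theory of $I_F$ should then exhibit a nonzero multiple of $\iota^\ast\sigma=-\sigma$, giving a contradiction.

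The generality hypotheses are inherited from \Cref{prop:I^2} and \Cref{prop:linear-relation}. For an arbitrary double EPW quartic, we would spread the relation over $\mathcal{U}$ and specialise, using that all cycles involved are generically defined.
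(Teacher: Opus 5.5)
Your argument is the paper's proof: square $(\iota,\id)^\ast I=B-I$ from \Cref{prop:linear-relation}, substitute \Cref{prop:I^2} on both sides using $(\iota,\id)^\ast\Gamma_\iota=\Delta_Y$, and divide by $2a$, which is non-zero by \Cref{lemma:non-zero}. The one cosmetic difference is that you keep $(\iota,\id)^\ast\Gamma$ as a new decomposable cycle, whereas the paper uses $\iota$-invariance of generically defined cycles to replace it by $\Gamma$.

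Your optional cohomological fallback for $a\neq 0$ would not work as written, though you do not need it. The term $I\cdot\Gamma$ is not decomposable, and $(I\cdot\pr_1^\ast h^2)_\ast\sigma=I_\ast(h^2\sigma)$ lies in $H^{2,0}(Y)$ and need not vanish. Also, $I_\ast$ sends $H^2$ to $H^{-2}=0$, so the claim about its action on $H^2$ is off.
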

\begin{proof}
    We square the relation from \Cref{prop:linear-relation} and obtain 
    $$(\iota, \id)^\ast I^2=I^2-2I\cdot B +B^2.$$
    Now, we can substitute the relation from \Cref{prop:I^2} to obtain
    $$(\iota,\id)^\ast(a\Gamma_\iota+ I\cdot \Gamma+\Gamma')=a\Gamma_\iota+I\cdot\Gamma+\Gamma' -2I\cdot B+B^2$$
    Since $\Gamma$ is decomposable and generically defined, it is $\iota$-invariant, meaning that 
    $$(\iota,\id)^\ast(I\cdot\Gamma)=(\iota,\id)^\ast(I)\cdot\Gamma.$$
    After rearranging and renaming, we obtain the statement. 
    This relies on the fact that the constant $a$ in \Cref{prop:I_F^2} and hence in \Cref{prop:I^2} is non-zero. For this, see the discussion in \Cref{rmk:non-zero}.
\end{proof}

\begin{corollary}\label{cor:chow^3_weak}
    Let $Y$ be a double EPW quartic. 
    Then we have
    $$\chow^3(Y)^-_{\mathrm{hom}}=\chow^1(Y)^+\cdot \chow^2(Y)^-_{\mathrm{hom}}.$$
\end{corollary}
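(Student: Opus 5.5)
We let the correspondence identity of \Cref{cor:proj-i} act on an element $z\in\chow^3(Y)^-_{\mathrm{hom}}$. Since $\iota^\ast z=-z$, the left-hand side gives
$$\tfrac{1}{2}(\Delta_Y-\Gamma_\iota)_\ast z=\tfrac12(z-\iota_\ast z)=z.$$
It therefore suffices to show that each term on the right-hand side sends $z$ into $\chow^1(Y)^+\cdot\chow^2(Y)^-_{\mathrm{hom}}$.

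\textbf{The term $\Gamma''$.} Write $\Gamma''=\sum \pr_1^\ast a_k\cdot\pr_2^\ast b_k$ with $a_k,b_k$ generically defined. Then
$$\Gamma''_\ast z=\sum \deg(a_k\cdot z)\, b_k.$$
Since $z$ is homologically trivial, every $\deg(a_k\cdot z)$ vanishes, so $\Gamma''_\ast z=0$.

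\textbf{The term $I\cdot\Gamma$.} Write $\Gamma=\sum\pr_1^\ast a_k\cdot\pr_2^\ast b_k$ with $a_k,b_k$ generically defined. By the projection formula,
$$(I\cdot\Gamma)_\ast z=\sum b_k\cdot I_\ast(a_k\cdot z).$$
For this to be nonzero and of codimension $3$, one needs $\operatorname{codim} b_k+\operatorname{codim}I_\ast(a_k z)=3$, where $I_\ast$ lowers codimension by $2$ on the relevant classes, as $I$ has codimension $2$ in $Y\times Y$. Hence $\operatorname{codim}(a_k)=\operatorname{codim}(b_k)$, and by degree reasons only the following cases contribute.
\begin{itemize}
\item If $a_k\in\chow^0$, then $b_k\in\chow^0$ and the term is $I_\ast z\in\chow^1(Y)$. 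Since $z$ is homologically trivial, $I_\ast z$ is a homologically trivial divisor, and $\chow^1(Y)_{\mathrm{hom}}=0$ because $H^1(Y,\Ox_Y)=0$. So the term vanishes.
\item If $a_k\in\chow^1$, then $a_k z\in\chow_0(Y)_{\mathrm{hom}}$, so $I_\ast(a_kz)\in\chow^2(Y)_{\mathrm{hom}}$, and $b_k$ is a generically defined divisor, hence $\iota$-invariant. These are the terms that land in $\chow^1(Y)^+\cdot\chow^2(Y)_{\mathrm{hom}}$.
\item Higher codimensions of $a_k$ give zero for dimension reasons.
\end{itemize}

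\textbf{The term $((\iota,\id)^\ast I)\cdot\Gamma'$.} The same computation applies, with $I_\ast$ replaced by $I_\ast\iota_\ast$, and again yields classes in $\chow^1(Y)^+\cdot\chow^2(Y)_{\mathrm{hom}}$.

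\textbf{Reducing to anti-invariant classes.} Summing up, $z=\sum_k b_k\cdot w_k$ with $b_k\in\chow^1(Y)^+$ and $w_k\in\chow^2(Y)_{\mathrm{hom}}$. Applying the projector $\tfrac12(1-\iota^\ast)$ to both sides fixes $z$. Because each $b_k$ is $\iota$-invariant,
$$\tfrac12(1-\iota^\ast)(b_k\cdot w_k)=b_k\cdot\tfrac12(w_k-\iota^\ast w_k),$$
and $\tfrac12(w_k-\iota^\ast w_k)\in\chow^2(Y)^-_{\mathrm{hom}}$. This proves $z\in\chow^1(Y)^+\cdot\chow^2(Y)^-_{\mathrm{hom}}$. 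The reverse inclusion is immediate: a product of an invariant class and an anti-invariant homologically trivial class is anti-invariant and homologically trivial.

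\textbf{Main obstacle.} The delicate point is the validity of \Cref{cor:proj-i} itself for the given $Y$, since it relies on $a\neq 0$ (\Cref{lemma:non-zero}) and the preceding results are stated for general $Y$. For an arbitrary double EPW quartic, I would first try to spread the relation over the family $\mathcal{U}$. The cycles involved are generically defined, so the relation holds on the generic fibre, and one then specialises it to every fibre. Once this is in place, the remaining work is only the bookkeeping of codimensions above.
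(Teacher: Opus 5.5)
Your proposal is correct and follows essentially the same route as the paper. You apply \Cref{cor:proj-i}, note that $\Gamma''$ kills homologically trivial classes, and use the projection formula with $\chow^1(Y)_{\mathrm{hom}}=0$ and dimension reasons to reduce to terms $b_k\cdot I_\ast(a_k\cdot z)$ with $a_k,b_k$ invariant divisors; your final application of $\tfrac12(1-\iota^\ast)$ is a slightly more careful way to get anti-invariance than the paper's direct assertion that $I_\ast(D_i\cdot\gamma)\in\chow^2(Y)^-_{\mathrm{hom}}$.

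One harmless slip: since $\Gamma\in\chow^2(Y\times Y)$, the codimensions of $a_k$ and $b_k$ must sum to $2$ rather than be equal. So the first case is really $a_k\in\chow^0(Y)$, $b_k\in\chow^2(Y)$, and it still vanishes because $I_\ast z=0$.
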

\begin{proof}
    Notice that the classes that appear in \Cref{cor:proj-i} are all generically defined.
    Hence, they can be extended to all double EPW quartics with the relation still holding.
    Now, let $\gamma\in \chow^3(Y)_{\mathrm{hom}}^-$. 
    Applying the relation from \Cref{cor:proj-i}, we obtain
    $$\gamma=(I\cdot\Gamma)_\ast(\gamma)+(((\iota,\id)^\ast I)\cdot\Gamma')_\ast(\gamma),$$
    since the decomposable cycle $\Gamma''$ acts trivially on homologically trivial cycles. 
    We compute $(I\cdot\Gamma)_\ast(\gamma)$, as the other case is analogous. 
    We may write $$\Gamma=\pr_1^\ast a+\pr_2^\ast b+\sum_i\pr_1^\ast D_i\cdot\pr_2^\ast D_i'$$ for some $a,b\in\gdch^2(Y)$ and $D_i,D_i'\in\gdch^1(Y)=\chow^1(Y)^+$.
    Using the projection formula, we obtain 
    $$(I\cdot\Gamma)_\ast(\gamma)=\sum_i D_i'\cdot I_\ast(D_i\cdot\gamma),$$
    as $I_\ast(\gamma)\in\chow^1(Y)_{\mathrm{hom}}=0$ and $a\cdot\gamma=0$ for dimension reasons. 
    Finally, we have that~$I_\ast(D_i\cdot\gamma)\in\chow^2(Y)^-_{\mathrm{hom}}$.
    Applying the same argument to $(((\iota,\id)^\ast I)\cdot\Gamma')_\ast(\gamma)$ completes the proof.
\end{proof}

\section{\texorpdfstring{A constant cycle surface and $\chow_1(X)$}{A constant cycle surface and CH(X)}}\label{sec:constant-cycle-surface}

In this section we aim to prove that the fixed locus $Z\subset Y$ of the involution $\iota$ is a \emph{constant cycle surface}, meaning that all points lying on $Z$ have the same class in~$\chow_0(Y)$.
Along the way, we will study some products of codimension $2$ cycles and prove that the constant $a$ in \Cref{prop:I_F^2} is non-zero. 
Furthermore, we will study the correspondence $\chow_0(Y)\to\chow_1(X)$ induced by the universal conic and show that it induces an isomorphism $\chow_0(Y)^-\cong\chow_1(X)_{\mathrm{hom}}$ and use this to study $\chow^3(Y)^-_{\mathrm{hom}}$.
We will use the construction via conics in Verra fourfolds, see \Cref{sec:construction-verra}.

\subsection{The fixed locus as a constant cycle surface}\label{subsec:consta-cycle}

We begin by studying conics on the del Pezzo surfaces $D_{(L,M)}$.
For any conic $c\subset X$, consider the threefold $$D_c=\{c'\in~F~|~c'\cap c\neq \varnothing\}.$$ 
Considering $I_F$ as a correspondence, we have that $(I_F)_\ast([c])=D_c$ and $D_c$ only depends on the class $[c]\in\chow^3(X)$. 
Recall that $c$ is contained in $$D_{(L,M)}=\pi^{-1}(L\times M)\subset\IP(L\otimes M\oplus \IC),$$ which is a del Pezzo surface of degree 4. 
A general such del Pezzo is contained in 5 quadric threefolds $\tilde{Q}$ of rank $4$.
Any such $\tilde{Q}$ contains two pencils of planes, each of which intersects $D_{(L,M)}$ in a conic. 
One of these quadrics is the cone $C(L\times M)$ and the conics obtained from it are contracted under one of the two projections $X\to \IP^2\times\IP^2\to\IP^2$, meaning they are not $(1,1)$-conics. 
Conics obtained from pencils of planes on the other~$4$ quadrics are $(1,1)$-conics.
Recall also that the involution on $X$ induces an involution on~$D_{(L,M)}$, which interchanges any two pencils obtained from the same $\tilde{Q}$.

Thus, any $D_{(L,M)}$ contains $8$ pencils of $(1,1)$-conics. 
From the fact that any del Pezzo of degree $4$ is the blow-up of $\IP^2$ in $5$ points, we see that the pencils are $H-E_i$ and~$2H-\sum_{j\neq i}E_j$, $i=1,\ldots 5$.
This description depends on a chosen isomorphism and we cannot determine which two pencils consist of conics that are not $(1,1)$-conics, but we know that it must be $H-E_i$ and $2H-\sum_{j\neq i}E_j$ for the same $i$.
Denote the $8$ pencils by $\ell_i$, where $\ell_{i+8}=\ell_i$ and $\iota(\ell_i)=\ell_{i+4}$. 
From this description we can conclude that
$$c_i^2=0, ~c_i c_{i+4}=2, \text{ and }c_i c_j=1 \text{ for } j\neq i, i+4,$$
where $c_i$ lies on $\ell_i$. 
\begin{lemma}\label{lemma:conic-intersection}
Let $c$ be a general $(1,1)$-conic in a general Verra fourfold $X$. Let $D_{(L,M)}$ be the del Pezzo surface containing $c$ and assume $c\in\ell_i$. 
Then 
$$D_c^2=4\ell_{i+4}+\sum_{j\neq i,i+4}\ell_j \text{ in }\chow^4(F).$$
\end{lemma}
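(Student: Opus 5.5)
The plan is to compute $D_c^2$ by moving one of the two factors. By definition $D_c=(I_F)_\ast[c]$ is a threefold in the fivefold $F$, so $D_c^2$ is a $1$-cycle, and it depends only on the class $[c]\in\chow_1(X)$. All members of the pencil $\ell_i$ are rationally equivalent in $X$, since $\ell_i\cong\IP^1$. I would therefore write $D_c^2=D_c\cdot D_{c'}$ for a second general conic $c'\in\ell_i$. Because $c_i^2=0$, the conics $c$ and $c'$ are disjoint, and it suffices to show that $D_c\cap D_{c'}$ is a proper intersection (of dimension $1$) and to compute the intersection multiplicities along each component.

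\textbf{Step 1 (set-theoretic description).} Let $c''$ be a conic meeting $c$ at $x$ and $c'$ at $x'$, with $\pi_1(c'')=L''$ and $\pi_2(c'')=M''$. Both $\pi_1(x)$ and $\pi_1(x')$ lie on $L$.
\begin{itemize}
\item If $\pi_1(x)\neq\pi_1(x')$, then $L''=L$.
\item Likewise, if $\pi_2(x)\neq\pi_2(x')$, then $M''=M$.
\end{itemize}
In that case $c''\subset\pi^{-1}(L\times M)=D_{(L,M)}$. The degenerate configurations, where the two points have the same image under $\pi_1$ or $\pi_2$, impose extra conditions. I would bound their dimension by a parameter count to check that they contribute no $1$-dimensional component; this uses that $c,c'$ are general in a general $X$, and possibly the flatness of $q$ from \Cref{lemma:normal-bundle}.

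Hence $D_c\cap D_{c'}$ consists of the $(1,1)$-conics on $D_{(L,M)}$ meeting both $c$ and $c'$. These are exactly the members of the pencils $\ell_j$ with $j\neq i$: members of $\ell_i$ meet neither conic since $c_i^2=0$, while $c_ic_j\geq 1$ for $j\neq i$. So the intersection is $1$-dimensional, hence proper, and it is supported on $\bigcup_{j\neq i}\ell_j$.

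\textbf{Step 2 (multiplicities).}
\begin{itemize}
\item For $j\neq i,i+4$, a general member of $\ell_j$ meets each of $c$ and $c'$ in exactly one point. Here $D_c$ and $D_{c'}$ are smooth near the corresponding point of $F$.
\item For $j=i+4$, a general member $c''$ meets $c$ in two points, since $c_ic_{i+4}=2$. The map $p\colon q^{-1}(c)\to D_c$ is then birational but not injective over $\ell_{i+4}$, so $D_c$ has two local branches there, one for each intersection point, and the same holds for $D_{c'}$. The pushforward $p_\ast q^\ast[c]$ records both branches.
\end{itemize}
I would then show that each pair of branches meets transversally along the curve. Each branch of $D_c$ at $c''$ is the locus of deformations of $c''$ keeping a point on $c$. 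Its tangent space is cut out in $T_{c''}F$ by the condition that a section of $N_{c''/X}$, evaluated at the point, lie in the image of $T_c$. The splitting $N_{c''/X}=\Ox(1)^2\oplus\Ox$ from \Cref{lemma:normal-bundle} makes this linear algebra explicit. This gives multiplicity $1$ for each $\ell_j$ with $j\neq i,i+4$, and $2\times 2=4$ for $\ell_{i+4}$, i.e.\ $D_c^2=4\ell_{i+4}+\sum_{j\neq i,i+4}\ell_j$.

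\textbf{Main obstacle.} I expect the hardest part to be Step 2: proving transversality of the branches in the five-dimensional $F$, and in particular that the two branches at the doubly-meeting conics are distinct and meet the branches of $D_{c'}$ transversally, so the multiplicity is exactly $4$ rather than something larger. I would first try a direct tangent-space computation. The fallback is a local computation inside the subvariety of conics contained in $D_{(L,M)}$ and its normal directions.
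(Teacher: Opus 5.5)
Your route is the same as the paper's. You replace $D_c^2$ by $D_c\cdot D_{c'}$ with $c'\in\ell_i$, show that the intersection lies on $\bigcup_{j\neq i}\ell_j\subset D_{(L,M)}$, and then compute multiplicities: $1$ along $\ell_j$ for $j\neq i,i+4$ from a tangent-space computation, and $2\cdot 2=4$ along $\ell_{i+4}$ from the two branches of $D_c$ and of $D_{c'}$. The paper makes this last step precise by cutting with a hypersurface $h_F$ transverse to $\ell_{i+4}$ and comparing tangent cones. The gap is in Step 1, where you leave the configurations with $\pi_k(x)=\pi_k(x')$ to a parameter count that you do not carry out. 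For irreducible $c''$ no count is needed. Since $\mathcal{O}_X(1)=\pi^\ast\mathcal{O}(1,1)$, $c''$ has degree $2$, and both $\pi_k(c'')$ are lines, each $\pi_k|_{c''}$ has degree one onto its image and is therefore injective. Hence $\pi_k(x)=\pi_k(x')$ would force $x=x'\in c\cap c'=\varnothing$. This is in substance the paper's argument: $\pi(c'')$ meets $L\times(M\cap M'')$ in one point, and $\pi$ is injective on $c''$.

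For reducible $c''$, however, the degenerate configurations genuinely occur in dimension one, so the count you propose cannot succeed. Let $e\subset D_{(L,M)}$ be a line over some $L\times\{m\}$ with $e\cdot c=1$. Every singular fibre of $D_{(L,M)}\to M$ has exactly one such component, because $c$ has degree one over $M$, and then also $e\cdot c'=1$. For general $z\in e$ with $\pi(z)=(a,m)$, the smooth quadric surface $\pi_1^{-1}(a)$ contains two lines $\ell\ni z$. These lie over $\{a\}\times N$ with $m\in N$ and, in general, $N\neq M$. Then $c''=e\cup\ell$ is a $(1,1)$-conic, namely a degenerate member of a pencil on $D_{(L,N)}$. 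It meets $c$ and $c'$ in points $x,x'\in e$ with $\pi_2(x)=\pi_2(x')=m$, yet it is not contained in $D_{(L,M)}$. Letting $z$ run over $e$ gives a whole curve of such conics inside $D_c\cap D_{c'}$. These curves are $1$-dimensional components of an intersection of the expected dimension, so they enter $D_c\cdot D_{c'}$ with positive multiplicity and positive $h_F$-degree and cannot be ignored. The paper's appeal to $\pi$ being an isomorphism on the meeting conic likewise only works for irreducible conics, so this point must be addressed in either version. One must either add these curves to the right-hand side of the formula or show that they drop out wherever the lemma is used.
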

\begin{proof}
Consider a general $c_0\in\ell_i$. 
Then, $D_{c_0}$ and $D_c$ are rationally equivalent and~$D_c\cap~D_{c_0}$ consists of conics intersecting both $c$ and $c_0$. 
Let $c'\in D_c\cap D_{c_0}$ be one such conic with~$c'\subset D_{(L',M')}$. 
We begin by showing that $L=L'$ and $M=M'$. Suppose this were not the case and assume that $M\neq M'$. 
We know that two conics lying on the same pencil don't intersect, so the points of intersection $x=c'\cap c$ and $y=c'\cap c_0$ must be distinct. 
As $\pi(c')$ is a $(1,1)$-class, it intersects $L\times M\cap M'$ in a single point. 
Thus, the points $x$ and $y$ must have the same image in $\IP^2\times\IP^2$. Since $c'$ is a conic,~$\pi|_c$ is an isomorphism, as both $c'$ and $\pi(c')$ are of genus $0$, meaning that the above is a contradiction. 
Thus, $c'\in D_{(L,M)}$. 
Then, the calculations above show that 
$$\bigcup_{j\neq i}\ell_j=\supp D_c\cap D_{c_0}.$$

Now, the proof proceeds as in \cite[Lem. 3.9]{zhang24}. 
We provide it for the sake of completeness. 
As $D_c$ and $D_{c_0}$ intersect transversely, we only need to compute the multiplicity. 
Let $c'\in D_c\cap D_{c_0}$ be a general smooth conic and $D'_c=p^{-1}(D_c)$, and let~$D_{\tilde{c}}'=p^{-1}(D_{c_0})$. 
First, consider $c'\in D_c\cap D_{c_0}\cap \ell_{i+4}$. 
Let $x_1,x_2$ be the points of intersection of $c'$ and $c$ and $y_1,y_2$ those of $c'$ and $c_0$. 
Then, $p$ maps $(c',x_i)$ and $(c',y_i)$ to~$c'$. 
The tangent cones are
\begin{align*}
C_{c'}D_c=H^0(c',N_{c'/X}\otimes I_{x_1})\oplus H^0(c',N_{c'/D_{(L,M)}})\cup H^0(c',N_{c'/X}\otimes I_{x_2})\oplus H^0(c',N_{c'/D_{(L,M)}})\\
C_{c'}D_{c_0}=H^0(c',N_{c'/X}\otimes I_{y_1})\oplus H^0(c',N_{c'/D_{(L,M)}})\cup H^0(c',N_{c'/X}\otimes I_{y_2})\oplus H^0(c',N_{c'/D_{(L,M)}})
\end{align*}
Choose a general hypersurface $h_F\subset F$ whose class is ample that meets $\ell_{i+4}$ transversely at $c'$. 
Then, the multiplicity of $D_c|_{h_F}$ and $D_{c_0}|_{h_F}$ at $c'$ is $2$. 
Finally, notice that~$H^0(N_{c'/D_{(L,M)}})$ is contained in the kernel of the differential of $\alpha$, i.e. is the same tangent direction as $\ell_{i+4}$. 
As $h_F$ meets $\ell_{i+4}$ transversely, we have
$$C_{c'}D_c|_{h_F}\cap C_{c'}D_{\tilde{c}}|_{h_F}=0$$ 
and using \cite[Prop. 1.29]{3264}, we obtain that the multiplicity of $D_c$ and $D_{c_0}$ at $\ell_{i+4}$ is~$4$.

Finally, consider $c'\in D_c\cap D_{c_0}\cap \ell_j$ for some $j\neq i+4$. 
The conic $c'$ intersects both $c$ and $c_0$ in a single point $x$ and $y$, respectively. 
Then, from the proof of \Cref{lemma:normal-bundle}, we see that 
$$T_{(c',x)}D'_c=H^0(c', N_{c'/X}\otimes I_x)\oplus H^0(c', N_{c'/D_{(L,M)},x})$$
and hence that $D'_c$ is smooth at $c'$.
Thus, 
$$T_{c'}D_c\cap T_{c'} D_{c_0}=H^0(c',N_{c'/D_{(L,M)}})=T_c\ell_j$$
and $D_c$ and $D_{c_0}$ intersect transversely at $c'$, completing the proof.
\end{proof}
Let $c_i\in\ell_i$. 
Applying the above to all conics contained in the same $D_{(L,M)}$, we obtain the following.
\begin{equation}
5\left( D_{c_i}^2+D_{c_{i+4}}^2\right)-\sum_{j=1}^8 D_{c_j}^2=10\left(\ell_i+\ell_{i+4} \right)
\tag{$\star$}\label{eq:star}
\end{equation}
\begin{lemma}\label{lemma:non-zero}
The constant $a$ in \Cref{prop:I_F^2} is non-zero. 
Furthermore, if $[c]=[c']$ in $\chow_1(X)$, then $[\alpha(c)]=[\alpha(c')]$ in $\chow_0(Y)$.
\end{lemma}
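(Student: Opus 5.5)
The idea is to evaluate the relation of \Cref{prop:I_F^2} on the class of a single conic and compare with \eqref{eq:star}. View both sides of
$$I_F^2=aW_2+I_F\cdot\Gamma+\Gamma'$$
as correspondences $\chow_1(F)\to\chow^4(F)$, or rather apply them to the point class $[c]\in\chow_0(F)$. First I would check the following. Since $I_F=(p\times p)_\ast(q\times q)^\ast\Delta_X$ and $(I_F)_\ast[c]=D_c$, one has $(I_F^2)_\ast[c]=D_c^2$; this is the restriction of $I_F^2$ to $\{c\}\times F$, because restricting to a fibre commutes with intersection products. The term $(W_2)_\ast[c]$ is the fibre $\alpha^{-1}(\iota\alpha(c))$, i.e. the pencil $\ell_{i+4}$ when $c\in\ell_i$. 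Because $\Gamma,\Gamma'$ are decomposable and generically defined, $(I_F\cdot\Gamma)_\ast[c]$ is a sum $\sum D_c\cdot\gamma_k\cdot\deg(\ldots)$ with $\gamma_k\in\gdch^\ast(F)$ independent of $c$, together with terms depending only on $\deg[c]$. Similarly, $\Gamma'_\ast[c]$ is a fixed generically defined class.

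Next I would take the combination from \eqref{eq:star}: apply the relation to $5([c_i]+[c_{i+4}])-\sum_j[c_j]$ for conics $c_j\in\ell_j$ in one fixed general $D_{(L,M)}$. The total degree of this combination is $10-8=2\neq 0$, so the constant terms from $\Gamma'$ and from the degree parts do not cancel automatically. I would handle this by subtracting $\tfrac{2}{8}$ of the sum of all eight, or by choosing coefficients $\lambda_j$ with $\sum\lambda_j=0$. The key point is that the classes $D_{c_j}$ depend only on $[c_j]\in\chow_1(X)$, and $\sum_j[c_j]$ is a multiple of a generically defined class: it is the sum of $(1,1)$-pencils on a del Pezzo, hence constant as $(L,M)$ varies by the $\IP^3$-bundle argument of \Cref{prop:linear-relation}. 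Using also $[c_i]+[c_{i+4}]=C$ constant, the combination $5([c_i]+[c_{i+4}])-\sum_j[c_j]$ becomes $5C-4C'$-type constant classes, and the left side $10(\ell_i+\ell_{i+4})$ must match $a\cdot 10\cdot(\ldots)$ plus constants.

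The cleaner route I would actually try is to compare a single pencil $\ell_i$ with another $\ell_k$ from a different $D_{(L',M')}$. If $a=0$, then $D_c^2$ restricted to $[c]$ is an affine-linear function of $D_c$ with generically defined coefficients. By \Cref{lemma:conic-intersection}, $D_c^2=4\ell_{i+4}+\sum_{j\ne i,i+4}\ell_j$, and summing gives \eqref{eq:star}. I would then push forward via $\alpha$ (after intersecting with $h_F$) to $\chow_0(Y)$ and take cycle classes. With $a=0$ the right side of \eqref{eq:star} would lie in $\langle$generically defined$\rangle+\alpha^\ast$-images of $I_\ast$ of homologically equivalent combinations. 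Taking $c$ very general and using that $\ell_i+\ell_{i+4}$ is $\alpha^{-1}(y)+\alpha^{-1}(\iota y)$ with $y$ arbitrary, one would get that $[y]+[\iota y]$ is constant in $\chow_0(Y)$ up to $I_\ast$-images. This should contradict Mumford-type nontriviality, since $\chow_0(Y)^+$ is large given $H^{4,0}\neq 0$ invariant under $\iota$. This contradiction is the main obstacle, and I would instead aim for a cohomological contradiction: compute degrees against $h_F^3$ and against a divisor distinguishing $\ell_i$, which requires only numbers.

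For the second statement, assume $[c]=[c']$ in $\chow_1(X)$, with $c\in\ell_i$ and $c'\in\ell'_k$. Then $D_c=D_{c'}$, hence $D_c^2=D_{c'}^2$. Now apply the relation to the points $[c]$ and $[c']$. The $I_F\cdot\Gamma$ and $\Gamma'$ contributions coincide since they depend only on $D_c$ and on the degree, so $a\,\alpha^{-1}(\iota\alpha(c))=a\,\alpha^{-1}(\iota\alpha(c'))$ in $\chow^4(F)$. Since $a\ne 0$, pushing forward $\alpha_\ast(-\cdot h_F)$ gives $[\iota\alpha(c)]=[\iota\alpha(c')]$, and applying $\iota_\ast$ yields $[\alpha(c)]=[\alpha(c')]$.
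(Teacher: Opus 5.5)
Your argument for the second statement is correct and is the paper's own. You evaluate the relation at $[c]$ and $[c']$. The $I_F\cdot\Gamma$ and $\Gamma'$ terms agree, since they depend only on $D_c$ and on the degree, and $(W_2)_*[c]=\alpha^*[\iota\alpha(c)]$. Applying $\alpha_*(h_F\cdot -)$ and then $\iota_*$ gives the claim once $a\neq 0$. Your justification of $(I_F^2)_*[c]=D_c^2$ by restricting to $\{c\}\times F$ is also fine.

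The proof that $a\neq 0$, however, is not completed, for two reasons.

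\emph{The residual term.} You stop at ``$[y]+[\iota(y)]$ is constant up to $I_*$-images'', but no such term survives. If $a=0$, then for every zero-cycle $z$ on $F$ one has $(I_F^2)_*z=b\cdot(I_F)_*z+\deg(z)\,\gamma'$, with $b,\gamma'$ fixed and generically defined. Take $z=[c]+[c']$ with $\alpha(c')=\iota\alpha(c)$. Since $[c]+[c']=C$ in $\chow_1(X)$, a fact you already quote in your second paragraph, this gives $D_c^2+D_{c'}^2=b\cdot D_C+2\gamma'$, independent of $c$. So every bracket on the left of $(\star)$ is constant, and $\alpha_*(h_F\cdot -)$ applied to $(\star)$ shows that $10m([y]+[\iota(y)])$ is constant in $\chow_0(Y)$.

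\emph{The missing contradiction.} This is the real gap. You call the resulting Mumford-type contradiction ``the main obstacle'' and propose to replace it by a numerical computation against $h_F^3$ and a divisor distinguishing the $\ell_i$. That cannot work:
\begin{itemize}
\item All $\ell_j$ are fibres of $\alpha$, hence homologous, so no divisor distinguishes them.
\item In cohomology, $(W_2)_*[c]=\alpha^*[\mathrm{pt}]$ is also the value at $[c]$ of the decomposable, generically defined class $F\times\alpha^*o_Y$. So evaluating the relation on points in cohomology can never detect $a$.
\end{itemize}
The contradiction must be drawn in $\chow_0$, and there it is immediate. Constancy of $y\mapsto[y]+[\iota(y)]$ says that the correspondence $\Delta_Y+\Gamma_\iota$ acts on $\chow_0(Y)$ through the degree. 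By Mumford/Bloch--Srinivas it therefore annihilates $H^0(Y,\Omega_Y^4)$. But $(\Delta_Y+\Gamma_\iota)^*\sigma^2=\sigma^2+\iota^*\sigma^2=2\sigma^2\neq 0$, because $\iota^*\sigma=-\sigma$. This is exactly how the paper concludes; it phrases it as $\Gamma_\iota$ being a constant cycle subvariety of $Y\times Y$.

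A minor point: in your second paragraph the $a$-term is $a\bigl(5(\ell_i+\ell_{i+4})-\sum_j\ell_j\bigr)$, not $a\cdot 10(\ldots)$. The nonzero total degree of your combination is harmless, since the degree terms are constant anyway.
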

\begin{proof}
On the contrary, assume that $a=0$. 
Then,
$$I_F^2=I_F\cdot\Gamma+\Gamma'$$
for some decomposable and generically defined $\Gamma,\Gamma'$. 
The map $\chow_0(F)\to\chow^\ast(F)$ induced by a decomposable correspondence depends only on the degree of the zero-cycle and $I_\ast$ factors through $\chow_1(X)$.
Hence,
$$(I^2_F)_\ast(\xi)=(I^2_F)_\ast(\zeta)$$
if $q_\ast p^\ast  \xi = q_\ast p^\ast \zeta$ in $\chow_1(X)$. Using \cite[Lem. 17.3]{shenvial}, we obtain $(I_F^2)_\ast(c)=D_c^2$. 
The argument in the proof of \Cref{prop:linear-relation} shows that $c+c'$ is constant in $\chow_1(X)$ for any $c$ and $c'$ with $\alpha(c')=\iota(\alpha(c)))$.
Hence, $D_c^2+D_{c'}^2$ doesn't depend on $c$, where $c'$ is some conic satisfying $\alpha(c')=\iota(\alpha(c))$.

Now, choose a very ample divisor $h_F$ on $F$ and let $m$ be the intersection number of~$h_F$ with a general fibre of $\alpha$. 
Then it follows from \Cref{eq:star} that
$$\alpha_\ast((5(D_c^2+D_{\iota(c)}^2)-\sum_{i=1}^8 D_{c_j}^2)\cdot h_F)=\alpha_\ast(10(\ell+\ell')\cdot h_F)=10m([\alpha (c)]+[\iota(\alpha(c))]),$$
implying that $[\alpha(c)]+[\iota(\alpha(c))]$ is constant. 
This implies that $\Gamma_\iota$, the graph of the involution $\iota$, is a constant-cycle subvariety in $Y\times Y$. Let $\sigma$ be the symplectic form on~$Y$. 
By Mumford's theorem, see \cite[Cor. 10.18]{voisinI}, $(\pr_1^\ast\sigma^2+\pr_2^\ast\sigma^2)|_{\Gamma_\iota}=0$. 
Since $\iota$ is anti-symplectic, $2\pr_1^\ast\sigma^2|_{\Gamma_\iota}=0$, which implies $\sigma^2=0$, as $\pr_1\colon \Gamma_\iota\to Y$ is an isomorphism. 
This is a contradiction, and $a$ is non-zero.

For the second statement, let $[c]=[c']$. 
Then, we have 
\begin{align*}
0&= \alpha_\ast(h_F\cdot ((I_F^2)_\ast[c]-(I_F^2)_\ast[c']))\\
&= \alpha_\ast(h_F\cdot (aW+I_F\cdot\Gamma+\Gamma')_\ast([c]-[c']))\\
&= \alpha_\ast(h_F\cdot aW_\ast([c]-[c']))\\
&= am([\iota(\alpha(c))]-[\iota(\alpha(c'))]).
\end{align*}
Thus, $[c]=[c']$.
\end{proof}
We can now prove that the fixed locus $Z$ of the involution $\iota$ is a constant-cycle surface in $Y$. 
Consider the correspondence
$$\Psi\coloneq P_\ast\circ\alpha_\ast^{-1}=\frac{1}{m}P_\ast\circ (\pr_2^\ast h_F)_\ast\circ \alpha^\ast\colon \chow_0(Y)\to \chow_1(X),$$
where $h_F$ is an ample divisor on $F(X)$, $m$ is the intersection number of $h_F$ with the general fibre of $\alpha$, and $P$ is the universal conic $P\subset F(X)\times X$. 

\begin{theorem}\label{thm:constant-cycle}
    Let $Y$ be a general double EPW quartic with associated Verra fourfold~$X$.
    Then, $\Psi(y+\iota(y))$ does not depend on $y\in Y$. 
    In particular, $\Psi(2z)$ is constant in $\chow_1(X)$ for any $z\in Z$. 
    Furthermore, $Z$ is a constant cycle surface and any $z\in Z$ has class $o_Y$, the Beauville--Voisin class.
\end{theorem}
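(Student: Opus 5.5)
The plan has three steps: show that $\Psi(y+\iota(y))$ is independent of $y$, use this to show that $Z$ is a constant cycle surface, and then identify the common class with $o_Y$.

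\textbf{Step 1: $\Psi(y+\iota(y))$ is constant.} For $y\in Y$, choose conics $c\in\alpha^{-1}(y)$ and $c'\in\alpha^{-1}(\iota(y))$. Write $\ell,\ell'$ for the fibres of $\alpha$ over $y$ and $\iota(y)$. These fibres are the two pencils of planes of the same quadric $\tilde Q$ intersected with $D_{(L,M)}$.

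First, all conics in a fibre of $\alpha$ are rationally equivalent in $X$, since the fibre is a $\IP^1$. Hence
$$\Psi(y)=\frac1m P_\ast\bigl((h_F)|_{\ell}\bigr)=[c]\quad\text{in }\chow_1(X).$$

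Second, by the argument in the proof of \Cref{prop:linear-relation}, $c+c'$ is a hyperplane section of $D_{(L,M)}$ coming from $\tilde Q$. Such sections are parametrised by a $\IP^3$-bundle over $(\IP^2)^\vee\times(\IP^2)^\vee$, which is rationally connected. Therefore the class $C=[c]+[c']$ is independent of $y$, and
$$\Psi(y+\iota(y))=C .$$

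For $z\in Z$ we have $\iota(z)=z$, so $\Psi(2z)=C$ for every $z\in Z$.

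\textbf{Step 2: $Z$ is a constant cycle surface.} For $z,z'\in Z$, Step 1 gives $2[c_z]=2[c_{z'}]$ in $\chow_1(X)$, where $c_z,c_{z'}$ are conics over $z,z'$. With rational coefficients this means $[c_z]=[c_{z'}]$.

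The second statement of \Cref{lemma:non-zero} then gives $[\alpha(c_z)]=[\alpha(c_{z'})]$, i.e. $[z]=[z']$ in $\chow_0(Y)$. The only inputs are the relation of \Cref{prop:I_F^2} and $a\neq0$. Hence all points of $Z$ have a common class, which I denote $o_Z$.

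\textbf{Step 3: $o_Z=o_Y$.} This is the main obstacle. The approach is to exhibit $o_Z$ as a generically defined cycle and then invoke \Cref{thm:franchetta}.

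The fixed locus $Z$ is defined in the family $\mathcal U$, since $\iota$ is. Therefore the class
$$[Z]\cdot h_1\cdot h_2\in\chow_0(Y),$$
or any degree-positive product of $[Z]$ with generically defined divisors, is generically defined. This zero-cycle is supported on $Z$, so it equals $\deg([Z]h_1h_2)\,o_Z$.

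It remains to check that this degree is non-zero. I expect to do this by a cohomological computation: $Z$ is a Lagrangian surface, and the restriction of the ample class $h=h_1+h_2$ to $Z$ gives $\deg(h^2|_Z)>0$. So I would use $[Z]\cdot h^2$ in place of $[Z]h_1h_2$.

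Then $o_Z\in\gdch^4(Y)$. By \Cref{thm:franchetta}, $\gdch^4(Y)=\IQ o_Y$ injects into cohomology, and both $o_Z$ and $o_Y$ have degree $1$. Hence $o_Z=o_Y$.

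A fallback, should generic definedness of $Z$ be delicate, is to use $c_4(Y)$. Its Chern class computation via the $\IP^1$-bundle $F(X)$ and the Verra fourfold expresses it through classes pulled back from $X$, and one could relate it to $\Psi^{-1}(C)$. I would try the Franchetta route first.
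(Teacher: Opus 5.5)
Your proposal is correct and follows the paper's proof: $c+c'$ is a hyperplane section of $D_{(L,M)}$, so $\Psi(y+\iota(y))$ is constant; the second statement of \Cref{lemma:non-zero} (which needs $a\neq 0$) then gives $[z]=[z']$ on $Z$; and the Franchetta property (\Cref{thm:franchetta}) identifies the common class with $o_Y$. Your Step 3 spells out what the paper only asserts, namely that the class of a point of $Z$ is generically defined, via $[Z]\cdot h^2=\deg(h^2|_Z)\,o_Z$ with $\deg(h^2|_Z)>0$ by ampleness, so the $c_4(Y)$ fallback is unnecessary.
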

\begin{proof}
    It suffices to prove this for a general point $y\in Y$, so we may assume that $y\notin Z$. 
    Recall from the proof of~\Cref{prop:linear-relation} that the class $c+c'\in\chow^3(X)$ with $\alpha(c)=y$ and $\alpha(c')=\iota(y)$ is constant, meaning that $\Psi(y+\iota(y))$ is constant.
    This holds for all points, and in particular for $c\in \alpha^{-1}(Z)$, meaning that the class $[2c]\in\chow^3(X)$ doesn't depend on $c$, and using \Cref{lemma:non-zero}, the class $[\alpha(c)]=[z]$ is constant in $Y$ for any $z\in Z$.
    Since $Z$ is generically defined, the class of any point on $Z$ is generically defined, meaning that it agrees with $o_Y$ by \Cref{thm:franchetta}.
\end{proof}

In view of \cite[Conj. 0.8]{voisin16}, we would expect points lying on any constant cycle surface to have the same class. 

\begin{proposition}\label{prop:nef}
    Let $Y$ be a double EPW quartic.
    Any point lying on a constant cycle surface in $Y$ has class $o_Y$.
\end{proposition}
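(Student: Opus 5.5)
The plan is to show that a constant cycle surface $S\subset Y$ must meet a subvariety whose points are already known to have class $o_Y$, namely the fibres of the Lagrangian fibrations over suitable loci, or, for a general $Y$, the fixed surface $Z$ of \Cref{thm:constant-cycle}. The cleanest route is intersection-theoretic. Let $S$ be a constant cycle surface and let $s\in\chow_0(Y)$ be the class of any of its points. Let $h$ be an ample divisor which is generically defined, for instance $h=h_1+h_2$. Then $[S]\cdot h^2=\deg(S\cdot h^2)\, s$ in $\chow_0(Y)$. This holds because $S\cdot h^2$ can be represented by a zero-cycle supported on $S$, and all points of $S$ have class $s$. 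Moreover $\deg(S\cdot h^2)>0$ by ampleness. It therefore suffices to show that $[S]\cdot h^2$ is a multiple of $o_Y$.

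The first key step is to use that $S$ is a constant cycle surface together with Mumford's theorem, \cite[Cor.~10.18]{voisinI}, to get $\sigma|_S=0$. So $S$ is Lagrangian, and its cohomology class lies in the part of $H^4(Y,\IQ)$ orthogonal to $\sigma\bar\sigma$-type classes. Next, I would take an ample $D\in\chow^1(Y)$ and write $[S]\cdot D^2$. The aim is to express $[S]\cdot h^2$ through cycles in $\langle\chow^1(Y),\gdch^\ast(Y)\rangle$. Since every divisor on $Y$ is in $\chow^1(Y)$, one has $h^2\cdot h'\cdot h''=\deg\cdot\, o_Y$ as soon as the Beauville--Voisin--Franchetta statement \Cref{intro:bv} is available for zero-cycles. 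More directly, I would use the Lagrangian fibrations: the classes $h_1^2$ and $h_2^2$ are represented by fibres of $Y\to\IP^2$. I would then move $h^2$ as $h_1^2+2h_1h_2+h_2^2$ and intersect $S$ with general fibres. A general fibre of a Lagrangian fibration is an abelian surface $A_t$, and $S\cap A_t$ is a finite set of points of class $s$. It remains to show that the degree-weighted class of $S\cap A_t$ equals a multiple of $o_Y$, uniformly in $t$.

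The main obstacle is exactly this last step, pinning down $s$ without assuming $Y$ is general. My first attempt uses a degeneration to $Z$. A constant cycle surface on a hyperkähler fourfold satisfies the property that $s$ lies in Voisin's filtration $S_2\chow_0(Y)$ as a point whose orbit has dimension at least $2$ (\cite{voisin16}). Using \Cref{prop:filtration}, or rather the structure $\chow_0(Y)=\IQ o_Y\oplus\chow_0(Y)^-\oplus\chow_0(Y)^+_{\mathrm{hom}}$, I would show that points with orbit of dimension at least $2$ have class in $\IQ o_Y$. This is Voisin's result: on $K3^{[2]}$-type hyperkähler fourfolds, for which $Y$ is a moduli space on a twisted $K3$, $S_2\chow_0=\IQ o$ with $o$ the class of points on constant cycle surfaces. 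I would combine this with the fact that $Z$ itself is such a surface with class $o_Y$. Concretely, for a general $Y$ every point of any constant cycle surface lies in $S_2\chow_0(Y)$, whose degree-one part is one class, so it equals the class of points of $Z$, which is $o_Y$ by \Cref{thm:constant-cycle}. For arbitrary $Y$, I would spread out: $o_Y$ is generically defined and the identity $s=o_Y$ specialises, since constant cycle surfaces deform along with the relevant relative Chow varieties, or one argues that $s\cdot$ fibre-intersection equals $h_1^2h_2^2$-type products. This uniqueness of the degree-one part of $S_2$ is where I expect the real work.
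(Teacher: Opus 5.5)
\textbf{There is a genuine gap.} The step that carries all the content is never proved: that an arbitrary constant cycle surface $S\subset Y$ has the same point class as the fixed surface $Z$.

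\emph{First route.} $[S]\cdot h_1^2$ equals $\deg(S\cdot h_1^2)\,s$ by construction, so asking that it lie in $\IQ o_Y$ just restates the goal. Using results of the paper that do not depend on this proposition, the route goes through only partially. Since $H^4(Y,\IQ)^-=h_1H^2(Y,\IQ)^-\oplus h_2H^2(Y,\IQ)^-$, you can write $[S]^-=h_1D_1+h_2D_2+\beta$ with $D_i\in\chow^1(Y)^-$ and $\beta\in\chow^2(Y)^-_{\mathrm{hom}}$. Then:
\begin{itemize}
\item $\beta h_1^2=0$ by \Cref{prop:h^2};
\item $(h_1D_1+h_2D_2)h_1^2=0$ by \Cref{thm:BV}, since it is anti-invariant and hence of degree $0$;
\item $[S]^+h_1^2\in\IQ o_Y$ by \Cref{proposition:D^2} and \Cref{thm:franchetta}.
\end{itemize}
So $s=o_Y$ whenever $S$ dominates the base of one of the two Lagrangian fibrations. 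If $S$ dominates neither base, $S\cdot h_1^2$ and $S\cdot h_2^2$ have degree $0$. The same computation with $h_1h_2$ then only yields $\deg(S\cdot h_1h_2)(s-o_Y)=h_1h_2\cdot\beta$. Since $h_1h_2$ acts injectively on $\chow^2(Y)^-_{\mathrm{hom}}$ (again by \Cref{prop:h^2}), you would need $\beta=0$, which is no easier than the original claim.

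\emph{Second route.} This does not close the gap either. \Cref{prop:filtration} is proved in the paper \emph{using} the present proposition, so invoking it is circular. The ``result of Voisin'' you quote, that all points with $\dim O_y\geq 2$ have a single class, is not an established theorem. It is the content of \cite[Conj.~0.8]{voisin16}, and it is exactly the uniqueness statement you need.

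\emph{Spreading out.} Your final specialisation step is aimed at the wrong object. A constant cycle surface on a special $Y$ need not deform to nearby members, so you cannot reduce the special case to the general one for an arbitrary $S$.

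\textbf{The missing ingredient.} The paper's route rests on a uniqueness theorem valid on \emph{every} double EPW quartic. By Mumford's theorem \cite[Cor.~10.18]{voisinI}, $S$ is Lagrangian, as you observe. Every $Y$, not only the general one, carries a Lagrangian fibration $Y\to\IP^2$. Hence \cite[Prop.~3.1]{twistedSYZ} gives that all points on constant cycle Lagrangian subvarieties of $Y$ have the same class. It then suffices to exhibit one such surface whose points have class $o_Y$, namely $Z$. This holds by \Cref{thm:constant-cycle} for general $Y$, and extends to all $Y$ because $Z$ and $o_Y$ are generically defined. So the only statement that gets specialised is the one about $Z$; the uniqueness is applied directly on each $Y$.
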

\begin{proof}
    Let $Z\subset Y$ be the fixed locus of the involution. 
    Then for the general double EPW quartic, any point on $Z$ has class $o_Y$ by \Cref{thm:constant-cycle}.
    Since $Z$ is generically defined, this extends to any $Y$.
    Finally, as $Y$ admits a Lagrangian fibration, \cite[Prop. 3.1]{twistedSYZ} shows that points on constant cycle Lagrangian subvarieties have the same class.
\end{proof}

\comment{
\begin{proposition}
    Let $Y$ be a very general double EPW quartic and let $Z'\subset Y$ be another constant cycle surface. 
    Then, the class of any point $z'\in Z'$ is $o_Y$.
\end{proposition}
\begin{proof}
    It suffices to prove that any two surfaces in $Y$ intersect, which we show by proving that any two surfaces have positive intersection number. 
    Let $N_2(Y)=\chow_2(Y)\otimes\IR/\sim_{\mathrm{num}}$ be the group of $2$-cycles modulo numerical equivalence. 
    Since $Y$ is very general, this is a $4$-dimensional vector space spanned by $h_1^2, h_2^2, h_1h_2,$ and $Z$. 
    Given that $h_1^2$ and $h_2^2$ are represented by Lagrangian subvarieties obtained from a Lagrangian fibration, we may apply \cite[Prop. 3.3 and Prop. 3.4]{ottem22}, which show that $c_2(Y), h_1^2,$ and $h_2^2$ are not contained in the interior of the effective cone $\overline{\mathrm{Eff}}_2(Y)$. 
    Using \Cref{lemma:class-of-Z} we can compute the following intersection numbers.
    $$Z^2=1404, h_1^2Z=48, h_2^2Z=48, c_2(Y)Z=72$$
    From this we obtain that the cone generated by $h_1^2, h_2^2, c_2(Y),$ and $Z$ is contained in its dual cone,
    $$\mathrm{cone}(h_1^2, h_2^2, c_2(Y), Z)\subset \mathrm{cone}(h_1^2, h_2^2, c_2(Y), Z)^\vee\subset \mathrm{Nef}_2(Y).$$
    As all the intersection numbers are strictly positive, the first cone is contained entirely in the interior of the second cone, meaning that any surface on $Y$ is strictly nef, i.e. has positive intersection with any other surface.
\end{proof}
}

\subsection{\texorpdfstring{Comparing $\chow_0(Y)$ and $\chow_1(X)$}{Comparing CH(Y) and CH(X)}}\label{subsec:constant2}

We now study the correspondence $\Psi$ in more detail. 
First, notice that we have 
$$\Psi^t\circ\Psi=((\alpha\times\alpha)_\ast(I_F\cdot h_F\times h_F))_\ast=I_\ast.$$
We will use the calculation of $D_c^2$ in \Cref{lemma:conic-intersection} to show that the correspondence $\Psi$ induces an isomorphism $\chow_0(Y)^-\cong\chow_1(X)_{\mathrm{hom}}$.

Let $y\in Y$ and define the following class in $\chow^2(Y)$:
$$S_y\coloneq I_\ast(y)=\frac{1}{m}\alpha_\ast(D_c\cdot h_F),$$
where $c$ is a conic with $\alpha(c)=y$. 

\begin{lemma}\label{lemma:D-S}
    The difference
    $$A\coloneq D_c-\alpha^\ast S_{\alpha(c)} \text{ in } \chow^2(F(X))$$
    is generically defined and we have
    $$S_y\cdot S_{y'}=\alpha_\ast(D_c \cdot D_{c'}\cdot h_F)-\alpha_\ast(A^2 \cdot h_F).$$
    Furthermore, $S_y+S_{\iota(y)}$ is generically defined and independent of $y\in Y$.
\end{lemma}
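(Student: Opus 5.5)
The plan is to read everything off the correspondence relations already established, namely \Cref{pullback-I} and \Cref{prop:linear-relation}, by letting them act on points.

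\textbf{The class $A$.} Write $A_{F\times F}\coloneq I_F-(\alpha\times\alpha)^\ast I$ for the decomposable, generically defined cycle of \Cref{pullback-I}, and act on the class of a conic $[c]\in\chow_0(F)$.

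\begin{itemize}
\item By \Cref{lemma:normal-bundle}, $I_F=(p\times p)_\ast(q\times q)^\ast\Delta_X$, and hence $(I_F)_\ast[c]=D_c$.
\item For the second term, $((\alpha\times\alpha)^\ast I)_\ast[c]=\alpha^\ast I_\ast(\alpha_\ast[c])=\alpha^\ast I_\ast(y)=\alpha^\ast S_y$, where $y=\alpha(c)$. This uses the base-change/projection formula for the flat $\IP^1$-bundle $\alpha$.
\item Together these give $A=(A_{F\times F})_\ast[c]$.
\item Since $A_{F\times F}$ is decomposable, say $\sum_k \pr_1^\ast u_k\cdot\pr_2^\ast v_k$, its action on a zero-cycle only sees the degree: $(A_{F\times F})_\ast[c]=\sum_k \deg(u_k\cdot[c])\,v_k$.
\item The $v_k$ are generically defined, so $A$ is generically defined and independent of $c$.
\end{itemize}

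\textbf{The product formula.} Write $D_c=\alpha^\ast S_y+A$ and $D_{c'}=\alpha^\ast S_{y'}+A$, expand $D_c\cdot D_{c'}\cdot h_F$, and push forward by $\alpha$. The projection formula gives three pieces:
\begin{itemize}
\item the term $\alpha_\ast(\alpha^\ast(S_yS_{y'})\cdot h_F)=m\,S_yS_{y'}$;
\item the cross terms $(S_y+S_{y'})\cdot\alpha_\ast(A\cdot h_F)$;
\item the term $\alpha_\ast(A^2\cdot h_F)$.
\end{itemize}

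The key observation is that $\alpha_\ast(A\cdot h_F)=0$:
\begin{itemize}
\item by definition $\alpha_\ast(D_c\cdot h_F)=mS_y$;
\item by the projection formula $\alpha_\ast(\alpha^\ast S_y\cdot h_F)=S_y\cdot\alpha_\ast h_F=mS_y$.
\end{itemize}
So the cross terms vanish. Hence $m\,S_yS_{y'}=\alpha_\ast(D_cD_{c'}h_F)-\alpha_\ast(A^2h_F)$, which is the stated identity once $h_F$ is normalised so that $m=1$, or after rescaling by $m$. Keeping track of this normalisation constant $m$ is the only bookkeeping point I expect to need care.

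\textbf{The sum $S_y+S_{\iota(y)}$.} We have $S_y+S_{\iota(y)}=I_\ast(y)+((\iota,\id)^\ast I)_\ast(y)=B_\ast(y)$, where $B$ is the cycle of \Cref{prop:linear-relation}. Since $B$ is decomposable and generically defined, the same degree argument as in the first step shows that $B_\ast(y)$ is a fixed generically defined class in $\chow^2(Y)$, independent of $y$.

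An alternative check is the argument inside \Cref{prop:linear-relation} itself: $[c]+[c']=C$ is constant in $\chow_1(X)$, so $S_y+S_{\iota(y)}=\tfrac1m\alpha_\ast(p_\ast q^\ast C\cdot h_F)$, which is visibly generically defined.

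The main subtlety is the identification $((\alpha\times\alpha)^\ast I)_\ast[c]=\alpha^\ast I_\ast(\alpha(c))$. I would verify it via the projection formula for $\id\times\alpha$ and the fact that $\alpha$ is flat.
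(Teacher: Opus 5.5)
Your proposal is correct. For the first two assertions it is the paper's argument made explicit. The paper only says that $A$ being generically defined ``follows directly from \Cref{pullback-I}'' and that the product formula comes from the projection formula. The steps you supply are exactly what is left implicit there: the identity $((\alpha\times\alpha)^\ast I)_\ast[c]=\alpha^\ast I_\ast(\alpha_\ast[c])$, the degree argument showing $A$ is independent of $c$, and the cancellation $\alpha_\ast(A\cdot h_F)=0$.

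Two small remarks on this part:
\begin{enumerate}
\item A cycle that is decomposable and generically defined does not automatically come with generically defined factors $v_k$. Argue instead that $(A_{F\times F})_\ast[c]=(A_{F\times F})_\ast\zeta$ for any generically defined zero-cycle $\zeta$ of degree one. The right-hand side is a push-forward, in the family, of a product of generically defined classes, hence generically defined.
\item Your worry about $m$ is justified, and it goes slightly beyond bookkeeping. With $I=(\alpha\times\alpha)_\ast(I_F\cdot h_F\times h_F)$ one actually gets $I_\ast(y)=m\,\alpha_\ast(D_c\cdot h_F)$. Moreover, the proof of \Cref{pullback-I} uses $(\alpha\times\alpha)_\ast(h_F\times h_F)=[Y\times Y]$. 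So the paper's formulas, including $S_y=I_\ast(y)=\frac1m\alpha_\ast(D_c\cdot h_F)$, are consistent only when $m=1$. The right reading is to normalise $h_F$ as a $\IQ$-divisor with $m=1$ from the start, not to rescale the final identity.
\end{enumerate}

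For the last assertion you take a genuinely different route. The paper writes $S_y+S_{\iota(y)}=(\Psi^t\circ\Psi)(y+\iota(y))$ and invokes \Cref{thm:constant-cycle}. That gives independence of $y$ and, because points of $Z$ have class $o_Y$, the explicit value $I_\ast(2o_Y)$. You instead read both properties off the decomposability of $B=I+(\iota,\id)^\ast I$ from \Cref{prop:linear-relation}.

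Both routes rest on the same geometric fact, that $[c]+[c']$ is constant in $\chow_1(X)$. Yours avoids \Cref{lemma:non-zero} and the Franchetta identification of points on $Z$. The paper's route yields the explicit constant $2I_\ast(o_Y)$, which is used later in \Cref{thm:chow-iso}. You can recover that value cheaply as well: $o_Y=c_4(Y)/324$ is $\iota$-invariant and $B$ is decomposable, so $B_\ast(y)=B_\ast(o_Y)=2I_\ast(o_Y)$.
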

\begin{proof}
    That the difference $A$ is generically defined follows directly from \Cref{pullback-I} and we can use the projection formula to compute $S_y\cdot S_{y'}$. 
    That $S_y+S_{\iota(y)}$ is independent of~$y\in Y$ follows from \Cref{thm:constant-cycle}, as $S_y+S_{\iota(y)}=(\Psi^t\circ\Psi)(y+\iota(y))$. 
    Hence,~$S_y+~S_{\iota(y)}=~I_\ast(2o_Y)$ and it is generically defined. 
\end{proof}

\begin{theorem}\label{thm:chow-iso}
    Let $Y$ be a general double EPW quartic with associated Verra fourfold~$X$.
    The correspondence $\Psi$ induced by the universal conic is zero on $\chow_0(Y)^+_{\mathrm{hom}}$ and induces an isomorphism 
    $$\Psi\colon \chow_0(Y)^-\xrightarrow{\cong} \chow_1(X)_{\mathrm{hom}}.$$
    Its inverse is given up to a scalar by $\Psi^t$ composed with multiplication by $I_\ast(o_Y)$.
\end{theorem}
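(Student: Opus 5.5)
The plan is to split the theorem into three claims: vanishing on $\chow_0(Y)^+_{\mathrm{hom}}$, injectivity on $\chow_0(Y)^-$, and surjectivity onto $\chow_1(X)_{\mathrm{hom}}$. The formula for the inverse will come out of the injectivity computation.

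\textbf{Vanishing on the invariant part.} By \Cref{thm:constant-cycle}, $\Psi(y+\iota(y))$ is independent of $y$. Since $\Psi$ is a correspondence, hence linear, it follows that $\Psi(\xi+\iota_\ast\xi)=\deg(\xi)\Psi(2o_Y)/2$ for every $\xi\in\chow_0(Y)$. For $\xi\in\chow_0(Y)^+_{\mathrm{hom}}$ we have $\xi+\iota_\ast\xi=2\xi$ and $\deg\xi=0$, so $\Psi(\xi)=0$. In addition, $\Psi$ sends degree-zero cycles to homologically trivial cycles, and every element of $\chow_0(Y)^-$ has degree $0$. Hence $\Psi$ maps $\chow_0(Y)^-$ into $\chow_1(X)_{\mathrm{hom}}$.

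\textbf{Injectivity and the inverse.} I would compute $S_y\cdot I_\ast(o_Y)$ using \Cref{lemma:D-S} together with the relation from \Cref{cor:proj-i}. Concretely, let $\xi\in\chow_0(Y)^-$ and apply $\frac12(\Delta_Y-\Gamma_\iota)$ to it. This gives
$$\xi=(I\cdot\Gamma)_\ast\xi+(((\iota,\id)^\ast I)\cdot\Gamma')_\ast\xi,$$
because the decomposable term $\Gamma''$ acts through the degree, which is zero. Next I expand $\Gamma=\pr_1^\ast a+\pr_2^\ast b+\sum_i \pr_1^\ast D_i\cdot\pr_2^\ast D_i'$ with all pieces generically defined. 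Since $\xi$ is a zero-cycle, only the terms carrying $\pr_2^\ast b$ contribute, and these give $b\cdot I_\ast(\xi)$. The term $\pr_1^\ast a$ drops out for dimension reasons. Each term $\pr_1^\ast D_i\cdot\pr_2^\ast D_i'$ produces $D_i'\cdot I_\ast(D_i\cdot\xi)$, and $D_i\cdot\xi=0$. Moreover $(\iota,\id)^\ast I$ acts on $\xi$ as $I_\ast\iota_\ast\xi=-I_\ast\xi$. The outcome is $\xi=\beta\cdot I_\ast\xi$ for some $\beta\in\gdch^2(Y)$.

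Since $I_\ast=\Psi^t\circ\Psi$, this shows $\xi=\beta\cdot\Psi^t\Psi(\xi)$, which proves injectivity. To reach the stated form I need to identify $\beta$, up to a scalar and modulo classes that annihilate $I_\ast(\chow_0(Y)^-)=\chow^2(Y)^-_{\mathrm{hom}}$, with $I_\ast(o_Y)$. My approach is to note that $\gdch^2$ acts on $\chow^2(Y)^-_{\mathrm{hom}}$ only through a small space. The clean option is to use $S_y\cdot S_{y'}$ from \Cref{lemma:D-S} together with \Cref{lemma:conic-intersection} and \eqref{eq:star}. For $\xi=y-\iota(y)$ we get $S_y-S_{\iota(y)}=2S_y-I_\ast(2o_Y)$, and $S_y^2$ is computed via $D_c^2$. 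The relation $4\ell_{i+4}+\sum_{j\ne i,i+4}\ell_j$ shows that $S_y\cdot(S_y-S_{\iota(y)})$ is a nonzero multiple of $\iota(y)-y$ plus constant terms. Substituting $S_{\iota(y)}=2I_\ast(o_Y)-S_y$ and antisymmetrising then expresses $y-\iota(y)$ as a scalar multiple of $I_\ast(o_Y)\cdot(S_y-S_{\iota(y)})$. This is the explicit inverse.

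\textbf{Surjectivity.} I would show that $\chow_1(X)$ is spanned by conic classes plus the algebraic part. Since $P$ dominates $X$ by \Cref{lemma:normal-bundle} and $q$ is flat, a standard argument applies: curves on $X$ are rationally equivalent to combinations of $(1,1)$-conics modulo decomposable classes, using that $X$ is rationally connected by conics. The motive decomposition $\mathfrak h(X)\cong\mathfrak h(S)(1)\oplus\bigoplus\mathds 1(i)$ can be used alternatively. A homologically trivial combination $\sum n_j c_j$ then equals $\Psi(\sum n_j\alpha(c_j))$, and after projecting to the anti-invariant part we get $\Psi(\frac12(\xi-\iota_\ast\xi))$, because the invariant part maps to a constant determined by degree.

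\textbf{Main obstacle.} The hardest step is the exact multiplicity bookkeeping that shows the scalar in the inverse is nonzero and that $\beta$ acts as a multiple of $I_\ast(o_Y)$. This needs $a\neq0$ from \Cref{lemma:non-zero} and a careful accounting of the pencils $\ell_j$ pushed forward by $\alpha_\ast(\,\cdot\,h_F)$.
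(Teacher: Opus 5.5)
\textbf{The gap is surjectivity.} Your vanishing and injectivity steps are sound. The step ``$\chow_1(X)$ is spanned by $(1,1)$-conics modulo decomposable classes'', however, is asserted rather than proved, and the reasons you give do not imply it.
\begin{itemize}
\item Flatness and surjectivity of $q$ (\Cref{lemma:normal-bundle}) only say that every point of $X$ lies on a conic.
\item Rational connectedness by conics controls $\chow_0(X)$, not $\chow_1(X)$. There is no general principle that a covering family of rational curves spans $\chow_1(X)_{\mathrm{hom}}$. Your sentence that curves are rationally equivalent to combinations of conics modulo decomposable classes is exactly the statement to be proved.
\item The motivic alternative does not help as stated. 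An abstract isomorphism $\mathfrak{h}(X)\cong\mathfrak{h}(S)(1)\oplus\bigoplus_i\mathds{1}(i)$ says nothing about whether the specific correspondence $P$ hits all of $\chow_1(X)_{\mathrm{hom}}$. The nonvanishing of $p_\ast q^\ast$ on $H^4_{\mathrm{tr}}(X)$ (\Cref{prop:verra-fix}) does not lift to Chow groups without Bloch-type input.
\end{itemize}
The paper does not prove this step either; it cites \cite[Cor.~4.4]{laterveer-verra}. If you want your own argument, you need actual geometry, for instance along these lines:
\begin{itemize}
\item Show that $\chow_1(X)_{\mathrm{hom}}$ is spanned by differences of lines in the quadric-surface fibres of $\pi_1$.
\item Each such line lies in some $D_{(L,M)}$. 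Its Picard group is spanned over $\IQ$ by its ten pencils of conics.
\item The two non-$(1,1)$ pencils are $\pi$-pullbacks of lines in $\IP^2\times\IP^2$, hence Tate on $X$.
\item The class $c+c'$, with $\alpha(c')=\iota(\alpha(c))$, is a hyperplane section of $D_{(L,M)}$, hence also Tate. This lets you absorb the Tate part of a homologically trivial class into a degree-zero combination of conics.
\end{itemize}

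\textbf{The rest.} Your injectivity argument takes a different route from the paper's. The paper applies $P^t_\ast$ to $\Psi(\xi)=\Psi(\xi')$ and multiplies by the constant class $D_c+D_{\iota(c)}$ and by $h_F$. It then uses \Cref{lemma:conic-intersection} in the form $D_c^2-D_{\iota(c)}^2=4(\ell_{i+4}-\ell_i)$, whose push-forward is $4m(\iota(y)-y)$.

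You instead feed $\xi\in\chow_0(Y)^-$ into \Cref{cor:proj-i}. Your bookkeeping is correct: only the $\pr_2^\ast b$ terms survive, and $((\iota,\id)^\ast I)_\ast\xi=-I_\ast\xi$. This yields $\xi=\beta\cdot I_\ast\xi$ with $\beta\in\gdch^2(Y)$, so you get a generically defined left inverse of $\Psi$ directly. The price is relying on all of \Cref{sec:incidence} together with $a\neq 0$.

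To identify $\beta$ with $I_\ast(o_Y)$ you fall back on the paper's computation. Note that only the antisymmetrised product $(S_y+S_{\iota(y)})(S_y-S_{\iota(y)})=S_y^2-S_{\iota(y)}^2$ is a multiple of $y-\iota(y)$. The product $S_y\cdot(S_y-S_{\iota(y)})$ alone involves the six points $\alpha(\ell_j)$ for $j\neq i,i+4$, so your intermediate claim about it is inaccurate.

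Finally, the correct identity is $\Psi(\xi+\iota_\ast\xi)=\deg(\xi)\,\Psi(2o_Y)$, without your factor $\tfrac12$. This is harmless, since you only use it for $\deg\xi=0$.
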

\begin{proof}
    We know that $\chow_0(Y)^+_{\mathrm{hom}}$ is generated by classes of the form $y+\iota(y)-2o_Y$ for~$y\in Y$, hence $\Psi|_{\chow_0(Y)^+_{\mathrm{hom}}}=0$ by \Cref{thm:constant-cycle}. 
    Notice that $\Psi$ is surjective by \cite[Cor. 4.4]{laterveer-verra}. 
    Hence, we only need to check injectivity on $\chow_0(Y)^-_{\mathrm{hom}}$. 
    To this end, consider points $y_i, y_i'$ such that
    $$\Psi\left(\sum y_i-\iota(y_i)\right)=\Psi\left(\sum y_i'-\iota(y_i')\right) $$
    Applying $P^t_\ast$ we obtain
    $$\sum D_{c_i}-D_{\iota(c_i)}=\sum D_{c_i'}-D_{\iota(c_i')},$$
    where $c_i$, $c_i'$, $\iota(c_i)$, and $\iota(c_i')$ are conics that lift $y_i, y_i', \iota(y_i)$, and $\iota(y_i')$, respectively. 
    From \Cref{thm:constant-cycle} we have that $D_{c}+D_{\iota(c)}$ is constant for any conic $c$.
    Using \Cref{lemma:conic-intersection,lemma:D-S} after multiplying the last equation with $A$ and $h_F$ and pushing forward, we obtain that
    $$\alpha_\ast\left(h_F\cdot (D_c+D_{\iota(c)})\cdot \sum D_{c_i}-D_{\iota(c_i')}\right)=S_y^2-S_{\iota(y)}^2=4m\sum_i (y_i-\iota(y_i)),$$
    where $m$ is the intersection number of $h_F$ with the general fibre of $\alpha$.
    From this we obtain that $\sum_i y_i-\iota(y_i)=\sum_i y_i'-\iota(y_i')$ and $\Psi$ is injective. 
    Combining the calculation above with the fact that $\Psi^t\circ\Psi=I_\ast$ and $S_y+S_{\iota(y)}=2I_\ast(o_Y)$ by \Cref{thm:constant-cycle} shows that the inverse is as stated.
\end{proof}

\begin{corollary}\label{cor:chow-iso}
    Let $Y$ be a double EPW quartic. 
    There are correspondences that are inverses up to a scalar,
    $$(\pr_1^\ast I_\ast(o_Y))_\ast\colon\chow^2(Y)^-_{\mathrm{hom}} \mathrel{\substack{\longrightarrow \\[-1pt] \longleftarrow}} 
    \chow_0(Y)^-\colon I_\ast, $$
    given by multiplication by $I_\ast(o_Y)$ and $I_\ast$. 
    In particular, $\chow^2(Y)^-_{\mathrm{hom}}\cong\chow^2(Y)^-_{\mathrm{alg}}$, i.e. rational and algebraic equivalence agree on $\chow^2(Y)^-$.
\end{corollary}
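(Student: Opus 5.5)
The plan is to factor both maps through $\chow_1(X)_{\mathrm{hom}}$ using \Cref{thm:chow-iso}, and then extend from the general to an arbitrary double EPW quartic using the fact that all the cycles involved are generically defined.

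Since $\Psi^t\circ\Psi=I_\ast$, the map $I_\ast$ sends $\chow_0(Y)^-$ to $\chow^2(Y)$. The first step is to check that it lands in $\chow^2(Y)^-_{\mathrm{hom}}$.
\begin{itemize}
\item Homological triviality follows because $\chow_0(Y)^-$ consists of degree-zero cycles, and the action on cohomology of a correspondence applied to a zero-cycle depends only on its degree.
\item Anti-invariance follows from \Cref{prop:linear-relation}: we have $\iota^\ast I_\ast = -I_\ast + B_\ast$, and the decomposable cycle $B$ kills homologically trivial zero-cycles.
\end{itemize}

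Next, for the general $Y$, the composite
$$\chow_0(Y)^-\xrightarrow{I_\ast}\chow^2(Y)^-_{\mathrm{hom}}\xrightarrow{\cdot I_\ast(o_Y)}\chow_0(Y)$$
is a nonzero multiple of the identity. This is exactly the inverse statement in \Cref{thm:chow-iso}, rewritten via $\Psi^t\circ\Psi=I_\ast$. The computation behind it is $S_y^2-S_{\iota(y)}^2=4m(y-\iota(y))$ together with $S_y+S_{\iota(y)}=2I_\ast(o_Y)$ from \Cref{lemma:D-S}. Indeed,
$$(S_y-S_{\iota(y)})\cdot 2I_\ast(o_Y)=S_y^2-S_{\iota(y)}^2,$$
so multiplication by $I_\ast(o_Y)$ sends $I_\ast(y-\iota(y))$ to a nonzero multiple of $y-\iota(y)$. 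This gives injectivity of $I_\ast$ on $\chow_0(Y)^-$.

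The other direction is to show that, for $\gamma\in\chow^2(Y)^-_{\mathrm{hom}}$, we have $I_\ast(\gamma\cdot I_\ast(o_Y))=\lambda\gamma$. I will apply \Cref{cor:proj-i} to $\gamma$. Since $\gamma$ is anti-invariant, we get
$$\gamma=(I\cdot\Gamma)_\ast\gamma+(((\iota,\id)^\ast I)\cdot\Gamma')_\ast\gamma,$$
because the decomposable $\Gamma''$ acts as zero on $\gamma$. Next I expand $\Gamma=\pr_1^\ast a+\pr_2^\ast b+\sum_i \pr_1^\ast D_i\cdot\pr_2^\ast D_i'$ and apply the projection formula, term by term.
\begin{itemize}
\item The terms $D_i'\cdot I_\ast(D_i\gamma)$ lie in $D'\cdot\chow^2(Y)^-_{\mathrm{hom}}$, and such products land in codimension $3$. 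For them to contribute to $\chow^2$, one must pair degrees correctly. By degree bookkeeping, the only term of $\Gamma$ that gives something in $\chow^2$ is $\pr_1^\ast a$ with $a\in\gdch^2(Y)$, contributing $I_\ast(a\cdot\gamma)$.
\item The $(\iota,\id)^\ast I$ term contributes $I_\ast(\iota^\ast(a'\gamma))=-I_\ast(a'\gamma)$ in the same way.
\end{itemize}
This yields $\gamma=I_\ast(a''\cdot\gamma)$ for some $a''\in\gdch^2(Y)=\langle h_1^2,h_2^2,h_1h_2,c_2\rangle$. In particular, $I_\ast$ is surjective onto $\chow^2(Y)^-_{\mathrm{hom}}$.

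Combined with injectivity, $I_\ast$ is an isomorphism, and its inverse must then be a multiple of multiplication by $I_\ast(o_Y)$, by the previous step. The main obstacle is justifying that the extraneous codimension-$3$ terms vanish, that is, that the degree bookkeeping really isolates the $\pr_1^\ast a$ part. If that fails, I would instead use the bijectivity from the first part together with \Cref{thm:chow-iso}, which already gives $\Psi^t\Psi$ bijective onto its image, and show that the image is everything using \cite[Cor.~4.4]{laterveer-verra}-type surjectivity of $\Psi^t$ onto $\chow^2(Y)_{\mathrm{hom}}^-$.

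Extension to all double EPW quartics follows because $I$ and $o_Y$ are generically defined and the relations of \Cref{cor:proj-i} spread over the whole family, as in \Cref{cor:chow^3_weak}. Finally, $\chow_0(Y)^-$ is generated by differences $y-\iota(y)$ of points, which are algebraically trivial. Hence the image of $I_\ast$ is algebraically trivial, which gives $\chow^2(Y)^-_{\mathrm{hom}}=\chow^2(Y)^-_{\mathrm{alg}}$.
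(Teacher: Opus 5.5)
Your argument is correct in outline, and it reaches the statement by a partly different route from the paper. The paper combines $\Psi^t\circ\Psi=I_\ast$ with \Cref{thm:chow-iso} for general $Y$, then spreads out using that $I$ and $I_\ast(o_Y)$ are generically defined. That one-line deduction does not spell out why $I_\ast$ hits all of $\chow^2(Y)^-_{\mathrm{hom}}$: \Cref{thm:chow-iso} only controls $\Psi^t$ on $\chow_1(X)_{\mathrm{hom}}$, and the relation $(\cdot I_\ast(o_Y))\circ I_\ast=\lambda\,\id$ by itself gives only injectivity.

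Your use of \Cref{cor:proj-i} to write $\gamma=I_\ast(a''\gamma)$ makes surjectivity explicit. It also behaves better under spreading out, since it comes from a cycle identity between generically defined classes rather than a statement about actions on subgroups. The same computation applied to $z\in\chow_0(Y)^-$ gives even more: only the $\pr_2^\ast b$ terms survive, so $z=\beta\cdot I_\ast(z)$ for some $\beta\in\gdch^2(Y)$. Hence \Cref{cor:proj-i} alone shows that $I_\ast$ is an isomorphism whose inverse is multiplication by a generically defined $2$-cycle. The computation behind \Cref{thm:chow-iso} is then needed only to compare that $2$-cycle with $I_\ast(o_Y)$.

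Two of your justifications need repair.

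\emph{The extra terms in the surjectivity computation.} These are not removed by degree bookkeeping. For $\gamma\in\chow^2(Y)$ one has $I_\ast(D_i\gamma)\in\chow^1(Y)$, so $D_i'\cdot I_\ast(D_i\gamma)$ lies in $\chow^2(Y)$, not in codimension $3$; that remark belongs to \Cref{cor:chow^3_weak}, where $\gamma\in\chow^3$. Likewise the $\pr_2^\ast b$ term gives $b\cdot I_\ast(\gamma)$ with $I_\ast(\gamma)\in\chow^0(Y)$. Both vanish because $\gamma$ is homologically trivial and $\chow^1(Y)_{\mathrm{hom}}=\chow^0(Y)_{\mathrm{hom}}=0$. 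With this your formula holds, and $a''\gamma\in\chow_0(Y)^-$ because $a''$ is $\iota$-invariant.

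\emph{Anti-invariance of $I_\ast z$.} The correspondence $(\iota,\id)^\ast I$ acts as $I_\ast\circ\iota^\ast$, not $\iota^\ast\circ I_\ast$. So \Cref{prop:linear-relation} applied to an anti-invariant $z$ only returns the tautology $B_\ast z=0$. To get $\iota^\ast I_\ast z=-I_\ast z$, use that $I$ is symmetric, since $I_F$ and $h_F\times h_F$ are. Then the transpose $B^t=I+(\id,\iota)^\ast I$ is also decomposable, and $((\id,\iota)^\ast I)_\ast=\iota^\ast\circ I_\ast$. This gives $\iota^\ast I_\ast z=-I_\ast z$ for every degree-zero $z$.

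With these fixes the proof goes through. This includes your final step, which deduces $\chow^2(Y)^-_{\mathrm{hom}}=\chow^2(Y)^-_{\mathrm{alg}}$ from surjectivity of $I_\ast$ together with the algebraic triviality of the cycles $y-\iota(y)$.
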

\begin{proof}
    First, let $Y$ be a general double EPW quartic.
    Since $\Psi^t\circ\Psi=I_\ast$, we obtain from \Cref{thm:chow-iso} that $I_\ast$ is an isomorphism with inverse given up to a scalar by multiplication by~$I_\ast(o_Y)$.
    The classes $I$ and $I_\ast(o_Y)$ are generically defined, meaning that they extend to any double EPW quartic and remain inverse correspondences.
\end{proof}

\subsection{\texorpdfstring{The motives of $X$ and $Y$}{The motives of X and Y}}\label{subsec:motives}

The aim of this subsection is to compare the motives of $X$ and $Y$, in particular their transcendental motives. 
We begin by studying the action of the universal conic on cohomology, before studying $\chow^3(Y)^-_{\mathrm{hom}}$ and combining these results to show that there is an isomorphism of transcendental motives 
$$\mathfrak{h}(Y)^-\cong t(X)(-1)\oplus t(X)^{\oplus 2}\oplus t(X)(1)\oplus \bigoplus_j \mathds{1}(d_j)^{\oplus r_j}.$$

\begin{remark}\label{rmk:verra-fix}
    The proof of \cite[Prop. 2.8]{laterveer-verra}, on which many of the results in \emph{loc. cit.} rely, unfortunately contains a mistake. 
    The universal conic is claimed to be a projective bundle and the projective bundle formula is used prominently. 
    This is not the case, as Verra fourfolds contain singular conics. 
    We provide a fix to this below and note that one can alternatively also proceed as in  \cite[Sec. 4.2]{ilievmanivel} and show that the holomorphic two-form on $F(X)$, that descends to $Y$, is induced from $X$. 
    This can be used to compute the symplectic form on $Y$ explicitly and provide a geometric argument for the involution~$\iota$ being anti-symplectic. 
\end{remark}

\begin{proposition}[{\cite[Prop. 2.8]{laterveer-verra}}]\label{prop:verra-fix}
    Let $X$ be a general Verra fourfold and $Y$ the associated double EPW quartic. 
    Let $p\colon P\to F(X)$ be the universal conic and $q\colon P\to X$ the other projection. 
    Then the composition induces an isomorphism 
    $$p_\ast q^\ast\colon H^4_{\mathrm{pr}}(X,\IQ)\xrightarrow{\cong}\im(H^2_{\mathrm{pr}}(Y,\IQ)\xrightarrow{\alpha^\ast} H^2(F(X),\IQ)).$$
    Furthermore, the induced isomorphism $\psi\colon H^4_{\mathrm{pr}}(X)\to H^2_{\mathrm{pr}}(Y)$ satisfies 
    $$\lambda(x,y)=q(\psi(x),\psi(y))\text{ for }x,y\in H^4_{\mathrm{pr}}(X) \text{ and some }\lambda\neq 0,$$
    where $(x,y)$ denotes the intersection form and $q$ the Beauville--Bogomolov form.
\end{proposition}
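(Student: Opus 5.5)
The approach I would take avoids the projective bundle formula for $P\to F(X)$, which fails because of singular conics. Instead I would work with the cylinder map $p_\ast q^\ast\colon H^4(X,\IQ)\to H^2(F(X),\IQ)$, using only that $q$ is flat (\Cref{lemma:normal-bundle}) and that $p$ is proper and generically a $\IP^1$-bundle.

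The first step is to show that the image of primitive classes lands in $\alpha^\ast H^2(Y,\IQ)$ and in fact in $\alpha^\ast H^2_{\mathrm{pr}}(Y)$. Here $H^2_{\mathrm{pr}}(Y)$ is the part orthogonal to $h_1,h_2$; its image under $\alpha^\ast$ is the part of $H^2(F(X))$ orthogonal to the fibre class and to pullbacks of the invariant classes. For this I would restrict to a fibre $\alpha^{-1}(y)\cong\IP^1$, along which the conics sweep out a quadric surface cone (two planes' worth in $\tilde Q\cap D_{(L,M)}$), i.e. they are all rationally equivalent in $X$. Hence for $x$ primitive, $p_\ast q^\ast x$ has degree $\deg(x\cdot[c])=0$ on the fibres, where $[c]$ is the class of a conic. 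Since $[c]$ is a combination of $H^3$-type algebraic classes and $x$ is orthogonal to algebraic classes, this pairing vanishes. Therefore $p_\ast q^\ast x$ lies in $\alpha^\ast H^2(Y)$. Primitivity on $Y$ follows similarly, by pairing against the generically defined classes $h_1,h_2$ pulled back, which come from $\pi_i$ on $X$.

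The second step is injectivity together with the form comparison. I would compute, for $x,x'\in H^4_{\mathrm{pr}}(X)$, the integral $\int_{F}p_\ast q^\ast x\cdot p_\ast q^\ast x'\cdot h_F^{6}$ as $\int_{X\times X} (x\otimes x')\cdot (q\times q)_\ast(p\times p)^\ast(\text{suitable class})$. This uses the incidence description $I_F=(p\times p)_\ast(q\times q)^\ast\Delta_X$ and the decomposition from \Cref{prop:I_F^2}: the decomposable terms pair trivially with primitive classes, leaving a multiple of $(x,x')$. The resulting multiple is nonzero thanks to \Cref{lemma:non-zero}.

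Alternatively, and more robustly, I would use Hodge theory directly. $H^4_{\mathrm{pr}}(X)$ has Hodge numbers $(1,20,1)$ of K3 type (via the motive $\mathfrak h(S)(1)$, or via the Griffiths residue computation in \cite{laterveer-verra}), while $H^2_{\mathrm{pr}}(Y)$ has dimension $23-2=21$ for $b_2=23$ of $K3^{[2]}$-type. Since $\psi$ is a morphism of Hodge structures, it suffices to show $\psi(H^{3,1})\neq0$; the very general such Hodge structure is then irreducible, so $\psi$ is injective and hence an isomorphism by dimension count. Nonvanishing on $H^{3,1}$ is an infinitesimal computation in the style of \cite[Sec.~4.2]{ilievmanivel}: the two-form $p_\ast q^\ast\sigma_X$ on $F(X)$ restricted to deformations of a conic is computed via $H^0(N_{c/X})$ and the splitting from \Cref{lemma:normal-bundle}, and it is nondegenerate transversally to fibres.

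The last step is the compatibility of forms. Both $(\cdot,\cdot)$ and $q(\psi\cdot,\psi\cdot)$ are monodromy-invariant forms on an irreducible (for very general $X$) Hodge structure, or on the monodromy representation of the family over $\mathcal U$, which I expect to be irreducible with one-dimensional invariant bilinear forms. They are therefore proportional, with $\lambda\neq0$ since $\psi$ is an isomorphism; the statement then holds for general $X$ by continuity. I expect the main obstacle to be the nonvanishing on $H^{3,1}$ (equivalently, on the symplectic form) without the projective bundle structure. There I would lean on flatness of $q$ and the explicit normal bundle $\Ox(1)^2\oplus\Ox$.
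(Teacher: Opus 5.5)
\textbf{Verdict: genuine gap.} Your proposal does not prove that $p_\ast q^\ast$ is non-zero on $H^4_{\mathrm{pr}}(X)$, and that non-vanishing is the real content of the proposition.

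Your reduction is sound, and in places cleaner than the paper's. The fibre-degree test for landing in $\alpha^\ast H^2_{\mathrm{pr}}(Y)$, the dimension count, and the Schur-type proportionality of the two forms together avoid the paper's appeal to the isomorphism of \cite[Thm.~2.4]{laterveer-verra}. The proportionality argument does need $\mathrm{End}_{\mathrm{Hdg}}(H^4_{\mathrm{tr}}(X))=\IQ$ for very general $X$, not just irreducibility. But all of this is formal once non-vanishing is known, and neither of your routes establishes it.

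In your Hodge-theoretic route, ``$p_\ast q^\ast\sigma_X$ is nondegenerate transversally to fibres'' is the statement to be shown. An Iliev--Manivel computation would need an explicit generator of $H^{3,1}(X)$ and an evaluation of the induced form on $H^0(N_{c/X})$, and you supply neither. Your first route is not a computation as written, for three reasons:
\begin{enumerate}
\item $F(X)$ is a fivefold, so the pairing would involve $h_F^3$, not $h_F^6$.
\item The term $I_F\cdot\Gamma$ in \Cref{prop:I_F^2} is not decomposable, so discarding decomposable terms does not leave only the $aW_2$ term.
\item Nothing connects $\int_F p_\ast q^\ast x\cdot p_\ast q^\ast x'\cdot h_F^3$ to $I_F^2$ or to the coefficient $a$.
\end{enumerate}

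The paper proves non-vanishing directly. Since the fibres of $p$ have trivial $\chow_0$, a decomposition-of-the-diagonal argument gives $p_\ast\colon H^4_{\mathrm{tr}}(P)\cong H^2_{\mathrm{tr}}(F(X))$ with inverse $p^\ast(\cdot)\cdot h_P$, so $q^\ast x=p^\ast x_0\cdot h_P$. One then restricts over a general codimension-$2$ linear section $F'\subset F(X)$, over which $q'$ is generically finite of degree $d$. Squaring gives $d(x,x)=\int x_0^2\cdot p'_\ast h_{P'}^2$, so $x_0$, and with it $p_\ast q^\ast x$, is non-zero whenever $(x,x)\neq 0$.

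Your idea of using $a\neq 0$ can in fact be rescued, by letting correspondences act on $H^{4,0}(Y)$ instead of computing a pairing. Let both sides of $I^2=a\Gamma_\iota+I\cdot\Gamma+\Gamma'$ from \Cref{prop:I^2} act on $\sigma^2$.
\begin{itemize}
\item On the right you get $a\sigma^2$. Indeed $\iota_\ast\sigma^2=\sigma^2$, and $\Gamma'$ kills $\sigma^2$. Writing $\Gamma=\sum_i\pr_1^\ast u_i\cdot\pr_2^\ast v_i$, the term $(I\cdot\Gamma)_\ast\sigma^2=\sum_i v_i\cdot I_\ast(u_i\sigma^2)$ vanishes for Hodge-type reasons.
\item On the left, the $H^0\otimes H^4$ and $H^4\otimes H^0$ components of $[I]$ are Hodge classes, and $\mathrm{NS}(Y)\cdot\sigma^2=0$. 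So only the square of the $H^2_{\mathrm{tr}}(Y)\otimes H^2_{\mathrm{tr}}(Y)$-component of $[I]$ can act non-trivially.
\item Since $[I_F]=(p\times p)_\ast(q\times q)^\ast[\Delta_X]$ by \Cref{lemma:normal-bundle}, that component equals $m^2\sum_j\psi(e_j)\otimes\psi(e_j^\vee)$ for a basis $(e_j)$ of $H^4_{\mathrm{tr}}(X)$. It therefore vanishes if $\psi=0$.
\end{itemize}
Hence \Cref{lemma:non-zero}, whose proof does not use the present proposition, forces $\psi\neq 0$.

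Two smaller slips also need fixing.
\begin{itemize}
\item The Hodge numbers are wrong. $b_4(X)=24$, and the algebraic part of $H^4(X)$ is spanned by $g_1^2,g_1g_2,g_2^2$, where the $g_i$ are pullbacks of the hyperplane classes of the two factors $\IP^2$. So $H^4_{\mathrm{pr}}(X)$ has Hodge numbers $(1,19,1)$, not $(1,20,1)$. With your numbers, an injection into the $21$-dimensional $H^2_{\mathrm{pr}}(Y)$ would be impossible.
\item The fibre-degree computation is misstated. The degree of $p_\ast q^\ast x$ on a fibre of $\alpha$ is $\int_X x\cdot[D_{(L,M)}]=\int_X x\cdot g_1g_2$, because that pencil of conics sweeps out $D_{(L,M)}$, not a quadric cone. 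The expression $\deg(x\cdot[c])$ would live in $H^{10}(X)=0$.
\end{itemize}
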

\begin{proof}
    It is sufficient to prove the statement for very general $X$.
    We claim that the map being non-zero is sufficient in this case.
    Indeed, notice that $H^4_{\mathrm{pr}}(X)=H^4_{\mathrm{tr}}(X)$ for very general $X$ and that this is a simple Hodge structure. 
    Thus, since $H^{3,1}(X)$ maps to $H^{2,0}(F(X))=\alpha^\ast H^{2,0}(Y)$, the image of the map is contained in $\alpha^\ast H^2(Y)$ if the map is non-zero.
    Furthermore, there is an isomorphism $H^4_{\mathrm{pr}}(X)\cong H^2_{\mathrm{pr}}(Y)$ via a different construction that satisfies the equation above \cite[Thm. 2.4]{laterveer-verra}. 
    Composing its inverse with $p_\ast q^\ast$ grants an endomorphism of $H^4_{\mathrm{tr}}(X)$, which is a multiple of the identity by the simplicity of $H^4_{\mathrm{tr}}(X)$.
    Hence, if $p_\ast q^\ast$ non-zero, this composition is a non-zero multiple and an $p_\ast q^\ast$ is an isomorphism that satisfies the equation above.

    We prove that $p_\ast q^\ast$ is non-zero. 
    Recall that $H^k_{\mathrm{tr}}(X)= H^k(X)/ N^1 H^k(X)$, where 
    $$ N^1H^k(X)\coloneq \sum_{D\subset X} \ker(H^k(X)\to H^k(X\setminus D))$$
    and the sum runs over all divisors $D\subset X$.
    As the fibres of $p$ have trivial $\chow_0$, it follows from \cite[Prop. 3.1]{vial-fibrations} that $p_\ast$ induces an isomorphism $\chow_0(P)\cong\chow_0(F(X))$.
    A decomposition of the diagonal argument shows that $p_\ast$ induces an isomorphism 
    $$p_\ast \colon H^4_{\mathrm{tr}}(P)\xrightarrow{\cong} H^2_{\mathrm{tr}}(F(X))$$
    with inverse given by $p^\ast$ composed with multiplication by the class of an ample line bundle. 
    Thus, for any $x\in H^4_{\mathrm{tr}}(X)$, we have 
    $$q^\ast x=p^\ast x_0\cdot h_P\text{ for an ample }h_P\text{ and }x_0\in H_{\mathrm{tr}}^2(F(X)).$$
    Let $F'$ be the intersection of two general hyperplane sections in $F(X)$ and $P'$ the restriction of $P$ to $F'$. 
    Denote the restrictions of $p$ and $q$ to $P'$ by $p'$ and $q'$ and the inclusion by $j\colon F'\to F(X)$.
    Then 
    $$j_\ast(p')_\ast(q')^\ast x=p_\ast q^\ast x \cdot h_F^2.$$
    Since multiplication by $h_F^2$ is injective by the Hard Lefschetz Theorem, it suffices to prove that $(p')_\ast(q')^\ast$ is non-zero.
    Notice that $q'$ is generically finite, as the fibres of $q$ are two-dimensional and a general intersection of two hyperplane sections will intersect the general fibre in finitely many points. 
    Let $d$ be the degree of $q'$.
    Squaring the equation~$(q')^\ast x=(p')^\ast x_0\cdot h_{P'}$ and pushing forward to $F'$, we obtain 
    $$ d(x,x)=(p')_\ast (q')(x^2)=x_0^2\cdot (p')_\ast h_{P'}^2.$$
    It also follows from the equation above that $x_0=p_\ast q^\ast$.
    In particular, the morphism is non-zero, as the intersection pairing is non-degenerate on $H^4_{\mathrm{tr}}(X)$, completing the proof.
\end{proof}

\Cref{cor:chow-iso} shows that the map $\chow^2(Y)^-_{\mathrm{hom}}\to\chow_0(Y)^-$ induced by multiplication by $I_\ast(o_Y)$ is an isomorphism.
The Bloch--Beilinson conjecture implies that multiplication with $h^2$, the square of the polarisation, should also define an isomorphism as above, which we prove below.
This is analogous to the case of Fano varieties of lines on cubic fourfolds, cf. \cite[Props. 22.2 and~22.3]{shenvial}.
Let $h_1$ and $h_2$ be the divisors pulled back from $\IP^2$ along the Lagrangian fibrations $Y\to\IP^2$. 
Then let $h=h_1+h_2$ be the polarisation and $g=h_1-h_2$.

\begin{proposition}\label{prop:h^2}
    Let $Y$ be a double EPW quartic.
    Then, multiplication by $h_1^2$ and~$h_2^2$ is~$0$ on $\chow^2(Y)^-_{\mathrm{hom}}$.
    Furthermore, the map $\chow^2(Y)^-_{\mathrm{hom}}\to~\chow_0(Y)^-$ induced by multiplication by $h^2$ is an isomorphism, which agrees up to a scalar with the isomorphism induced by the multiplication with $I_\ast(o_Y)$ from \Cref{thm:chow-iso}.
\end{proposition}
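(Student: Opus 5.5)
For the first claim, that $h_1^2$ and $h_2^2$ act by zero on $\chow^2(Y)^-_{\mathrm{hom}}$, I would use the Lagrangian fibrations $f_i\colon Y\to\IP^2$. The class $h_i^2$ is a positive multiple of the class of a fibre $F_t=f_i^{-1}(t)$. Hence for $\gamma\in\chow^2(Y)$ we get $h_i^2\cdot\gamma = j_{t\ast}(\gamma|_{F_t})$ up to a scalar, where $j_t$ is the inclusion of a general fibre. The general fibre is an abelian surface, and its class moves in a family parametrised by the rational variety $\IP^2$. Consider the correspondence $\Gamma_i\coloneq (Y\times_{\IP^2}Y)$, or more precisely the cycle $\pr_1^\ast$-restriction to the fibre product. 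It acts on $\gamma$ by $\gamma\mapsto h_i^2\cdot\gamma$ up to scalar, and it factors through the relative $\chow_0$ of the family of abelian surfaces.

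A cleaner route uses \Cref{cor:chow-iso}: every $\gamma\in\chow^2(Y)^-_{\mathrm{hom}}$ is of the form $I_\ast(\xi)$ with $\xi\in\chow_0(Y)^-$. So $h_i^2\cdot\gamma=(\pr_2^\ast h_i^2\cdot I)_\ast(\xi)$, and it suffices to show that the correspondence $\pr_2^\ast h_i^2\cdot I$ kills $\chow_0(Y)^-$. For this I would exploit the symmetry under $\iota$. Since $h_i$ is $\iota$-invariant, \Cref{prop:linear-relation} gives
$$\pr_2^\ast h_i^2\cdot I\equiv -\pr_2^\ast h_i^2\cdot(\iota,\id)^\ast I$$
modulo decomposable cycles. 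Thus the correspondence is anti-invariant under $\iota$ acting on the first factor, and the image $h_i^2\cdot\chow^2(Y)^-_{\mathrm{hom}}$ lies in $\chow_0(Y)^-$.

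The main step is then to show directly that $h_i^2\cdot\chow^2(Y)_{\mathrm{hom}}$ lands in the $\iota$-invariant part. I would argue this with the fibration: $h_i^2\cdot\gamma$ is supported on a fibre $F_t$. On $Y$ the involution preserves each fibre of $f_i$, since $f_i$ factors through $D_1^{\overline A}$ and $\iota$ is the covering involution. On the fibre $F_t$, which is a Lagrangian abelian surface or a double cover of a quartic section of the cone, $\iota$ acts, and the restriction $\gamma|_{F_t}$ is an anti-invariant homologically trivial $0$-cycle on $F_t$. By Voisin's result that points on a fibre of a Lagrangian fibration have classes governed by $\chow_0$ of the fibre, which is the input used in \Cref{prop:nef} via \cite[Prop. 3.1]{twistedSYZ}, the image of $\chow_0(F_t)_{\mathrm{hom}}$ in $\chow_0(Y)$ should be controlled by the constant cycle property. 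I expect this to be the real obstacle. What I would try first is to show that $\iota|_{F_t}$ acts as $-1$ composed with a translation on the abelian surface, so that anti-invariant $0$-cycles on $F_t$ land in $I^2\chow_0(F_t)$ and can be moved across the family. Then the pushforward is a cycle that is both in $\chow_0(Y)^-$ and invariant, hence zero.

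Given the first claim, the second follows formally. Since $h^2=h_1^2+2h_1h_2+h_2^2$, multiplication by $h^2$ on $\chow^2(Y)^-_{\mathrm{hom}}$ equals $2h_1h_2$. Also $g^2=h_1^2-2h_1h_2+h_2^2$, so $h^2=-g^2$ on this group. To compare with $I_\ast(o_Y)$, I would use that $I_\ast(o_Y)\in\gdch^2(Y)=\langle h_1^2,h_2^2,h_1h_2,c_2(Y)\rangle$. By the first claim, multiplication by $I_\ast(o_Y)$ therefore equals $\lambda h_1h_2+\mu c_2(Y)$. I would then eliminate $c_2(Y)$ by applying \Cref{cor:proj-i} together with the Franchetta relations, or by showing that $c_2(Y)$ acts as a multiple of $h_1h_2$ after pulling back to $F(X)$, where $c_2$ is expressed via $H,G$ as in \Cref{prop:I_F^2}. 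This shows that multiplication by $h^2$ is a scalar multiple of the isomorphism of \Cref{thm:chow-iso}. The scalar is nonzero because otherwise multiplication by $I_\ast(o_Y)$ would vanish, contradicting \Cref{cor:chow-iso}.
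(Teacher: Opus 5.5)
\textbf{There is a genuine gap in the first claim}, which carries the whole proposition, and the fibration route you sketch for it does not work. Since $\iota$ is the covering involution of $Y\to D_1^{\overline{A}}$ and the fibres of $D_1^{\overline{A}}\to\IP^2$ are Kummer quartics, $\iota$ acts on a general fibre $F_t$ of $f_i$ as $-1$ about a fixed point. In the Beauville decomposition of $\chow_0(F_t)$, $[-1]^\ast$ acts by $(-1)^s$ on the $s$-th summand. So the anti-invariant $0$-cycles are exactly the Albanese part, spanned by the classes $[a]-[-a]$, whereas $I^2\chow_0(F_t)$ is $\iota$-\emph{invariant}. Your claim that anti-invariant cycles land in $I^2$ is therefore false. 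Their pushforwards to $Y$ are the classes $[y]-[\iota(y)]$, and these are not all zero in $\chow_0(Y)^-$; otherwise $\iota$ would act trivially on $H^{2,0}(Y)$ by Mumford's theorem. So knowing only that $\gamma|_{F_t}$ is an anti-invariant, homologically trivial $0$-cycle on $F_t$ gives no vanishing, and nothing in your argument makes $j_{t\ast}(\gamma|_{F_t})$ invariant. Likewise, the symmetry argument via \Cref{prop:linear-relation} only reproves the tautology $h_i^2\cdot\chow^2(Y)^-_{\mathrm{hom}}\subset\chow_0(Y)^-$. You would need genuinely global information about $\gamma$ (for instance, that it restricts to zero on the generic fibre of $f_i$), and the proposal supplies none.

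The missing idea is to transport the question to the Verra fourfold, where a Franchetta property is available. After reducing to general $Y$ (all classes involved are generically defined), \Cref{thm:chow-iso,cor:chow-iso} give isomorphisms $\chow_1(X)_{\mathrm{hom}}\cong\chow^2(Y)^-_{\mathrm{hom}}$ and $\chow_0(Y)^-\cong\chow_1(X)_{\mathrm{hom}}$ induced by the universal conic. So it suffices that the composite $\chow_1(X)_{\mathrm{hom}}\to\chow^2(Y)^-_{\mathrm{hom}}\xrightarrow{\cdot h_i^2}\chow_0(Y)^-\to\chow_1(X)_{\mathrm{hom}}$ vanishes. Cut down by the primitive projector $\pi^4_{\mathrm{pr}}$, this composite is a generically defined cycle on $X\times X$. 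By the Franchetta property for $X\times X$ (\Cref{thm:franchetta}; nothing comparable is established for $Y\times Y$), it then suffices to check vanishing in cohomology. There $h_i^2\cdot H^2_{\mathrm{pr}}(Y)=0$, because $q(h_i)=0$ and $q(h_i,D)=0$ for primitive $D$.

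Your elimination of $c_2(Y)$ in the second step is also unjustified. The class expressed through $H,G$ in \Cref{prop:I_F^2} is $c_2(T_X)$, not $c_2(Y)$, and invoking \Cref{cor:proj-i} does not by itself say how $c_2(Y)$ acts. The paper uses \Cref{lemma:class-of-Z}, $kZ=\frac{3}{4}(9h^2-g^2)-c_2(Y)$. For $\gamma\in\chow^2(Y)^-_{\mathrm{hom}}$, the cycle $Z\cdot\gamma$ is a degree-zero cycle supported on the constant cycle surface $Z$ (\Cref{thm:constant-cycle}), hence zero. Alternatively, the same $X\times X$ argument applied to $c_2(Y)-15h_1h_2$ works, since by the Fujiki relations these two classes act identically on $H^2_{\mathrm{pr}}(Y)$. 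With these two inputs in place, your concluding deduction (a non-zero scalar, hence an isomorphism) goes through.
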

\begin{proof}
    Since all the classes involved are generically defined, it suffices to prove the statement for the general double EPW quartic, so we may assume that $Y$ admits an associated Verra fourfold $X$.

    Consider the composition
    $$
    \chow_1(X)_{\mathrm{hom}}\xrightarrow{\Psi}\chow^2(Y)^-_{\mathrm{hom}}\xrightarrow{\cdot h_i^2}\chow_0(Y)^-\xrightarrow{\Psi^t}\chow_1(X)_{\mathrm{hom}}
    $$
    Since $\Psi$ and $\Psi^t$ are isomorphisms by \Cref{thm:chow-iso}, it suffices to prove that the composition is zero.
    Using the Franchetta property for $X\times X$ from \Cref{thm:franchetta}, this follows from knowing that the composition is $0$ in cohomology. 

    Define the primitive motive $\mathfrak
    {h}_{\mathrm{pr}}^4(X)$ of $X$ as the motive cut out by the idempotent correspondence 
    $$\pi^4_{\mathrm{pr}}=\Delta_X-X\times x-x\times X-g_1\times g_1 g_2^2- g_2\times g_2 g_1^2-\frac{g_1^2\times g_1^2}{\deg g_1^2 g_2^2}-\frac{g_2^2\times g_2^2}{\deg g_1^2 g_2^2},$$
    where the $g_i$ are the pullbacks of the hyperplane classes along the projections $X\to \IP^2$ and $x$ is a point.
    It satisfies $H^k(\mathfrak{h}_{\mathrm{pr}}^4(X))=H^k_{\mathrm{pr}}(X)$, which is $0$ for $k\neq 4$.
    Hence the composition becomes the following in cohomology.
    $$
    H^4_{\mathrm{pr}}(X)\xrightarrow{P_\ast^t} H^2_{\mathrm{pr}}(Y)\xrightarrow{\cdot h_i^2} H^6_{\mathrm{pr}}(Y)\xrightarrow{P_\ast}H^4_{\mathrm{pr}}(X)
    $$
    Since $q(h_i,h_i)=0$ and $q(h_i, D)=0$ for all $D\in H^2_{\mathrm{pr}}(Y)$, multiplication by $h_i^2$ vanishes on $H^2_{\mathrm{pr}}(Y)$, meaning that the composition also vanishes in cohomology.

    Finally, since $I_\ast(o_Y)$ is generically defined, it is a linear combination of $h_1^2, h_1h_2, h_2^2,$ and $c_2(Y)$. 
    By \Cref{lemma:class-of-Z}, the proof of which is independent of this result, the class of~$Z$ is linearly independent of $h_1^2, h_1h_2, h_2^2$. 
    For any $\alpha\in\chow^2(Y)^-_{\mathrm{hom}}$, $Z\cdot\alpha$ is supported on~$Z$ and hence is a multiple of $o_Y$ by \Cref{thm:constant-cycle}.
    Thus, it is equal to $0$, as it is also anti-invariant. 
    Hence, multiplication with $I_\ast(o_Y)$ agrees up to a scalar with multiplication with $h_1h_2$, which in turn agrees up to a scalar with multiplication by $h^2$, completing the proof. 
\end{proof}

\begin{corollary}\label{cor:chow^3_strong}
    Let $Y$ be a double EPW quartic.
    Then multiplication by $g^2$ induces an isomorphism $\chow^2(Y)^-_{\mathrm{hom}}\cong \chow_0(Y)^-$ with inverse $I_\ast$ and
    $$\chow^3(Y)^-_{\mathrm{hom}}=h\cdot\chow^2(Y)^-_{\mathrm{hom}}\oplus g\cdot\chow^2(Y)^-_{\mathrm{hom}}.$$
\end{corollary}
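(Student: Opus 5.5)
First I will handle the statement about $g^2$. Since $h=h_1+h_2$ and $g=h_1-h_2$, we have $h^2=h_1^2+2h_1h_2+h_2^2$ and $g^2=h_1^2-2h_1h_2+h_2^2$. By \Cref{prop:h^2}, multiplication by $h_1^2$ and $h_2^2$ vanishes on $\chow^2(Y)^-_{\mathrm{hom}}$. Hence on this group
$$g^2\cdot(-)=-2h_1h_2\cdot(-)=-h^2\cdot(-).$$
\Cref{prop:h^2} shows that multiplication by $h^2$ is an isomorphism onto $\chow_0(Y)^-$, and \Cref{cor:chow-iso} shows that it is inverse to $I_\ast$ up to a scalar. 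The same therefore holds for multiplication by $g^2$. After rescaling $I$ (or simply reading the statement up to a non-zero scalar), its inverse is $I_\ast$.

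For the decomposition of $\chow^3(Y)^-_{\mathrm{hom}}$, I will start from \Cref{cor:chow^3_weak}. It gives $\chow^3(Y)^-_{\mathrm{hom}}=\chow^1(Y)^+\cdot\chow^2(Y)^-_{\mathrm{hom}}$. I expect $\chow^1(Y)^+$ to be spanned by $h_1,h_2$, hence by $h$ and $g$. For the very general member this is the statement $\gdch^1(Y)=\langle h_1,h_2\rangle$. For an arbitrary $Y$, I will use that the proof of \Cref{cor:chow^3_weak} only produced products with the generically defined divisors $D_i'\in\gdch^1(Y)=\langle h_1,h_2\rangle$. Either way, the proof of \Cref{cor:chow^3_weak} already gives $\chow^3(Y)^-_{\mathrm{hom}}=h\cdot\chow^2(Y)^-_{\mathrm{hom}}+g\cdot\chow^2(Y)^-_{\mathrm{hom}}$.

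It remains to show that this sum is direct. Suppose $h\cdot\beta=g\cdot\gamma$ with $\beta,\gamma\in\chow^2(Y)^-_{\mathrm{hom}}$. Multiplying by $h$ and using $hg=h_1^2-h_2^2$, which acts by zero, gives $h^2\beta=0$, so $\beta=0$ by \Cref{prop:h^2}. Multiplying instead by $g$ gives $g^2\gamma=hg\beta=0$, so $\gamma=0$ by the first part. This shows the sum is direct.

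The main subtlety is that all the multiplications used must land in anti-invariant homologically trivial groups where \Cref{prop:h^2} applies, and this is immediate since $h$ and $g$ are $\iota$-invariant. If instead one needed $\chow^1(Y)^+=\langle h,g\rangle$ for special $Y$ carrying extra invariant divisors, the fallback is the observation above that only generically defined divisors occur in the proof of \Cref{cor:chow^3_weak}.
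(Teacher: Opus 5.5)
Your proposal is correct and follows the paper's proof essentially step by step. You rewrite $g^2\cdot(-)$ as $-h^2\cdot(-)$ on $\chow^2(Y)^-_{\mathrm{hom}}$ via \Cref{prop:h^2}, take the sum decomposition from \Cref{cor:chow^3_weak}, and obtain directness by multiplying a relation $h\beta=g\gamma$ by $h$ and using $hg=h_1^2-h_2^2$. Your extra step giving $\gamma=0$, and your remark that $\chow^1(Y)^+=\langle h,g\rangle$ also for special $Y$, are not needed for directness but are harmless refinements of the paper's argument.
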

\begin{proof}
    Since $g^2=h_1^2-2h_1h_2+h_2^2$, it follows from \Cref{prop:h^2} that multiplication by $g^2$ agrees with multiplication by $-h^2$ and is an isomorphism.
    We know from \Cref{cor:chow^3_weak} that $$\chow^3(Y)^-_{\mathrm{hom}}=\chow^1(Y)^+\cdot\chow^2(Y)^-_{\mathrm{hom}}=h\cdot\chow^2(Y)^-_{\mathrm{hom}}+g\cdot\chow^2(Y)^-_{\mathrm{hom}}.$$
    To show that this is indeed a direct sum, consider $\alpha,\beta\in\chow^2(Y)^-_{\mathrm{hom}}$ such that $\alpha\cdot h=\beta \cdot g$.
    Multiplying with $h$ shows that $\alpha\cdot h^2=\beta\cdot gh=\beta\cdot(h_1^2-h_2^2)=0$ by \Cref{prop:h^2}. 
    Since multiplication with $h^2$ is an isomorphism, $\alpha=0$.    
\end{proof}

\begin{corollary}
    Let $Y$ be a general double EPW quartic with associated Verra fourfold~$X$ and $Y\to D_1^{\overline{A}}$ be the double cover. 
    There is an isomorphism of rational Chow motives 
    $$\mathfrak{h}(Y)\cong \mathfrak{h}(D_1^{\overline{A}})\oplus t(X)(-1)\oplus t(X)^{\oplus 2}\oplus t(X)(1)\oplus \bigoplus_j \mathds{1}(d_j)^{\oplus r_j},$$
    where $t(X)$ denotes the transcendental motive of $X$.
\end{corollary}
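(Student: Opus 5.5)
The plan is to split the motive of $Y$ under the involution as $\mathfrak{h}(Y)=\mathfrak{h}(Y)^+\oplus\mathfrak{h}(Y)^-$, using the projectors $\frac12(\Delta_Y\pm\Gamma_\iota)$, and to treat the two summands separately.

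\textbf{Invariant part.} Since $Y\to D_1^{\overline{A}}$ is the quotient by $\iota$ and we work with rational coefficients, $\mathfrak{h}(Y)^+\cong\mathfrak{h}(D_1^{\overline{A}})$. Here $\mathfrak{h}(D_1^{\overline{A}})$ is understood as the motive of a finite group quotient of a smooth projective variety, as in \cite{motives}. No further work is needed for this summand.

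\textbf{Anti-invariant part.} I would build explicit correspondences out of the universal conic. Let $\Psi^t=P^t_\ast$ from $\mathfrak{h}(X)$ to $\mathfrak{h}(Y)(-1)$, suitably twisted, and compose it with the projector onto $t(X)$. The transcendental motive $t(X)$ is cut out of $\mathfrak{h}^4_{\mathrm{pr}}(X)$; by the motivic decomposition $\mathfrak{h}(X)\cong\mathfrak{h}(S)(1)\oplus\bigoplus\mathds{1}(i)$ it is a twist of $t(S)$. From this I would form three maps into $\mathfrak{h}(Y)^-$:
\begin{itemize}
\item $\Phi_0\colon t(X)(-1)\to\mathfrak{h}(Y)^-$, given by $\Psi^t$;
\item $\Phi_1,\Phi_2\colon t(X)\to\mathfrak{h}(Y)^-$, given by $\Psi^t$ followed by multiplication by $h$ and by $g$;
\item $\Phi_3\colon t(X)(1)\to\mathfrak{h}(Y)^-$, given by $\Psi^t$ followed by multiplication by $h^2$ (equivalently by $g^2$).
\end{itemize}
The candidate inverses are the analogous maps built from $\Psi$, using multiplication by $I_\ast(o_Y)$ and $I_\ast$.

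\textbf{Chow groups.} On $\chow^\ast_{\mathrm{hom}}$, \Cref{thm:chow-iso}, \Cref{cor:chow-iso}, \Cref{prop:h^2} and \Cref{cor:chow^3_strong} give the following. The groups $\chow^2(Y)^-_{\mathrm{hom}}$ and $\chow_0(Y)^-$ are each isomorphic to $\chow_1(X)_{\mathrm{hom}}$, which is $\chow^\ast(t(X))$. The group $\chow^3(Y)^-_{\mathrm{hom}}$ is the direct sum of two copies, namely $h\cdot\chow^2(Y)^-_{\mathrm{hom}}$ and $g\cdot\chow^2(Y)^-_{\mathrm{hom}}$. Finally, $\chow^1(Y)^-_{\mathrm{hom}}=0$.

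\textbf{Cohomology.} On cohomology, \Cref{prop:verra-fix} and hard Lefschetz-type arguments identify the anti-invariant transcendental cohomology of $Y$ with four twisted copies of $H^4_{\mathrm{tr}}(X)$. Concretely these are $H^2_{\mathrm{tr}}(Y)$, the image in $H^6(Y)$ of $H^2_{\mathrm{tr}}(Y)$ under the quadratic-form-induced map, and two copies inside $H^4(Y)^-$. The first two I expect to have the right rank. The two copies in $H^4(Y)^-$ are obtained from $\sym^2 H^2$ via $h\cdot H^2_{\mathrm{tr}}$ and $g\cdot H^2_{\mathrm{tr}}$. This shows that $\Phi=\oplus\Phi_k$ is an isomorphism on cohomology. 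The remaining algebraic anti-invariant classes contribute Lefschetz motives $\mathds{1}(d_j)^{\oplus r_j}$, which I would split off using the classes spanning them together with their duals.

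\textbf{Lifting to Chow motives.} To pass from cohomology to Chow motives, I would use the Franchetta property (\Cref{thm:franchetta}). The compositions $\Phi^{-1}\circ\Phi-\mathrm{id}$ and $\Phi\circ\Phi^{-1}-\mathrm{id}$ are generically defined cycles on $X\times X$ and on $Y\times Y$, respectively, and they vanish in cohomology. Franchetta would therefore make them nilpotent, or even zero. Two points here are delicate:
\begin{itemize}
\item Franchetta is available for $Y\times Y$ only through its description as a summand of powers of $K3$ surfaces, so this input needs care.
\item A cleaner route is to use that all motives involved are of abelian type, built from $\mathfrak{h}(S)$, and are finite dimensional in Kimura's sense on the relevant summands. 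Then homological isomorphisms between them lift, since Chow-theoretic nilpotence follows from homological triviality.
\end{itemize}

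\textbf{Main obstacle.} I expect the hardest step to be this lifting, that is, checking that the kernel of the correspondence contributes no extra phantom summand. On the Chow side, the isomorphisms on each $\chow^i_{\mathrm{hom}}$ listed above provide exactly the surjectivity and injectivity needed to rule this out.
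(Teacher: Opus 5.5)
There is a genuine gap in the lifting step, which is the only non-formal part of the argument, and neither mechanism you propose for it is available.

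\textbf{Why your two lifting routes fail.} \Cref{thm:franchetta} establishes the Franchetta property only for $Y$, for $X^n$ with $n\le 3$, and for $F(X)$. For $Y\times Y$ it would require Franchetta for fourth powers of genus-$2$ $K3$ surfaces. That is not established here; the known results stop at cubes. So you cannot deduce that $\Phi\circ\Phi^{-1}-\mathrm{id}$ vanishes in $\chow^4(Y\times Y)$ from its vanishing in cohomology.

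The Kimura route fails too. The motive of a general $K3$ surface of genus $2$ is not known to be finite-dimensional or of abelian type; Kuga--Satake only gives a homological relation. Hence homologically trivial correspondences are not known to be nilpotent here.

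At most, Franchetta on $X\times X$ gives you a one-sided inverse. That needs $X$ very general, so that the projector onto $t(X)$ is generically defined. You would then get
$\mathfrak{h}(Y)^-\cong t(X)(-1)\oplus t(X)^{\oplus 2}\oplus t(X)(1)\oplus\bigoplus_j\mathds{1}(d_j)^{\oplus r_j}\oplus N'$
with $H^\ast(N')=0$. Nothing you cite excludes such a phantom summand $N'$.

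\textbf{The missing idea.} What you need is the identity principle over a universal domain, \cite[Lem.~1.1]{huybrechts}. Since $\IC$ is a universal domain, a morphism of Chow motives over $\IC$ is an isomorphism as soon as it induces bijections on all Chow groups. This is exactly how the paper argues, and it makes your cohomological paragraph unnecessary.

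Your $\Phi$, together with lifts of a basis of the algebraic classes in $H^\ast(Y)^-$ for the Lefschetz summands, induces on Chow groups exactly the following maps:
\begin{itemize}
\item $\chow_1(X)_{\mathrm{hom}}\to\chow^2(Y)^-_{\mathrm{hom}}$,
\item $\chow_1(X)_{\mathrm{hom}}^{\oplus 2}\to\chow^3(Y)^-_{\mathrm{hom}}$ via $h$ and $g$,
\item $\chow_1(X)_{\mathrm{hom}}\to\chow_0(Y)^-$ via $h^2$.
\end{itemize}
\Cref{thm:chow-iso}, \Cref{cor:chow-iso}, \Cref{prop:h^2} and \Cref{cor:chow^3_strong} show these are isomorphisms. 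The lifted algebraic classes span complements to the homologically trivial parts.

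Your closing sentence points in this direction. However, passing from bijectivity on $\chow^\ast$ to an isomorphism of motives, equivalently to the vanishing of $N'$, is precisely the content of that lemma, and it has to be invoked. Once it is, the rest of your set-up (the splitting by $\frac12(\Delta_Y\pm\Gamma_\iota)$ and $\mathfrak{h}(Y)^+\cong\mathfrak{h}(D_1^{\overline{A}})$) completes the proof.
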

\begin{proof}
    It is sufficient to prove that 
    $$\mathfrak{h}(Y)^-\cong t(X)(-1)\oplus t(X)^{\oplus 2}\oplus t(X)(1)\oplus \bigoplus_j \mathds{1}(d_j)^{\oplus r_j}.$$
    Since we are working over $\IC$, which is a universal domain, \cite[Lem. 1.1]{huybrechts} reduces this to proving that there is an isomorphism of Chow groups
    $$\chow^\ast(Y)^-\cong \chow^\ast(t(X)(-1)\oplus t(X)^{\oplus 2}\oplus t(X)(1)\oplus\bigoplus_j\mathds{1}(d_j)^{\oplus r_j})$$
    induced by correspondences. 
    These are given by the isomorphisms in \Cref{prop:verra-fix,cor:chow-iso,cor:chow^3_strong} composed with the isomorphism $\chow_0(Y)^-\cong\chow_1(X)_{\mathrm{hom}}$ from \Cref{thm:chow-iso}.
\end{proof}

\subsection{Voisin's filtration on zero-cycles}
Voisin studied Beauville's conjecture on the splitting of the conjectural Bloch--Beilinson filtration in \cite{voisin16} and defined a filtration opposite to it. 
\begin{definition}[\cite{voisin16}]
    Let $T$ be a hyperkähler variety of dimension $n$ and let $O_x$ be the set of points rationally equivalent to $x\in T$.
    This is a countable union of subvarieties of $T$ and has a well-defined dimension as the supremum over the dimensions of these subvarieties.
    Define \emph{Voisin's filtration} on $\chow_0(T)$ as
    $$
    S_i^{\text{V}}\chow_0(T)\coloneq \left\langle [x]~|~x\in T \text{ and }\dim O_x\geq n-i\right\rangle.
    $$
\end{definition}

Adapting arguments from \cite{bolognesi-laterveer} to the setting of double EPW quartics, we prove that Voisin's filtration is of the form predicted by the Bloch--Beilinson conjecture.

\begin{proposition}\label{prop:filtration}
    Let $Y$ be a double EPW quartic. 
    Then $S_1^{\text{V}}\cap\chow_0(Y)_{\mathrm{hom}}=\chow_0(Y)^-$ and Voisin's filtration agrees with the filtration obtained from the involution $\iota$,
    $$
    \IQ o_Y\subset \IQ o_Y\oplus\chow_0(Y)^-\subset \chow_0(Y)
    $$
\end{proposition}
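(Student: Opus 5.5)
Throughout I write $S_i$ for $S_i^{\mathrm{V}}\chow_0(Y)$. The plan is to adapt the argument of \cite{bolognesi-laterveer}, checking the filtration step by step. For $S_0$: the only point class with $\dim O_x=4$ is a class such that every point is rationally equivalent to $x$, which is impossible since $\chow_0(Y)$ is infinite dimensional. Hence $S_0$ is spanned by points with $\dim O_x\geq 4$, which forces $S_0=0$ unless we index from $\dim O_x\geq 2$. Following Voisin, for $n=4$ the interesting steps are $S_2$ (points with $\dim O_x\geq 2$) and $S_1$ (points with $\dim O_x\geq 3$). I will identify $S_2=\IQ o_Y$ and $S_1\cap\chow_0(Y)_{\mathrm{hom}}=\chow_0(Y)^-$ in the statement's indexing, and $\chow_0(Y)$ is trivially the top step. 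For $\IQ o_Y$: by \Cref{thm:constant-cycle}, the fixed surface $Z$ consists of points of class $o_Y$, so $\dim O_z\geq 2$ and $o_Y$ lies in the deepest step. Conversely, if $\dim O_x\geq 2$, then $O_x$ contains a constant cycle surface, and \Cref{prop:nef} gives $[x]=o_Y$. For a non-general $Y$, where \Cref{thm:constant-cycle} is not directly available, I would use \Cref{prop:nef}, which already covers all double EPW quartics; alternatively, specialise generically defined statements.

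\textbf{Inclusion $\chow_0(Y)^-\subset S_1$.} Here I would use the Lagrangian fibrations $f_i\colon Y\to\IP^2$ induced by $h_1,h_2$, together with the constant cycle property. By Voisin's results, namely \cite{voisin16} and the argument in \cite{bolognesi-laterveer} via Lagrangian fibrations, every point $x$ satisfies $\dim O_x\geq 1$: the points of a fibre of $f_i$ rationally equivalent to a given point form a subvariety of dimension at least half the fibre dimension. To get $\dim O_x\geq 3$ I would argue as follows. For $x\in Y$, the cycle $x-\iota(x)$ spans $\chow_0(Y)^-$, and $x+\iota(x)\in \IQ o_Y\oplus\chow_0(Y)^+_{\mathrm{hom}}$. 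The key is to show that the locus $\{y : y+\iota(y)=x+\iota(x)\}$ is $3$-dimensional for general $x$. Via \Cref{thm:chow-iso}, the class $y+\iota(y)$ is determined by the invariant part. Since $\Psi$ kills $\chow_0(Y)^+_{\mathrm{hom}}$, I would instead use the identity $S_y^2=$ (multiple of) $y$ plus invariant terms, from \Cref{lemma:conic-intersection,lemma:D-S}. Honestly, the cleaner route seems to be Voisin's criterion: a point lies in $S_1$ if and only if it lies on a constant cycle subvariety of dimension $3$. The orbits $O_x$ for $x$ on a divisor $\{y: y-\iota(y) \text{ fixed}\}$ are then obtained from the incidence $I$: the set of $y$ with $I_\ast(y)=S_y$ fixed has codimension $\leq 1$, because $I_\ast$ lands in $\chow^2(Y)$ and its anti-invariant part is controlled by \Cref{cor:chow-iso}. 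Combining this with the fact that $\Psi$ is zero on $\chow_0(Y)^+_{\mathrm{hom}}$ shows that classes of the form $\iota$-anti-invariant plus $o_Y$ are represented by points with $3$-dimensional orbits.

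\textbf{Reverse inclusion $S_1\cap\chow_0(Y)_{\mathrm{hom}}\subset\chow_0(Y)^-$.} Let $x$ satisfy $\dim O_x\geq 3$, so $x$ lies on a constant cycle divisor $E$. I would show that $x+\iota(x)=2o_Y$. Since $E$ and $\iota(E)$ are constant cycle divisors, each meets the constant cycle surface $Z$, because $Z\cdot E\neq 0$ by \Cref{lemma:class-of-Z} and ampleness considerations. Hence $x=o_Y$ on $E\cap Z$ and every point of $E$ has class $o_Y$. That seems too strong, so more precisely: $E$ is constant cycle, so all points of $E$ have class $[x]$, and if $E\cap Z\neq\varnothing$ then $[x]=o_Y$. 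The actual statement must instead use $E\cap\iota(E)$. Points there have $[y]=[x]=[\iota(x)]$, so $x-\iota(x)\mapsto$ the invariant part of $[x]$ is $[x]^+=\tfrac12(x+\iota(x))=[x]$, and the fixed-point argument then shows that the invariant homologically trivial part vanishes.

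The main obstacle is the dimension count in the inclusion $\chow_0(Y)^-\subset S_1$. I would try first to produce the $3$-dimensional orbits through the $\IP^1$-bundle $F(X)$, using the fact that $c+c'$ is constant for $c,c'$ with $\alpha(c')=\iota(\alpha(c))$.
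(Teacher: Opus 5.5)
There is a genuine gap, and it begins with the indexing. For a hyperk\"ahler variety of dimension $2n$, Voisin's $S_i$ is spanned by points with $\dim O_x\geq n-i$. Here $n=2$, so $S_0^{\mathrm{V}}$ comes from orbits of dimension $2$ and $S_1^{\mathrm{V}}$ from orbits of dimension $\geq 1$. The definition is printed with $\dim T=n$, but the paper's proof uses $S_0^{\mathrm{V}}$ for $\dim O_x=2$.

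You sensed that the literal reading was off but resolved it the wrong way, and your two targets are then incompatible. Points with $\dim O_x\geq 3$ are among those with $\dim O_x\geq 2$, so $S_1\subset S_2$, and $S_2=\IQ o_Y$ forces $S_1\cap\chow_0(Y)_{\mathrm{hom}}=0\neq\chow_0(Y)^-$. In fact your $S_1$ is $0$. By Mumford's theorem $\sigma$ vanishes on every constant cycle subvariety, so each $O_x$ is a countable union of isotropic subvarieties and $\dim O_x\leq 2$. Hence neither the $3$-dimensional orbits you try to build for $\chow_0(Y)^-\subset S_1$ nor the constant cycle divisor $E$ in your reverse inclusion exists.

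Your claim that every point has $\dim O_x\geq 1$ is also false. In the correct indexing it would give $S_1^{\mathrm{V}}=\chow_0(Y)$. But $\chow_0(Y)^+_{\mathrm{hom}}\neq 0$: the invariant projector $\tfrac12(\Delta_Y+\Gamma_\iota)$ fixes $\sigma^2$, so by Bloch--Srinivas it cannot act through the degree on $\chow_0(Y)$.

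What is correct is your deepest step. A point with $\dim O_x=2$ lies on a constant cycle surface, so $[x]=o_Y$ by \Cref{prop:nef}, and $Z$ shows that $o_Y$ occurs. This is exactly the paper's argument for $S_0^{\mathrm{V}}\chow_0(Y)=\IQ o_Y$.

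The missing idea for the middle step is a description of $S_1^{\mathrm{V}}$ that can be compared with $\iota$. The paper imports it from \cite[Thm.~1.8]{charles-mongardi-piacenza}: $S_1^{\mathrm{V}}\chow_0(Y)=h\cdot\chow^3(Y)$. Then \Cref{prop:h^2} gives $\chow_0(Y)^-=h^2\cdot\chow^2(Y)^-_{\mathrm{hom}}\subset h\cdot\chow^3(Y)$.

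For the converse, split $\chow^3(Y)=\chow^3(Y)^+\oplus\chow^3(Y)^-$. Clearly $h\cdot\chow^3(Y)^-\subset\chow_0(Y)^-$. For $\alpha\in\chow^3(Y)^+$, use the double cover $f\colon Y\to D_1^{\overline{A}}$ and the inclusion $j$ of the quartic section $D_1^{\overline{A}}$ into $C(\IP^2\times\IP^2)$:
\[
h\cdot\alpha=\tfrac12 f^\ast(h\cdot f_\ast\alpha)=\tfrac18 f^\ast j^\ast j_\ast f_\ast\alpha\in f^\ast j^\ast\chow^4(C(\IP^2\times\IP^2))=\IQ o_Y.
\]
Hence $S_1^{\mathrm{V}}\cap\chow_0(Y)_{\mathrm{hom}}\subset\chow_0(Y)^-$.

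None of these three ingredients appears in your proposal: the identification $S_1^{\mathrm{V}}=h\cdot\chow^3(Y)$, the $h^2$-isomorphism on $\chow^2(Y)^-_{\mathrm{hom}}$, and the double-cover computation on invariant classes. The incidence and $\IP^1$-bundle constructions you sketch are aimed at orbits of a dimension that does not occur.
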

\begin{proof}
    Any point with $y\in Y$ with $\dim O_y=2$ lies on a Lagrangian constant cycle subvariety, meaning that its class is $o_Y$ by \Cref{prop:nef}.
    Hence, $S_0^{\text{V}}\chow_0(Y)=~\IQ o_Y$.
    It follows from \cite[Thm. 1.8]{charles-mongardi-piacenza} that $S_0^{\text{V}}\chow_0(Y)=D\cdot\chow^3(Y)$ for any $D\in\chow^1(Y)$.
    We obtain from \Cref{prop:h^2} that 
    $$
    \chow_0(Y)^-=h^2\cdot\chow^2(Y)^-_{\mathrm{hom}}\subset h\cdot\chow^3(Y)=S_1^{\text{V}}\chow_0(Y).
    $$
    Now, consider the double cover $f\colon Y\to D_1^{\overline{A}}$ and the inclusion $j\colon D_1^{\overline{A}}\to C(\IP^2\times\IP^2)$.
    Let~$\alpha\in \chow^3(Y)^+$. 
    Then 
    $$
    h\cdot\alpha=\frac{1}{2}f^\ast f_\ast h\cdot\alpha=\frac{1}{2}f^\ast(h\cdot f_\ast\alpha)=\frac{1}{8}f^\ast j^\ast j_\ast f_\ast\alpha=\lambda o_Y,
    $$
    since $D_1^{\overline{A}}$ is a quartic section in the cone and $j^\ast j_\ast f_\ast\alpha\in j^\ast\chow^4(C(\IP^2\times\IP^2))=\IQ o_Y$.
    Thus, 
    $$
    S_1^{\text{V}}\chow_0(Y)\cap \chow_0(Y)_{\mathrm{hom}}=h\cdot(\chow^3(Y)^+\oplus\chow^3(Y)^-)\cap\chow_0(Y)_{\mathrm{hom}}\subset\chow_0(Y)^-
    $$
\end{proof}

\section{A multiplicativity result implied by the Bloch--Beilinson conjecture}\label{sec:multiplicativity}

The aim of this section is to prove that the multiplication 
$$\chow^2(Y)^-_{\mathrm{hom}}\otimes\chow^2(Y)^-_{\mathrm{hom}}\to\chow_0(Y)^+_{\mathrm{hom}}$$
is surjective. 
Since $H^0(Y,\Omega_Y^2)\otimes H^0(Y,\Omega_Y^2)\to H^0(Y,\Omega_Y^4)$ is surjective, the generalised Hodge conjecture and the Bloch--Beilinson conjecture predict that the multiplication map $F^2\chow^2(Y)\otimes F^2\chow^2(Y)\to F^4\chow_0(Y)$ is surjective.
Then, the expected description of the Bloch--Beilinson filtration on $Y$ would imply the statement above. 
The analogous statement was shown in the case of Fano varieties of lines on cubic fourfolds for what the filtration is expected to be \cite[Thm. 20.2]{shenvial}.

To prove this result, we will need to compute more intersection products of varieties of the type $D_c$, as in \Cref{subsec:consta-cycle}, meaning that we will be using similar methods.
We need to understand more products of the type $S_y\cdot S_{y'}$, where
$$S_y\coloneq I_\ast(y)=\frac{1}{m}\alpha_\ast(D_c\cdot h_F).$$
Recall that $h_F$ is a choice of ample divisor on $F(X)$, $m$ is the intersection number of $h_F$ with the general fibre of $\alpha$, and
$$D_c=\left\{c'\in F(X)~|~c'\cap c\neq\varnothing\right\}.$$
We begin by computing the product of $D_c\cdot D_{c'}$, where $c'$ is any conic with~$\alpha(c')=~\iota(\alpha(c))$, which is similar to \cite[Prop. 20.7]{shenvial}.
Let $\pi\colon X\to \IP^2\times\IP^2$ be the double cover and recall that any $c$ is contained in a del Pezzo surface $D_{(L,M)}=\pi^{-1}(L\times M)$.
Any such del Pezzo surface contains $8$ pencils of~$(1,1)$-conics, which we denote $\ell_i$. 
For more details, see the discussion at the beginning of \Cref{subsec:consta-cycle}.
\begin{proposition}\label{prop:Dc.Diota}
    Let $X$ be a general Verra fourfold and $c$ a general $(1,1)$-conic contained in $D_{(L,M)}\subset X$ and assume $c\in \ell_i$.
    Let $c'$ be a general conic lying on $\ell_{i+4}$, i.e~$\alpha(c')=~\iota(\alpha(c))$.
    Then
    $$D_c\cdot D_{c'}=\Gamma+\sum_{j\neq i,i+4}\ell_j \text{ in }\chow^4(F),$$
    where $\Gamma$ is generically defined. 
\end{proposition}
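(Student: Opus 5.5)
The plan is to compute $D_c\cdot D_{c'}$ set-theoretically, as in \Cref{lemma:conic-intersection}, and then account for the extra excess contributions by a generically defined class. I will model this on \cite[Prop.~20.7]{shenvial}. The key difference from \Cref{lemma:conic-intersection} is that $c$ and $c'$ lie in the same del Pezzo surface $D_{(L,M)}$ on the opposite pencils $\ell_i,\ell_{i+4}$, so they meet in two points. Conics through these two points then give an extra component of $D_c\cap D_{c'}$ not contained in $D_{(L,M)}$.

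First I determine the support of $D_c\cap D_{c'}$. Let $x_1,x_2=c\cap c'$. A conic $c''$ meeting both $c$ and $c'$ falls into one of three cases.
\begin{enumerate}[label=(\roman*)]
\item It lies in $D_{(L,M)}$. By the intersection numbers $c_ic_j=1$ for $j\neq i,i+4$, every conic on $\ell_j$ with $j\neq i,i+4$ meets both. Conics on $\ell_i$ or $\ell_{i+4}$ meet only one of $c,c'$ generically, apart from finitely many exceptions.
\item It lies outside $D_{(L,M)}$ and passes through $x_1$ or $x_2$. This gives the surfaces $\Sigma_{x_k}=p(q^{-1}(x_k))$ of conics through $x_k$, which are $2$-dimensional by the flatness of $q$ in \Cref{lemma:normal-bundle}.
\item It lies outside $D_{(L,M)}$, meets $c$ at $y\neq x_k$ and $c'$ at $y'\neq x_k$. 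The argument in \Cref{lemma:conic-intersection} shows that $\pi(c'')$ then meets $L\times M$ in two distinct points. I expect a dimension count to show these conics form a family of dimension at most $1$, so they do not contribute in codimension $4$; otherwise they form a further component, handled below.
\end{enumerate}
So, up to this check, $\supp(D_c\cap D_{c'})=\bigcup_{j\neq i,i+4}\ell_j\cup\Sigma_{x_1}\cup\Sigma_{x_2}$.

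Next come the multiplicities along the $\ell_j$. Here I repeat the tangent space computation from \Cref{lemma:conic-intersection}. For a general $c''\in\ell_j$ with $j\neq i,i+4$, $c''$ meets $c$ and $c'$ in single distinct points. Then $D_c$ and $D_{c'}$ are smooth at $c''$ with $T_{c''}D_c\cap T_{c''}D_{c'}=H^0(N_{c''/D_{(L,M)}})=T\ell_j$, so the intersection is transverse there and each $\ell_j$ appears with multiplicity $1$.

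The main obstacle is the excess part along $\Sigma_{x_1},\Sigma_{x_2}$, where $D_c$ and $D_{c'}$ are tangent: both contain $\Sigma_{x_k}$ and share tangent directions coming from sections of $N$ vanishing at $x_k$. I would not compute this multiplicity directly. Instead I note that $[\Sigma_x]=p_\ast q^\ast[x]$, and $[x_k]$ is the class of a point on $X$, which is constant because $\chow_0(X)=\IQ$ ($X$ is Fano, hence rationally connected). So whatever intrinsic multiplicity $\mu$ appears (the same for $k=1,2$ by symmetry), the contribution is $2\mu\,p_\ast q^\ast[\mathrm{pt}]$, which is generically defined. If a residual component of type (iii) exists, its class is also built from $p_\ast q^\ast$ of classes on $X$ supported along the pair $(c,c')$. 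I would handle it via $c+c'=C$ being a constant generically defined class in $\chow_1(X)$ (see the proof of \Cref{prop:linear-relation}) and the decomposability of $(q\times q)^\ast\Delta_X$ restricted to the relevant loci, as in \Cref{prop:I_F^2}. To make the excess rigorous I would apply the excess intersection formula to $(p\times p)_\ast(q\times q)^\ast\Delta_X$ restricted over $\{x_k\}$, whose residual class is a polynomial in $H,G$ and in $p^\ast\chow^\ast(F)$-classes pulled back from the family. Collecting these generically defined pieces into $\Gamma$ gives the statement.
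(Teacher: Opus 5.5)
There is a genuine gap in your treatment of the surfaces $E_{x_1},E_{x_2}$ (your $\Sigma_{x_k}$), and this is where all the content of the proposition lies. These components have dimension $2$, whereas $D_c\cdot D_{c'}\in\chow^4(F)=\chow_1(F)$ has expected dimension $1$. So their contribution is not ``a multiplicity $\mu$ times $p_\ast q^\ast[\mathrm{pt}]$'': that class is $[E_x]\in\chow^3(F)$ and has the wrong dimension. By the excess intersection formula, the contribution of $E_{x_k}$ is the push-forward of $\{s(E_{x_k},D_c)\,c(N_{D_{c'}/F})\}_1$. This is a divisor class on the surface $E_{x_k}$, built from the normal data of $D_c$ and $D_{c'}$ along $E_{x_k}$, so a priori it depends on $c$ and $c'$. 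Constancy of $[x_k]\in\chow_0(X)$ gives no control over it. Your closing sentence about applying excess intersection to $(p\times p)_\ast(q\times q)^\ast\Delta_X$ ``over $\{x_k\}$'' does not identify these normal bundles either. The paper closes exactly this step in three moves. It rewrites the excess class as $c_1(\omega_F^\vee\otimes\omega_{D_c}\otimes\omega_{D_{c'}}\otimes\omega_{E_x}^\vee)|_{E_x}$. It computes $K_{D_c}=\tilde\Gamma|_{D_c}-E_x$, with $\tilde\Gamma$ generically defined, via the small resolution $q^{-1}(c)\to D_c$ and its map to $c\cong\IP^1$. Finally it uses $E_x^2=0$ on $D_c$.

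Two further points. First, your case (iii) is empty, not merely small. The argument of \Cref{lemma:conic-intersection} shows that for $c''\not\subset D_{(L,M)}$ the curve $\pi(c'')$ meets $L\times M$ in at most one point. Hence $c''$ meets $c$ and $c'$ in the same point, which lies in $c\cap c'=\{x_1,x_2\}$. Moreover, a one-dimensional residual family would contribute to $\chow^4(F)$ rather than vanish; compare the residual curve $\ell_\sigma$ in \Cref{prop:app-intersection}, which must be carried separately. Second, the ingredient you reserved for case (iii) gives a repair that avoids excess intersection entirely. $D_c+D_{c'}=p_\ast q^\ast([c]+[c'])$ is generically defined, because $[c]+[c']$ is the class of a hyperplane section of $D_{(L,M)}$ (see the proof of \Cref{prop:linear-relation}). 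So $2D_c\cdot D_{c'}=(D_c+D_{c'})^2-D_c^2-D_{c'}^2$, and \Cref{lemma:conic-intersection} applied to $c\in\ell_i$ and $c'\in\ell_{i+4}$ yields
$$D_c\cdot D_{c'}=\tfrac12(D_c+D_{c'})^2-2\sum_{j=1}^8\ell_j+\sum_{j\neq i,i+4}\ell_j.$$
Here $\sum_{j=1}^8\ell_j$ is the class of a general fibre of $F\to(\IP^2)^\vee\times(\IP^2)^\vee$, $c\mapsto(\pi_1(c),\pi_2(c))$, hence generically defined, and the statement follows.
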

\begin{proof}
    We begin by computing the set-theoretic intersection $D_c\cap D_{c'}$. First, notice that~$c$ and $c'$ intersect in two points, which we denote $x_1$ and $x_2$. 
    Let $E_x$ be the surface of conics passing through a point $x\in X$,
    $$E_x\coloneq p(q^{-1}(x))=\left\{c\in F(X)~|~x\in c\right\}.$$
    Clearly, $E_{x_1}\cup E_{x_2}\subset D_c\cap D_{c'}.$ 
    We also know that any conic  $\tilde{c}\in\ell_j$ for $j\neq i,i+4$ intersects $c$ and $c'$ in one point, meaning that
    $$T\coloneq E_{x_1}\cup E_{x_2}\cup\bigcup_{j\neq i, i+4}\ell_j\subset D_c\cap D_{c'}.$$
    We claim that this is an equality. 
    Indeed, any conic $\tilde{c}$ contained in $D_{(L,M)}$ that intersects both $c$ and $c'$ must lie on one of the $\ell_j$ with $j\neq i, i+4$, so we now consider a conic $\tilde{c}$ contained in $D_{(L',M')}$ with $L'\neq L$. 
    Then, since $\pi(\tilde{c})$ is a $(1,1)$-class, $\pi(\tilde{c})\cap (L\cap L'\times M')$ consists of a single point. 
    Thus, $\pi(\tilde{c}\cap c)=\pi(\tilde{c}\cap \iota(c))$, which means that $\tilde{c}\cap c= \tilde{c}\cap c'$, as $\pi|_c$ is an isomorphism. 
    Thus, $\tilde{c}\in E_{x_1}\cup E_{x_2}$, proving the claim. 

    Now we use the excess intersection formula to compute the cycle-theoretic intersection, see \cite[Thm. 9.2]{fulton} or \cite[Sec. 13.3]{3264}. 
    First, we know that the $\ell_j$ are all disjoint from each other, but they intersect each $E_{x_j}$ in a single distinct point. 
    This is because the pencils $\ell_j$ are base-point free and there is always exactly one conic on each pencil that contains a given point. 
    Similarly, it follows from \Cref{lemma:injective} that $E_{x_1}$ and $E_{x_2}$ intersect in exactly two points, namely $c$ and $c'$. 
    Now, the excess intersection formula says that
    $$D_c\cdot D_{c'}=j_\ast \left\{s(T, D_c)c(N_{D_{c'}/F})^{-1}\right\}_1,$$
    where $\{\cdot\}_1$ denotes the dimension $1$ component and $j\colon T\to F$ is the inclusion. 
    Since $T$ is reducible, we have $$\chow_1(T)=\chow_1(E_{x_1})\oplus\chow_1(E_{x_2})\oplus\bigoplus_{j\neq i,i+4}\chow_1(\ell_j).$$
    The class $s(T,D_c)$ is defined as the pushforward 
    $$s(T, D_c)=\pi'_\ast(\sum c_1(\Ox_E(k))),$$
    where $E=\proj \bigoplus_n \mathcal{I}_{T/D_c}^n/ \mathcal{I}_{T/D_c}^{n+1}$ and $\Ox_E(1)$ is a hyperplane bundle. 
    Since the irreducible components of $T$ only meet in points we can compute the excess intersection by restricting the bundles to each component. 
    Using the same arguments as in the proof of \Cref{lemma:conic-intersection}, we see that the multiplicity of the $\ell_j$ is $1$, so we only need to compute the excess intersection from $E_{x_1}$ and $E_{x_2}$.
    Denote the universal conic by $p\colon P\to F$ and the other projection by $q\colon P\to X$.

    On $E_{x_j}$, we can compute that 
    $$\{s(E_{x_j}, D_c)c(N_{D_{c'}/F})\}_1=c_1\left(\omega_F^\vee|_{E_{x_j}}\otimes \omega_{D_c}|_{E_{x_j}}\otimes \omega_{D_{c'}}|_{E_{x_j}}\otimes\omega_{E_{x_j}}^\vee\right). $$
    The first and last term are generically defined, since $E_x$ is, and pushing this forward to~$F$, we have 
    $$j_\ast\left\{s(E_{x_j},D_c)c(N_{D_{c'}/F})\right\}_1=\Gamma+  (j_c)_\ast(E_{x_j}\cdot K_{D_c})+ (j_{c'})_\ast(E_{x_j}\cdot K_{D_{c'}}),$$
    where $j_c\colon D_c\to F$ and $j_{c'}\colon D_{c'}\to F$ are the inclusions and $\Gamma$ is generically defined. 
    To compute $K_{D_c}$, consider the rational map $$D_c\dashrightarrow c,~\tilde{c}\mapsto \tilde{c}\cap c $$
    This is well-defined outside $\{c\}\cup\ell_{i+4}$ by \Cref{lemma:injective} and admits a small resolution by~$p\colon q^{-1}(c)\to D_c$ and $q\colon q^{-1}(c)\to c$. 
    We obtain a short exact sequence
    $$0\to p^\ast T_{P/X}|_{q^{-1}(c)}\to T_{q^{-1}(c)}\to q^\ast T_c\to 0.$$
    Since $c$ is a conic in $X$, we have $\Ox_X(1)|_c=\Ox_c(2)$, meaning that $q^\ast K_c=q^\ast\Ox_X(-1)|_{^{-1}(c)}$. 
    This implies that $K_{D_c}=\tilde{\Gamma}|_{D_c}-E_x$, where $\tilde{\Gamma}$ is a generically defined divisor on $F$, as~$E_x=~p_\ast q^\ast \Ox_X(1)$ and $q^{-1}(c)\to D_c$ is a small resolution.
    Furthermore, since $c$ is rational, the classes $E_x$ and $E_y$ are rationally equivalent for any two points $x,y\in c$. 
    From \Cref{lemma:injective} we know that any $c'\in E_x\cap E_y$ must lie on $\ell_{i+4}$, but there is at most one conic on a given pencil passing through any point, meaning that $E_x\cap E_y$ consists of a single point. 
    This means that $E_x^2=0$ in $D_c$. 
    Thus, $(j_c)_\ast(E_{x_j}\cdot K_{D_c})=E_x\cdot \tilde{\Gamma}$, which is itself generically defined. 
    The same argument applies to $(j_{c'})_\ast(E_{x_j}\cdot K_{D_{c'}})$.
\end{proof}
\begin{theorem}\label{them:multiplicativity+0}
    Let $Y$ be a general double EPW quartic. 
    Then the multiplication
    $$\chow^2(Y)^-_{\mathrm{hom}}\otimes\chow^2(Y)^-_{\mathrm{hom}}\to \chow_0(Y)^+_{\mathrm{hom}}$$
    is surjective.
\end{theorem}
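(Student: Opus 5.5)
We follow the strategy of \cite[Thm.~20.2]{shenvial}: we realise the generators of $\chow_0(Y)^+_{\mathrm{hom}}$ as products of classes $S_y-S_{\iota(y)}$.

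\emph{Generators of the target.} Since $\chow_0(Y)^+_{\mathrm{hom}}$ is spanned by the classes $y+\iota(y)-2o_Y$ with $y\in Y$ general, it suffices to write each such class as a sum of products of elements of $\chow^2(Y)^-_{\mathrm{hom}}$.

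\emph{Anti-invariance of the factors.} The natural candidates are $\sigma_y\coloneq S_y-S_{\iota(y)}$. By \Cref{lemma:D-S} we have $S_y+S_{\iota(y)}=2I_\ast(o_Y)$, so
$$\sigma_y=2S_y-2I_\ast(o_Y)=2I_\ast(y-o_Y).$$
Hence $\sigma_y$ is homologically trivial. It is also anti-invariant, because $I$ commutes with $\iota$ up to the relation of \Cref{prop:linear-relation}. Concretely, $I_\ast\circ\iota_\ast = -I_\ast$ plus a decomposable correspondence, and the decomposable part kills homologically trivial cycles.

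\emph{Computing $\sigma_y^2$.} We expand
$$\sigma_y^2 = S_y^2+S_{\iota(y)}^2-2S_yS_{\iota(y)}.$$
Each term is computed with \Cref{lemma:D-S}, which writes $S_yS_{y'}$ as $\frac{1}{m^2}\alpha_\ast(D_c\cdot D_{c'}\cdot h_F)$ (with suitable normalisation) minus $\alpha_\ast(A^2\cdot h_F)$. The correction term involves only the generically defined class $A$. After pulling out the part that does not depend on $y$, it contributes a generically defined zero-cycle, hence a multiple of $o_Y$ by \Cref{thm:franchetta}.

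The two types of intersection are then evaluated separately.
\begin{itemize}
\item For the squares, \Cref{lemma:conic-intersection} gives $D_c^2=4\ell_{i+4}+\sum_{j\neq i,i+4}\ell_j$.
\item For the mixed term, \Cref{prop:Dc.Diota} gives $D_cD_{c'}=\Gamma+\sum_{j\neq i,i+4}\ell_j$.
\end{itemize}
Pushing forward via $\alpha_\ast(-\cdot h_F)$, each pencil $\ell_j$ contributes $m[\alpha(\ell_j)]$. The pencils $\ell_j$ with $j\neq i,i+4$ cancel between the square terms and the mixed term. What remains is a combination of $[\alpha(\ell_{i+4})]=\iota(y)$ and $[\alpha(\ell_i)]=y$ (the latter coming from $D_{c'}^2$), plus generically defined classes. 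Therefore
$$\sigma_y^2=\lambda\,(y+\iota(y))+\mu\, o_Y,\qquad\text{with } \lambda=\frac{8}{m}\neq 0 \text{ up to normalisation}.$$
Taking degrees fixes $\mu=-2\lambda$, so
$$y+\iota(y)-2o_Y=\lambda^{-1}\sigma_y^2\in \chow^2(Y)^-_{\mathrm{hom}}\cdot\chow^2(Y)^-_{\mathrm{hom}}.$$

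\emph{Main obstacle.} The delicate step is the bookkeeping of the generically defined and $y$-dependent correction terms. Some of these terms, for instance $\alpha_\ast(A\cdot D_c\cdot h_F)$, are not obviously independent of $y$. I would first rewrite them via the projection formula as $A$-correspondences applied to $y$. Their $y$-dependence then enters only through $I_\ast$-type expressions that are symmetric in $y\leftrightarrow\iota(y)$, so they cancel in the combination $S_y^2+S_{\iota(y)}^2-2S_yS_{\iota(y)}$.

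If this cancellation fails, the fallback is the following. The leftover terms have the form $\Gamma_\ast(y)$ for a generically defined correspondence $\Gamma$. Such terms lie in $\IQ o_Y+\IQ(y+\iota(y))$ modulo terms already expressible as products; establishing this would use \Cref{cor:proj-i} and the structure of $\gdch^\ast(Y)$. One then checks that the resulting coefficient of $y+\iota(y)$ stays nonzero.
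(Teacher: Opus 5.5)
This is essentially the paper's proof: expand $\alpha_\ast((D_c-D_{c'})^2\cdot h_F)$ via \Cref{lemma:conic-intersection} and \Cref{prop:Dc.Diota}, then handle the generically defined remainder by Franchetta and a degree count. Your bookkeeping worry disappears because $A$ is independent of $c$, so $D_c-D_{c'}=\alpha^\ast(S_y-S_{\iota(y)})$ up to normalisation, and no $A$-terms ever arise. Your cancellation of the $\ell_j$ with $j\neq i,i+4$ (coefficients $1+1-2$) is also correct, so a single square is already a non-zero multiple of $y+\iota(y)-2o_Y$; the paper's display instead keeps a term $-m\sum_{j\neq i,i+4}y_j$ and removes it by adding the squares $(S_{y_j}-S_{\iota(y_j)})^2$ over all four pairs of pencils in $D_{(L,M)}$.

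One step needs repair. The relation $I_\ast\circ\iota_\ast=-I_\ast+B_\ast$ only shows that $I_\ast$ kills $\chow_0(Y)^+_{\mathrm{hom}}$; it does not show that $\sigma_y$ is anti-invariant. For that, transpose \Cref{prop:linear-relation}, using ${}^tI=I$ (since $I_F$ is symmetric), to get $\iota_\ast\circ I_\ast=-I_\ast+({}^tB)_\ast$. Applied to the homologically trivial cycle $y-\iota(y)$, this gives $\iota_\ast\sigma_y=-\sigma_y$.
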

\begin{proof}
    We know that $\chow_0(Y)^+_{\mathrm{hom}}$ is generated by cycles of the form $y+\iota(y)-2o_Y$ for~$y\in Y$. 
    Hence, we only need to show that these can be written as products of the form above. 
    Let $y\in Y$ and $c$ and $c'$ be conics such that $\alpha(c)=y$ and $\alpha(c')=\iota(y)$. 
    We assume that $c\in \ell_i$, meaning that $c'\in\ell_{i+4}$.
    Using \Cref{prop:Dc.Diota,lemma:conic-intersection} we can compute 
    \begin{align*}
        \alpha_\ast((D_c-D_{c'})^2\cdot h_F)&=\alpha_\ast(h_F\cdot (D_c^2+D_{c'}^2-2D_c\cdot D_{c'}))\\
        &=4m(y+\iota(y))-m\sum_{j\neq i, i+4} y_j -2\alpha_\ast(\Gamma\cdot h_F)\\
        &=4m(y+\iota(y))-m\sum_{j\neq i, i+4} y_j -2m o_Y,
    \end{align*}
    where $y_j=\alpha(\ell_j)$.
    The last equality follows from the fact that $\alpha_\ast(\Gamma\cdot h_F)$ is generically defined, hence must be a multiple of $o_Y$, and that the entire expression must be a homologically trivial cycle, so it suffices to count degrees to see that the coefficient must be $-2m$.
    Using \Cref{lemma:D-S} and the projection formula we can compute the left-hand side to be $(S_y-S_{\iota(y)})^2$.
    Finally, we have that
    $$(S_y-S_{\iota(y)})^2+\sum_{j=1}^4(S_{y_j}-S_{\iota(y_j)})^2=5m(y+\iota(y))-10mo_Y,$$
    completing the proof.
\end{proof}

\section{The Beauville--Voisin--Franchetta conjecture for double EPW quartics}\label{sec:BV}

The aim of this section is to prove the Beauville--Voisin--Franchetta conjecture for double EPW quartics.
The proof uses the results from \Cref{sec:incidence} and the fact that a double EPW quartic is a double cover of a quartic section in the cone $C(\IP^2\times\IP^2)$ and admits two Lagrangian fibrations, see \Cref{thm:construction} for a summary and \Cref{sec:construction-lagrangian} for more details. 

\subsection{The Beauville--Voisin--Franchetta conjecture}\label{subsec:BVF}

In the work of Beauville and Voisin \cite{BV} it was proven that the product of two divisors $D$ and $D'$ on a $K3$ surface~$S$ is a multiple of the class of any point lying on a rational curve, which is now called the Beauville--Voisin class $o_S$.
Furthermore, it was shown that $c_2(S)=24 o_S$.
This led Beauville to conjecture in \cite{beauville07} that the subring generated by divisors $\langle \chow^1(T)\rangle\subset~\chow^\ast(T)$ should map injectively into cohomology for any hyperkähler variety~$T$. 
Voisin then extended this conjecture in \cite{voisin08} to the subring generated by divisors and Chern classes.
\begin{conjecture}[Beauville--Voisin]\label{conj:BV}
    Let $T$ be a hyperkähler variety. The cycle class map restricted to the subring generated by divisors and Chern classes,
    $$\mathrm{cl}\colon\left\langle \chow^1(T), c_j(T)\right\rangle\to H^\ast(T,\IQ),$$
    is injective.
\end{conjecture}
This was proved for Hilbert schemes of $n$ points on $K3$ surfaces, where $n$ is small, and Fano varieties of lines in cubic fourfolds \cite{voisin08}, for generalised Kummer varieties \cite{fu-BV}, for double EPW sextics \cite{laterveervial, laterveer23}, and partial results are known for LLSS eightfolds \cite{bvf}.

Laterveer and Vial studied in \cite{bvf} how the Beauville--Voisin conjecture interacts with the Franchetta conjecture and showed that the generalised Franchetta conjecture and Grothendieck's Künneth standard conjecture imply the following, see \cite[Prop.~1.8]{bvf}
\begin{conjecture}[Beauville--Voisin--Franchetta conjecture]\label{conj:BVF}
    Let $\mathcal{F}$ be the moduli stack of a locally complete family of polarised hyperkähler varieties and $\mathcal{X}\to\mathcal{F}$ its universal family.
    Let $T$ be any member of the universal family and $n$ arbitrary.
    Then the cycle class map restricted to the subring of $\chow^\ast(T^n)$ generated by $\gdch^\ast_{\mathcal{F}}(T^n)$ and divisors,
    $$\mathrm{cl}\colon\left\langle \chow^1(T^n),\gdch^\ast(T^n)\right\rangle \to H^\ast(T^n,\IQ),$$
    is injective.
\end{conjecture}
 
Let $Y$ be a double EPW quartic and let $h_1$ and $h_2$ be the two divisors pulled back along the Lagrangian fibrations $Y\to\IP^2$. 
Then we know that the Franchetta property holds and that
$$\gdch^\ast(Y)=\langle h_1, h_2, c_2(Y), c_4(Y)\rangle,$$
meaning that the Beauville--Voisin conjecture and the Beauville--Voisin--Franchetta conjecture are equivalent.
In fact, the Beauville--Voisin--Franchetta conjecture for hyperkähler varieties of $K3^{[2]}$-type is equivalent to both the Beauville--Voisin conjecture and the Franchetta conjecture holding, as $\sym^2 H^2(Y,\IQ)\cong H^4(Y,\IQ)$. 
For more details on the cohomology of hyperkähler varieties, see \cite{laza-robles}.

\begin{remark}
    The Beauville conjecture, which predicts that the subring generated by divisors maps injectively into cohomology, was proved for hyperkähler varieties admitting Lagrangian fibrations in \cite{rieß}. 
    Combining this with \Cref{lemma:class-of-Z,lemma:DZ} and the Franchetta property yields the Beauville--Voisin--Franchetta conjecture for double EPW quartics.
    Our geometric approach allows for a stronger result, which we prove in the next section.
\end{remark}

\subsection{The Beauville--Voisin--Franchetta conjecture for double EPW quartics}\label{subsec:BV-quartics}

We introduce some notation.
Let $Y$ be a double EPW quartic, $\iota$ its involution, and~$Z$ the fixed locus of $\iota$.
Recall from \Cref{sec:construction-lagrangian} that $Y\to D_1^{\overline{A}}$ is a double cover over a quartic section in $C(\IP^2\times\IP^2)$.
Denote by $h_i$ the pullback of the hyperplane class of~$\IP^2$ along the two natural maps $Y\to\IP^2$. 
The invariant lattice $H^2(Y,\IZ)^\iota=\langle h_1, h_2\rangle$ is generated by~$h_1$ and $h_2$.
Denote the polarisation by $h=h_1+h_2$ and let $g=h_1-h_2$.
Furthermore, let $q$ be the Beauville--Bogomolov form.
We have $q(h)=4$ and $q(g)=-4$.
Recall that the Beauville--Voisin class $o_Y$ is represented by any point on the fixed locus~$Z$, and that $I=(\alpha\times\alpha)_\ast(I_F\cdot h_F\times h_F)$ for a choice of ample line bundle $h_F$.
We denote primitive divisors by $D$, i.e. $D\in\langle g,h\rangle^\perp$.
We begin by computing the class of $Z$ and its intersection with primitive divisors.

\begin{lemma}\label{lemma:class-of-Z}
Let $Y$ be a double EPW quartic and let $Z$ be the fixed locus of its involution. 
It is a smooth projective, regular, Lagrangian surface and its class satisfies
$$kZ=\frac{3}{4}(9h^2-g^2)-c_2(Y)$$
for some $k\in\IZ$.
\end{lemma}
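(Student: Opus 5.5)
The plan is to pin down the class of $Z$ by working in the four-dimensional space of Hodge classes, computing only the intersection numbers needed to fix it up to scaling.

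\emph{Geometric properties.} Since $\iota$ is an anti-symplectic involution of a hyperkähler fourfold, its fixed locus $Z$ is smooth. The restriction of $\sigma$ to $Z$ vanishes: $\iota^\ast\sigma=-\sigma$ while $\iota$ acts trivially on $T_Z$. A standard linear-algebra argument, decomposing $T_Y|_Z=T_Z\oplus N_{Z/Y}$ into the $\pm1$ eigenspaces, shows that both eigenspaces are isotropic for $\sigma$, so $\dim Z=2$ and $Z$ is Lagrangian, with $N_{Z/Y}\cong\Omega_Z$. Regularity, i.e.\ $H^1(Z,\Ox_Z)=0$, I would obtain from the description of $Z$ as the preimage of the branch surface $D_2^{\overline{A}}\subset D_1^{\overline{A}}$ from \Cref{sec:construction-lagrangian}. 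Alternatively, I would use the known results on fixed loci of involutions on $K3^{[2]}$-type manifolds with invariant lattice $U(2)$, where $H^{1,0}(Z)$ is computed from the action on cohomology.

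\emph{Reduction to finitely many numbers.} By the Franchetta property (\Cref{thm:franchetta}) and the fact that $Z$ is defined in the family $\mathcal{U}$, $[Z]$ is a generically defined class. It therefore lies in $\langle h_1^2,h_1h_2,h_2^2,c_2(Y)\rangle$, and it suffices to determine its cohomology class on a very general $Y$. I would write
\[
[Z]=x\,h_1^2+y\,h_2^2+z\,h_1h_2+w\,c_2(Y).
\]
To fix this up to scaling, I compute the pairings of $Z$ against $h_1^2$, $h_2^2$, $h_1h_2$, $c_2(Y)$, and $Z$ itself. The intersection matrix of the four basis classes follows from the Fujiki relations: $h_i^2h_j^2$, $h_1^2h_2^2$ and $c_2\cdot h_ih_j$ are given by $q(h_i,h_j)$ with $q(h_i)=0$ and $q(h_1,h_2)=2$, and $c_2^2=828$.

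\emph{The intersection numbers.}
\begin{itemize}
\item $Z^2=c_2(N_{Z/Y})=c_2(\Omega_Z)=e(Z)$. The topological Lefschetz formula gives $e(Z)=\sum(-1)^k\operatorname{tr}(\iota^\ast|H^k)$. On $H^2$ the trace is $2-21=-19$. Using $H^4=\sym^2H^2$, the trace on $H^4$ is $\tfrac12(19^2+23)=192$. Hence $e(Z)=156$.
\item $Z\cdot h_i^2$ counts fixed points of $\iota$ on a general fibre $A$ of the $i$-th Lagrangian fibration. Since $h_i$ is invariant, $\iota$ preserves the fibration and acts on the abelian surface $A$ as an involution reversing its symplectic form. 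I expect this to be $-1$ up to translation, which would give $16$ fixed points, though the translation part has to be checked.
\item $Z\cdot c_2(Y)$ can be obtained from $c_2(T_Y|_Z)=c_2(T_Z\oplus\Omega_Z)=2e(Z)-c_1(Z)^2$, so I also need $K_Z^2$. This should come from the holomorphic Lefschetz formula or from Noether's formula, using $\chi(\Ox_Z)$ and regularity.
\item $Z\cdot h_1h_2$ then follows from $Z\cdot h^2$, which is the degree of $Z\to D_2^{\overline{A}}$ times the degree of the branch surface. I would compute the latter via the Lagrangian degeneracy-locus class formula.
\end{itemize}
Finally, I would solve the resulting linear system. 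Any inconsistency is absorbed by the undetermined scalar $k$, so only the ratios need to match $\tfrac34(8h_1^2+20h_1h_2+8h_2^2)-c_2(Y)$. The symmetry between the two fibrations (the two associated $K3$ surfaces) forces $x=y$, which serves as a consistency check.

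I expect the main obstacle to be the fibrewise count $Z\cdot h_i^2$ together with $K_Z^2$: controlling exactly how $\iota$ acts on the abelian fibres, and obtaining $\chi(\Ox_Z)$, require genuine geometric input rather than pure lattice theory. If the fibrewise argument is unclear, I would instead compute $Z\cdot h^2$ and $Z\cdot g^2$ directly from the degree of $D_2^{\overline{A}}$ in the cone and its bidegree under the two projections.
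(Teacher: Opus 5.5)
Your proposal is correct, but it takes a genuinely different route from the paper's. The paper computes no intersection numbers. For general $Y$ it composes $Y\to D_1^{\overline{A}}$ with the inclusion into the blow-up $\tilde{C}\cong\IP(\Ox(1,1)\oplus\Ox)$ of the cone at its vertex, which $D_1^{\overline{A}}$ avoids. It notes that $Df\colon T_Y\to f^\ast T_{\tilde{C}}$ drops rank exactly along $Z$, and applies Thom--Porteous to get $kZ=f^\ast c_2(\tilde{C})-c_2(Y)$ directly in $\chow^2(Y)$, with $c_2(\tilde{C})$ read off from the Euler sequence. Regularity is simply quoted from \cite[Prop.~5.1]{ikkr17}.

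That argument is short and works in $\chow$ from the outset, but it leaves $k$ unspecified. Your route fixes $[Z]$ in cohomology and lifts it to $\chow$ via \Cref{thm:franchetta}; there is no circularity, since that theorem does not use the lemma. It costs two fixed-point formulas and some bookkeeping, but it is intrinsic and gives the exact answer. Carried out, the numbers are $Z^2=e(Z)=156$, $Z\cdot h_i^2=16$, $Z\cdot c_2(Y)=24$ and $Z\cdot h_1h_2=20$. Hence $[Z]=2h_1^2+5h_1h_2+2h_2^2-\tfrac13 c_2(Y)$, so $k=3$. This matches the length of the Thom--Porteous degeneracy scheme, whose local model transverse to $Z$ is $(y_1,y_2)\mapsto(y_1^2,y_1y_2,y_2^2)$.

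Several of your steps are left as expectations; here is how to close them.

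\emph{Fibre count.} The translation part is irrelevant: every involution $x\mapsto -x+a$ of an abelian surface has exactly $16$ fixed points. What you actually need is that $\iota$ acts trivially on the base $\IP^2$. This does not follow from $\iota^\ast h_i=h_i$ alone, but it does follow from the fibrations factoring through $Y\to D_1^{\overline{A}}$. Note that $\sigma|_A=0$, so ``reversing the symplectic form of $A$'' is not the right mechanism. Instead, $\sigma$ identifies $T_A$ with $N_{A/Y}^\vee$, on which $\iota$ acts trivially, and anti-symplecticity then forces $d\iota|_{T_A}=-\id$. Since $T_zA$ is then the $(-1)$-eigenspace at a fixed point $z$, $Z$ meets $A$ transversally.

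\emph{The class $Z\cdot c_2(Y)$.} You do not need $K_Z^2$ via Noether or regularity. The holomorphic Lefschetz formula, with $\iota^\ast$ acting on $H^0,H^2,H^4$ of $\Ox_Y$ by $1,-1,1$, gives $1=\int_Z\bigl(2c_2(Z)-K_Z^2\bigr)/24$. This yields $Z\cdot c_2(Y)=24$ directly.

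\emph{The class $Z\cdot h_1h_2$.} The degree of $D_2^{\overline{A}}$ is not needed. Once $Z\cdot h_i^2$ and $Z\cdot c_2(Y)$ are known, $Z^2=156$ becomes a quadratic equation in the one remaining unknown. One root gives $Z\cdot h_1h_2=20$. The other gives a negative, non-integral value, which is excluded because $h_1,h_2$ are nef.

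\emph{Regularity.} Your sketch does not establish regularity: Lefschetz only gives $\chi(\Ox_Z)=37$. Quote it as the paper does; it plays no role in the class computation.
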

\begin{proof}
Since all classes are generically defined, we may assume we are working on a general double EPW quartic and then spread out. 
In that case, the surface $Z$ is isomorphic to $D_2^{\overline{A}}$, which is the singular locus of $D_1^{\overline{A}}$ and is smooth, as $D_3^{\overline{A}}$ is empty for the general~$A$. 
Furthermore, it is Lagrangian as it is the fixed locus of an anti-symplectic involution on the hyperkähler $Y$.

Consider the blow-up $\tilde{C}\to C(\IP^2\times\IP^2)$ in the vertex of the cone. 
Projecting away from the vertex turns this into a $\IP^1$-bundle over $\IP^2\times\IP^2$, more precisely, it is isomorphic to~$\IP(\Ox(1,1)\oplus\Ox)\to\IP^2\times\IP^2$. 
Notice that the exceptional divisor is ample relative to~$\IP^2\times~\IP^2$. 
Furthermore, $D_1^{\overline{A}}$ doesn't meet the vertex and hence is contained isomorphically in $\tilde{C}$ and doesn't meet the exceptional divisor. 
Now, consider the composition~$Y\to~D_1^{\overline{A}}\to~\tilde{C}$ and denote it by $f$. 
Its derivative $Df\colon T_Y\to f^\ast T_{\tilde{C}}$ is not injective precisely on $Z$. 
Applying the Thom--Porteous formula in the form stated in \cite[Thm. 14.4]{fulton} to the vector bundles above yields a cycle supported on $Z$ with class in $\chow^2(X)$ given by~$c_2(f^\ast T_{\tilde{C}} - T_Y)$. 
As $c_1(Y)=0$, we obtain 
$$kZ=f^\ast c_2(\tilde{C})-c_2(Y).$$
Using the Euler sequence we can compute that 
$$c_2(\tilde{C})=6h_1^2+15h_1h_2+6h_2^2+6\xi(h_1+h_2),$$ 
where $\xi$ is the exceptional divisor. 
The result follows after pulling back and rearranging.

Further invariants of $Z$ are computed in \cite[Prop. 5.1 (B)]{ikkr17}. 
In the notation of \emph{loc. cit.}, the fixed locus $Z$ is $F_0$.
\end{proof}

\begin{lemma}\label{lemma:DZ}
    Let $Y$ be a double EPW quartic and let $Z$ be the fixed locus of its involution~$\iota$. 
    Then for any anti-invariant divisor $D\in\chow^1(Y)^-$, we have $D\cdot Z=0$ in~$\chow^3(Y)$.
\end{lemma}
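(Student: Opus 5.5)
The plan is to restrict $D$ to $Z$ and compute on $Z$ via the involution. Let $j\colon Z\hookrightarrow Y$ be the inclusion, so that $D\cdot Z=j_\ast(j^\ast D)$. Since $\iota$ fixes $Z$ pointwise, $\iota\circ j=j$, and therefore
$$j^\ast D=j^\ast\iota^\ast D=-j^\ast D \quad\text{in }\chow^1(Z),$$
because $D$ is anti-invariant. With rational coefficients this gives $j^\ast D=0$, hence $D\cdot Z=0$. This is essentially formal, and I would present it first.

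The one subtlety is that $\chow^1(Y)^-$ denotes the anti-invariant part of $\chow^1(Y)$, meaning classes with $\iota^\ast D=-D$ \emph{in the Chow group}, and the argument uses exactly that. For a hyperkähler variety, $\chow^1(Y)=\mathrm{NS}(Y)_\IQ$ because $H^1(Y,\Ox_Y)=0$. So any divisor class with $\iota^\ast D=-D$ in cohomology also satisfies it in $\chow^1$, and the eigenspace decomposition makes sense on the nose. Equivalently, $\chow^1(Y)^-$ consists of the divisors orthogonal to $\langle h_1,h_2\rangle$, since the invariant lattice is $\langle h_1,h_2\rangle$.

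For a fully rigorous pullback, I would use that $Z$ is smooth by \Cref{lemma:class-of-Z}, at least for general $Y$. Then $j^\ast$ is a refined Gysin map, and the identity $j^\ast\iota^\ast=(\iota\circ j)^\ast=j^\ast$ holds by functoriality of pullbacks of line bundles, since $\iota^\ast\Ox(D)|_Z\cong\Ox(D)|_Z$ via $\iota|_Z=\id$. This avoids any use of Gysin maps for singular $Z$: one simply restricts the line bundle $L=\Ox_Y(D)$, obtaining $L|_Z\cong(\iota^\ast L)|_Z\cong L^{-1}|_Z$ up to torsion. Then $c_1(L|_Z)=0$ rationally, and $D\cdot Z=j_\ast c_1(L|_Z)=0$.

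The only place I expect any friction is non-general $Y$, where $Z$ might be singular or reducible. There I would still use $D\cdot Z=j_\ast(c_1(L)\cap[Z])$ with $c_1$ of a line bundle acting on $\chow_\ast(Z)$, which requires no smoothness. The same argument applies, since $c_1(L|_Z)\cap[Z]=c_1(\iota^\ast L|_Z)\cap[Z]=-c_1(L|_Z)\cap[Z]$. So no specialisation argument is needed.
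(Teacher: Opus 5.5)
Your proof is correct and is essentially the paper's argument: both rest on $\iota\circ j=j$. The paper uses it after pushforward, noting that $D\cdot Z=j_\ast j^\ast D$ is both $\iota$-invariant and anti-invariant and hence zero, while you use it before pushforward to get $j^\ast D=0$ already in $\chow^1(Z)$. Your fallback for singular $Z$ is not needed, since the fixed locus of an involution of a smooth complex variety is always smooth.
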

\begin{proof}
    This is essentially \cite[Lem. 2.5]{laterveervial}.
    We have $Z\cdot\chow^1(Y)^-\subset~\chow^3(Y)^-$, but since any divisor on $Z$ is invariant, we have $Z\cdot\chow^1(Y)^-=j_\ast j^\ast\chow^1(Y)^-\subset~\chow^3(Y)^+$, where $j\colon Z\to Y$ is the inclusion.
    The result follows.
\end{proof}

Now let us recall the relations that need proving.
Given that we already know that the Franchetta conjecture holds for double EPW quartics by \Cref{thm:franchetta}, and that the subring generated by primitive divisors maps injectively into cohomology by \Cref{rmk:prim-div}, it suffices to prove the following.
For more details on the relations that hold in the cohomology of $Y$, see the discussion in \cite[Sec. 1.1]{shenvial}.
\begin{enumerate}
    \item $Dg^2=-Dh^2$ and $Dh^3=Dg^3=Dhc_2(Y)=Dgc_2(Y)=0$, $Dgh=0$, $Dc_2(Y)=\frac{30}{4}Dh^2$
    \item $D^2h^2=-D^2g^2=4q(D)o_Y$, and $D^2hg=0$, $D^2c_2(Y)=30q(D)o_Y$
    \item $D^2h = \frac{q(D)}{12}h^3$ and $D^2g = -\frac{q(D)}{12} g^3$
    \item $D^3= \frac{3q(D)}{4}Dh^2$ and $D^3h=D^3g=0$
    \item $DD'g=DD'h=0$ for any two primitive divisors $D$ and $D'$.
\end{enumerate}
Notice that many of these relations follow directly from the others by substituting and that \Cref{lemma:DZ} allows us to deduce that the relations involving $D$ and $c_2(Y)$ follow from the others.
The following few lemmas will establish the conjecture.
\begin{proposition}\label{proposition:D^2}
    Let $Y$ be a double EPW quartic and $\alpha\in\chow^2(Y)^+$ an invariant $2$-cycle.
    Then, we have that
    $$\alpha h,\alpha g\in\langle h^3, g^3\rangle\subset\chow^3(Y).$$
    In particular, it follows that $D^2h=\frac{q(D)}{12}h^3, D^2g=-\frac{q(D)}{12} g^3$ and $DD'h=DD'g=0$ for orthogonal primitive divisors $D$ and $D'$. 
    Furthermore, $D^3h=Dh^3=D^3g=Dg^3=0$.
\end{proposition}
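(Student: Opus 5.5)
The strategy is to push invariant classes down along the double cover $f\colon Y\to D_1^{\overline{A}}$ and compute in the cone, as in the proof of \Cref{prop:filtration}. Let $j\colon D_1^{\overline{A}}\to C(\IP^2\times\IP^2)$ be the inclusion. For $\alpha\in\chow^2(Y)^+$ we have $f^\ast f_\ast\alpha=\alpha+\iota^\ast\alpha=2\alpha$. The classes $h$ and $g$ are pulled back from $D_1^{\overline{A}}$, namely $f^\ast j^\ast H$ and $f^\ast j^\ast(2G-H)$, where $H,G$ are the generators of $\chow^\ast(C(\IP^2\times\IP^2))\cong\chow^\ast(\IP^2\times\IP^2)$ from the proof of \Cref{prop:I_F^2}. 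By the projection formula,
$$2\alpha h=f^\ast\bigl(f_\ast\alpha\cdot j^\ast H\bigr).$$

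Since $D_1^{\overline{A}}$ is a Cartier divisor (a quartic section) in the cone, $j^\ast j_\ast\beta=\beta\cdot j^\ast(4H)$ for any class $\beta$ on $D_1^{\overline{A}}$. Hence
$$f_\ast\alpha\cdot j^\ast H=\tfrac14\, j^\ast j_\ast f_\ast\alpha\in j^\ast\chow^3(C(\IP^2\times\IP^2)).$$
The group $\chow^3$ of the cone is spanned by $H^2G$ and $HG^2$. Their pullbacks to $Y$ are polynomials of degree $3$ in $h_1,h_2$ which, after the vanishing relations below, are combinations of $h^3$ and $g^3$. The same argument with $g$ in place of $h$ handles $\alpha g$.

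To land in $\langle h^3,g^3\rangle$, first rewrite $\chow^3$ of the cone as $\langle h_1^2h_2,h_1h_2^2\rangle$ after pullback. We have $h_1^3=h_2^3=0$, since these are pulled back from $\chow^3(\IP^2)=0$. Then $h^3=3h_1^2h_2+3h_1h_2^2$ and $g^3=-3h_1^2h_2+3h_1h_2^2$ span the same space, which gives the first claim.

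For the consequences, take $\alpha=D^2$ or $DD'$ with $D,D'$ primitive. Primitive divisors are anti-invariant for the very general member, since the invariant lattice is $U(2)=\langle h_1,h_2\rangle$; this extends to all members because everything involved is generically defined. So $D^2$ and $DD'$ lie in $\chow^2(Y)^+$, and the products $D^2h$, $D^2g$, $DD'h$, $DD'g$ are linear combinations of $h^3$ and $g^3$. The cycle class map is injective on $\langle h^3,g^3\rangle$, because $h^3$ and $g^3$ are independent in cohomology (pair them with $h$ and $g$ using the Fujiki relations). Therefore the coefficients are determined by the cohomological relations, which give $D^2h=\frac{q(D)}{12}h^3$, $D^2g=-\frac{q(D)}{12}g^3$, and $DD'h=DD'g=0$ when $q(D,D')=0$.

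For the last claims, $Dh^3$ and $Dg^3$ are anti-invariant zero-cycles. Writing $h^2,g^2\in\chow^2(Y)^+$, the argument above gives $Dh^3=D\cdot h\cdot h^2$, where $h\cdot h^2=h^3$ is a pullback from the cone. Hence $Dh^3=f^\ast(\cdot)\cdot D$, and pushing forward, $f_\ast(Dh^3)=j^\ast H^3\cdot f_\ast D=0$ because $f_\ast D=0$ for anti-invariant $D$. Thus $Dh^3$ is anti-invariant with trivial pushforward; but it is also invariant, since $h^3\cdot D$ is a zero-cycle supported on curves pulled back from the cone. I would instead argue directly: $h^3$ is pulled back from $D_1^{\overline{A}}$, so $h^3=f^\ast\gamma$ with $\gamma$ a $1$-cycle, and $D\cdot f^\ast\gamma=\tfrac12 f^\ast f_\ast(D\cdot f^\ast\gamma)$ up to the anti-invariant part. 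Consequently $D\cdot f^\ast\gamma$ is simply anti-invariant, with pushforward $f_\ast D\cdot\gamma=0$.

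The main obstacle is concluding vanishing here. My plan is to write $\gamma$ as the class of the pullback to $Y$ of a general complete-intersection curve $C$ in $D_1^{\overline{A}}$ avoiding the branch surface. Then $f^{-1}(C)$ is an étale double cover, and the restriction of $D$ to it can be compared with invariant divisors via the Lagrangian fibration, using \Cref{lemma:DZ} and the fact that $h^3$ is supported on fibres of the Lagrangian fibration. Once $Dh^3=Dg^3=0$ is established, $D^3h=\frac{3q(D)}{4}Dh^3=0$ follows from relation (4) together with the first part, and likewise $D^3g=0$.
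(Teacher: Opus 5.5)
Your argument for $\alpha h$ is the paper's. Deducing the numerical relations from the linear independence of $[h^3]$ and $[g^3]$ (via the Fujiki relations) is also fine, and it avoids Franchetta. There are, however, two genuine gaps.

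First, the sentence ``the same argument with $g$ in place of $h$ handles $\alpha g$'' is not justified. The identity $f_\ast\alpha\cdot j^\ast H=\frac14 j^\ast j_\ast f_\ast\alpha$ is the self-intersection formula $j^\ast j_\ast\beta=c_1(N)\cdot\beta$ for the Cartier divisor $D_1^{\overline{A}}$. It only ever produces products with $c_1(N)=4h$. Running it with $G$ gives $j^\ast j_\ast(\beta\cdot j^\ast G)=4\beta\cdot j^\ast(GH)$, which is a statement about the zero-cycle $\alpha h_1h$, not about $\alpha h_1$ (equivalently $\alpha g$). The paper treats $\alpha g$ by a separate argument: it uses the Kummer quartic fibration $D_1^{\overline{A}}\subset\IP(\Ox_{\IP^2}(1)^{\oplus 3}\oplus\Ox_{\IP^2})\to\IP^2$ and asserts $\alpha h_2=\frac14 j^\ast j_\ast\alpha$ there. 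Be aware that the same obstruction seems to apply to that step. Since $D_1^{\overline{A}}$ avoids the vertex, its normal bundle is the same in any ambient variety containing the cone minus its vertex as an open subset, and that bundle has $c_1=4h$, never $4h_2$. So for arbitrary $\alpha\in\chow^2(Y)^+$ the $g$-part needs a genuinely different input. For the consequences $D^2g=-\frac{q(D)}{12}g^3$ and $DD'g=0$, which lie in the subring generated by divisors, the result of Rie\ss{} quoted in \Cref{subsec:BVF} would suffice.

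Second, the vanishing of $Dh^3$ and $Dg^3$ is not proved. These are degree-zero classes in $\chow_0(Y)^-$, which is very large: for general $Y$ it is isomorphic to $\chow_1(X)_{\mathrm{hom}}$ by \Cref{thm:chow-iso}. Your plan via étale double covers of curves and Lagrangian fibres contains no mechanism that forces vanishing. Also, as you noticed, a zero-cycle supported on invariant curves need not be invariant.

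The missing idea is to use the fixed surface $Z$. By \Cref{lemma:DZ}, $D\cdot Z=0$ in $\chow^3(Y)$, and by \Cref{lemma:class-of-Z}, $kZ=\frac34(9h^2-g^2)-c_2(Y)$. Multiply $0=kD\cdot Z$ by $h$ (resp.\ $g$). Then replace the generically defined classes $g^2h$, $c_2(Y)h$ (resp.\ $h^2g$, $c_2(Y)g$) by their cohomologically determined multiples of $h^3$ (resp.\ $g^3$), which is legitimate in $\chow^3(Y)$ by \Cref{thm:franchetta}. The resulting coefficients are non-zero, which gives $Dh^3=Dg^3=0$.

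Finally, do not deduce $D^3h=0$ from relation (4). That relation is \Cref{lemma:D^3}, whose proof uses the present proposition, so the argument is circular. Substitute the first part instead: $D^3h=D\cdot D^2h=\frac{q(D)}{12}Dh^3$ and $D^3g=-\frac{q(D)}{12}Dg^3$. The latter inherits the issue with $D^2g$ raised above.
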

\begin{proof}
    Let $f\colon Y\to D_1^{\overline{A}}$ be the quotient map. Then, as $\alpha h$ and $\alpha g$ are invariant, it follows that $2\alpha h=f^\ast f_\ast \alpha h$ and $2\alpha g=f^\ast f_\ast \alpha g$.
    Now, as $D_1^{\overline{A}}$ is a quartic section in~$C(\IP^2\times\IP^2)$, it follows that 
    $$f_\ast \alpha h=\tilde{h}f_\ast \alpha=\frac{1}{4}  j^\ast j_\ast f_\ast \alpha,$$
    where $j\colon D_1^{\overline{A}}\to C(\IP^2\times\IP^2)$ is the inclusion and $\tilde{h}$ is the hyperplane class on $D_1^{\overline{A}}$.
    Thus,~$\alpha h\in \langle h^3, g^3\rangle$. 
    
    A similar argument works for $\alpha g$, too. 
    Let $C(\IP(\w^2 U_1)\times~\IP(U_2))\dashrightarrow ~\IP^2$ be the rational map given by projecting to the first factor. 
    It is shown in \cite[Props.~3.8 and 3.10]{ikkr17} that the fibres of the composition $D_1^{\overline{A}}\to C(\IP^2\times\IP^2)\to\IP^2$ are Kummer quartic surfaces. 
    It is shown in \emph{loc. cit.} that for $[u_2]\in\IP(U_2)$, the Kummer surface is contained in $\IP((\w^3U_1)\oplus ((\w^2 U_1)\otimes u_2))$.
    The rational map given by the projection is resolved by $\IP(\Ox_{\IP^2}(1)^{\oplus 3}\oplus\Ox_{\IP^2})\to\IP^2$.
    In particular, $D_1^{\overline{A}}\subset \IP(\Ox_{\IP^2}(1)^{\oplus 3}\oplus\Ox_{\IP^2})$ is a fibration of Kummer quartic surfaces. 
    In this construction, $h_2$ is the pullback of the hyperplane class on $\IP(\Ox_{\IP^2}(1)^{\oplus 3}\oplus\Ox_{\IP^2})$, meaning that
    $$\alpha h_2 = \frac{1}{4}j^\ast j_\ast \alpha,$$
    where $j\colon D_1^{\overline{A}}\to\IP(\Ox_{\IP^2}(1)^{\oplus 3}\oplus\Ox_{\IP^2})$ is the inclusion.
    Then, we have that $\alpha h_2\in\langle h^3,g^3\rangle$ and hence $\alpha g\in\langle h^3,g^3\rangle$.

    Then, as the relations above hold in $H^6(Y,\IQ)$, it follows that they also hold in the Chow ring using the Franchetta property for $Y$, meaning that any relation in $H^6(Y,\IQ)$ involving $\alpha$ and $g$ and $h$ holds in $\chow^3(Y,\IQ)$.

    For the last part, we use that $c_2(Y)\alpha\beta=30q(\alpha,\beta)$ in cohomology, where $q$ is the Beauville--Bogomolov form and $\alpha,\beta\in H^2(Y,\IQ)$, see \cite{og}.
    A cohomological computation and Franchetta show that
        $$c_2(Y)h= \frac{20}{8}h^3, c_2(Y)g= \frac{20}{8} g^3.$$
    Then, we obtain the following two equations from \Cref{lemma:class-of-Z}.
    \begin{align*}
        0=DZh&=(\frac{19}{2}-\frac{20}{8})h^3D\\
        0=DZg&=-(3+\frac{20}{8})g^3D
    \end{align*}
    Finally, we obtain the last relations by substituting.
\end{proof}
\begin{lemma}\label{lemma:D^3}
    Let $Y$ be a double EPW quartic. 
    Then the relations $D^3=\frac{3q(D)}{4} Dh^2$, $Dgh=0$, and $Dg^2=-Dh^2$ hold in $\chow^3(Y)$.
    More generally,
    $$\chow^1(Y)\cdot\chow^2(Y)^+\cap\chow^3(Y)_{\mathrm{hom}}=0.$$
\end{lemma}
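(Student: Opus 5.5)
The plan is to split $\chow^1(Y)$ into its $\iota$-invariant and anti-invariant parts and treat each separately; the three stated relations then follow from the general statement. Since $H^1(Y,\Ox_Y)=0$, we have $\chow^1(Y)=\chow^1(Y)^+\oplus\chow^1(Y)^-$, with $\chow^1(Y)^+=\langle h,g\rangle$ by the description of the invariant lattice. Take an element
$$x=h\alpha_1+g\alpha_2+\sum_k D_k\beta_k\in\chow^1(Y)\cdot\chow^2(Y)^+\cap\chow^3(Y)_{\mathrm{hom}},$$
with $\alpha_i,\beta_k\in\chow^2(Y)^+$ and $D_k\in\chow^1(Y)^-$. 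The part $x^+=h\alpha_1+g\alpha_2$ is $\iota$-invariant and $x^-=\sum_k D_k\beta_k$ is anti-invariant. Since $\iota^\ast$ commutes with the cycle class map, both $x^+$ and $x^-$ are homologically trivial. It therefore suffices to show that each vanishes.

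\textbf{The invariant part.} By \Cref{proposition:D^2}, $x^+\in\langle h^3,g^3\rangle$. This space consists of generically defined cycles, so by the Franchetta property (\Cref{thm:franchetta}) it injects into cohomology. Hence $x^+=0$.

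\textbf{The anti-invariant part.} Here I will use \Cref{cor:proj-i}, which holds for every double EPW quartic because all cycles in it are generically defined. Write $\gamma=x^-\in\chow^3(Y)^-_{\mathrm{hom}}$. Exactly as in the proof of \Cref{cor:chow^3_weak}, the relation gives
$$\gamma=\sum_i D_i'\cdot I_\ast(D_i\cdot\gamma)+\sum_i E_i'\cdot\bigl((\iota,\id)^\ast I\bigr)_\ast(E_i\cdot\gamma),$$
with $D_i,D_i',E_i,E_i'\in\gdch^1(Y)=\langle h,g\rangle$. The remaining contributions in that proof vanish for the same reasons as there: $\Gamma''$ acts trivially on homologically trivial cycles, a term $\pr_1^\ast a$ contributes $a\cdot\gamma$, which has codimension $5$, and a term $\pr_2^\ast b$ contributes $b\cdot I_\ast(\gamma)$ with $I_\ast(\gamma)\in\chow^1(Y)_{\mathrm{hom}}=0$. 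The key point is that each zero-cycle $D_i\cdot\gamma$ already vanishes. Indeed $D_i\cdot\gamma=\sum_k D_k\cdot(D_i\beta_k)$. By \Cref{proposition:D^2}, $D_i\beta_k\in\langle h^3,g^3\rangle$, so $D_i\cdot\gamma$ is a combination of the classes $D_kh^3$ and $D_kg^3$. These vanish by the last assertion of \Cref{proposition:D^2}; that assertion is stated for primitive $D$, and anti-invariant divisors are primitive because they are orthogonal to the invariant classes $h$ and $g$. The same argument applies to $E_i\cdot\gamma$. Hence $\gamma=0$, which proves the general statement.

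\textbf{The explicit relations.} Each of the three relations has the form $D\cdot\beta=0$ for a suitable invariant class $\beta$:
$$D^3-\tfrac{3q(D)}{4}Dh^2=D\cdot\Bigl(D^2-\tfrac{3q(D)}{4}h^2\Bigr),\qquad Dgh=D\cdot(gh),\qquad Dg^2+Dh^2=D\cdot(g^2+h^2).$$
Here $D^2$, $gh$, $g^2$ and $h^2$ all lie in $\chow^2(Y)^+$, since $\iota^\ast(D^2)=(-D)^2=D^2$. Each left-hand side is homologically trivial by the known cohomological relations for $K3^{[2]}$-type varieties. The general statement then shows that each vanishes.

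The step I expect to need the most care is the anti-invariant part: one has to check that the decomposition of $\Gamma$ in \Cref{cor:proj-i} only produces pairings of $\gamma$ with invariant divisors, so that \Cref{proposition:D^2} can be applied to the products $D_i\beta_k$.
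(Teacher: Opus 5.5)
Your proof is correct and follows the paper's argument. You apply the projector relation of \Cref{cor:proj-i} to an anti-invariant homologically trivial cycle and kill each surviving term via \Cref{proposition:D^2}, since $h\beta, g\beta\in\langle h^3,g^3\rangle$ and $Dh^3=Dg^3=0$; the differences are only organisational, namely that you explicitly split off the invariant part and dispose of it with \Cref{proposition:D^2} and the Franchetta property (a step the paper leaves implicit when it applies the anti-invariant projector to a general element of $\chow^1(Y)\cdot\chow^2(Y)^+$), and you derive the three explicit relations from the general statement rather than the reverse.
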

\begin{proof}
    We only sketch the proof of the first statement, as the others are analogous.
    We use the relation 
    $$\frac{1}{2}(\Delta_Y-\Gamma_\iota)=I\cdot\Gamma+(\iota,\id)^\ast I\cdot\Gamma'+\Gamma''$$
    obtained in \Cref{cor:proj-i}.
    As all classes that appear in the relation are generically defined, it spreads out to all double EPW quartics.
    Let $\lambda=\frac{3q(D)}{4}$.
    Since $D^3-\lambda Dh^2$ is anti-invariant, the left-hand side acts as the identity. 
    Then, we have that
    $$D^3-\lambda Dh^2=(\Gamma\cdot I)_\ast(D^3-\lambda Dh^2)+((\iota,\id)^\ast I\cdot\Gamma')_\ast(D^3-\lambda Dh^2)+\Gamma''_\ast(D^3-\lambda Dh^2).$$
    Since $\Gamma''$ is decomposable and generically defined, it acts as $0$ on homologically trivial cycles. 
    Now, we have to compute $$((\iota,\id)^\ast I\cdot\Gamma')_\ast(D^3-\lambda Dh^2)\text{ and }(\Gamma\cdot I)_\ast(D^3-\lambda Dh^2).$$ 
    By the Franchetta property, \Cref{thm:franchetta}, we know that 
    $$\Gamma,\Gamma'\in\chow^2(Y\times Y)\cap\pr_1^\ast\langle h,g,c_2(Y)\rangle\cdot\pr_2^\ast\langle h,g,c_2(Y)\rangle.$$ 
    Using the projection formula, we compute the following
    \begin{align*}
        (I\cdot\pr_1^\ast a)_\ast(D^3-\lambda Dh^2)&=I_\ast(h^2\cdot(D^3-\lambda Dh^2))=0\\
        (I\cdot\pr_1^\ast d_1\cdot\pr_2^\ast d_2)_\ast (D^3-\lambda Dh^2)&= h\cdot I_\ast(h\cdot(D^3-\lambda Dh^2))=0\\
        (I\cdot\pr_2^\ast b)_\ast(D^3-\lambda Dh^2)&=h^2\cdot I_\ast(D^3-\lambda Dh^2)=0,
    \end{align*}
    where $a,b\in\gdch^2(Y)$ and $d_1,d_2\in\gdch^1(Y)$.
    The first term vanishes by dimension reasons, the second vanishes using the relations proven in \Cref{proposition:D^2}, and the last one vanishes as $I_\ast(D^3-\lambda Dh^2)$ is a homologically trivial divisor, meaning it is zero.
    Hence $D^3-\lambda Dh^2=0$. 
    Now, let 
    $$\delta\gamma\in\chow^1(Y)\cdot\chow^2(Y)^+\cap\chow^3(Y)_{\mathrm{hom}},$$
    where $\delta$ is a divisor. 
    Then, we need to do the same computations as above, after replacing~$D^3-\lambda Dh^2$ by $\delta\gamma$. 
    The only argument that is not exactly the same is the one for
    $$a\cdot I_\ast(b \delta\gamma)=0,$$
    where $a$ is a generically defined $2$-cycle and $b$ is a generically defined divisor, i.e. a linear combination of $h$ and $g$. 
    Then, by \Cref{proposition:D^2}, we know that $b\gamma\in\langle h^3, g^3\rangle$, meaning that $b\delta\gamma$ is a homologically trivial multiple of the zero-cycle $o_Y$, i.e. $b\delta\gamma=0$, completing the proof.
\end{proof}
This is enough to prove the main result, the Beauville--Voisin--Franchetta conjecture for double EPW quartics.
\begin{theorem}\label{thm:BV}
    Let $Y$ be a double EPW quartic.
    Then the Beauville--Voisin--Franchetta conjecture holds for $Y$, meaning that the cycle class map restricted to the subring generated by divisors and generically defined cycles,
    $$\text{cl}\colon\left\langle \chow^1(Y),\gdch^\ast(Y)\right\rangle \to H^\ast(Y,\IQ),$$
    is injective.
    Furthermore, $\chow^1(Y)\cdot\chow^2(Y)^+$ and $\chow^1(Y)^{\cdot 2}\cdot\chow^2(Y)^+$ also map injectively into $H^6(Y,\IQ)$ and $H^8(Y,\IQ)$, respectively. 
\end{theorem}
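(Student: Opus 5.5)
We will prove the statement degree by degree, splitting $\chow^1(Y)=\chow^1(Y)^+\oplus\chow^1(Y)^-$. Here $\chow^1(Y)^+=\langle h,g\rangle$ consists of generically defined classes. Since $H^1(Y,\Ox_Y)=0$, we have $\chow^1(Y)\cong H^2(Y,\IQ)\cap H^{1,1}$, so the anti-invariant divisors are exactly the primitive ones, $D\in\langle h,g\rangle^\perp$. A monomial in the ring $\langle\chow^1(Y),\gdch^\ast(Y)\rangle$ is therefore a product of generically defined classes (namely $h,g,c_2(Y),o_Y$) with a product of primitive divisors. In degrees $0$ and $1$ there is nothing to prove, and products with no primitive factor are handled by the Franchetta property (\Cref{thm:franchetta}). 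So it suffices to show that every monomial involving at least one primitive divisor can be reduced to a canonical form whose cycle class map is injective.

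\textbf{Degree 2.} Elements have the form $\gamma+\sum D_iD_i'+\sum D_i\cdot b_i$ with $\gamma\in\gdch^2(Y)$ and $b_i\in\{h,g\}$. Suppose such an element is homologically trivial. Decompose it into $\iota$-eigenparts.
\begin{itemize}
\item The anti-invariant part is $\sum D_ib_i$. We show it vanishes by applying the projector relation of \Cref{cor:proj-i}, exactly as in the proof of \Cref{lemma:D^3}. There we used \Cref{proposition:D^2} to kill the terms $I_\ast(b\cdot x)$ and $h\cdot I_\ast(h\cdot x)$. Here the same computation works after multiplying by $h$, $g$, or $h^2$: the relations $Dh^3=Dg^3=0$ and $Dgh=0$ from \Cref{proposition:D^2,lemma:D^3} make each term a homologically trivial divisor or a homologically trivial multiple of $o_Y$, hence zero.
\item The invariant part is $\gamma+\sum D_iD_i'$. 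It lies in the subring generated by primitive divisors and generically defined classes. We would use \Cref{rmk:prim-div}, which gives injectivity of the ring generated by primitive divisors, together with the Franchetta property. The remaining issue is that a mixed sum $\gamma+\sum D_iD_i'$ could be homologically trivial without each summand being so. To handle this, we multiply by $h^2$, $hg$, $g^2$, and $c_2(Y)$. This reduces each product to a multiple of $o_Y$, using relations (2) and \Cref{proposition:D^2}, together with the pairing computations.
\end{itemize}
The cleanest route is to show that $\langle D^2\rangle+\gdch^2$ injects. For this we apply the projector $\frac12(\Delta_Y+\Gamma_\iota)$ and observe that $\sum D_iD_i'-\gamma'$ (with $\gamma'$ the Franchetta lift of its cohomology class) is annihilated by $h$ and $g$ by \Cref{proposition:D^2}. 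Then $I_\ast$ and the relation of \Cref{cor:proj-i} detect invariant homologically trivial classes only via $\chow^2(Y)^+_{\mathrm{hom}}\cdot$divisors, which vanish by \Cref{lemma:D^3}.

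\textbf{Degrees 3 and 4.} We use the two additional statements: $\chow^1(Y)\cdot\chow^2(Y)^+\to H^6$ and $\chow^1(Y)^{\cdot2}\cdot\chow^2(Y)^+\to H^8$ are injective. The first is exactly the last assertion of \Cref{lemma:D^3}. Every degree-3 monomial in our ring is a divisor times an invariant degree-2 class, or a product of three divisors. For example, $D^3$ equals $D\cdot(D^2)$ with $D^2$ invariant, and $DD'D''$ reduces by polarizing the relation $D^3=\frac{3q(D)}4Dh^2$. Hence degree 3 follows. For degree 4, a monomial is $\delta\cdot x$ with $x\in\chow^1(Y)\cdot\chow^2(Y)^+$. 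Knowing $x$ modulo homological equivalence determines it, and any homologically trivial element is zero. We then show $\delta\cdot\chow^1(Y)\cdot\chow^2(Y)^+\subset\IQ o_Y$: by \Cref{proposition:D^2}, products $b\cdot\alpha$ with $b\in\{h,g\}$ land in $\langle h^3,g^3\rangle$, whose products with divisors are multiples of $o_Y$ by Franchetta and \Cref{lemma:DZ}. Products $DD'\alpha$ are handled by rewriting $DD'$ via relation (2) in terms of $o_Y$ after pairing with $\alpha\in\langle h^2,hg,g^2,c_2,D^2\rangle$.

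\textbf{Main obstacle.} The hardest step is degree 4 with $\alpha$ non-generically-defined, for example $D^2D'^2$ or $D^4$. For these we would first try $D^4=\frac{3q(D)}4D^2h^2=3q(D)^2o_Y$ from \Cref{lemma:D^3} and relation (2), then polarize to get all quartic monomials in primitive divisors.
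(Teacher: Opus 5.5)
Your codimension-$3$ step is fine, and tidier than the paper's decomposition of $P$. By parity, every degree-$3$ monomial is a divisor times an invariant $2$-cycle, so the last assertion of \Cref{lemma:D^3} applies directly. Your polarisation of $D^4=3q(D)^2o_Y$ also correctly handles the subring in codimension $4$.

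\textbf{Codimension $2$, invariant part.} What you write here does not prove anything. Showing that $\gamma+\sum D_iD_i'$ is killed by $h^2,hg,g^2,c_2(Y)$ cannot show that it vanishes. Indeed, by \Cref{lemma:D^3} every divisor already annihilates $\chow^2(Y)^+_{\mathrm{hom}}$, so multiplication never detects such classes. Your ``cleanest route'' fails too. \Cref{cor:proj-i} only expresses the anti-invariant projector $\frac12(\Delta_Y-\Gamma_\iota)$ through $I$ and decomposable cycles. Applied to an invariant class it gives $0=0$, and nothing lets $I_\ast$ ``detect'' invariant homologically trivial classes.

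The missing observation is that codimension $2$ is formal. The degree-$2$ part of the subring is the image of $\IQ c_2(Y)\oplus\sym^2\chow^1(Y)$, and the composite to $H^4(Y,\IQ)$ is injective, for three reasons:
\begin{enumerate}
\item $\chow^1(Y)\hookrightarrow H^2(Y,\IQ)$;
\item cup product gives $\sym^2H^2(Y,\IQ)\cong H^4(Y,\IQ)$ for $K3^{[2]}$-type;
\item $[c_2(Y)]$ corresponds to a non-zero multiple of the dual Beauville--Bogomolov form, which has rank $23>\rho(Y)$, so it is not in $\sym^2$ of the N\'eron--Severi space.
\end{enumerate}
So a homologically trivial element comes from $0$ and is $0$. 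This is why the paper's list of relations starts in codimension $3$, and it makes your projector argument for the anti-invariant part unnecessary as well.

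\textbf{The ``Furthermore'' assertion for $\chow^1(Y)^{\cdot 2}\cdot\chow^2(Y)^+$.} Here $\alpha\in\chow^2(Y)^+$ is arbitrary, e.g.\ classes of invariant surfaces or invariant homologically trivial classes. In the case $DD'\alpha$ with $D,D'$ anti-invariant you only treat $\alpha\in\langle h^2,hg,g^2,c_2(Y),D''D'''\rangle$. The case with an invariant divisor factor is fine, since \Cref{proposition:D^2} holds for every $\alpha\in\chow^2(Y)^+$.

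The fix is what your remark ``knowing $x$ modulo homological equivalence determines it'' hints at but never uses. Take $x=D'\alpha\in\chow^1(Y)\cdot\chow^2(Y)^+$. Its class is a Hodge class in $H^6(Y,\IQ)$, so Hard Lefschetz and the Lefschetz $(1,1)$ theorem give $\beta\in\chow^1(Y)$ with $[x]=h^2[\beta]$. Since $h^2\beta$ also lies in $\chow^1(Y)\cdot\chow^2(Y)^+$, \Cref{lemma:D^3} forces $x=h^2\beta$. Hence $DD'\alpha=h^2D\beta$ is a product of divisors, and so a multiple of $o_Y$ by your subring argument.

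The paper's own one-line justification, via $\chow^1(Y)\cdot\chow^2(Y)^+=\langle h^3,g^3\rangle$, is only correct with $\chow^1(Y)^+$ in place of $\chow^1(Y)$. So this extra step is needed there as well.
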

\begin{proof}
    From $D^2h=\frac{q(D)}{12} h^3$ we obtain $D^2h^2=4q(D)o_Y$, as $h^4=48o_Y$. 
    The relations involving $D^2g^2$ and $D^2gh$ and $D^4$ follow in the same manner.
    It follows that any polynomial relation in $H^8(Y,\IQ)$ involving $D,h,g,$ and $c_2(Y)$ already holds in cohomology, as all the products involved are multiples of $o_Y$.
    Next, consider a polynomial relation~$[P]=~0$ in $H^6(Y,\IQ)$ involving $D,h,g,$ and $c_2(Y)$.
    This must be of degree at most $3$ in $D$, so we may decompose it as
    $$P=T+hQ+gQ'+h^2L+g^2L'+c_2(Y)L''+C$$
    where the terms are of degree $3,2,1,$ and $0$ in $D$, respectively.
    From the computations above, we obtain
    $$hQ+gQ'+C\in h\cdot\langle h^2, g^2\rangle \text{ and } T=h^2 L_1 \text{ and } h^2L+g^2L'+c_2(Y)L''=h^2(L_2+\lambda g),$$
    for some $L_1,L_2\in\langle h,g \rangle^\perp$, using $g^3=-3gh^2$. 
    Hence, $P=h^2(\mu_1 h+~\mu_2 g+~L_3)$ for some~$L_3\in\langle h, g\rangle^\perp$ and it follows that $[\mu_1 h+~\mu_2 g+~L_3]=~0$ in $H^2(Y,\IQ)$ via the Hard Lefschetz Theorem. 
    Thus, $\mu_1 h+\mu_2 g+L_3=0\in\chow^1(Y)$, as there are no homologically trivial divisors on $Y$ and $P=0$ in $\chow^3(Y)$. 
    For the last statement, we use the fact that~$\chow^1(Y)\cdot\chow^2(Y)^+=\langle h^3, g^3\rangle$, which implies that any class in~$\chow^1(Y)\cdot~\chow^1(Y)\cdot~\chow^2(Y)^+$ is a multiple of $o_Y$.
\end{proof}

It would follow from Beauville's splitting principle that the entire subring $$\left\langle \chow^1(Y), c_j(Y), \chow^2(Y)^+ \right\rangle$$ maps injectively into cohomology. 
Furthermore, it has been conjectured by Voisin in \cite{voisin16} that the subgroup of constant cycle surfaces also maps injectively into cohomology. 
We thank Robert Laterveer \cite{lat-private2} for pointing out that the subgroup of constant cycle surfaces is already contained in the subring above, reducing Voisin's conjecture to the fact that the subring also including the invariant $2$-cycles maps injectively into cohomology. 

\begin{corollary}
    Let $Y$ be a double EPW quartic and $C^2(Y)\subset \chow^2(Y)$ the subgroup generated by constant cycle surfaces. 
    Then 
    $$C^2(Y)\subset \left\langle \chow^1(Y), c_j(Y),\chow^2(Y)^+\right\rangle.$$
\end{corollary}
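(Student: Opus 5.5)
The plan is to split a constant cycle surface class into its $\iota$-invariant and anti-invariant parts and show that the anti-invariant part vanishes. Let $S\subset Y$ be a constant cycle surface, so every point of $S$ has the same class in $\chow_0(Y)$, which by \Cref{prop:nef} is $o_Y$. Write
$$[S]=\tfrac12([S]+\iota^\ast[S])+\tfrac12([S]-\iota^\ast[S]).$$
The first summand lies in $\chow^2(Y)^+$, which is contained in the subring. So it suffices to show that the anti-invariant part $\gamma\coloneq\tfrac12([S]-\iota^\ast[S])$ lies in $\langle \chow^1(Y), c_j(Y),\chow^2(Y)^+\rangle$. The natural target is to show that $\gamma$ is a polynomial in divisors plus something homologically trivial, and then that the homologically trivial part is $0$.

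\textbf{Step 1.} Since $\iota(S)$ is again a constant cycle surface, it is convenient to first treat cohomology. The cohomology class of $\gamma$ lies in $H^4(Y,\IQ)^-$, and $H^4(Y,\IQ)\cong\sym^2H^2(Y,\IQ)$. The anti-invariant part of $\sym^2 H^2$ is $H^2(Y)^+\otimes H^2(Y)^-$, that is, classes of the form $h\cdot D+g\cdot D'$ with $D,D'\in H^2(Y)^-$. The class $[\gamma]$ is also a Hodge class. Since $h$ and $g$ are $(1,1)$, the Hodge classes in $H^2(Y)^+\otimes H^2(Y)^-$ are exactly $h\cdot D+g\cdot D'$ with $D,D'$ anti-invariant Hodge classes, hence algebraic divisors. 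So there is a divisor polynomial $\delta=hD+gD'\in\chow^2(Y)^-$ with $\gamma-\delta\in\chow^2(Y)^-_{\mathrm{hom}}$.

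\textbf{Step 2.} The main obstacle is to show that $\epsilon\coloneq\gamma-\delta\in\chow^2(Y)^-_{\mathrm{hom}}$ vanishes. For this I will use \Cref{prop:h^2}, which says that multiplication by $h^2$ is injective on $\chow^2(Y)^-_{\mathrm{hom}}$, so it is enough to prove $h^2\epsilon=0$. First, $h^2\cdot[S]$ is supported on $S$, so it is a multiple of $o_Y$; likewise $h^2\cdot\iota^\ast[S]$ is a multiple of $o_Y$. Hence $h^2\gamma\in\IQ o_Y$, and since it is anti-invariant it is $0$. Next, $h^2\delta=h^3D+h^2gD'$. By \Cref{proposition:D^2} we have $Dh^3=0$, and $h^2gD'$ vanishes as well, using $Dgh=0$ from \Cref{lemma:D^3}. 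So $h^2\delta=0$, which gives $h^2\epsilon=0$ and therefore $\epsilon=0$.

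\textbf{Step 3.} Combining the two steps gives $[S]=\tfrac12([S]+\iota^\ast[S])+hD+gD'$, which lies in the subring. Taking linear combinations over all constant cycle surfaces gives the statement for $C^2(Y)$. Some care is needed in Step 1: the decomposition $H^4=\sym^2H^2$ and the identification of anti-invariant Hodge classes both hold for any double EPW quartic, and \Cref{prop:h^2} holds for all $Y$, so the argument works without assuming that $Y$ is general.
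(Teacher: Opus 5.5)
Your proof is correct. It follows the same outline as the paper: split $[S]$ into $\iota$-eigenparts, lift the anti-invariant cohomology class to a sum of products of an invariant and an anti-invariant divisor, and show that the homologically trivial remainder $\epsilon$ vanishes. The difference is the lemma used to kill $\epsilon$.

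The paper applies the projector relation of \Cref{cor:proj-i}, arguing as in \Cref{lemma:D^3}, to write $\epsilon=I_\ast(\epsilon\cdot a)$ for some unspecified generically defined $2$-cycle $a$. It then shows $\epsilon\cdot a=0$ using \Cref{thm:BV} for the products $D_i^+D_i^-a$ and $S^+a$, and \Cref{prop:nef} for $S\cdot a$.

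You instead use the injectivity of multiplication by $h^2$ on $\chow^2(Y)^-_{\mathrm{hom}}$ from \Cref{prop:h^2}. Because the test cycle is explicit, $h^2\delta=0$ reduces to the two relations $Dh^3=0$ (\Cref{proposition:D^2}) and $Dgh=0$ (\Cref{lemma:D^3}). And $h^2\gamma=0$ follows from \Cref{prop:nef} together with the $\iota$-invariance of $o_Y$. Your Hodge-theoretic lift in Step 1 is also more careful than the paper's one-line appeal to $H^2(Y,\IQ)^+\cdot H^2(Y,\IQ)^-=H^4(Y,\IQ)^-$.

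What each route buys:
\begin{itemize}
\item Yours is more explicit and avoids \Cref{thm:BV}.
\item The cost is a dependence on \Cref{prop:h^2}. Its proof goes through the Verra fourfold, \Cref{thm:chow-iso}, the Franchetta property for $X\times X$ and \Cref{lemma:class-of-Z}, and its validity for arbitrary $Y$ is obtained by spreading out an isomorphism statement from the general case.
\item The paper's route only uses \Cref{cor:proj-i}, a relation among generically defined cycles on $Y\times Y$, which specialises to every double EPW quartic more transparently.
\end{itemize}

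One point to make explicit: you apply relations the paper states for primitive divisors to anti-invariant ones. This is justified because $\iota^\ast$ preserves the Beauville--Bogomolov form and $H^2(Y,\IQ)^+=\langle h,g\rangle$, so $\chow^1(Y)^-$ is exactly $\langle h,g\rangle^\perp\cap\chow^1(Y)$.
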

\begin{proof}
    Let $S$ be a constant cycle surface in $Y$. 
    We may decompose it into its invariant and anti-invariant parts $S=S^++S^-$.
    Then, since $H^2(Y,\IQ)^+\cdot H^2(Y,\IQ)^-=H^4(Y,\IQ)^-$, there exist $D_i^+\in\chow^1(Y)^+$ and $D_i^-\in\chow^1(Y)^-$ such that $S^- -\sum_i D_i^-\cdot D_i^+\in\chow^2(Y)^-_{\mathrm{hom}}$.
    Applying the relation from \Cref{cor:proj-i} and using similar arguments as in the proof of \Cref{lemma:D^3} shows that $S^- -\sum_i D_i^+\cdot D_i^-=I_\ast((S^- -\sum_i D_i^+\cdot D_i^-)\cdot a)$ for some generically defined $2$-cycle $a$. 
    Finally, $D_i^+\cdot D_i^-\cdot a$ and $S^+\cdot a$ are both multiples of $o_Y$ by \Cref{thm:BV} and $S\cdot a$ is a multiple of $o_Y$ by \Cref{prop:nef}, since $S$ is a constant cycle surface. 
    This means that $(S^- -\sum_i D_i^+\cdot D_i^-)\cdot a$ is a homologically trivial multiple of $o_Y$, hence zero, implying that $S^-=\sum_i D_i^+\cdot D_i^-$.
\end{proof}

\appendix

\section{Multiplicativity of the decomposition for double EPW sextics}\label{appendix}

In this appendix we prove analogous results to \Cref{thm:chow-iso,them:multiplicativity+0} for double EPW sextics.
The proofs are similar, but the geometric arguments involved are slightly different. 
We begin by recalling the parts of the construction of double EPW sextics that we will need. 

By $V_i$ we refer to complex vector spaces of dimension $i$.
Fix a $5$-dimensional~$V_5$. 
A Gushel--Mukai fourfold is a smooth fourfold $X$ obtained as the intersection of the Grassmannian $\Gr(2,5)$ with a quadric $Q$ and a hyperplane $H$, i.e.
$$X=\Gr(2, V_5)\cap Q\cap H\subset \IP(\w^2 V_5).$$
The hyperplane $H$ can be viewed as a two-form $\omega$ on $V_5$ with a one-dimensional kernel, which we denote by $W_1$.
It was shown in \cite{og} that one can associate to any such $X$ a dual double EPW sextic, which we denote by $\tilde{Y}^\vee$. This is a $K3^{[2]}$-type hyperk\"ahler fourfold, which admits an involution $\iota$.

Iliev and Manivel \cite{ilievmanivel} study conics in $X$, of which there are the following three types, and their Hilbert scheme $F(X)$.
Denote the plane spanned by a conic by $\langle c \rangle$.
\begin{enumerate}
    \item $\tau$-conics: the plane $\langle c\rangle$ is not contained in $\Gr(2,V_5)$. 
    \item $\rho$-conics: the plane $\langle c\rangle$ is of the form $\IP(V_3^\vee)$ and is contained in $H$.
    \item $\sigma$-conics: the plane $\langle c\rangle$ is of the form $\IP(V_1\w V_4)$ and is contained in $H$. 
\end{enumerate}
A general conic is a $\tau$-conic.
There is a morphism $\alpha\colon F(X)\to~\tilde{Y}^\vee$ that is a $\IP^1$-bundle on the complement of the loci of $\rho$- and $\sigma$-conics, which are both contracted to points~$x_\rho,x_\sigma\in\tilde{Y}^\vee$ \cite{ilievmanivel}.
For any $V_4\subset V_5$, we denote 
$$S_{V_4}=\Gr(2,V_4)\cap H\cap Q,$$
which is a degree $4$ del Pezzo surface, analogous to the $D_{(L,M)}$ in the case of double EPW quartics.
A general conic defines a unique $V_4$ and is contained in $S_{V_4}$. 
The involution~$\iota$ can be described similarly as in the case of double EPW quartics. 
Given a general~$y\in \tilde{Y}^\vee$, choosing a conic $c$ in the fibre of $\alpha$ over $y$ and choosing a hyperplane in~$S_{V_4}$ containing $c$ yields a residual conic $c'$, which satisfies $\alpha(c')=\iota(y)$. We denote the incidence variety on $F(X)$ by $I_F$ and define the class
$$I=(\alpha\times\alpha)_\ast(I_F\cdot h_F\times h_F)\text{ in }\chow^2(\tilde{Y}^\vee\times\tilde{Y}^\vee).$$

Note also that while the Franchetta property is not known to hold for double EPW sextics, there is a candidate for the Beauville--Voisin class $o_{\tilde{Y}^\vee}$, given by the class of any point lying on the fixed locus $Z\subset \tilde{Y}^\vee$ of the involution.
This is well-defined, since it was proven in \cite[Thm 1.1]{zhang24} that the fixed locus is a constant cycle subvariety of~$\tilde{Y}^\vee$.
The Beauville--Voisin conjecture was also shown to hold for double EPW sextics in \cite{laterveervial, laterveer23} with the degree zero part being exactly $\IQ o_{\tilde{Y}^\vee}$.

We may now prove the analogues of \Cref{thm:chow-iso,them:multiplicativity+0} for double EPW sextics. We will need results from \cite{zhang24} and \cite{laterveer23}.

As in the case of double EPW quartics, define the following classes. Let $y\in \tilde{Y}^\vee$ be a point and let 
$$S_y=I_\ast(y)=\frac{1}{m}\alpha_\ast(D_c\cdot h_F),$$
where $c$ is any conic with $\alpha(c)=y$ and $D_c$ is the threefold of conics intersecting $c$, $h_F$ is an ample divisor on $F(X)$ and $m$ is the intersection number of $h_F$ with the general fibre of $\alpha$. 
\begin{lemma}
    The difference 
    $$A=D_c-\alpha^\ast S_{\alpha(c)}$$
    is generically defined. 
    So is $S_y+S_{\iota(y)}$ for any $y\in \tilde{Y}^\vee$.
\end{lemma}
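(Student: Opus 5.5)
We follow the proofs of \Cref{pullback-I} and \Cref{lemma:D-S}. For the first claim we work on $F(X)\times F(X)$, with $I_F$ and $I$ as above. The only place where the argument changes is that $\alpha$ is now a $\IP^1$-bundle only away from the loci of $\rho$- and $\sigma$-conics, which are contracted to the points $x_\rho,x_\sigma$.

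\emph{Step 1: the class $A_F\coloneq I_F-(\alpha\times\alpha)^\ast I$.} This class is generically defined, because $I_F$, $\alpha$ and $h_F$ are all constructed in families over the moduli of Gushel--Mukai fourfolds. Here ``generically defined'' means over that base. Next we restrict to $U\times U$, where $U=F(X)\setminus\alpha^{-1}(\{x_\rho,x_\sigma\})$ and $\alpha|_U$ is a $\IP^1$-bundle. On $U\times U$ the projective bundle formula writes $A_F$ as
\[(\alpha\times\alpha)^\ast\gamma+\text{divisorial terms}+\lambda\, h_F\times h_F .\]
As in \Cref{pullback-I}, we have $\gamma=(\alpha\times\alpha)_\ast(A_F\cdot h_F\times h_F)$ plus products of divisors. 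The first term vanishes by the definition of $I$. Divisors on $\tilde{Y}^\vee\times\tilde{Y}^\vee$ are decomposable since $H^1(\Ox)=0$. Hence $A_F|_{U\times U}$ is decomposable and generically defined.

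\emph{Step 2: extension across the exceptional loci.} By the localisation sequence, $A_F$ differs from an honest decomposable generically defined class by a cycle supported on $(F\setminus U)\times F\cup F\times(F\setminus U)$. The loci of $\rho$- and $\sigma$-conics are themselves defined in families (the $\IP(V_3^\vee)$ and $\IP(V_1\w V_4)$ planes in $H$). So this correction is generically defined, and we may absorb it into the class being tracked.

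\emph{Step 3: restriction to the fibre.} Restricting to $\{c\}\times F$ gives
\[A=D_c-\alpha^\ast S_{\alpha(c)}=A_F|_{c\times F},\]
and $A$ is generically defined by the projection formula, exactly as in \Cref{lemma:D-S}. For $c$ general we may take $c\in U$, away from the exceptional loci. The only feature of $A_F$ used here is that it is decomposable with generically defined factors, so the correction term from Step 2 does not interfere.

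\emph{Step 4: the class $S_y+S_{\iota(y)}$.} We use the residual-conic description of $\iota$. Let $c'$ be the residual conic of $c$ in a hyperplane section of $S_{V_4}$. Then $c+c'$ is a hyperplane section of the degree $4$ del Pezzo surface $S_{V_4}$. These sections are parametrised by a projective-space bundle over the rational variety parametrising the $V_4\subset V_5$, so $c+c'=C$ is a fixed class in $\chow_1(X)$, equal to $h\cdot[S_{V_4}]$. It follows that
\[m\,(S_y+S_{\iota(y)})=\alpha_\ast\bigl(h_F\cdot p_\ast q^\ast C\bigr).\]
The right-hand side is independent of $y$ and defined in the family, hence generically defined. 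Alternatively, one can invoke \cite[Thm 1.1]{zhang24}: it gives $S_y+S_{\iota(y)}=I_\ast(y+\iota(y))=2I_\ast(o_{\tilde{Y}^\vee})$ for general $y$, and the statement extends to every $y$ by specialisation.

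The main obstacle is Step 2, namely the non-$\IP^1$-bundle locus. I would first check from \cite{ilievmanivel} that the $\rho$- and $\sigma$-loci have dimension at most $2$ and $3$ respectively. Then any correction supported over them on $F\times F$ can be handled exactly as $W_1$ was handled in \Cref{prop:I_F^2}.
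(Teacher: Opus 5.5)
There is a genuine gap, in Steps 2--3. Step 4 is fine: the residual-conic argument, that $c+c'=h\cdot[S_{V_4}]$ is a fixed and generically defined class, is exactly what underlies \cite[Lem.~4.3]{zhang24}, which the paper invokes for the second claim. Your alternative via \cite[Thm~1.1]{zhang24} would not suffice on its own. Constancy of the class of points of $Z$ says nothing about $I_\ast(y+\iota(y))$ for $y\notin Z$.

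After Step 1 you have $A_F=B+j_\ast R$, where $B$ is decomposable with generically defined factors and $R$ is a class on $Z_\partial=(F\setminus U)\times F\cup F\times(F\setminus U)$. Your reason for $j_\ast R$ being generically defined, namely that its support is defined in families, is not valid in general. It holds here only because $j_\ast R=A_F-B$. In any case, Step 3 needs decomposability of the correction, not generic definedness, and you never establish it. For $c\in U$, the part of $R$ supported on $(F\setminus U)\times F$ does act trivially on $[c]$. The part supported on $F\times(F\setminus U)$, however, meets $\{c\}\times F$ in $\{c\}\times(F\setminus U)$. It therefore contributes to $A=(A_F)_\ast[c]$ a class in $\chow^2(F)$ pushed forward from $\chow_3(F\setminus U)$. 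Nothing in your argument shows this term is generically defined, or even independent of $c$. So ``the correction term from Step 2 does not interfere'' is unjustified.

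The remedy you propose at the end cannot work. The $\sigma$-conics form a divisor $F^\sigma\cong\mathrm{Bl}_{[W_1]}\IP(V_5)$ in the fivefold $F(X)$; the paper itself uses $[F^\sigma]$ as a divisor class in the proof of \Cref{thm:A-multiplicativity}. Hence $Z_\partial$ is a divisor in $F\times F$. The localisation trick for $W_1$ in \Cref{prop:I_F^2} works because $\dim W_1=5$ is smaller than the dimension $6$ of cycles in $\chow^4(F\times F)$. For $A_F\in\chow_8(F\times F)$ you would need $\dim(F\setminus U)\leq 2$, which fails, and would fail even with your guessed bound of $3$.

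To close the gap you have two options:
\begin{enumerate}
\item Import the decomposability of $I_F-(\alpha\times\alpha)^\ast I$ from \cite[Lem.~4.2]{laterveer-erratum}. This is what the paper does: its proof is just that of \Cref{lemma:D-S}, with \Cref{pullback-I} replaced by that lemma and \Cref{thm:constant-cycle} replaced by \cite[Lem.~4.3]{zhang24}.
\item Control the extra term by hand. It lies in the image of $\chow_3(F^\sigma)\oplus\chow_3(F^\rho)\to\chow^2(F)$. Here $\chow_3(F^\sigma)=\chow^1(\mathrm{Bl}_{[W_1]}\IP(V_5))$ is spanned by the hyperplane and exceptional classes, both defined in families. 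After the analogous check for the $\rho$-locus, the extra term lies in a fixed finite-dimensional space of generically defined classes. A countability argument as $c$ varies in $U$ then shows it is independent of $c$.
\end{enumerate}
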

\begin{proof}
    The proof is analogous to that of \Cref{lemma:D-S}. 
    \Cref{pullback-I} is replaced by \cite[Lem. 4.2]{laterveer-erratum} and
    \Cref{thm:constant-cycle} by \cite[Lem.~4.3]{zhang24}. 
\end{proof}
\begin{theorem}
    Let $\tilde{Y}^\vee$ be a general double EPW sextic. 
    Then multiplication by $I_\ast(o_{\tilde{Y}^\vee})$ and $I_\ast$ induce inverse isomorphisms
    $$(\pr_1^\ast I_\ast(o_{\tilde{Y}^\vee}))_\ast\colon\chow^2(\tilde{Y}^\vee)^-_{\mathrm{hom}} \mathrel{\substack{\longrightarrow \\[-1pt] \longleftarrow}} \chow_0(\tilde{Y}^\vee)^-\colon I_\ast.$$
    In particular, homological and algebraic equivalence agree on $\chow^2(\tilde{Y}^\vee)^-$. 
\end{theorem}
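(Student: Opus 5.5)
We mirror the proof of \Cref{thm:chow-iso} and \Cref{cor:chow-iso}, with the Gushel--Mukai fourfold $X$ in place of the Verra fourfold. Set $\Psi\coloneq \frac{1}{m}P_\ast\circ(\pr_2^\ast h_F)_\ast\circ\alpha^\ast\colon\chow_0(\tilde{Y}^\vee)\to\chow_1(X)$, where $P$ is the universal conic. Then $\Psi^t\circ\Psi=I_\ast$ and $S_y=I_\ast(y)$.

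\textbf{Step 1: $I_\ast$ kills the invariant homologically trivial part.} By \cite[Thm.~1.1]{zhang24}, the fixed locus $Z$ is a constant cycle surface. For a general $y$, the residual-conic description of $\iota$ shows that $c+c'$ is a hyperplane section of $S_{V_4}$, hence constant in $\chow_1(X)$; this is \cite[Lem.~4.3]{zhang24}. So $S_y+S_{\iota(y)}=2I_\ast(o_{\tilde{Y}^\vee})$ for all $y$. Since $\chow_0(\tilde{Y}^\vee)^+_{\mathrm{hom}}$ is generated by the classes $y+\iota(y)-2o_{\tilde{Y}^\vee}$, it follows that $I_\ast$ vanishes on it, and $I_\ast$ sends $\chow_0^-$ into $\chow^2(\tilde{Y}^\vee)^-_{\mathrm{hom}}$.

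\textbf{Step 2: the key intersection computation.} The analogue of \Cref{lemma:conic-intersection} is needed. For a general $\tau$-conic $c\subset S_{V_4}$ lying on a pencil $\ell_i$, the degree $4$ del Pezzo $S_{V_4}$ again carries ten conic pencils in five $\iota$-swapped pairs. This time all five pairs are plausibly genuine $\tau$-pencils, unlike the Verra case where one pair was not a $(1,1)$-pencil; this must be checked. One then has
$$D_c^2=4\ell_{i'}+\sum_{j\neq i,i'}\ell_j+(\text{generically defined}),$$
where $\ell_{i'}=\iota(\ell_i)$. This is \cite[Lem.~3.9]{zhang24}, and the multiplicity computation there is exactly the model followed earlier. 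Pushing forward with $h_F$ and using the lemma above, namely $D_c=\alpha^\ast S_{\alpha(c)}+A$ with $A$ generically defined, gives
$$S_y^2-S_{\iota(y)}^2=\lambda\,(y-\iota(y))+(\text{terms } S_y\cdot\Gamma)$$
for some nonzero $\lambda$ (here $\lambda=4m-m=3m$ if the pencil count is as above). Writing $S_y^2-S_{\iota(y)}^2=(S_y-S_{\iota(y)})(S_y+S_{\iota(y)})=2(S_y-S_{\iota(y)})\cdot I_\ast(o)$ shows that multiplication by $I_\ast(o_{\tilde{Y}^\vee})$ composed with $I_\ast$ is a nonzero multiple of the identity on $\chow_0^-$. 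The generically defined correction terms must be controlled. Since Franchetta is not available for sextics, I would instead use that these corrections are invariant, or decomposable and homologically trivial after antisymmetrising in $y\leftrightarrow\iota(y)$, so that they cancel or act by degree. This bookkeeping is \textbf{the main obstacle}. I would first try to show, as in \cite{laterveer23}, that the correction is of the form $\Gamma_\ast$ with $\Gamma$ in the span of $\pr_i^\ast$ of $h$, $c_2$ and $\Delta$-free classes, which vanish on $y-\iota(y)$ after multiplication by the anti-invariant class.

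\textbf{Step 3: conclusion.} It remains to show that multiplication by $I_\ast(o)$ followed by $I_\ast$ is also a scalar on $\chow^2(\tilde{Y}^\vee)^-_{\mathrm{hom}}$. For this I would use the sextic analogue of \Cref{cor:proj-i}, namely $\frac12(\Delta-\Gamma_\iota)=I\cdot\Gamma+(\iota,\id)^\ast I\cdot\Gamma'+\Gamma''$, available from \cite[Prop.~2.27--3.2]{laterveer23} with the erratum. Applying it to $\beta\in\chow^2(\tilde{Y}^\vee)^-_{\mathrm{hom}}$ exactly as in \Cref{cor:chow^3_weak} gives $\beta=I_\ast(a\cdot\beta)$ for a fixed invariant $2$-cycle $a$. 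This shows that $I_\ast$ is surjective onto $\chow^2(\tilde{Y}^\vee)^-_{\mathrm{hom}}$ from $\chow_0^-$. Combined with Step 2, it shows that $I_\ast$ is an isomorphism whose inverse is, up to scalar, multiplication by $I_\ast(o_{\tilde{Y}^\vee})$.

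Finally, $\chow_0^-$ is spanned by the classes $y-\iota(y)$, which are algebraically trivial. Their images under the correspondence $I_\ast$ are therefore algebraically trivial, which gives $\chow^2(\tilde{Y}^\vee)^-_{\mathrm{hom}}=\chow^2(\tilde{Y}^\vee)^-_{\mathrm{alg}}$.
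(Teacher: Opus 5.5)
Your architecture is the paper's: the paper proves this by rerunning \Cref{thm:chow-iso} and \Cref{cor:chow-iso} with \cite[Lem.~3.9]{zhang24} in place of \Cref{lemma:conic-intersection}. Steps~1 and~3 are fine. Step~3 even makes explicit the surjectivity of $I_\ast$ onto $\chow^2(\tilde{Y}^\vee)^-_{\mathrm{hom}}$, which the paper uses tacitly.

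The gap is in Step~2, at the point you yourself call the main obstacle. You leave the correction terms $S_y\cdot\Gamma$ uncontrolled, and the mechanism you propose for disposing of them does not work. A term $\Gamma\cdot(S_y-S_{\iota(y)})$, with $\Gamma$ a fixed class built from $h$ and $c_2$, is the action of the correspondence $I\cdot\pr_2^\ast\Gamma$. That correspondence is not decomposable, so there is no reason for it to act through the degree. Nor does such a term vanish in general: multiplication by $h^2$ is an isomorphism $\chow^2(\tilde{Y}^\vee)^-_{\mathrm{hom}}\to\chow_0(\tilde{Y}^\vee)^-$ by \cite[Prop.~4.1]{bolognesi-laterveer}. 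If such a term were really present, you would only learn that multiplication by $I_\ast(o_{\tilde{Y}^\vee})$ plus some other fixed class is a left inverse. The identification of the inverse with multiplication by $I_\ast(o_{\tilde{Y}^\vee})$, which is part of the statement, would then be lost.

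The missing point is that no correction term occurs at all.
\begin{itemize}
\item Because $I_F-(\alpha\times\alpha)^\ast I$ is decomposable \cite[Lem.~4.2]{laterveer-erratum}, $A=D_c-\alpha^\ast S_y$ is a decomposable correspondence applied to a point. Hence it is a single class, independent of $c$, and the $A^2$ contributions cancel in $D_c^2-D_{c'}^2$.
\item The only remaining cross term is $2\,\alpha_\ast(h_F\cdot A)\cdot(S_y-S_{\iota(y)})$. Here $\alpha_\ast(h_F\cdot A)=\alpha_\ast(h_F\cdot D_c)-\alpha_\ast(h_F)\cdot S_y=mS_y-mS_y=0$, by the very definition $S_y=\frac1m\alpha_\ast(D_c\cdot h_F)$.
\end{itemize}
Equivalently, you can stay on $F(X)$ as the paper does: $D_c+D_{c'}$ is a fixed class because $[c]+[c']$ is constant in $\chow_1(X)$, and you multiply $D_c-D_{c'}$ by it.

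With $D_c^2-D_{c'}^2=4(\ell_{i+5}-\ell_i)$, the pencils $\ell_j$ with $j\neq i,i+5$ and any $c$-independent extra term cancel completely. So your $\lambda=3m$ should be $4m$ up to sign. You then get
$$2m\,I_\ast(o_{\tilde{Y}^\vee})\cdot I_\ast\bigl(y-\iota(y)\bigr)=m\bigl(S_y^2-S_{\iota(y)}^2\bigr)=\alpha_\ast\bigl(h_F\cdot(D_c^2-D_{c'}^2)\bigr)=4m\bigl(\iota(y)-y\bigr),$$
with no Franchetta input. Together with your Steps~1 and~3, this gives the theorem.
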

\begin{proof}
    The proof is the same as the one for \Cref{thm:chow-iso} after replacing \Cref{lemma:conic-intersection} by \cite[Lem. 3.9]{zhang24}. 
\end{proof}
\begin{remark}
    We would expect multiplication by $h^2$, which is also an isomorphism by \cite[Prop. 4.1]{bolognesi-laterveer}, to agree with multiplication by $I_\ast(o_{\tilde{Y}^\vee})$ up to a scalar.
    Unfortunately, the Franchetta conjecture is not known for double EPW sextics, meaning that it is not clear that the generically defined $2$-cycle $I_\ast(o_{\tilde{Y}^\vee})$ is a linear combination of $h^2$ and $c_2(Y)$.
    Were this the case, it would follow from the computation of the class of the fixed locus in \cite[Lemma 4.1]{ferretti12} that the two isomorphisms agree.
\end{remark}

Now, we proceed as in \Cref{sec:multiplicativity} to prove the analogue of \Cref{them:multiplicativity+0}. 
We begin by computing the product $D_c\cdot D_{c'}$ for $c'$ such that $\alpha(c')=\iota(\alpha(c))$. 
Recall that a general $c$ is contained in a unique del Pezzo surface $S_{V_4}$ and that any such del Pezzo surface contains~$10$ pencils of conics, which we denote $\ell_i$. 
For any $c\in\ell_i$, we have~$\iota(\alpha(c))=~\alpha(c')$, where $c'\in\ell_{i+5}$. 
The situation is essentially the same as in the case of double EPW quartics, but for the fact that we do not restrict to $(1,1)$-conics, which explains the difference in the number of pencils. 
See \Cref{subsec:consta-cycle} for more details.
\begin{proposition}\label{prop:app-intersection}
    Let $c$ be a general conic with associated $S_{V_4}$ and assume $c\in\ell_i$.
    Let~$c'$ be a general conic lying on $\ell_{i+5}$, i.e. $\alpha(c')=\iota(\alpha(c))$.
    Then
    $$D_c\cdot D_{c'}=\Gamma+\sum_{j\neq i,i+5}\ell_j+k\ell_\sigma\text{ in }\chow^4(F),$$
    where $\Gamma$ is generically defined and $\ell_\sigma$ is a curve contained in the locus of $\sigma$-conics. 
\end{proposition}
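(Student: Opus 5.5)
We would follow the proof of \Cref{prop:Dc.Diota} closely, adapting the geometry from $D_{(L,M)}$ to $S_{V_4}$ and keeping track of the extra contributions coming from $\sigma$-conics. The first step is the set-theoretic intersection $D_c\cap D_{c'}$. The conics $c\subset S_{V_4}$ on $\ell_i$ and $c'$ on $\ell_{i+5}$ are residual in a hyperplane section of $S_{V_4}$, so they meet in two points $x_1,x_2$. Hence $E_{x_1}\cup E_{x_2}\subset D_c\cap D_{c'}$, where $E_x=p(q^{-1}(x))$. Each pencil $\ell_j$ with $j\neq i,i+5$ has intersection number $1$ with both $c$ and $c'$ on $S_{V_4}$, so $\bigcup_{j\neq i,i+5}\ell_j\subset D_c\cap D_{c'}$.

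It remains to show there are no further components apart from possibly a curve of $\sigma$-conics. Let $\tilde c$ be a conic meeting both $c$ and $c'$ but not contained in $S_{V_4}$. If $\tilde c$ is a $\tau$-conic, it spans $\tilde V_4\neq V_4$, and $\Gr(2,V_4)\cap\Gr(2,\tilde V_4)=\Gr(2,V_3)$ is a plane. The points of $\tilde c$ lying in $\Gr(2,V_4)$ form a linear section of $\tilde c$, hence meet $c\cup c'$ in a controlled way. Arguing as in the quartic case with $\pi|_c$ replaced by the Plücker geometry (the analogue of the argument in \cite[Lem.~3.9]{zhang24}), we would show that $\tilde c$ must pass through $x_1$ or $x_2$. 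The $\rho$-conics are finitely many up to the contraction and lie in $\IP(V_3^\vee)$; we expect them not to meet a general $c$ at all, so they contribute nothing. The $\sigma$-conics lie in planes $\IP(V_1\wedge V_4')$. Those meeting both $c$ and $c'$ form a curve, or are empty, inside the $\sigma$-locus; we denote its class $k\ell_\sigma$ with unknown multiplicity $k$. Since the statement leaves $k$ and $\ell_\sigma$ unspecified, we do not need to compute them, only to localize them in the $\sigma$-locus.

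Next we run the excess intersection formula \cite[Thm.~9.2]{fulton} componentwise, using that the components meet only in finitely many points. On the $\ell_j$ the intersection is transverse by the tangent space computation of \Cref{lemma:conic-intersection}, or rather its sextic analogue \cite[Lem.~3.9]{zhang24}, so each $\ell_j$ appears with coefficient $1$. On $E_{x_j}$ the excess term equals
$$c_1\bigl(\omega_F^\vee\otimes\omega_{D_c}\otimes\omega_{D_{c'}}\otimes\omega_{E_{x_j}}^\vee\bigr)\big|_{E_{x_j}}.$$
Its first and last factors are generically defined because $E_x=p_\ast q^\ast[x]$. For the middle factors we compute $K_{D_c}$ via the small resolution $q^{-1}(c)\to D_c$ fibred over $c\cong\IP^1$. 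This gives $K_{D_c}=\tilde\Gamma|_{D_c}-E_x$ with $\tilde\Gamma$ generically defined, since $\Ox_X(1)|_c=\Ox_c(2)$. We then use $E_x^2=0$ on $D_c$, because $E_x\cap E_y$ for $x\neq y\in c$ is a single point of $\ell_{i+5}$. Together these show that the $E_{x_j}$ contributions are generically defined. Any excess along the $\sigma$-curve is absorbed into $k\ell_\sigma$.

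The main obstacle is the set-theoretic classification of conics outside $S_{V_4}$ meeting both $c$ and $c'$. The Verra argument used the double cover $\pi$ to $\IP^2\times\IP^2$ and the $(1,1)$-class, which has no direct analogue here. We would first try the linear-algebraic fact that two distinct $\Gr(2,V_4)$ meet in a plane, intersected with $H$ and $Q$. This should show that for $\tau$-conics the meeting points with $c$ and $c'$ coincide. It should also show that only the degenerate $\sigma$-planes escape this argument, and these are what produce the term $k\ell_\sigma$.
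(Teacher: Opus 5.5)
Your overall architecture matches the paper's: describe $D_c\cap D_{c'}$ set-theoretically as $E_{x_1}\cup E_{x_2}\cup\bigcup_{j\neq i,i+5}\ell_j$ plus a curve of $\sigma$-conics, then rerun the excess-intersection computation of \Cref{prop:Dc.Diota}, leaving the multiplicity along the $\sigma$-curve undetermined. The gap is in the one step that is genuinely new in the sextic setting: showing that a conic $\tilde c\not\subset S_{V_4}$ meeting both $c$ and $c'$ either lies in $E_{x_1}\cup E_{x_2}$ or is a $\sigma$-conic. For $\tau$-conics you only state the conclusion. The missing idea is this. If $x\in\tilde c\cap c$ and $y\in\tilde c\cap c'$, then $V_x,V_y\subset V_4\cap\tilde V_4$, which is $3$-dimensional. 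Hence $V_x\cap V_y\neq 0$, and the whole line $\overline{xy}$ lies in $\Gr(2,V_5)\cap\langle\tilde c\rangle$. Since $\langle\tilde c\rangle\not\subset\Gr(2,V_5)$ and the Grassmannian is cut out by quadrics, this intersection is just $\tilde c$. So either $x=y\in\{x_1,x_2\}$, or $\overline{xy}$ is a line of $S_{V_4}$ meeting both $c$ and $c'$. The latter is impossible because $c+c'\sim -K_{S_{V_4}}$ has degree $1$ on lines. The paper phrases the same linear algebra uniformly: $\dim(V_x+V_y)=3$, so $\langle\tilde c\rangle$ contains a line of $\Gr(2,V_5)$, and \cite[Lem.~3.4]{zhang24} forces $\tilde c$ to be a $\rho$- or $\sigma$-conic.

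Your treatment of $\rho$-conics rests on a false claim. Every $x\in X$ with $W_1\not\subset V_x$ lies on the $\rho$-conic $\IP(\wedge^2(V_x\oplus W_1))\cap Q$, because $V_x\oplus W_1$ is $\omega$-isotropic. So $\rho$-conics meet a general $c$ in a one-parameter family. Nor are they finitely many: they form a threefold (isotropic $V_3\supset W_1$), all contracted by $\alpha$ to $x_\rho$. What is true, and what the paper uses, is the following. A $\rho$-plane $\IP(\wedge^2V_3)$ with $W_1\subset V_3$ passing through $x\in c$ and $y\in c'$ forces $V_x,V_y\subset V_3\cap V_4$. This intersection is $2$-dimensional because $W_1\not\subset V_4$ for general $c$. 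Hence $x=y$, and the conic already lies in $E_{x_1}\cup E_{x_2}$.

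You also assert, without justification, that the $\sigma$-conics meeting both $c$ and $c'$ form at most a curve. A $\sigma$-conic with vertex $V_1\neq W_1$ passes through $x\in c$ if and only if $V_1\subset V_x$. So these conics are parametrised by $S_c\cap S_{c'}\subset\IP(V_4)$, where $S_c$ is the surface swept by the lines $\IP(V_x)$, $x\in c$. This is a curve because $S_c\neq S_{c'}$: they are smooth quadrics, opposite rulings of one quadric give disjoint conics, and $c\cap c'\neq\varnothing$. With these repairs, the excess-intersection part of your argument goes through exactly as in the paper.
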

\begin{proof}
    We compute the set-theoretic intersection $D_c\cap D_{c'}$ and then use the excess intersection formula to compute the intersection in the Chow ring. 
    First, notice that $c$ and~$c'$ intersect in two points, which we denote $x_1$ and $x_2$. 
    Let $E_x$ be the surface of conics containing a point $x\in X$,
    $$E_x=p(q^{-1}(x))=\{c\in F(X)~|~x\in c\}.$$
    Clearly, $E_{x_1}\cup E_{x_2}\subset D_c\cap D_{c'}.$ 
    We also know that any conic  $\tilde{c}\in\ell_j$ for $j\neq i,i+5$ intersects $c$ and $c'$ in one point, meaning that
    $$T\coloneq E_{x_1}\cup E_{x_2}\cup\bigcup_{j\neq i, i+5}\ell_j\subset D_c\cap D_{c'}.$$
    We claim that any conic that is not contained in $T$ is a $\sigma$-conic and that there is a~$1$-dimensional family of these. 
    The rest of the proof is analogous to the argument used in the proof of \cite[Lem. 3.9]{zhang24}. 
    Let $\tilde{c}\in D_c\cap D_{c'}$ be a conic not contained in $T$. 
    Then it cannot lie in $S_{V_4}$. 
    Let $x\in \tilde{c}\cap c$ and $y\in\tilde{c}\cap c'$. As $X\subset \Gr(2,V_5)$, these correspond to two $2$-dimensional subspaces $V_x$ and $V_y$ of $V_5$.
    Since $\tilde{c}$ is not contained in $S_{V_4}$, we have that $V_x\cup V_y$ is $3$-dimensional, meaning that $V_x\cap V_y$ is $1$-dimensional. This means that 
    $$\IP((V_x\cap V_y)\w(V_x\cup V_y))\subset \Gr(2,V_5)\cap \langle \tilde{c}\rangle.$$
    By \cite[Lem. 3.4]{zhang24}, this implies that $\tilde{c}$ is a $\rho$- or $\sigma$-conic.
    Since $c$ is general, so is~$V_4$, which means that $W_1\not\subset V_4$. 
    As $V_x\cup V_y\subset V_4$, we also have $W_1\not\subset V_x\cup V_y$, meaning that~$\langle \tilde{c}\rangle$ cannot be of the form $\IP(V_3^\vee)$ contained in $H$.
    Thus, $\tilde{c}$ is a $\sigma$-conic.

    The locus of $\sigma$-conics is isomorphic to $\mathrm{Bl}_{[W_1]}\IP(V_5)$, see \cite[Sec. 3]{ilievmanivel}.
    Let $S_c$ and~$S_{c'}$ be the surfaces swept out by lines parametrised by $c$ and $c'$ in $\IP(V_4)$. 
    Define a morphism~$S_c\cap~S_{c'}\to~\mathrm{Bl}_{[W_1]}\IP(V_5)$ by sending a point $V_1\in S_c\cap S_{c'}$ to the $\sigma$-conic obtained by intersecting the plane $\IP(V_1\w V_4)$ with $S_{V_4}$. 
    The image is precisely the set of $\sigma$-conics intersecting both $c$ and $c'$ and is $1$-dimensional. 
    Thus, conics not contained in $T$ are parametrised by a curve $\ell_\sigma$ of $\sigma$-conics.

    Now, we proceed in precisely the same way as in the proof of \Cref{prop:Dc.Diota}, as we do not need to know the multiplicity of $\ell_\sigma$ in the intersection, since it will vanish after pushing forward to $\tilde{Y}^\vee$.
    Notice that in the proof of \Cref{prop:Dc.Diota}, the generically defined class $\Gamma$ is also of the form $\Gamma'\cdot E_x$, where $\Gamma'$ is a generically defined divisor on~$F(X)$. 
\end{proof}
\begin{theorem}\label{thm:A-multiplicativity}
    Let $\tilde{Y}^\vee$ be a general double EPW sextic. 
    Then the multiplication
    $$\chow^2(\tilde{Y}^\vee)^-_{\mathrm{hom}}\otimes\chow^2(\tilde{Y}^\vee)^-_{\mathrm{hom}}\to \chow_0(\tilde{Y}^\vee)^+_{\mathrm{hom}}$$
    is surjective.
\end{theorem}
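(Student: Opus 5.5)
We mirror the proof of \Cref{them:multiplicativity+0}. First, $\chow_0(\tilde{Y}^\vee)^+_{\mathrm{hom}}$ is generated by classes $y+\iota(y)-2o_{\tilde{Y}^\vee}$ with $y$ general. This uses that $\chow_0(\tilde{Y}^\vee)^+$ is spanned by classes $y+\iota(y)$, and that $o_{\tilde{Y}^\vee}$ is the class of any point of the fixed locus $Z$ by \cite[Thm 1.1]{zhang24}. So it suffices to write each such class, up to a non-zero scalar, as a sum of products of elements of $\chow^2(\tilde{Y}^\vee)^-_{\mathrm{hom}}$. For a general $y$ we choose a conic $c$ with $\alpha(c)=y$, lying on a pencil $\ell_i$ of $S_{V_4}$, and a residual conic $c'\in\ell_{i+5}$, so that $\alpha(c')=\iota(y)$. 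By the lemma above, $S_y-S_{\iota(y)}=2S_y-(S_y+S_{\iota(y)})$ differs from $2S_y$ by a generically defined class that does not depend on $y$. Hence $S_y-S_{\iota(y)}$ is anti-invariant and homologically trivial, i.e. it lies in $\chow^2(\tilde{Y}^\vee)^-_{\mathrm{hom}}$.

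Next we compute $(S_y-S_{\iota(y)})^2$. We write $D_c=\alpha^\ast S_y+A$ with $A$ generically defined and independent of $c$, and similarly for $c'$. The projection formula then gives $(S_y-S_{\iota(y)})^2=\frac1m\alpha_\ast\bigl((D_c-D_{c'})^2\cdot h_F\bigr)$, as in \Cref{lemma:D-S}. We expand $(D_c-D_{c'})^2=D_c^2+D_{c'}^2-2D_c\cdot D_{c'}$ and use two inputs:
\begin{itemize}
\item the self-intersection formula \cite[Lem. 3.9]{zhang24}, giving $D_c^2=k_0\ell_{i+5}+\sum_{j\neq i,i+5}\ell_j$ for an explicit $k_0$, analogous to $k_0=4$ in \Cref{lemma:conic-intersection};
\item \Cref{prop:app-intersection} for $D_c\cdot D_{c'}$.
\end{itemize}
After pushing forward with $h_F$, each pencil $\ell_j$ contributes $m\,\alpha(\ell_j)$. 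The curve $\ell_\sigma$ lies in the $\sigma$-locus, which $\alpha$ contracts to the point $x_\sigma$, so $\alpha_\ast(\ell_\sigma\cdot h_F)$ is a multiple of the fixed class $[x_\sigma]$. The generically defined term $\Gamma$ pushes forward to a generically defined zero-cycle.

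This last point is the main obstacle. The Franchetta property is not known for double EPW sextics, so we cannot directly say that $\alpha_\ast(\Gamma\cdot h_F)$ is a multiple of $o_{\tilde{Y}^\vee}$. We handle it using the observation at the end of the proof of \Cref{prop:app-intersection}: $\Gamma=\Gamma'\cdot E_x$ with $\Gamma'$ a generically defined divisor. The term $\alpha_\ast(\Gamma'\cdot E_x\cdot h_F)$ therefore lies in the product of $\chow^1$ with generically defined classes, and $[x_\sigma]$ is likewise a generically defined zero-cycle. By the Beauville--Voisin conjecture for double EPW sextics \cite{laterveervial, laterveer23}, whose degree-zero part is $\IQ o_{\tilde{Y}^\vee}$, both are multiples of $o_{\tilde{Y}^\vee}$. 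Alternatively, one can show that $\alpha_\ast(E_x\cdot\Gamma'\cdot h_F)$ is independent of $x$ and of the choice of $c$, and conclude by constancy. Since the full expression is homologically trivial, a degree count then fixes the coefficient of $o_{\tilde{Y}^\vee}$. This yields
\[
(S_y-S_{\iota(y)})^2=k_0(y+\iota(y))-\sum_{j\neq i,i+5}y_j-\lambda\, o_{\tilde{Y}^\vee},
\]
where $y_j=\alpha(\ell_j)$ and $\lambda$ is determined by degree.

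Finally we sum over the five pairs of pencils of $S_{V_4}$, as in the last step of \Cref{them:multiplicativity+0}. The relation $\sum_{j=1}^{5}(y_j+\iota(y_j))=\sum_{j=1}^{10}y_j$ makes the other points cancel, and we obtain
\[
(S_y-S_{\iota(y)})^2+\sum_{j\neq i}(S_{y_j}-S_{\iota(y_j)})^2=\mu\bigl(y+\iota(y)-2o_{\tilde{Y}^\vee}\bigr)
\]
for some scalar $\mu$. It remains to check that $\mu\neq0$, which is a linear computation in $k_0$. If that check fails, we would instead combine the relations for different pencils to isolate $y+\iota(y)$. This proves surjectivity.
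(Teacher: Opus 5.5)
Your argument breaks at the step you yourself call the main obstacle. The Beauville--Voisin theorem for double EPW sextics \cite{laterveervial, laterveer23} is about the subring generated by divisors and Chern classes. It says nothing about a zero-cycle of the form $\chow^1\cdot\chow^1\cdot(\text{generically defined }2\text{-cycle})$, nor about the point class $[x_\sigma]$, unless you already know these lie in that subring. Going from ``generically defined'' to ``multiple of $o_{\tilde{Y}^\vee}$'' is the Franchetta property, and that is exactly what is unavailable. The paper supplies an extra geometric input instead:
\begin{itemize}
\item it writes $\Gamma'=\alpha^\ast d_1+a_1\xi+b_1[F^\sigma]$ and $h_F=\alpha^\ast d_2+a_2\xi+b_2[F^\sigma]$;
\item it uses $\xi\cdot E_x=0$ to reduce $\alpha_\ast(\Gamma\cdot h_F)$ to $\alpha_\ast(E_x)\cdot d_1\cdot d_2$;
\item it proves that $\alpha_\ast(E_x)$ is $\iota$-invariant, since a conic through $x$ on $\ell_i$ is paired with one through $x$ on $\ell_{i+5}$;
\item it then applies the main theorem of \cite{laterveer23}.
\end{itemize}
The $\iota$-invariance of $\alpha_\ast(E_x)$ is the idea your argument lacks.

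Your fallback, ``independent of $x$ and $c$, conclude by constancy'', can be turned into a correct and even simpler argument, but not as written. A zero-cycle $z_0$ that does not depend on $y$ need not be a multiple of $o_{\tilde{Y}^\vee}$, so your degree count is premature. Constancy does give the following: since $\Gamma'$ is a fixed class and $[E_x]$ depends only on $[x]\in\chow_0(X)\cong\IQ$, you get $(S_y-S_{\iota(y)})^2=k_0(y+\iota(y))-z_0$ for all general $y$ with $z_0$ fixed. The coefficient of $[x_\sigma]$, which you rightly treat as a genuine term, is then fixed by degree. To finish, either:
\begin{itemize}
\item subtract two such relations, which puts $k_0(y+\iota(y)-y'-\iota(y'))$ in the image; by the moving lemma these differences span $\chow_0(\tilde{Y}^\vee)^+_{\mathrm{hom}}$; or
\item extend the relation to all points via the correspondence $J^2$, where $J=I-(\iota,\id)^\ast I$, and evaluate at $z\in Z$, where $J_\ast(z)=0$; this gives $z_0=2k_0o_{\tilde{Y}^\vee}$.
\end{itemize}
This route bypasses \cite{laterveer23} and also disposes of the $[x_\sigma]$ term.

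Two smaller points.
\begin{itemize}
\item In $(D_c-D_{c'})^2=D_c^2+D_{c'}^2-2D_c\cdot D_{c'}$ the terms $\ell_j$ with $j\neq i,i+5$ cancel. So the $\sum_j y_j$ in your display should not be there, and you need no summation over pencils, only $k_0\neq 0$. With your uncancelled formula and five pairs of pencils, no combination isolates $y+\iota(y)$ when $k_0=4$, so your pending check $\mu\neq 0$ would actually fail.
\item Anti-invariance of $S_y-S_{\iota(y)}$ does not follow from $S_y+S_{\iota(y)}$ being constant. Take it instead from the appendix isomorphism $I_\ast\colon\chow_0(\tilde{Y}^\vee)^-\to\chow^2(\tilde{Y}^\vee)^-_{\mathrm{hom}}$.
\end{itemize}
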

\begin{proof}
    The proof of \Cref{them:multiplicativity+0} applies almost verbatim, after replacing \Cref{prop:Dc.Diota} by \Cref{prop:app-intersection}, \Cref{lemma:conic-intersection} by \cite[Lemma 3.9]{zhang24}, and noticing the following. 
    In the proof of \Cref{them:multiplicativity+0}, we used the Franchetta property to conclude that $\alpha_\ast(\Gamma\cdot h_F)$ is a multiple of $o_Y$. 
    As we do not know whether the family of double EPW sextics satisfies the Franchetta property, we need to work around this. 
    We know that
    $$\alpha_\ast(\Gamma\cdot h_F)=\alpha_\ast(E_x\cdot \Gamma'\cdot h_F),$$ where $\Gamma'$ is a generically defined divisor. 
    Since $\alpha$ is a $\IP^1$-bundle outside the locus of $\rho$- and $\sigma$-conics, we know that 
    $$\Gamma'=\alpha^\ast d_1+a_1\xi+b_2[F^\sigma]\text{ and }h_F=\alpha^\ast d_2+a_2\xi+b_2[F^\sigma],$$
    where $d_1,d_2\in\chow^1(\tilde{Y}^\vee)$, $\xi$ is the class of the relative hyperplane bundle of the $\IP^1$-bundle and $[F^\sigma]$ is the class of the locus of $\sigma$-conics.
    Since $F^\sigma$ is contracted to a point by $\alpha$, any multiple of $[F^\sigma]$ vanishes after pushing forward.
    Furthermore, the conics contained in the fibre of $\alpha$ over a general point $y\in \tilde{Y}^\vee$ are disjoint, meaning that $\xi\cdot E_x=0$.
    Thus, we have $$\alpha_\ast(\Gamma\cdot h_F)=\alpha_\ast(E_x)\cdot d_1\cdot d_2.$$
    Now, given $y\in\alpha(E_x)$, there exists a conic $c\in\ell_i$ with $\alpha(c)=y$ and $x\in c$.
    We know that there exists $c'\in\ell_{i+5}$ with $x\in c'$, which means that $\alpha(c')=\iota(y)$ and $\iota(y)\in\alpha(E_x)$. 
    Thus, $\alpha_\ast(E_x)$ is $\iota$-invariant and $\alpha_\ast(E_x)\cdot d_1\cdot d_2$ is a multiple of $o_{\tilde{Y}^\vee}$ using the main theorem of \cite{laterveer23}. With this, the rest of the proof goes through as in the case of double EPW quartics. 
\end{proof}

\addtocontents{toc}{\protect\setcounter{tocdepth}{-1}}
\section*{Glossary of Notation}
\addtocontents{toc}{\protect\setcounter{tocdepth}{1}}

\begin{tabbing}
  \hspace{3.2cm} \= \hspace{12cm} \= \kill
  $Y$ \> a double EPW quartic \>  \ref{sec:construction} \\
  $\mathcal{U}$ \> the $19$-dimensional family of double EPW quartics \>  \ref{sec:construction} \\
  $q$ \> the Beauville--Bogomolov form \>  \ref{sec:construction} \\
  $\iota$ \> the involution on $Y$ \>  \ref{sec:construction} \\
  $U_1, U_2$ \> $3$-dimensional complex vector spaces     \>  \ref{sec:construction-verra}  \\
  $C(T)$  \> the projective cone over a variety $T$     \>  \ref{sec:construction-verra}  \\
  $X$  \> a Verra fourfold     \>   \ref{sec:construction-verra} \\
  $Q$\> the quadric defining a Verra fourfold     \>  \ref{sec:construction-verra}  \\
  $\pi\colon X\to\IP^2\times\IP^2$  \> the double cover associated to $X$     \>  \ref{sec:construction-verra}  \\
  $c$  \> a $(1,1)$-conic in $X$     \>  \ref{sec:construction-verra} \\
  $F(X)$  \> the Hilbert scheme of $(1,1)$-conics in $X$     \> \ref{sec:construction-verra}   \\
  $I_{X,2}$  \> $H^0(X,\mathcal{I}_{X}(2)),$ quadrics containing $X$     \> \ref{sec:construction-verra}  \\
  $\mathfrak{Q}$  \> a quadric containing $X$     \> \ref{sec:construction-verra}  \\
  $\langle c\rangle$  \> the plane spanned by a conic $c$     \> \ref{sec:construction-verra}  \\
  $L,M$ \> lines in $\IP^2$     \>  \ref{sec:construction-verra}  \\
  $D_{(L,M)}$  \> $\pi^{-1}(L\times M)$, a del Pezzo surface of degree $4$     \>   \ref{sec:construction-verra} \\
  $\tilde{Q}$  \> the unique quadric threefold containing $D_{(L,M)}\cup\langle c\rangle$     \>  \ref{sec:construction-verra}  \\
  $\alpha\colon F(X)\to Y$  \> the $\IP^1$-bundle over $Y$ associated to $X$    \>  \ref{sec:construction-verra}  \\
  $S_i, S$  \> $K3$ surfaces     \>  \ref{sec:construction-moduli}  \\
  $\beta_i, \beta$  \> Brauer classes on $K3$ surfaces     \>   \ref{sec:construction-moduli}  \\
  $\mathcal{M}_{(0,H,0)}(S,\beta)$  \> a moduli space of stable objects on a twisted $K3$ surface     \> \ref{sec:construction-moduli}  \\
  $\mathrm{LG}(10,\w^3 V)$  \> the Grassmannian of Lagrangian subspaces     \> \ref{sec:construction-lagrangian}  \\
  $D_k^{\overline{A}}$  \> the Lagrangian degeneracy loci associated to $A$     \>  \ref{sec:construction-lagrangian}  \\
  $\mathcal{X}$\> a family of projective varieties     \> \ref{sec:franchetta}  \\
  $\gdch^\ast_B(\mathcal{X}_0)$  \> generically defined cycles with respect to the family $\mathcal{X}\to B$     \> \ref{sec:franchetta}  \\
  $\mathcal{F}$  \> a moduli stack of a locally complete family of polarised     \> \ref{sec:franchetta}  \\
  \> hyperkähler varieties \> \\
  $\mathcal{K}_2$  \> the moduli of degree $2$ $K3$ surfaces     \> \ref{sec:franchetta}  \\
  $o_Y, o_S$  \> the Beauville--Voisin class      \> \ref{sec:franchetta}  \\
  $h_1, h_2$  \> the divisors associated to the Lagrangian fibrations on $Y$     \> \ref{sec:franchetta}  \\
  $P$  \> the universal conic $P\subset F(X)\times X $    \> \ref{sec:incidence1}  \\
  $I_F$  \> the incidence variety of conics in $F(X)\times F(X)$     \> \ref{sec:incidence}  \\
  $q\colon P\to X$  \> the projection to $X$     \> \ref{sec:incidence1}  \\
  $p\colon P\to F(X)$ \> the projection to $F(X)$ \>  \ref{sec:incidence1}\\
  $\Gamma$, $\Gamma'$, $\Gamma''$  \> decomposable and generically defined cycles     \> \ref{sec:incidence1}  \\
  $h_F$\>  an ample divisor on $F(X)$    \> \ref{sec:incidence1}  \\
  $m$\>  the intersection number of $h_F$ with the general fibre of $\alpha$    \> \ref{sec:incidence1}  \\
  $I$  \> the push-forward of $I_F$ to $Y\times Y$     \> \ref{sec:incidence2}  \\
  $Z$  \>  the fixed locus of the involution $\iota$    \> \ref{sec:incidence2}  \\
  $D_c$  \> the threefold of conics intersecting a given conic $c$     \> \ref{subsec:consta-cycle}  \\
  $\ell_i$  \> a pencil of conics in a $D_{(L,M)} $    \> \ref{subsec:consta-cycle}  \\
  $\Psi$  \> the correspondence induced by the universal conic     \> \ref{subsec:consta-cycle}  \\
  $S_y$  \> the class $I_\ast(y)$     \> \ref{subsec:constant2}  \\
  $\chow^\ast(Y)^{+/-}$\>  the group (anti-)invariant cycles \> \ref{subsec:constant2} \\
  $\chow^\ast(T)_{\mathrm{hom}}$\> homologically trivial cycles     \> \ref{subsec:constant2}  \\
  $g$  \> the divisor $g=h_1-h_2$     \> \ref{subsec:motives}  \\
  $F^\bullet$  \> the conjectural Bloch--Beilinson filtration     \> \ref{sec:multiplicativity}  \\
  $E_x$  \> the class of conics passing through a point $x\in X$     \> \ref{sec:multiplicativity}  \\
  $h$  \> the polarisation $h=h_1+h_2$ of $Y$     \> \ref{sec:BV}  \\
  $D$  \> a primitive divisor in $Y$     \> \ref{sec:BV}  \\
  $\tilde{Y}^\vee$ \> a dual double EPW sextic \> \ref{appendix} 

\end{tabbing}

\printbibliography[title=References]

\end{document}